\def\commentflag{3}
\def\ct{2}
\newcommand{\zz}[1]{{\color{black}{#1}}}
\newcommand{\fn}[1]{\footnote{ {\color{blue}{#1}} }}
\newtheorem{theorem}{Theorem}[section]
\newtheorem{lemma}[theorem]{Lemma}
\newtheorem{proposition}[theorem]{Proposition}
\newtheorem{corollary}[theorem]{Corollary}
\newtheorem{remark}[theorem]{Remark}
\numberwithin{equation}{section}
\def \sss{\scriptscriptstyle}
\newcommand{\norm}[1]{\left\lVert #1 \right\rVert}
\def\eq{\Longleftrightarrow}
\def\q {\quad}
\def \l{\langle}
\def \r{\rangle}
\def\bb{\begin{equation}
  \left\{
   \begin{array}{l} }
\def\ee{   \end{array}
  \right.
  \end{equation}}
\def\mm{ \left[
 \begin{matrix}}
\def\nn{\end{matrix} \right] } 
\def\p{\partial}
\def \dd{\cdot}
\def \t{\times}
\def\n{\nu}
\def \w {\widetilde}
\def \h{\hat}
\def\d{\delta}
\def \vp{\varphi}
\def \na{\nabla}
\def \ep{\varepsilon}
\def \lad{\lambda}
\def\re{\mathfrak{Re}}
\def\im{\mathfrak{Im}}
\def \curl{\nabla \times}
\def \ccurl{{\rm  curl}}
\def \ddiv {{\rm div}}
\def \sgd  {\na_{S}}
\def \sdiv {\sdiv_S}
\def \td{T_D^k}
\def \kd{K_D^k}
\def \md{M_{\p D}^k}
\def \wg{\w{\gamma}_n^{-1}}
\def \b{\overline}
\def \NN{\mathbb{N}}
\def \R{\mathbb{R}}
\def \C{\mathbb{C}}
\def \S{\mathbb{S}}
\def \T{\mathbb{T}}
\def \TT{\mathcal{T}}
\def \rr{\mathcal{R}}
\def \P{\mathbb{P}}
\def \xx {\mathbb{X}}
\def \FF{{\rm F}}
\def \gk{g(x,y,k)}
\def \Z{\mathbb{Z}}
\def \I{\mathbb{I}}
\def \bvi{{ \boldsymbol \vp^j_{\lad_i}}}
\def \np{\mathcal{K}^{k,*}_{\p D}}
\def \ss{\mathcal{S}}
\def \A{\mathcal{A}}
\def \diag{{\bf diag}}
\def \ynm {Y_n^m}
\def \unm {U_n^m}
\def \vnm {V_n^m}
\def \ete {E^{TE}_{n,m}}
\def \etm {E^{TM}_{n,m}}
\def \wete {\w{E}^{TE}_{n,m}}
\def \wetm {\w{E}^{TM}_{n,m}}
\def \anm {\alpha_{n,m}}
\def \bnm {\beta_{n,m}}
\def \aanm{a^m_n}
\def \bbnm{b^m_n}
\def \gnm {\gamma_{n,m}}
\def \enm {\eta_{n,m}}
\def \hh{\mathcal{H}}
\def \jj{\mathcal{J}}
\def \hzz {H_0(\ddiv 0, D)}
\def \hbb {H_{0}^{-1/2}(\p D)}
\def \pd {\mathbb{P}_{{\rm d}}} 
\def \pw {\mathbb{P}_{{\rm w}}}
\title{Super-resolution in recovering embedded electromagnetic
sources \\ in high contrast media}
\begin{document}
\author{
Habib Ammari\footnote{Department of Mathematics, ETH Z\"{u}rich, R\"{a}mistrasse 101, CH-8092 Z\"{u}rich, Switzerland. 
The work of this author is partially supported by the
Swiss National Science Foundation (SNSF) grant 200021-172483.
(habib.ammari@math.ethz.ch).}
\and Bowen Li\footnote{Department of Mathematics, The Chinese University of Hong Kong, Shatin, N.T., Hong Kong. (bwli@math.cuhk.edu.hk).}
\and Jun Zou\footnote{Department of Mathematics, The Chinese University of Hong Kong, Shatin, N.T., Hong Kong.
The work of this author was
substantially supported by Hong Kong RGC General Research Fund (Project  14306718)
and NSFC/Hong Kong RGC Joint Research Scheme 2016/17 (Project N\_CUHK437/16).
(zou@math.cuhk.edu.hk).}
}
\date{}
\maketitle

\begin{abstract}
The purpose of this work is to provide a rigorous mathematical analysis of the expected super-resolution phenomenon in the time-reversal imaging of electromagnetic (EM) radiating sources embedded in a high contrast medium. It is known that the resolution limit is essentially determined by the sharpness of the imaginary part of the EM Green's tensor for \zz{the associated background}.
% the inhomogeneous background\fn{Is the background of our interest inhomogeneous or homogeneous?}.  
We first establish the close connection between the resolution and the material parameters and the resolvent of the electric integral operator, via the Lippmann-Schwinger representation formula. We then present 
an insightful characterization of the spectral structure of the integral operator for a general bounded domain and derive the pole-pencil decomposition of its resolvent in the high contrast regime. 
For the special case of a spherical domain, we provide some quantitative asymptotic behavior 
of the eigenvalues and eigenfunctions. These mathematical findings shall enable us to provide
a concise and rigorous illustration of the super-resolution in the EM source reconstruction in high contrast media. 
Some numerical examples are also presented to verify our main theoretical results. 
% Our results generalize those obtained by H. Ammari et al in \cite{ammari2015super} for Helmholtz equations.
\end{abstract}

\section{Introduction}
In this work, we study the potential super-resolution phenomenon when using the time-reversal imaging method to reconstruct the EM sources embedded in general media with high refractive indices. Among the various imaging algorithms, the time-reversal approach is one of the most direct and simplest ones. Its principle is to exploit the reciprocity of wave propagation. Intuitively, we retrace the path of the wave observed in the far field backwards in chronology to find the location of its generating source \cite{wahab2014electromagnetic,wahab2014electromlossy,chen2013reverseac,chen2013reverse}. For a far-field imaging 
system using the time-reversal method, we know from the Helmholtz-Kirchhoff integral that its resolution is limited by the imaginary part of the Green's function of the wave equations associated with the background medium \cite{ammari2015mathematical,ammari2015super}. It is connected with the so-called Abbe diffraction limit (half of the operating wavelength) via the concept of full width at half maximum (FWHM) \cite{ammari2017sub,ammari2016mathematics}.  In a more precise way, the sharper the imaginary part of the Green's function, the smaller the full width at
its half maximum and the smaller scale the imaging system can resolve.

Over the past several decades, intensive efforts have been made to explore the potential of breaking the diffraction limit 
in two-fold: generating a better raw images, and recovering the finer details of raw images by
post-imaging processes. In this work, our discussion shall be restricted to the first procedure, that is, how to physically improve the resolution by obtaining the better 
a priori information. The Abbe diffraction limit actually results from the fact that the information about subwavelength details of the profile is carried out by the evanescent components  of  the scattered field that is basically unmeasurable in the far field \cite{bao2013near,bao2014near}, \zz{see also Proposition \ref{prop:asylarge}}. To break the resolution barrier, we may need to
capture the subwavelength information. It has been demonstrated in many different settings that using resonant media is a promising and feasible choice, e.g., the plasmonic nanoparticles \cite{ammari2017mathematicalscalar,ammari2016plasmaxwell,ammari2016surface}, the bubbly media \cite{ammari2018minnaert,ammari2017sub}, the Helmholtz resonators \cite{ammari2015mathematical}, and the high contrast media \cite{ammari2014medium,ammari2015super,ammari2018super}. Under specific circumstances, these resonant media can excite the resonances and serve as an amplifier that increases the strength of the subwavelength information of the sources encoded in the measured data.  In general, they are mathematically equivalent to eigenvalue problems \cite{ammari2015super,ammari2018minnaert,ammari2017mathematicalscalar}. 
It was demonstrated in \cite{ammari2017mathematicalscalar} that 
the surface plasmon resonance can be treated as an eigenvalue problem of the Neumann-Poincar\'{e} operator, 
which was further used to analyze the imaginary part of the Green's function and the possibility of achieving
the super-resolution by using plasmonic nanoparticles. For the bubbly media, it was shown in \cite{ammari2017sub} that 
the  super-focusing of acoustic waves can be obtained at frequencies near the Minnaert resonance. 
The inverse source problem was investigated in \cite{ammari2015super} 
for the Helmholtz equation and the super-resolution was explained 
based on the resonance expansion of the Green's function associated with the medium 
with respect to the generalized eigenfunctions of the Riesz potential $\kd$ (cf.\,\eqref{def:kd}). 
As a complement of the work \cite{ammari2015super}, the imaging of the target of high contrast was studied in \cite{ammari2018super} 
for the Helmholtz system
and the experimentally observed super-resolution was illustrated 
via the concept of scattering coefficients. In this work, we \zz{consider the three-dimensional EM wave governed by the full Maxwell equations, and, with the help of an electric integral operator $\td$, a solid mathematical foundation is provided for the the expected super-resolution phenomenon in the time-reversal reconstruction of EM sources embedded in a high contrast
medium. We  
%extend the results in \cite{ammari2015super,ammari2018super} to the three-dimensional electromagnetic wave governed by the full Maxwell equations by linking it to the electric integral operator $\td$ (cf.\eqref{def:td}) and 
also develop some analytical tools very different from the acoustic cases to discuss several critical issues that were not covered in \cite{ammari2015super,ammari2018super}.}

The contributions of this work are three-fold. Firstly, we derive the Lippmann-Schwinger equation to reveal the relations between the medium (shape and refractive indices) and its associated EM Green's tensor (cf.\,\eqref{eq:greeninho}), \zz{of which the explicit formula is not available.} \zz{It is worth emphasizing that this derivation is not as trivial and standard as one might think, and, in fact, our arguments and analysis are very different from the ones in \cite{ammari2015super} 
for the Helmholtz equation and are much more involved. The main difficulty in our case arises from the strong singularity of the EM Green's tensor so the standard approach (see, e.g., \cite{colton2012inverse,ammari2015super}) that works for the functions with $L^2$-regularity is not applicable.} To deal with this problem, we deliberately choose a smooth cutoff function to separate the singular part
from the Green's tensor $G$ so that the remaining regular part can be represented by the Lippmann-Schwinger equation. Since the singular term is explicitly constructed, our decomposition (see Theorems \ref{thm:resoinhogr_1} and \ref{thm:resoinhogr_2}) may also have potential applications in the numerical computation of $G$.  
Secondly, as we shall demonstrate, the mechanism underlying the super-resolution in resonant media is closely related to the spectral analysis of $\td$, which is still far from being complete. For the case of the electric permittivity being smooth enough on the whole space, the integral operator involved in the Lippmann-Schwinger equation is compact and well-studied \cite{colton2012inverse,costabel2010volume}. When the material coefficients 
have jumps across the medium interfaces, the integral operator is not compact and its spectral study is largely open. In \cite{costabel2015volume}, the authors investigated the essential spectrum of the integral operators arising from the EM scattering on the Lipschitz domain in two dimensions and gave a relatively complete characterization in various cases, which extended their earlier results in \cite{costabel2010volume,costabel2012essential} where only the smooth domain was considered. We refer the readers to \cite{rahola2000eigenvalues,budko2006spectrum} for the numerical study of the spectrum of EM volume integral operators. 
To explore the spectral properties of the integral operator $\td$ in three dimensions, we first show that all the eigenvalues of $\td$, except $-1$, of which the corresponding eigenspace consists of the nonradiating sources, lie in the upper-half plane of $\C$; see Theorem \ref{prop:diseig}. Then, by using the Helmholtz decomposition of $L^2$-vector fields, we obtain a characterization of the essential spectrum of $\td$ in a more concise and constructive manner than the existing ones \cite{costabel2012essential, costabel2015volume}. Combining the characterization with the analytic Fredholm theory, we further characterize its eigenvalues of finite type, and give the relation among these eigenvalues, the eigenvalues (point spectrum) and the essential spectrum in Theorem \ref{thm:mainspec}. To the best of our knowledge, it is the first time that the relations between the various types of spectra of $\td$ are clearly characterized in the literature. These results, along with the fundamental properties of Riesz projections, allow us to write the pole-pencil decomposition of the resolvent of $\td$. 
After that, we present more quantitative results for the case of a spherical domain. We rigorously establish the asymptotic forms of the eigenvalues of the integral operator, and prove that these complex eigenvalues are rapidly tending to the real axis in Theorem \ref{thm:asyeigenvalue}. We also observe that along these eigenvalue sequences, there is a localization phenomenon for the associated eigenfunctions \cite{grebenkov2013geometrical,nguyen2013localization}, with 
a mathematical illustration provided in Theorem \ref{thm:loceig}. In Appendix \ref{app:B}, we provide another possible perspective to investigate the spectral properties of $\td$ by regarding it as a quasi-Hermitian operator. 

Our third contribution is that by applying the pole-pencil decomposition to the Lippmann-Schwinger representation of the Green's tensor, we write the resonance expansion (eigenfunction expansion) for the imaginary part of 
the Green's tensor, and find that both eigenvalues and eigenfunctions are responsible for the super-resolution in the reconstruction of the EM embedded sources in the high contrast setting. Precisely, the localized eigenfunctions are highly oscillating and can encode the subwavelength information of the sources. Such information is further amplified when the high contrast approaches some resonant values, and then is back-propagated to reconstruct the subwavelength details of the sources.

The remainder of this work is organized as follows. In Section \ref{sec:iescp}, we first give a brief review of the resolution of the time-reversal method for the inverse source problem and then derive the Lippmann-Schwinger representation of the EM Green's tensor.  In Section \ref{sec:savio}, we investigate the spectral structure of the involved volume integral operator on a general domain (cf.\,\eqref{def:td}) and obtain the pole-pencil decomposition of its resolvent near the small regular value. 
We then proceed to provide more quantitative analysis of spectral properties for the spherical domain. With these mathematical findings, we provide a full explanation for the super-resolution in high contrast media in Section \ref{sec:super-resolu}. 
In addition, we will \zz{present the numerical evidences in the case of a spherical region to validate} our main theoretical results. \zz{Some details and} other useful and interesting results are given in Appendices \ref{app:A}, \ref{app:B} and \ref{app:C}.

We shall use some standard notations for the Sobolev spaces (see \cite{monk2003finite}) 
throughout this work. For a vector $x \in \R^3$, we denote its transport by $x^t$ and its polar form by $(|x|,\h{x})$ with $\h{x}:=x/|x| \in S^2$, 
where $S^2$ is the two dimensional unit sphere in $\R^3$.
We denote the inner product and outer product for two vector $u,v \in \R^3$ by $u^t \dd v$ and $u \t v$ respectively. We also need the tensor product operation  $\otimes$ of two vectors, i.e., given two vectors $u \in \R^n$ and $v \in \R^m$, $u \otimes v$ is a $n \times m$ matrix given by $(u \otimes v)_{ij} = u_i v_j$. And we always let vector operators act on matrices column by column. For a Banach space $X$ and its topological dual $X'$, we introduce the dual pairing $\l l,x\r_{X} : = l(x)$. 
% For Sobolev spaces, to simplify the notation, we shall usually abbreviate $\l\dd,\dd\r_{H^s(D,\R^3)} = \l \dd,\dd\r_{H^s}$ if there is no ambiguity.
We use $\oplus_\perp$ to denote the orthogonal sum in a Hilbert space, while the direct sum in a Banach space is denoted by $\oplus$.

\section{Resolution of imaging EM embedded sources}\label{sec:iescp}
In this section, we shall first introduce the time-reversal reconstruction of EM sources embedded in a high contrast medium and then review its resolution analysis. 
% It is a known fact that the resolution is limited by the imaginary part of the Green's tensor associated with the inhomogeneous background. 
The main purpose of this section is to work out the explicit relation between the resolution limit and the contrast between the refractive indices of the dielectric inclusion and its surrounding medium.

Let us start with the introduction of some notation, definitions and conventions in this work. We consider a dielectric inclusion $D$ embedded in the free space $\R^3$, where $D$ is a bounded connected open set with a smooth boundary $\p D$ and the exterior unit normal vector $\n$. We assume the refractive index $n(x) \in L^\infty(\R^3)$ of the form:
\begin{equation*}
    n(x) = 1+ \tau \chi_D(x)\,,
\end{equation*}
where $\tau \gg 1$ is a positive real constant and $\chi_D$ is the characteristic function of $D$. Let $k$ and $k_\tau := k \sqrt{1 + \tau}$ be the wave numbers in the free space and in the medium $D$, respectively. Then we introduce the fundamental solution of the differential operator $-(\Delta + k^2)$ in $\R^3$: $ g(x,y,k) := \frac{e^{ik|x-y|}}{4 \pi |x-y|}$, $k \ge 0$. We define the Riesz potential $\kd$: 
\begin{equation}\label{def:kd}
    \kd[\varphi] = \int_D g(x,y,k) \vp(y)dy\quad \text{for}\ \vp \in L^2(D,\R^3)\,,
\end{equation}
which is a bounded linear operator from $L^2(D,\R^3)$ to $H^2_{loc}(\R^3,\R^3)$.
This further allows us to introduce the electric volume integral operator $\td$:
\begin{equation} \label{def:td}
    \td[\varphi] = (k^2 + \nabla \ddiv) \kd[\varphi]\in H_{loc}(\ccurl,\R^3)\quad \text{for}\ \vp \in L^2(D,\R^3)\,, 
\end{equation}
which satisfies
\begin{align}  \label{eq:maxwellint}
%  & \curl \curl u- k^2 u = \curl \varphi,\\
      \curl \curl \td[\varphi]- k^2 \td[\varphi] = k^2 \varphi \chi_D \q \text{in}\ \R^3\,,
\end{align}
in the variational sense,  together with the outgoing radiation condition:
\begin{equation} \label{mod:out}
    |x|\left(\curl \td[\varphi](x) \t \h{x} - i k \td[\varphi](x)\right) \to 0 \q \text{as} \ |x|\to \infty\,. 
\end{equation}
We say that a $L^2$-vector field $E$ solving the homogeneous Maxwell equations is radiating if it satisfies the radiation condition \eqref{mod:out} in the far-field, and of which we define the far-field pattern $E_\infty(\h{x})\in L_T^2(S^2)$ by the asymptotic form:
\begin{equation} \label{def:ffp}
    E_\infty(\h{x}) = \frac{e^{ik|x|}}{|x|} E_\infty(\h{x}) + O\left(\frac{1}{|x|^2}\right)\q \text{as} \ |x| \to \infty\,.
\end{equation}
The following surface integral operators are also needed:
\begin{align}\label{eq:deflayer}
    \ss^k_{\p D}[\vp] = \int_{\p D} g(x,y,k) \vp((y)d\sigma(y), \quad  \np[\vp] = \int_{\p D} \frac{\p}{\p \n_x} g(x,y,k) \vp((y)d\sigma(y)\quad \text{for}\  \vp \in H^{-\frac{1}{2}}(\p D)\,.
\end{align}
 We recall the normal trace formula for the gradient of $\ss_{\p D}^k$:
\begin{equation} \label{eq:jumprelation}
   \gamma_n\left(\na\ss^k_{\p D}[\vp]\right) = \left(\frac{1}{2} + \np\right)[\vp](x)\,, \q x \in \p D\,,
\end{equation}
where $\gamma_n [\dd] = \n^t \dd \dd$ is the normal trace mapping which is well-defined on the space $H(\ddiv,D)$.
For the case where the density function $\vp$ in $\ss^k_{\p D}$ is the tangent vector fields from $H_T^{-1/2}(\ddiv,\p D)$, we denote the operator by $\A_{\p D}^k$ instead in order to avoid any confusion. When $k = 0$, we omit the superscript $k$ in the above definitions for simplicity, e.g., we write $\ss_{\p D}$ for $\ss_{\p D}^0$. We are now ready to state the inverse source problem of our interest in this work, and analyze the resolution of the time-reversal reconstruction of the EM 
embedded sources.
%%%%%%%%%%
%%some definition and mapping (differentiation) property 
%%%%%%%%%%
% Define the following surface integral operator with wave number $k$,
% \begin{align*}
% \A_{\p D}^k : H_T^{-\frac{1}{2}}(\ddiv,\p D) &\longrightarrow H({\rm curl},D)\  \text{or} \ H_{loc}({\rm curl}, \R^3 \backslash \bar{D})  \\
% \varphi & \longmapsto \A_{\p D}^k[\varphi](x) = \int_{\p D} g(x,y,k)\varphi(y)d\sigma(y)\,.
% \end{align*}
% Taking this opportunity, we give a result on the mapping property of Riesz potential and its commutativity of differentiation and integration for later use, as a corollary of the basic theorem \cite[Theorem 8.40]{folland1995introduction} in pseudodifferential operators.
% \begin{lemma} \label{lem:mapriesz}
% $\kd$ is a pseudodifferential operator with symbol of order $-2$: $\frac{1}{4\pi^2|\xi|^2-k^2}$, mapping $H_0^s(D,\R^3)$ continuously into $H^{s+2}_{loc}(\R^3,\R^3)$. In particular, it is a bounded linear operator from $L^2(D,\R^3) \to H^2(D,\R^3)$. Moreover, for a multi-index $\alpha \in \NN^3$, $\kd[\p ^\alpha \dd]:= g * \p^\alpha (\chi_D \dd)$ is a pseudodifferential operator with symbol of order $-2+|\alpha|$: 
% \begin{equation*}
%     \frac{(i2\pi)^{|\alpha|} \xi^\alpha}{4\pi^2|\xi|^2-k^2},
% \end{equation*}
% mapping $H_0^s(D,\R^3)$ continuously into $H^{s+2-|\alpha|}_{loc}(\R^3,\R^3)$, and satisfies 
% $$ \p^\alpha \kd[\vp] = \kd[\p ^\alpha \vp]. $$
% \end{lemma}

Consider the following forward source problem associated with the medium $D$:
\begin{equation} \label{mod:eq}
\left\{ 
\begin{aligned}
   &\curl  \curl E (x) - k^2 n(x) E(x) = f(x)\,, \q x \in \R^3\,,   \\
     & E\ \text{satisfies the outgoing radiation condition \eqref{mod:out}}\,,
\end{aligned} \right.
\end{equation}
where $f \in L^2(D,\R^3)$ is the electric radiating source in the sense that $E$ has a nontrivial far-field pattern \cite{albanese2006inverse}.  The corresponding inverse source problem is aimed at reconstructing the source $f$ by using the electric field data $E_{{\rm meas}}(x)$ collected on the far-field measurement surface $\p B(0,\h{R})$, where the radius $\h{R}$ is large enough and $B(0,\h{R})$ contains $D$. In the distribution sense, the measured data $E_{{\rm meas}}(x)$ on $\p B(0,\h{R})$ can be written as 
\begin{equation} \label{eq:measureddata}
    E_{{\rm meas}}(x) = \int_{D} G(x,y,k)f(y) dy\,, \quad x \in \p B(0,\h{R})\,,
\end{equation}
where $G(x,y,k)$ is the Green's tensor of Maxwell's equations for the inhomogeneous background, defined by 
\begin{equation} \label{eq:greeninho}
    \curl  \curl G(x,y,k) - k^2 n(x) G(x,y,k) = \d(x- y)\I\,, \q x \in \R^3\,, \ y \in  \R^3\backslash \p D\,,
\end{equation}
such that each column of $G$ satisfies the outgoing radiation condition \eqref{mod:out}. Here, $\I$ is the $3\t 3$ identity matrix.  The existence of $G$ can be rigorously justified by the boundary integral equations (cf.\ \eqref{eq:repgre:1}-\eqref{eq:repgre:2}). In our following representation, $G(x,y,k)$ will usually occur with a unit polarization vector $p\in S^2$, i.e., $G(x,y,k)p$, physically denoting the electric field generated by the point dipole source $\d(x-y)p$ located at $y$,
and we will not give descriptions for the other similar  notations  if there is no ambiguity.

To re-emit the measured field $E_{{\rm meas}}(x)$ in \eqref{eq:measureddata} back to the source, we multiply it by 
$\b{G}$ (phase conjugation is the frequency domain counterpart of time reversal), which immediately leads us to the imaging functional:
\begin{equation} \label{eq:imiisp}
    I(z) = \int_{\p B(0,\h{R})} \b{G(z,x,k)} E_{{\rm meas}}(x) d\sigma(x)\,,
\end{equation}
where $z$ is any sampling point taken from the sampling region $\Omega$ which is a bounded domain satisfying $D \subset \Omega \subset B(0,R)$. The resolution of the above imaging functional is a standard consequence of the following corollary of the well-known Helmholtz-Kirchhoff identity\cite{chen2013reverse,ito2013direct}: for any $p,q\in S^2$, 
% for any $p,q\in \R^3$,  %%%%%%%%% more details
% \begin{align}
%     \int_{\p \Omega} (\curl \b{G(\xi,\xq,k)}q\t &\n)^t \dd G(\xi,\xp,k)p - (\curl G(\xi,\xp,k)p \t \n)^t \dd \b{G(\xi,\xq,k)}q d\sigma(\xi) 
%     \notag \\  = & - 2i p^t \dd \im G(\xp,\xq,k)q, \q \forall \  x_p, x_q \in \Omega \backslash \p D. \label{eq:HKid}
% \end{align}
% where 
% $\Omega$ is a bounded Lipschitz domain in $\R^3$, containing $D$.
% It directly follows from \eqref{eq:HKid} and the radiation condition \eqref{mod:out} that 
 \begin{equation} \label{eq:hkidencor}
    k \int_{\p B(0,\h{R})} (\overline{G(\xi,x,k)}q)^t \dd G(\xi,z,k)p d\sigma(\xi) = q^t\dd \im G(x,z,k)p + O\left(\frac{1}{\h{R}}\right)\,, \q \forall \ x,z \in \Omega \backslash \p D\,.
\end{equation}
% where $\h{R}$ is large enough such that $D \subset \Omega \subset B(0,R)$.
To see this, we substitute \eqref{eq:measureddata} into \eqref{eq:imiisp}, and then readily obtain from \eqref{eq:hkidencor} that for an arbitrary probing direction $q \in S^2$, it holds that
\begin{align*}
   q^t \dd I(z) = \int_{\p B(0,\h{R})}q^t \dd \overline{G(z,x,k)}E_{{\rm meas}}(x) d\sigma(x) & = \int_{D}\int_{\p B(0,\h{R})} q^t \dd  \b{G(z,x,k)}G(x,y,k)f(y) d\sigma(x)  dy \\
    &  =\frac{1}{k} \int_{D} q^t \dd \im G(z,y,k)f(y) dy + O\left(\frac{1}{\h{R}}\right)\,,
\end{align*}
where we have used the reciprocity of the Green's tensor: $G(x,y,k)^t = G(y,x,k)$. Thus, we have that $I(z)$ can be approximated by
% the resolution of $I(z)$ (the principal part ) is given by
\begin{equation*}
     \h{I}(z) = \frac{1}{k} \int_D \im G(z,y,k)f(y)dy\,,\quad z \in \Omega\,,
\end{equation*}
when $\h{R}$ tends to infinity. To investigate the properties of $\h{I}$, it suffices to consider the imaginary part of the Green's tensor (with a polarization vector $p$):
\begin{equation*}
    \im G(z,z_0,k)p\,, \quad z_0 \in D\,, \ p \in S^2\,,
\end{equation*}
which is proportional to the raw image $I(z)$ of the point dipole source $f(y) = \d_{z_0}(y)p$ asymptotically. It
is worth emphasizing that $\im G$, unlike the acoustic case, is anisotropic in the sense that $q^t \dd \im G p$ may present  different features for different probing directions $q \in S^2$ and polarization directions $p \in S^2$, and hence yields a direction dependent diffraction barrier.   \zz{But we can still expect a better resolution in the image of $f$ obtained from the approximate functional $\h{I}(z)$, if $\im G(z,z_0,k)p$ exhibits subwavelength peaks.}

To figure out how the high contrast $\tau$ influences the behavior of the imaginary part of the Green's tensor, the Lippmann-Schwinger formulation may be adopted, 
as it was suggested in \cite{ammari2015super} for the acoustic case. However, it is not a trivial task 
to derive the Lippmann-Schwinger equation here as in \cite{ammari2015super} due to the strong singularity 
of the current Green's tensor $G(x,y,k)$ associated with the Maxwell equations for the inhomogeneous background.
We observe that $\im Gp$ does not satisfy the outgoing radiation condition \eqref{mod:out} although it obeys 
\begin{equation*} 
    \curl  \curl \im G(x,y,k)p - k^2 n(x) \im G(x,y,k)p = 0\,, \q x \in \R^3\,, \ y \in  \R^3\backslash \p D\,.
\end{equation*} 
Thus, we need to to deal directly with $G(z,z_0,k)p$ that solves the equation:
\begin{equation} \label{eq:fundapolar}
    \curl  \curl  G(z,z_0,k)p - k^2 n(x) G(z,z_0,k)p = \d_{z_0}(z)p\,, \quad z_0\in D, \ z \in \R^3 \backslash \p D\,,
\end{equation}
or equivalently,
\begin{align}
     \curl  \curl  \left[G(z,z_0,k) - G_0(z,z_0,k)\right]p -& k^2 \left[G(z,z_0,k) - G_0(z,z_0,k)\right]p \notag \\&= k^2 \tau \chi_D G(z,z_0,k)p\,, \quad z_0\in D, \ z \in \R^3 \backslash \p D\,, \label{eq:fundapolar_1}
\end{align}
where 
\begin{equation}\label{eq:greenfree}
    G_0(x,y,k) := \left(\I+\frac{1}{k^2}\na \ddiv\right)\gk \I\,
\end{equation}
is the Green's tensor of Maxwell equations for the free space with wave number $k$. \zz{
By \eqref{eq:maxwellint} and \eqref{eq:fundapolar_1}, the integral equation for $G$ may be formally formulated as}
\begin{equation*}
     G(z,z_0,k)p - G_0(z,z_0,k)p  = \tau \td\left[G(\dd,z_0,k)p\right](z), \quad z\in D\,.
\end{equation*}
Nevertheless, there is a strong singularity of $G(z,z_0,k)$ near $z_0$ (cf.\,\eqref{eq:singpriori}), 
resulting in the fact that $G(z,z_0,k)p \notin L^2(D,\R^3)$ and the evaluation of 
$\td\left[G(\dd,z_0,k)\right](z)$ makes no sense.

To address this issue, we need an a priori information on the singularity of Green's tensor $G$, which we shall observe
from the boundary integral equation for $G$.  With the help of the integral operator $\A_{\p D}^k$ introduced 
earlier in this section, we assume that $G(x,y,k)p$ has the following ansatz: for $y \in D$,
\begin{equation} \label{eq:repgre:1}
G(x,y,k)p = \begin{cases}
G_0(x,y,k_\tau)p + \curl\A_{\p D}^{k_\tau} [\phi](x) + \curl \curl \A_{\p D}^{k_\tau}[\psi](x)\,, &\mbox {$x \in D$}\,,
\\ 
\curl \A_{\p D}^k[\phi](x) + \curl \curl \A_{\p D}^k[\psi](x)\,, & \mbox {$x \in \R^3 \backslash \bar{D}$}\,,
\end{cases}
\end{equation}
and for $y \in \R^3\backslash \bar{D}$,
\begin{equation} \label{eq:repgre:2}
G(x,y,k)p = 
\begin{cases}
\curl\A_{\p D}^{k_\tau} [\phi](x) + \curl \curl \A_{\p D}^{k_\tau}[\psi](x)\,, & \mbox {$x \in D$}\,, \\ 
G_0(x,y,k)p +\curl \A_{\p D}^k[\phi](x) + \curl \curl \A_{\p D}^k[\psi](x)\,, & \mbox {$x \in \R^3 \backslash \bar{D}$}\,.
\end{cases}
\end{equation}
The densities $\phi,\psi \in H_T^{-1/2}(\ddiv,\p D)$ in \eqref{eq:repgre:1} and \eqref{eq:repgre:2} can be found by solving a boundary integral equation built via the trace formulas related to $\A_{\p D}^{k}$ \cite{ammari2013enhancement,ammari2018mathematical}. By \eqref{eq:repgre:1}, we readily see that near $z_0 \in D$, $G(z,z_0,k)p$ has
the same singularity as $G_0(z,z_0,k_\tau)p$ \zz{in the sense that}
\begin{equation} \label{eq:singpriori}
   G(z,z_0,k)p - G_0(z,z_0,k_\tau)p \in L^2(D,\R^3)  \,.
\end{equation}

We are now prepared to derive the Lippmann-Schwinger representation of $G$ in terms of $\td$ and $\tau$. The key idea here is to split $G$ into a singular term with compact support in $D$ and a regular remainder, and then establish the integral equation for the regular part instead. To do so, \zz{we construct a smooth cutoff function $\w{\chi}_{z_0}(z)$ with a compact support in $D$ satisfying $$
\w{\chi}_{z_0}(z) \equiv 1\  \text{on a small ball}\  B(z_0,r) \subset D\,,
$$ 
and define
\begin{equation} \label{def:tilg}
\w{g}(z,z_0,k) :=  \w{\chi}_{z_0}(z)g(z,z_0,k)\,,\q z \in \R^3\,,
\end{equation}
which helps us to separate the singularity indicated in \eqref{eq:singpriori} locally.} It follows that $\nabla_z \ddiv_z (\w{g}(z,z_0,k)p)$ is a distribution on $\R^3$ with its support and singular support, respectively, given by the compact set  ${\rm supp}(\w{\chi}{z_0})$ and the single point $\{z_0\}$.
We now write $G(z,z_0,k)p$ as
\begin{equation} \label{eq:decomgginho}
G(z,z_0,k)p = G_0(z,z_0,k)p - \frac{\tau}{k_\tau^2}\nabla_z \ddiv_z (\w{g}(z,z_0,k)p) + V(z,z_0,k)p\,,\q  z\in \R^3\,,
\end{equation}
where \zz{$V(\dd,z_0,k)p|_D$ defined by the above formula is an $L^2$-vector field, by \eqref{eq:singpriori} and \eqref{def:tilg}.} Substituting \eqref{eq:decomgginho} back into \eqref{eq:fundapolar}, we can find, by a direct computation, that $V(z,z_0,k)p$ satisfies
\begin{align} 
    \curl \curl V(z,z_0,k)p &- k^2 n(z) V(z,z_0,k)p \notag \\
    & =  \tau  k^2 \chi_D(z) (G_0(z,z_0,k)p - \frac{1}{k^2} \nabla_z \ddiv_z (\w{g}(z,z_0,k)p))\,, \label{eq:eqforvp}
\end{align}
where we have used the fact that $G_0$ is the fundamental solution to the homogeneous Maxwell equations and a simple but important observation that 
 \begin{equation*}
     k^2n(x)\frac{\tau}{k_\tau^2}\nabla_z \ddiv_z (\w{g}(z,z_0,k)p) = \tau \nabla_z \ddiv_z (\w{g}(z,z_0,k)p)\,, \q z \in \R^3\,.
 \end{equation*}
The above observation also suggests the reasons why it is necessary to restrict the singularity in the domain $D$.  Note that the source term in the right-hand side of \eqref{eq:eqforvp} is an $L^2$-vector field. We define a matrix function
\begin{equation} \label{def:gp}
   \w{G}(z,z_0,k):= G_0(z,z_0,k) - \frac{1}{k^2} \nabla_z \ddiv_z \left(\w{g}(z,z_0,k)\I\right)\,, \q z,z_0 \in D\,.
\end{equation}
Then the corresponding Lippmann-Schwinger equation for $Vp$ reads as follows:
\begin{equation*}
     V(z,z_0,k)p = \tau \td[\w{G}(\dd,z_0,k)p + V(\dd,z_0,k)p](z)\,, \q z \in D\,.
\end{equation*}
If $1 - \tau \td$ is invertible (as we shall see in Theorem \ref{prop:diseig}, this is always the case for a high contrast $\tau$), we further have 
\begin{align}
     V(z,z_0,k)p &= (1-\tau \td)^{-1}(\tau \td - 1+1)[\w{G}(\dd,z_0,k)p](z) \notag\\
     & = (1-\tau \td)^{-1}[\w{G}(\dd,z_0,k)p](z) - \w{G}(z,z_0,k)p\,, \q z\in D \,.\label{eq:reprevp}
\end{align}
Then it follows from the decomposition \eqref{eq:decomgginho}, the definition of $\w{G}$ in \eqref{def:gp} and the relation $k_\tau = k \sqrt{1 + \tau}$ that
\begin{align*}
    G(z,z_0,k)p & = \w{G}(z,z_0,k)p + (\frac{1}{k^2}-\frac{\tau}{k_\tau^2}) \nabla_z \ddiv_z (\w{g}(z,z_0,k)p) + V(z,z_0,k)p\\
    & =  \w{G}(z,z_0,k)p + \frac{1}{k_\tau^2} \nabla_z \ddiv_z (\w{g}(z,z_0,k)p) + V(z,z_0,k)p\,, \quad z, z_0\in D\,.
\end{align*}

Combining this decomposition with \eqref{eq:reprevp}, we arrive at the main result of this section.
\begin{theorem} \label{thm:resoinhogr_1}
The Green's tensor of the Maxwell equations \zz{\eqref{eq:fundapolar}} with a polarization vector $p \in S^2$, has the following representation: 
\begin{equation}  \label{eq:resoinhogr_1}
    G(z,z_0,k)p =  \frac{1}{k_\tau^2} \nabla_z \ddiv_z (\w{g}(z,z_0,k)p) + (1-\tau \td)^{-1}[\w{G}(z,z_0,k)p](z)\,, \quad z, z_0\in D\,,
\end{equation}
where $\w{g}$ and $\w{G}$ are given by
\eqref{def:tilg} and \eqref{def:gp}, respectively.
\end{theorem}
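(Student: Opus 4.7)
The plan is to execute the singular-extraction strategy already previewed in the text: isolate the non-$L^2$ part of $G(\cdot,z_0,k)p$ by an explicit cutoff, reduce the $L^2$ remainder to a Lippmann-Schwinger equation governed by $\td$, and invert $1-\tau\td$. The a priori information \eqref{eq:singpriori} furnished by the layer-potential ansatz \eqref{eq:repgre:1} is the key enabler: it says that $G(\cdot,z_0,k)p$ differs from $G_0(\cdot,z_0,k_\tau)p$ by an $L^2(D,\R^3)$ field, so only the $\na\ddiv$-contribution of the free-space tensor near $z_0$ needs to be removed by hand.

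First I would choose a smooth cutoff $\w\chi_{z_0}$ compactly supported in $D$ with $\w\chi_{z_0}\equiv 1$ on some ball $B(z_0,r)\subset D$, define $\w g$ as in \eqref{def:tilg}, and posit the ansatz \eqref{eq:decomgginho}, namely $G(z,z_0,k)p = G_0(z,z_0,k)p - (\tau/k_\tau^2)\na_z\ddiv_z(\w g\,p) + V(z,z_0,k)p$. Since $\w g$ matches $g$ to all orders near $z_0$, the subtracted distribution absorbs exactly the strong singularity of $G_0(\cdot,z_0,k_\tau)p$, and combined with \eqref{eq:singpriori} this shows $V(\cdot,z_0,k)p|_D \in L^2(D,\R^3)$.

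Next I would substitute this ansatz into the defining equation \eqref{eq:fundapolar}. On ${\rm supp}(\w\chi_{z_0})\subset D$ one has $n(z) = 1+\tau$ and $k_\tau^2 = k^2(1+\tau)$, giving the crucial algebraic identity $k^2 n(z)\,(\tau/k_\tau^2) = \tau$; this is precisely why the cutoff must be supported in $D$. Together with the fact that $G_0(\cdot,z_0,k)p$ is the free-space fundamental solution, this identity lets the singular $\na\ddiv$-contributions cancel and leaves the inhomogeneous Maxwell system \eqref{eq:eqforvp} for $V$, whose right-hand side is the $L^2$-source $k^2\tau\chi_D\,\w G(\cdot,z_0,k)p$ with $\w G$ as in \eqref{def:gp}. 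The variational characterization \eqref{def:td}-\eqref{mod:out} of $\td$ then recasts this PDE as the Lippmann-Schwinger equation $V = \tau\td[\w G(\cdot,z_0,k)p + V]$ on $D$.

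Finally, invoking Theorem \ref{prop:diseig} to invert $1-\tau\td$ for high contrast gives the closed form \eqref{eq:reprevp} for $V$. Substituting back into the ansatz, writing $G_0 = \w G + (1/k^2)\na\ddiv(\w g\,\I)$ from \eqref{def:gp}, and combining the two $\na\ddiv(\w g\,p)$ terms via $1/k^2 - \tau/k_\tau^2 = 1/k_\tau^2$ produces \eqref{eq:resoinhogr_1}. The main obstacle is the distributional bookkeeping around $z_0$: one must justify that $V|_D$ really lies in $L^2(D,\R^3)$, that the distribution $\na_z\ddiv_z(\w g\,p)$ (supported in $D$, smooth away from $z_0$) can be legitimately inserted into the Maxwell equations on all of $\R^3$, and that the passage between the PDE for $V$ and the integral equation through $\td$ is valid despite the singularity. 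Once the cutoff is taken compactly inside $D$, every such step reduces to the cancellation identity on $n(z)$ above and the a priori bound \eqref{eq:singpriori}.
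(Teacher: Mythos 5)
Your proposal reproduces the paper's proof step by step: the same a priori $L^2$ regularity \eqref{eq:singpriori} extracted from the layer-potential ansatz, the same cutoff construction \eqref{def:tilg}, the same splitting \eqref{eq:decomgginho} with $V|_D\in L^2$, the same use of the cancellation $k^2 n(z)\,\tau/k_\tau^2=\tau$ on ${\rm supp}\,\w\chi_{z_0}\subset D$ to produce the $L^2$-sourced system \eqref{eq:eqforvp}, the Lippmann--Schwinger reformulation through $\td$, inversion of $1-\tau\td$ via Theorem \ref{prop:diseig}, and the final recombination using $1/k^2-\tau/k_\tau^2=1/k_\tau^2$. No gaps and no meaningful deviation from the paper's argument.
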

In the above construction, the definitions of $\w{g}$ and  $\w{G}$ depend on the position of $z_0$ and the explicit choice of the cutoff function $\w{\chi}_{z_0}(z)$. If we re-define $\w{g}$ and $\w{G}$ in
\eqref{def:tilg} and \eqref{def:gp} as 
\begin{equation} \label{def:tilg_2}
    \w{g}(z,z',k) = \w{\chi}_{z_0}(z)g(z,z',k),\q z\in \R^3, \ z' \in B(z_0,r)\,,
\end{equation}
and 
\begin{equation} \label{def:gp_2}
    \w{G}(z,z',k) = G_0(z,z',k) - \frac{1}{k^2} \na_z \ddiv_z (\w{g}(z,z',k)\I),\q z\in \R^3, \ z' \in B(z_0,r)\,,
\end{equation}
respectively, and revisit the proof of Theorem \ref{thm:resoinhogr_1} carefully, we can find the same representation of $G(z,z',k)p$ as the one in \eqref{eq:resoinhogr_1} for $z \in D$ and $z' \in B(z_0,r)$ \zz{but with $\w{g}$ and $\w{G}$ replaced by the ones in \eqref{def:tilg_2} and \eqref{def:gp_2}.} More generally, given an arbitrary compact subset $D'$ of $D$, we may replace the cutoff function $\w{\chi}_{z_0}(z)$ in \eqref{def:tilg_2} by another smooth cutoff function $\w{\chi}_{D'}$ such that $\w{\chi}_{D'}(z) \equiv 1$ on a small neighborhood of $D'$. Then, by a very similar argument as above, 
we can derive an improved variant of Theorem \ref{thm:resoinhogr_1}. 
\begin{theorem} \label{thm:resoinhogr_2}
Given a compact subset $D'$ of $D$, let $\w{g}$ be given by \eqref{def:tilg_2} with $\w{\chi}_{z_0}(z)$ replaced by the smooth cutoff function $\w{\chi}_{D'}(z)$ associated with $D'$, and let $\w{G}$ be defined as in \eqref{def:gp_2} with the newly defined $\w{g}$. Then the following decomposition of the Green's tensor $G(z,z',k)$ (cf.\,\eqref{eq:greeninho}) holds,
\begin{equation}  \label{eq:resoinhogr_2}
    G(z,z',k) = \frac{1}{k_\tau^2} \nabla_z \ddiv_z (\w{g}(z,z',k)\I) + (1-\tau \td)^{-1}[\w{G}(\dd,z',k)](z)\,, \q z\in D, \ z' \in D'\,.
\end{equation}
\end{theorem}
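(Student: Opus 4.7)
The plan is to mirror the proof of Theorem~\ref{thm:resoinhogr_1} step by step, the only structural change being that the point-dependent cutoff $\w{\chi}_{z_0}$ is replaced by the single cutoff $\w{\chi}_{D'}$ that identically equals $1$ on a fixed neighborhood of the compact set $D'$. The crucial feature that makes this substitution harmless is that every $z'\in D'$ sits in the region where $\w{\chi}_{D'}\equiv1$, so the singular structure of $\w{g}(z,z',k)=\w{\chi}_{D'}(z)\,g(z,z',k)$ along the diagonal $z=z'$ is exactly that of $g(z,z',k)$ itself, which is all that the local argument in the earlier proof actually used. The added benefit is that $\w{\chi}_{D'}$ no longer depends on $z'$, so the resulting identity can be read jointly in the pair $(z,z')$, which is exactly what a two-variable decomposition of the tensor $G(z,z',k)$ requires.

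Concretely, for fixed $z'\in D'$ I would start from the ansatz
\begin{equation*}
    G(z,z',k)p = G_0(z,z',k)p - \frac{\tau}{k_\tau^2}\nabla_z \ddiv_z (\w{g}(z,z',k)p) + V(z,z',k)p,\q z\in\R^3,
\end{equation*}
and first verify that $V(\dd,z',k)p\in L^2(D,\R^3)$. This follows from the a priori estimate~\eqref{eq:singpriori} (applied with $z_0=z'$) combined with the elementary coefficient identity $\tfrac{1}{k^2}-\tfrac{\tau}{k_\tau^2}=\tfrac{1}{k_\tau^2}$: the strongly singular contributions $\tfrac{1}{k^2}\nabla_z\ddiv_z g(z,z',k)\I p$ hidden inside $G_0(z,z',k)p$ and $-\tfrac{\tau}{k_\tau^2}\nabla_z\ddiv_z(\w{g}(z,z',k)p)$ combine, near $z=z'$, to reproduce precisely the singularity $\tfrac{1}{k_\tau^2}\nabla_z\ddiv_z g(z,z',k_\tau)\I p$ of $G_0(z,z',k_\tau)p$ that~\eqref{eq:singpriori} isolates. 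Substituting the ansatz into~\eqref{eq:fundapolar} and invoking the observation that, since ${\rm supp}\,\w{\chi}_{D'}\subset D$ forces $n(z)=1+\tau$ on ${\rm supp}\,\w{g}$, one has $k^2 n(z)\tfrac{\tau}{k_\tau^2}\nabla_z\ddiv_z(\w{g}(z,z',k)p) = \tau\nabla_z\ddiv_z(\w{g}(z,z',k)p)$, I obtain a PDE of exactly the form~\eqref{eq:eqforvp} with $z_0$ replaced by $z'$ and the new $\w{G}$ from~\eqref{def:gp_2}.

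The remainder is then routine. This PDE reformulates as the Lippmann--Schwinger equation $V(\dd,z',k)p=\tau\td[\w{G}(\dd,z',k)p+V(\dd,z',k)p]$ on $D$; inverting $1-\tau\td$ (available via Theorem~\ref{prop:diseig} for large $\tau$) in the same way as in~\eqref{eq:reprevp}, combining with the decomposition, and collapsing $\tfrac{1}{k^2}-\tfrac{\tau}{k_\tau^2}$ into $\tfrac{1}{k_\tau^2}$ yields~\eqref{eq:resoinhogr_2}. The main subtlety I anticipate is not any single sharp step but rather uniformity in the parameter $z'$: one must check that $z'\mapsto\w{G}(\dd,z',k)$ is continuous (ideally holomorphic) into $L^2(D,\R^3)$ for $z'\in D'$, so that the resulting identity is a genuine two-variable decomposition of $G$ rather than merely a pointwise-in-$z'$ statement. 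Because $\w{\chi}_{D'}$ is fixed, smooth, and compactly supported in $D$, and because the subtraction in~\eqref{def:gp_2} removes the only singularity of $g(z,z',k)$ on ${\rm supp}\,\w{\chi}_{D'}$, this should reduce to standard bookkeeping rather than a genuine obstacle.
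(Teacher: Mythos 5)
Your proposal is correct and follows the same route the paper indicates: the paper proves Theorem~\ref{thm:resoinhogr_2} simply by noting that the argument for Theorem~\ref{thm:resoinhogr_1} goes through verbatim once $\w{\chi}_{z_0}$ is replaced by the fixed cutoff $\w{\chi}_{D'}$, which is exactly what you do. Your remarks on why the substitution is harmless (the singularity is still localized inside the region where $\w{\chi}_{D'}\equiv1$, and the key identity $k^2 n(z)\tfrac{\tau}{k_\tau^2}=\tau$ on ${\rm supp}\,\w{g}$ still holds) and on joint dependence in $(z,z')$ are consistent with the paper's intent.
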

% \begin{remark}\label{rem:ressmooth}
% \fn{It is better to remove this remark as it does not show the real necessity for us to use the smooth cutoff function.}
% \zz{The reason we use the smooth cutoff function here, instead of a simple characteristic function $\chi_D$, to separate the singularity is mainly to avoid the introduction and further technical treatment of some boundary distributions, 
% although it is possible; see (25),(26) in \cite{costabel2015volume}. 
% }
% \end{remark}
We can clearly see from \eqref{eq:resoinhogr_2} (or  \eqref{eq:resoinhogr_1}) how the high contrast $\tau$
affects the behavior of $G$. In the high contrast regime, i.e., $\tau \gg 1$, the first term of \eqref{eq:resoinhogr_2} involves
the contrast $\tau$ in an explicit way, and we can find that its imaginary part is of order $\tau^{-1}$ and thereby  negligible since $\im \w{g}(z,z',k)$ is a sufficiently smooth function.
At the same time, the second term in \eqref{eq:resoinhogr_2} is strongly influenced by the property of operator $(\tau^{-1}-\td)^{-1}$. If there are some poles of the resolvent of $\td$ near $\tau^{-1}$, we may expect that the term $(1-\tau \td)^{-1}[\w{G}(\dd, z',k)](z)$ blows up and hence $\im G$ exhibits a sharper peak than the one in the homogeneous space. These observations lead us to the investigations of the spectral structure as well as the resolvent of $\td$ in the next section, which serves as 
the mathematical preparations for a complete study of 
the possibility of achieving the super-resolution in high contrast media in Section \ref{sec:super-resolu}.

\section{Spectral analysis of the volume integral operator} \label{sec:savio}

For a bounded linear operator $A$ on a complex Banach space, we denote by $\sigma(A)$ its spectrum, by $\sigma_p(A)$ its eigenvalues (point spectrum), and by $(\lad - A)^{-1}$ the resolvent, which is an analytic operator-valued function defined on the resolvent set $\rho(A) := \C \backslash \sigma(A)$. We refer to the elements in $\rho(A)$ as the regular values of $A$. We have seen in Section \ref{sec:iescp} that the resolution limit in the EM inverse source problem is closely related to the behaviour of the resolvent $(\lad - \td)^{-1}$ near the small regular value $\tau^{-1}\ll 1$. 

\subsection{Spectral structure} \label{subsec:specstr}

\zz{In this subsection, we are going to first consider the distribution of eigenvalues of $\td$ and then give  characterizations of the essential spectrum and eigenvalues of finite type (their definitions will be given after Corollary \ref{cor:eigespe-1}). These  results are  fundamental to the pole-pencil decomposition of the resolvent $(\lad - \td)^{-1}$ that shall be derived in Section \ref{subsec:ppdec}.} We start with an easily observed but quite important lemma for our later use.
% that will be used in our following analysis and arguments.
\begin{lemma} \label{lem:eqtrass}
For the integral operator $\td$ defined by \eqref{def:td}, we have $0 \notin \sigma_p(\td)$. Moreover, the eigenvalue equation $(\lad - \td)[\vp] = 0$ has nontrivial solutions for some $\lad \in \C$ \zz{(i.e., $\lad \in \sigma_p(\td)$)} if and only if the following transmission problem has a nontrivial radiating solution $u \in H_{loc}(\ccurl,\R^3)$,
\begin{equation} \label{eq:transim}
    \curl \curl u - k^2 u = \frac{k^2}{\lad}u \chi_D \q \text{in}\ \R^3\,.
\end{equation}
In this case, the solution $u$ to \eqref{eq:transim}, restricted on $D$, is an eigenfunction of $\td$ associated with $\lad$. 
\end{lemma}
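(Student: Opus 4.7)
The plan is to split the claim into three pieces and tie each to the distributional identity \eqref{eq:maxwellint} together with uniqueness of the full-space Maxwell source problem with the outgoing radiation condition \eqref{mod:out}.

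First, to show $0 \notin \sigma_p(\td)$, I would assume $\td[\vp] = 0$ in $H_{loc}(\ccurl,\R^3)$ and read off from \eqref{eq:maxwellint} that $k^2 \vp \chi_D = 0$ in the variational sense, forcing $\vp = 0$ in $L^2(D,\R^3)$. Consequently every eigenvalue $\lad$ is nonzero, which justifies dividing by $\lad$ below.

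For the forward implication, given $\vp \neq 0$ with $\td[\vp] = \lad \vp$ in $L^2(D,\R^3)$, I set $u := \td[\vp]$; by construction $u \in H_{loc}(\ccurl,\R^3)$ is radiating, and using \eqref{eq:maxwellint} together with $\vp = \lad^{-1} u|_D$ rewrites the source term $k^2 \vp \chi_D$ as $(k^2/\lad)\, u \chi_D$, giving \eqref{eq:transim}; nontriviality is automatic since $u|_D = \lad \vp \neq 0$. For the converse, starting from a nontrivial radiating $u$ that solves \eqref{eq:transim}, I put $\vp := u|_D$. If $\vp$ were zero, $u$ would solve the homogeneous Maxwell equation in $\R^3$ with the radiation condition and hence vanish, contradicting nontriviality. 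Then both $u$ and $\lad^{-1}\td[\vp]$ are radiating solutions of
\begin{equation*}
\curl\curl w - k^2 w = \frac{k^2}{\lad}\vp\chi_D \q \text{in}\ \R^3,
\end{equation*}
so uniqueness for this Maxwell source problem forces $u = \lad^{-1}\td[\vp]$; restricting to $D$ yields $\td[\vp] = \lad \vp$, which simultaneously confirms that $\lad \in \sigma_p(\td)$ and that $u|_D$ is the associated eigenfunction.

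The only non-cosmetic ingredient is well-posedness of the inhomogeneous Maxwell system in $\R^3$ subject to \eqref{mod:out}; this is classical but I would want to cite it carefully because it is used twice in the converse, once to rule out $\vp = 0$ and once to identify $u$ with $\lad^{-1}\td[\vp]$. No compactness or spectral machinery is needed here, so once the uniqueness statement is in hand the entire argument should be short.
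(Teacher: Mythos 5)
Your proposal is correct and follows essentially the same route as the paper's proof: exploit the identity \eqref{eq:maxwellint} to read off $0\notin\sigma_p(\td)$, pass between the eigenvalue equation and \eqref{eq:transim} via $u=\td[\vp]$, and invoke uniqueness of the radiating Maxwell source problem in the converse to identify $u$ with $\td[u/\lad]$. Your explicit check that $u|_D \neq 0$ in the converse (otherwise $u$ solves the homogeneous radiating problem and vanishes) is a small refinement that the paper leaves implicit, but it is not a different argument.
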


\begin{proof}
Suppose $(\lad,\vp)$ is the eigenpair of $\td$, i.e., $\td[\vp] = \lad \vp$, $\vp \neq 0$, which directly yields, by \eqref{eq:maxwellint}, 
\begin{equation} \label{eq:midforu}
    (\curl \curl - k^2)\td[\vp] = (\curl \curl - k^2)\lad \vp = k^2 \vp \chi_D \q \text{in} \ \R^3. 
\end{equation}
We readily see that if $\lad = 0$, then $\vp = 0$ on $D$, from which it follows that $0 \notin \sigma_p(\td)$ and $\lad$ in \eqref{eq:midforu} does not vanish. Since $\vp$ is the eigenfunction of $\td$ with eigenvalue $\lad$, we can write the right-hand side of \eqref{eq:midforu} as $k^2\td[\vp/\lad]\chi_D$ and then conclude that $\td[\vp]$ is a nontrivial solution of \eqref{eq:transim}. Conversely, if $u$ is a nontrivial solution of \eqref{eq:transim}, by the
uniqueness of a solution to the Maxwell source problem and \eqref{eq:maxwellint}, we have $u = \td[u/\lad]$,  which also implies that $u|_D$ is an eigenfunction of $\td$ associated with $\lad$.  
\end{proof}

We denote the interior wave number $k\sqrt{1 + \lad^{-1}}$ in \eqref{eq:transim} by $k_\lad$. \zz{Here and throughout this work, we consider the principal branch of $\sqrt{\dd}$ with the branch cut given by $(-\infty,0]$.}
It should be stressed that the equation \eqref{eq:transim} is defined on the whole space $\R^3$ and understood in the variational sense. This fact immediately yields $\curl u \in H_{loc}(\ccurl,\R^3)$, and hence 
$\curl u \in H_{loc}^1(\R^3,\R^3)$ by noting that $\ddiv (\curl u) = 0$ and making use of the embedding theorem (cf.\,\cite[Theorem 2.5]{amrouche1998vector}). These facts can also be verified by the integral representation of $u$, i.e., $u = \td[u/\lad]$. We now give the first main result of this subsection, concerning an a priori characterization of the distribution of the eigenvalues and eigenspaces of $\td$. \zz{The proof follows a similar spirit of 
the one for proving the uniqueness of a solution to the direct acoustic scattering problem (cf.\,\cite[Theorem 2.14]{colton2012inverse}) but pays a special attention to the ranges of the eigenvalues and the topology of the domain.
% The idea of the proof is similar to the one for proving the uniqueness of a solution to the direct acoustic scattering problem (cf.\,\cite[Theorem 2.14]{colton2012inverse}).
}

\begin{theorem} \label{prop:diseig}
For a bounded smooth domain $D$, we have that if $\lad \in \sigma_p(\td)\backslash\{-1\}$, then $\im \lad > 0$. Suppose that $\R^3\backslash \bar{D}$ is connected. We have that if $\lad = -1$ is an eigenvalue of $\td$, then the associated eigenspace must be contained in $\na H_0^1(D)$. 
\end{theorem}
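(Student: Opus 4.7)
The plan is to invoke Lemma~\ref{lem:eqtrass} to recast $\lad\in\sigma_p(\td)$ as the existence of a nontrivial radiating $u\in H_{loc}(\ccurl,\R^3)$ solving the transmission problem \eqref{eq:transim}, and then extract sign information on $\lad$ by pairing $u$ against $\bar u$ on a large ball $B_R\supset\bar D$ via Green's curl-curl identity. I would integrate by parts separately on $D$ (where $\curl\curl u=k_\lad^2 u$) and on $B_R\setminus\bar D$ (where $\curl\curl u=k^2 u$). The variational interpretation of \eqref{eq:transim} forces $\curl\curl u\in L^2_{loc}(\R^3)$, so $\curl u\in H_{loc}(\ccurl,\R^3)$ and both tangential traces $\n\times u$ and $\n\times\curl u$ are continuous across $\partial D$; consequently the two boundary contributions on $\partial D$ cancel. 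Taking imaginary parts kills the real quantity $\int_{B_R}(|\curl u|^2-k^2|u|^2)\,dx$ and leaves
\[
\im(k^2/\lad)\int_D|u|^2\,dx=\im\int_{\partial B_R}\bar u\cdot(\h x\times\curl u)\,d\sigma.
\]

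Next, using the radiation condition \eqref{mod:out} in the form $\h x\times\curl u=-iku+o(1/|x|)$, the right-hand side converges to $-k\lim_{R\to\infty}\int_{\partial B_R}|u|^2\,d\sigma\le 0$. Since $u|_D\not\equiv 0$, one has $\int_D|u|^2>0$, and this forces $\im(k^2/\lad)\le 0$, i.e.\ $\im\lad\ge 0$.

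To upgrade this to strict positivity when $\lad\ne -1$, I argue by contradiction: if $\im\lad=0$, the identity above collapses to $\lim_{R\to\infty}\int_{\partial B_R}|u|^2=0$, and Rellich's lemma, applied componentwise after noting that $\ddiv u=0$ in the exterior (take $\ddiv$ of the equation), yields $u\equiv 0$ on the unbounded component $\Omega_\infty$ of $\R^3\setminus\bar D$. By trace continuity, $\n\times u=\n\times\curl u=0$ on the relatively open nonempty patch $\partial D\cap\partial\Omega_\infty$. Because $\lad\ne -1$ implies $k_\lad\ne 0$, the interior equation $\curl\curl u-k_\lad^2 u=0$ combined with $\ddiv u=0$ (again from the equation) is an elliptic system, and the standard unique continuation principle (vanishing Cauchy data on a relatively open piece of $\partial D$) forces $u\equiv 0$ throughout $D$, contradicting nontriviality. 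Hence $\im\lad>0$.

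For $\lad=-1$ under the hypothesis that $\R^3\setminus\bar D$ is connected, the Rellich step of the previous paragraph now propagates $u\equiv 0$ to the whole exterior, so both tangential traces vanish on the whole of $\partial D$ from inside. In $D$ the equation reduces to $\curl\curl u=0$; pairing with $\bar u$ and integrating by parts using $\n\times u=0$ on $\partial D$ yields $\int_D|\curl u|^2=0$, so $u$ is curl-free in $D$. Extending $u$ by zero outside $\bar D$ preserves curl-freeness globally in $H_{loc}(\ccurl,\R^3)$ (the tangential trace of $u$ already vanishes on $\partial D$), and simple connectedness of $\R^3$ furnishes a potential $q\in H^1_{loc}(\R^3)$ with $u=\nabla q$. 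Connectedness of $\R^3\setminus\bar D$ forces $q$ to be a constant there, which we normalize to $0$; hence $q|_D\in H_0^1(D)$ and $u|_D\in\nabla H_0^1(D)$ as required. The main technical delicacy I anticipate is the unique continuation invocation in the third paragraph: the cleanest route is to regard $u$ as a solution of the second-order elliptic system $(-\Delta-k_\lad^2)u=0$ coupled with $\ddiv u=0$ and apply a Holmgren-type theorem across the open boundary patch where the Cauchy data vanish.
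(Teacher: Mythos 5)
Your proof is correct and follows the same core argument as the paper: invoke Lemma~\ref{lem:eqtrass} to recast the eigenvalue equation as the transmission problem \eqref{eq:transim}, pair with $\bar u$ on $D$ and on $B_R\setminus\bar D$, use trace continuity across $\partial D$ and the Silver--M\"uller condition to reduce to the sign of $\im(k^2/\lad)$ against the far-field energy, and then run Rellich plus Holmgren-type unique continuation when the eigenvalue is real. The only notable divergence is in the $\lad=-1$ case: where the paper appeals to the Helmholtz-type decomposition $H_0(\ccurl 0,D)=\nabla H_0^1(D)\oplus_\perp K_N(D)$ and the triviality of the normal cohomology space $K_N(D)$ (Appendix~\ref{app:A}), you instead extend $u$ by zero, observe the extension is globally curl-free because $\n\times u=0$ on $\partial D$, invoke simple connectedness of $\R^3$ to produce a potential $q\in H^1_{loc}(\R^3)$ with $\nabla q=u$, and use connectedness of $\R^3\setminus\bar D$ to normalize $q\equiv 0$ there, which forces $q|_D\in H_0^1(D)$. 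Your route is more self-contained and avoids the cohomology machinery; the paper's version makes the topological obstruction (the space $K_N(D)$ of dimension $J$) explicit, which is useful for its Corollary~\ref{cor:eigespe-1}. Both are valid. One small point worth flagging: in the interior integration by parts for $\lad=-1$ you justify vanishing of the boundary term by citing $\n\times u=0$, whereas the term that actually appears is $\int_{\partial D}(\n\times\curl u)\cdot\bar u$; this still vanishes (either because $\n\times\curl u=0$ by exterior vanishing and trace continuity, or because $\n\times u=0$ forces $\bar u$ to be normal and hence orthogonal to the tangential field $\n\times\curl u$), but the justification as written is slightly indirect.
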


\begin{proof}
We assume that $u \in H_{loc}(\ccurl,\R^3)$ is a radiating solution to \eqref{eq:transim}, or equivalently, the following system:
\begin{equation} \label{eq:trans12}
    \left\{ \begin{array}{ll}
     \curl \curl u - k_\lad^2 u = 0    & \text{in}\ D\,, \\
     \curl \curl u - k^2 u = 0    & \text{in}\ \R^3 \backslash \bar{D}\,, \\
     \left[\n \t u\right] = 0, \quad \left[\n \t \curl u\right] = 0 &  \text{on}\ \p D\,, 
    \end{array}\right.
\end{equation}
where $\lad \neq 0$ is a complex number with $\im \lad \le 0$. We shall prove that if $\lad \neq -1$ (equivalently, $k_\lad \neq 0$), $u$ must be zero everywhere; if $\lad  = -1$, then $u \in \na H_0^1(D)$, provided that the open set $\R^3\backslash \bar{D}$ is connected. For this purpose, choose an open ball $B(0,R)$ centered at the origin with large enough radius $R$ such that $\bar{D}\subset B(0,R)$, and multiply both sides of the second equation in the system \eqref{eq:trans12} by the test function $\bar{u}$. Then a direct integration by parts on $B(0,R)\backslash \bar{D}$ gives us
\begin{align}
    0 &= \int_{B(0,R)\backslash \bar{D}}\curl \curl u \dd \bar{u}-k^2 u \dd \bar{u} dx \notag \\ 
    % =\int_{B(0,R)\backslash \bar{D}} \curl u \dd \curl \bar{u} - k^2 u \dd \bar{u} + \int_{\p B(0,R)\backslash \bar{D}} \n \t \curl u \dd \bar{u}\\
     & = \int_{B(0,R)\backslash \bar{D}} |\curl u|^2  - k^2 |u|^2 dx + \int_{\p B(0,R)} \h{x} \t \curl u \dd \bar{u}d\sigma(x) - \int_{\p D} \n \t \curl u \dd \bar{u}d\sigma(x) \notag\\
     & = \int_{B(0,R)\backslash \bar{D}} |\curl u|^2  - k^2 |u|^2 dx -ik \int_{\p B(0,R)}|u|^2  d\sigma(x) + O\left(\frac{1}{R}\right) - \int_{\p D} \n \t \curl u \dd \bar{u}d\sigma(x)\,, \label{eq:trans12pro_1}
\end{align}
where we have used the radiation condition \eqref{mod:out} 
and the fact that $\curl u \in H^1_{loc}(\R^3,\R^3)$. By taking the imaginary parts of both sides  of \eqref{eq:trans12pro_1} and letting $R$ tends to infinity, we have 
\begin{equation} \label{eq:trans12pro_2}
   \im \int_{\p D} \n \t \curl u \dd \bar{u}d\sigma(x) = -k\int_{S^2} |u_\infty|^2 d\sigma(\h{x})\le 0\,.
\end{equation}
Here, $u_\infty$ is the far-field pattern of $u$ given by \eqref{def:ffp}. We now consider the field inside the domain.  Similarly, with the help of an integration by parts over $D$ and the first equation in \eqref{eq:trans12}, we obtain
\begin{equation} \label{eq:trans12pro_3}
     -\int_D   |\curl u|^2 - k_\lad^2 |u|^2 dx  =  \int_{\p D} \n \t \curl u \dd \bar{u} d \sigma(x)\,,
\end{equation}
and its imaginary part 
\begin{equation} \label{eq:trans12pro_4}
    \im \int_{\p D} \n \t \curl u \dd \bar{u} d \sigma(x) = \im \int_D k^2 \frac{1}{\lad} |u|^2 dx \,.
\end{equation}
Noting that $\im \lad^{-1} = - \im (\lad/|\lad|^2)\ge 0$, we readily have $$\im \int_{\p D} \n \t \curl u \dd \bar{u} d \sigma(x) = 0\,,$$ 
by \eqref{eq:trans12pro_2} and \eqref{eq:trans12pro_4}, since the tangential traces of $u$ and $\curl u$ are continuous. Then, we see from the above formula and \eqref{eq:trans12pro_2} that the far-field pattern $u_\infty$ vanishes, and thus $u$ vanishes in the unbounded connected component of $\R^3\backslash \bar{D}$ by  
Rellich's lemma (cf.\,\cite[Theorem 6.10]{colton2012inverse}).
Therefore, it follows that
\begin{equation} \label{eq:trans12pro_5}
    \n \t u = 0 \,, \  \n \t \curl u = 0 \quad \text{on}\  \Gamma_0\,,
\end{equation}
where $\Gamma_0$ is the boundary of the unbounded component of $\R^3\backslash \bar{D}$. 

To complete the proof, let us first consider the simple case: $\lad \neq -1$, where the interior wave number $k_\lad$ does not vanish. The desired result that  $u = 0$ in $D$ directly follows from \eqref{eq:trans12pro_5} and the Holmgren's Theorem (cf.\,\cite[Theorem 6.5]{colton2012inverse}). We now consider the other case where $\lad = -1$ under the condition that $\R^3\backslash \bar{D}$ is connected. In this case, we only have $\curl u = 0$ in $D$, i.e., $u \in H_0(\ccurl 0, D)$, from \eqref{eq:trans12pro_3} and the observation $\Gamma_0 = \p D$. 
Recalling \eqref{eq:kerofcurl}, we have the following characterization of $H_0(\ccurl 0, D)$:
\begin{equation*}
    H_0(\ccurl 0, D) = \na H_0^1(D)\,,
\end{equation*}
since the $\R^3\backslash \bar{D}$ is connected and thus the corresponding normal cohomology space $K_N(D)$ is trivial. Therefore, we can conclude $u = \na p$ for some $p\in H_0^1(D)$ if $\lad = -1$ is an eigenvalue, and complete the proof. 
\end{proof}
The above theorem does not tell us whether $\lad  = - 1$ is an eigenvalue or not. However, if we extend an $L^2$-field $u$ from $\na H_0^1(D)$, or more generally, $H_0(\ccurl0, D)$, by zero outside the domain $D$, i.e., $\chi_D u$, we can find that it solves the system \eqref{eq:trans12} for $\lad = -1$, which indicates that $\lad = -1$ is indeed an eigenvalue of $\td$. Thus, we actually have the following corollary.
\begin{corollary} \label{cor:eigespe-1}
For a bounded smooth domain $D$, $\lad = -1$ is always an eigenvalue of $\td$ with the associated eigenspace containing $H_0(\ccurl0,D)$. If $\R^3 \backslash \bar{D}$ is connected, then the eiganspace is equal to $\na H_0^1(D)$.
\end{corollary}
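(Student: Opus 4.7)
The plan is to establish the two inclusions and then invoke the prior theorem to pin down the eigenspace exactly. The forward inclusion $H_0(\mathrm{curl}\,0,D)\subset \ker(T_D^k+1)$ will be obtained by exhibiting an explicit radiating solution of the transmission problem \eqref{eq:transim} with $\lambda=-1$, which by Lemma \ref{lem:eqtrass} automatically yields eigenfunctions of $T_D^k$.

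Concretely, take any nonzero $u\in H_0(\mathrm{curl}\,0,D)$ and denote by $\tilde u$ its extension by zero to all of $\mathbb{R}^3$. The condition $\nu\times u=0$ on $\partial D$ guarantees that $\tilde u\in H_{\mathrm{loc}}(\mathrm{curl},\mathbb{R}^3)$ with $\mathrm{curl}\,\tilde u\equiv 0$ away from $\partial D$; together with $\mathrm{curl}\,u=0$ inside $D$ and the trivial vanishing outside, this shows that the tangential jump of $\mathrm{curl}\,\tilde u$ across $\partial D$ is zero as well. For $\lambda=-1$ the interior wave number is $k_\lambda=0$, so the equation \eqref{eq:transim} reduces on $D$ to $\mathrm{curl}\,\mathrm{curl}\,u=0$ (which holds since $\mathrm{curl}\,u=0$) and on $\mathbb{R}^3\setminus\bar D$ to $\mathrm{curl}\,\mathrm{curl}\,\tilde u-k^2\tilde u=0$ (which holds since $\tilde u\equiv 0$ there). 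The Silver--M\"uller radiation condition is trivially satisfied, so $\tilde u$ is a nontrivial radiating solution. Lemma \ref{lem:eqtrass} then yields that $-1\in\sigma_p(T_D^k)$ and that $u=\tilde u|_D$ lies in the associated eigenspace, establishing the first assertion.

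For the second assertion, assume in addition that $\mathbb{R}^3\setminus\bar D$ is connected. Theorem \ref{prop:diseig} already gives the reverse inclusion $\ker(T_D^k+1)\subset \nabla H_0^1(D)$. Combined with the inclusion $\nabla H_0^1(D)\subset H_0(\mathrm{curl}\,0,D)\subset \ker(T_D^k+1)$ just obtained (the first containment is trivial, and under the connectedness assumption it is in fact an equality by the characterization $H_0(\mathrm{curl}\,0,D)=\nabla H_0^1(D)$ used already in the proof of Theorem \ref{prop:diseig}, since the normal cohomology space $K_N(D)$ is trivial), we conclude that all three spaces coincide, proving that the eigenspace equals $\nabla H_0^1(D)$.

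The whole argument is essentially a bookkeeping exercise, so there is no substantial obstacle: the only point requiring a little care is verifying that the zero extension of an element of $H_0(\mathrm{curl}\,0,D)$ genuinely produces a global $H_{\mathrm{loc}}(\mathrm{curl},\mathbb{R}^3)$-solution of the transmission system (i.e., that the tangential traces of both $u$ and $\mathrm{curl}\,u$ match across $\partial D$), which is exactly what the defining conditions $\mathrm{curl}\,u=0$ in $D$ and $\nu\times u=0$ on $\partial D$ supply.
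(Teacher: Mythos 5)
Your proof is correct and takes essentially the same approach as the paper: extend $u\in H_0(\ccurl 0,D)$ by zero, verify that the resulting field is a radiating $H_{loc}(\ccurl,\R^3)$-solution of the transmission problem at $\lad=-1$ (noting $k_\lad=0$ and that the tangential jumps of both $u$ and $\ccurl u$ vanish), conclude membership of the eigenspace via Lemma \ref{lem:eqtrass}, and then use Theorem \ref{prop:diseig} together with $H_0(\ccurl 0,D)=\na H_0^1(D)$ when $\R^3\setminus\bar D$ is connected. You simply spell out the trace-matching details that the paper leaves implicit.
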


To proceed, we need the following concepts about the spectrum of a bounded linear operator $A$. We say that $\lad \in \sigma(A)$ is an eigenvalue of finite type if and only if $\lad$ is an isolated point in $\sigma(A)$ and the corresponding Riesz Projection $P_{\lad}$: 
\begin{equation} 
    P_\lad(A) = \frac{1}{2\pi i}\int_{\Gamma}(z - A)^{-1}d z,
\end{equation}
is a finite rank operator, where $\Gamma$ is a Cauchy contour in $\C$ enclosing only the eigenvalue $\lad$ among $\sigma(A)$, and the definition does not depend on the choice of $\Gamma$. The other concept is the essential spectrum $\sigma_{ess}(A)$ defined by
\begin{equation*}
    \sigma_{ess}(A) = \{\lad \in \C\,;\ \lad \I - A \ \text{is not Fredholm operator}\}\,.
\end{equation*}
\zz{Inspired by the work \cite{costabel2010volume} where the strongly singular volume integral equation associated with the EM scattering problem was transformed to a coupled surface-volume system involving only weakly singular kernels by introducing an additional variable on the boundary via an integration by parts, here we exploit the Helmholtz decomposition of $L^2$-vector fields to obtain another operator matrix similar to the one in \cite{costabel2010volume} but with fully decoupled unknown variables.
% and in an abstract and clearer manner.
This newly derived system enables us to see a clear and insightful spectral structure of $\td$.}

We now recall from Theorem \ref{lem:decoofl2} the Helmholtz decomposition of $L^2$-vector fields:
\begin{align} \label{eq:helmdecomp}
    L^2(D,\R^3) = \nabla H^1_0(D) \oplus_\perp H_0(\ddiv 0, D) \oplus_\perp W\,,
\end{align}
where $W$ is the function space consisting of $H^1$-harmonic functions and $H_0(\ddiv 0, D) = \ccurl \w{X}_N^0 \oplus_\perp K_T(D)$. Denote by $\mathbb{P}_0$, $\pd$ and $\pw$ the projections from $L^2(D,\R^3)$ to $\nabla H^1_0(D)$, $H_0(\ddiv 0, D)$ and $W$, respectively. In Appendix \ref{app:A}, we show how these subspaces are connected with the divergence, curl and normal trace of a vector field. In particular, we have $\mathbb{P}_0u = - \na \S \ddiv u$  and $\pw u = \wg \gamma_n(u+\na \S \ddiv u)$; see Appendix \ref{app:A} for the definitions of operators $\S$ and $\wg$. For our subsequent analysis, we introduce a product space: 
\begin{equation*}
     \xx:= \na H_0^1(D) \t \hzz \t H_{0}^{-\frac{1}{2}}(\p D)\,,
\end{equation*}
equipped with the norm $\norm{\FF}_{\xx}:= \norm{f_1}_{L^2(D)} + \norm{f_2}_{L^2(D)} + \norm{f_3}_{\hbb}$ for $\FF = (f_1,f_2,f_3)\in \xx$, which is isomorphic to $L^2(D,\R^3)$  via the isomorphism $\Xi : f \to \Xi[f] = (\mathbb{P}_0 f, \pd f, \w{\gamma}_n \pw f)$. By using the isomorphism $\Xi$, \zz{we define an operator $\TT_D^k$ on $\xx$ by}
\begin{equation} \label{def:matrixtd}
    \TT_D^k :=  \Xi\td\Xi^{-1}\,,
\end{equation}
which \zz{is similar to $\td$ and hence} has the same spectral properties as $\td$. \zz{We remark that} the inverse of $\Xi$ is given by 
$\Xi^{-1}(f_1,f_2,f_3) = f_1 + f_2 + \wg f_3$.

\zz{We proceed to consider the spectral analysis of $\TT_D^k$.} We first observe that $\nabla H_0^1(D)$ and divergence-free vector fields $H(\ddiv 0, D)$ are $\td$-invariant spaces. In fact, for $\phi \in H_0^1(D)$, we have
\begin{align} \label{eq:invanah}
    T^k_D[\nabla \phi] = k^2 \nabla K^k_D[\phi] + \nabla \Delta K^k_D[\phi] = - \nabla \phi\,,
\end{align}
which can be verified \zz{by using integration by parts with} the fact that $\phi$ has zero trace on $\p D$. On the other hand, by a density argument and the fact that $\ddiv: L^2(D,\R^3) \to H^{-1}(D)$, we have 
\begin{equation*} 
    \ddiv T^k_D[\phi] = - \ddiv \phi \q \text{for}\ \phi \in L^2(D,\R^3)\,.
\end{equation*}
By these observations and the definition of
$\TT_D^k$ (cf.\,\eqref{def:matrixtd}), we can write the operator matrix $\mathcal{T}_D^k$ as follows:
\begin{equation} \label{eq:matrixtd}
  \TT_D^k = \mm - 1 & 0 & 0 \\
     0 & \pd \td & \pd \td \wg \\
     0 &  \gamma_n \td  & \gamma_n \td \wg \nn.
\end{equation}
To further analyze the properties of $\TT_D^k$, we need to work out explicit formulas for the operators involved in \eqref{eq:matrixtd}, which are only defined in an abstract way. To do so, a direct calculation gives us that 
\begin{align} \label{eq:tdhdiv}
    T^k_D[\vp] = k^2 K^k_D[\vp] - \nabla \ss^k_{\p D}[\vp \dd \n] = k^2 K^k_D[\pd \vp + \pw \vp] - \nabla \ss^k_{\p D}[\gamma_n \pw \vp]
\end{align}  
holds for $\vp \in H(\ddiv0,D)$. Then, we take the normal trace on both sides  of \eqref{eq:tdhdiv} and find 
\begin{align} \label{eq:tdhdivnor}
    \gamma_n \td [\vp] & = k^2\gamma_n \kd[\pd \vp + \pw \vp] - (\frac{1}{2} + \np) [\gamma_n \pw \vp] \q \text{for}\ \vp \in H(\ddiv0,D)\,,
\end{align}
where we have used the normal trace formula \eqref{eq:jumprelation} for $\na\ss^{k}_{\p D}$. By \eqref{eq:tdhdiv} and \eqref{eq:tdhdivnor}, we readily have 
\begin{equation} \label{eq:explicitmtd}
\left\{ \begin{aligned}
  &\pd \td [\dd] = k^2 \pd \kd[\dd]\,, && \gamma_n \td[\dd] = k^2 \gamma_n K^k_D[\dd]   && \text{on} \ \hzz\,, \\
  &\pd \td \wg [\dd] = k^2 \pd K^k_D \wg[\dd] - \pd \nabla \ss^k_{\p D}[\dd]\,, &&  \gamma_n \td \wg[\dd] = k^2\gamma_n \kd \wg [\dd] - (\frac{1}{2} + \np) [\dd]       && \text{on} \ \hbb\,.
\end{aligned}\right.
\end{equation}

We are now in a position to prove the following lemma. 
\begin{lemma} \label{lem:speccomp}
$\rr_D^k := \TT_D^k - \diag(-1,0,-\frac{1}{2})$ is a compact operator on $\xx$.
\end{lemma}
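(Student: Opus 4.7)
The plan is to leverage the block matrix representation \eqref{eq:matrixtd} together with the explicit formulas \eqref{eq:explicitmtd}, and show that after subtracting $\diag(-1,0,-\frac{1}{2})$, each nonzero entry of $\rr_D^k$ is compact between the corresponding component spaces of $\xx$. The $(1,1)$ block vanishes by construction; the $(2,2)$ and $(3,2)$ blocks become $k^2\pd\kd$ and $k^2\gamma_n\kd$ respectively; the $(2,3)$ block equals $k^2\pd\kd\wg - \pd\na\ss_{\p D}^k$; and the $(3,3)$ block equals $k^2\gamma_n\kd\wg - \np$.

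The critical observation I would isolate first is that $\pd\na\ss_{\p D}^k = 0$ as an operator from $\hbb$ to $\hzz$; this removes the only term whose compactness is not directly visible from standard Sobolev regularity. For $\phi \in \hbb$ we have $\ss_{\p D}^k[\phi] \in H^1(D)$, and for any $v \in \hzz = H_0(\ddiv 0,D)$, integration by parts yields
\[
\langle \na\ss_{\p D}^k[\phi],\, v\rangle_{L^2(D)}
 = -\langle \ss_{\p D}^k[\phi],\, \ddiv v\rangle_{L^2(D)} + \langle \gamma_n v,\, \ss_{\p D}^k[\phi]\rangle_{\p D} = 0,
\]
since $\ddiv v = 0$ in $D$ and $\gamma_n v = 0$ on $\p D$. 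Thus $\na\ss_{\p D}^k[\phi] \perp H_0(\ddiv 0,D)$, and the $L^2$-orthogonal projection $\pd$ annihilates it, so the $(2,3)$ block collapses to $k^2\pd\kd\wg$.

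Compactness of the remaining four blocks is then routine. The Riesz potential $\kd$ maps $L^2(D,\R^3)$ boundedly into $H^2(D,\R^3)$, so by Rellich's theorem $\kd:L^2(D,\R^3)\to L^2(D,\R^3)$ is compact; composing with the bounded projection $\pd$ and, where relevant, the bounded map $\wg:\hbb \to L^2(D,\R^3)$, yields compactness of $\pd\kd$ and $\pd\kd\wg$. For $\gamma_n\kd$ and $\gamma_n\kd\wg$, the normal trace is bounded from $H^2(D,\R^3)$ into $H^{3/2}(\p D)$, which embeds compactly into $H^{-1/2}(\p D)$, hence into $\hbb$. Finally, for a smooth boundary $\p D$, the Neumann--Poincar\'e operator $\np$ is a classical smoothing operator mapping $H^{-1/2}(\p D)$ boundedly into $H^{1/2}(\p D)$, and is thus compact on $\hbb$. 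Assembling these facts yields the compactness of $\rr_D^k$ on $\xx$.

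The main obstacle is precisely the identification $\pd\na\ss_{\p D}^k = 0$: on the face of it, $\na\ss_{\p D}^k$ does not map into any subspace compactly embedded in $L^2(D,\R^3)$, so the compactness of the $(2,3)$ block cannot be read off from Sobolev regularity alone and must instead be extracted from the orthogonality structure encoded in the Helmholtz decomposition. Once this algebraic cancellation is in hand, every other contribution to $\rr_D^k$ is of Riesz-potential or layer-potential type, and compactness reduces to standard Rellich and smoothing estimates.
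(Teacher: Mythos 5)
Your proof is correct and follows the same block-by-block strategy as the paper's, with both arguments reducing to the explicit formulas in \eqref{eq:explicitmtd} and treating the $(2,3)$ block as the only nontrivial point. Where you genuinely diverge from the paper is in how that $(2,3)$ block is handled. The paper inserts and subtracts the static single layer $\ss_{\p D}$, writing
\begin{equation*}
\pd\na\ss^k_{\p D} = \pd\na\bigl(\ss^k_{\p D}-\ss_{\p D}\bigr) + \pd\na\ss_{\p D}\,,
\end{equation*}
then argues the first summand is compact because the kernel difference is weakly singular, and the second \emph{vanishes} because $\ss_{\p D}[\phi]$ is harmonic in $D$, so $\na\ss_{\p D}[\phi]\in W$. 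You instead observe directly that $\pd\na\ss^k_{\p D}=0$: since $\ss^k_{\p D}[\phi]\in H^1(D)$, its gradient is orthogonal to $H_0(\ddiv 0,D)$ by the divergence-theorem identity, with no harmonicity required. This is a genuine tightening of the argument---it isolates the minimal structural fact (any gradient of an $H^1$ function lies in $\na H_0^1(D)\oplus_\perp W$, i.e.\ in $\ker\pd$) and shows that the entire term $\pd\na\ss^k_{\p D}$, not merely its static part, is zero, making the $\ss^k_{\p D}-\ss_{\p D}$ split unnecessary. Both proofs ultimately exploit the same orthogonality built into the Helmholtz decomposition of Appendix~\ref{app:A}; yours does so a bit more economically. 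One small point you gloss over: the claim that $\gamma_n\kd$ lands in $\hbb$ (mean zero) rather than just $H^{-1/2}(\p D)$ deserves a word---it follows automatically because $\TT_D^k=\Xi\td\Xi^{-1}$ maps $\xx$ to itself by construction, so the third row must produce elements of $\hbb$---but this is cosmetic.
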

\begin{proof}
To prove the compactness of $\rr^k_D$ on the product space $\xx$, it suffices to show that each block in $\rr^k_D$ is compact. By the mapping property of $\kd$ and Rellich's lemma for Sobolev spaces, we can obtain that $\pd \kd$ and $\gamma_n \td$ are compact operators from $\hzz$ to $\hzz$ and $\hbb$, \zz{namely, the operators $(\rr_D^k)_{2,2}$ and $(\rr_D^k)_{3,2}$ are compact (cf.\,\eqref{eq:explicitmtd}).} Meanwhile, a further fact that $\np$ is compact gives us the compactness of  \zz{$(\rr_D^k)_{3,3} = \gamma_n\td \wg + 1/2$} on $\hbb$, by \eqref{eq:explicitmtd}. To show that  \zz{$(\rr_D^k)_{2,3} = \pd \td \wg$} is compact from $\hbb$ to $\hzz$, we write it, by using \eqref{eq:explicitmtd}, as
\begin{equation*}
\pd \td \wg[\dd] = 
% k^2 \pd K^k_D \wg[\dd] - \pd \nabla \ss^k_{\p D}[\dd] =
(k^2 \pd K^k_D \wg - \pd \nabla (\ss^k_{\p D}-\ss_{\p D}))[\dd] - \pd \nabla \ss_{\p D}[\dd]\,,
\end{equation*}
where the first term is obviously compact, and the second term actually vanishes due to the fact that $\na \ss_{\p D}[\dd]\in W$. The proof is complete.
\end{proof}
By Lemma \eqref{lem:speccomp} and the fact that 
the essential spectrum is stable under a compact perturbation \cite{gohberg1990classes}, we directly have the characterization of the essential spectrum \cite{costabel2012essential}:
\begin{equation*}
    \sigma_{ess}(\td) = \sigma_{ess}(\TT_D^k)  = \sigma_{ess}(\diag(-1,0,-\frac{1}{2})) =  \{-1,0,-\frac{1}{2}\}\,, 
\end{equation*}
and $\lad-\td$ is an analytic Fredholm operator function with index zero on $\C\backslash \sigma_{ess}$ as a consequence of the definition of essential spectrum \zz{and the fact that the Fredholm index ${\rm ind}(\lad - T_D^k)$ is a constant on a connected open set.} Then, by using the analytic Fredholm theory \cite{gohberg1990classes} and Theorem \ref{prop:diseig}, we can conclude that $(\lad-\td)^{-1}$ is extended to a meromorphic function on $\C\backslash \sigma_{ess}(\td)$ with its poles \zz{being} a discrete and countable bounded set given by $\sigma_p(\td) \backslash \sigma_{ess}(\td)$, and for some $\lad_0 \in \sigma_p(\td) \backslash \sigma_{ess}(\td)$ and $\lad$ in a sufficiently small neighborhood of $\lad_0$, $(\lad-\td)^{-1}$ has the following Laurent expansion:
\begin{equation} \label{eq:lauexp}
    (\lad-\td)^{-1} = \sum_{n=-q(\lad_0)}^\infty (\lad - \lad_0)^n T_n\,,
\end{equation}
where $T_0$ is Fredholm operator with index zero, and $T_i$, $-q(\lad_0)\le i \le -1$, are finite rank operators \zz{with $q(\lad_0)$ being a positive integer.} 

From now on we shall denote the set of all the eigenvalues of finite type of $\td$ by $\sigma_f(\td)$.
To better understand this set, we recall 
the following fundamental  property  concerning the Riesz projection (cf.\,\cite[Thm 2.2]{gohberg1990classes}).
\begin{lemma}\label{lem:fpriproj}
For a bounded linear operator $A$ on a Banach space $X$, let $\sigma$ be an isolated part of $\sigma(A)$ and $P_\sigma(A)$ is the associated Riesz projection. Then both ${\rm im} P_\sigma(A)$ and ${\rm ker} P_\sigma(A)$ are the invariant subspaces of $A$ with $\sigma(A|_{{\rm im} P_\sigma}) = \sigma$ and   $\sigma(A|_{{\rm ker} P_\sigma(A)}) = \sigma(A)\backslash \sigma$. Moreover, $X$ has the direct sum decomposition: $X = {\rm im} P_\sigma(A) \oplus {\rm ker} P_\sigma(A)$.
\end{lemma}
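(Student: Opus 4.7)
The plan is to reduce everything to two key algebraic facts about $P := P_\sigma(A)$, namely that it is a bounded idempotent commuting with $A$; the decomposition statement and the spectral identities then follow from standard invariant-subspace arguments and an explicit construction of the resolvents of the restricted operators via contour integrals.

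\textbf{Idempotence and commutation with $A$.} To prove $P^2 = P$, I would pick two Cauchy contours $\Gamma_1, \Gamma_2 \subset \rho(A)$, both surrounding $\sigma$ (and nothing else of $\sigma(A)$), with $\Gamma_2$ lying strictly inside $\Gamma_1$, and expand
\begin{equation*}
P^2 \;=\; -\frac{1}{4\pi^2}\int_{\Gamma_1}\!\!\int_{\Gamma_2}(z-A)^{-1}(w-A)^{-1}\,dw\,dz.
\end{equation*}
The resolvent identity $(z-A)^{-1}(w-A)^{-1} = (w-z)^{-1}[(z-A)^{-1}-(w-A)^{-1}]$ splits this into two iterated integrals; Cauchy's theorem applied first to $w$ on $\Gamma_2$ (where $z \in \Gamma_1$ is outside) and then to $z$ on $\Gamma_1$ (where $w \in \Gamma_2$ is inside) yields $P^2 = P$. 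The commutation $AP = PA$ is then immediate from $A(z-A)^{-1} = (z-A)^{-1}A$ by integrating over $\Gamma$, so ${\rm im}\,P$ and ${\rm ker}\,P$ are both $A$-invariant, and $X = {\rm im}\,P \oplus {\rm ker}\,P$ is the standard topological decomposition induced by any bounded projection.

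\textbf{Identifying the spectra of the restrictions.} Writing $A_1 := A|_{{\rm im}\,P}$ and $A_2 := A|_{{\rm ker}\,P}$, for any $z_0 \notin \sigma$ I would choose a Cauchy contour $\Gamma$ enclosing $\sigma$ but not $z_0$, and define
\begin{equation*}
R_1(z_0) \;:=\; \frac{1}{2\pi i}\int_\Gamma \frac{(w-A)^{-1}}{w - z_0}\,dw.
\end{equation*}
A direct manipulation with the resolvent identity gives $(z_0 - A)R_1(z_0) = P = R_1(z_0)(z_0 - A)$, so the restriction of $R_1(z_0)$ to ${\rm im}\,P$ is a bounded inverse of $z_0 - A_1$, forcing $\sigma(A_1) \subseteq \sigma$. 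An analogous construction exploiting the fact that $\I - P$ is the Riesz projection associated with the isolated spectral part $\sigma(A)\setminus\sigma$ produces the resolvent of $A_2$ at every $z_0 \in \sigma$, yielding $\sigma(A_2) \subseteq \sigma(A)\setminus\sigma$. Since the block-diagonal structure of $A$ on the two complementary invariant subspaces gives $\sigma(A) = \sigma(A_1)\cup\sigma(A_2)$, both inclusions must be equalities, completing the proof.

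\textbf{Expected obstacle.} The arguments are essentially bookkeeping with the resolvent identity and Cauchy's theorem; the only place requiring genuine care is the contour geometry in the two double-integral computations used for $P^2$ and $R_1(z_0)$, where one must correctly track which pole lies inside which contour and choose orientations consistently. Once this bookkeeping is cleanly laid out, every remaining step reduces to a direct algebraic verification.
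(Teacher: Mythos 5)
The paper does not prove this lemma; it simply cites it as Theorem~2.2 of Gohberg--Goldberg--Kaashoek, \emph{Classes of Linear Operators}. Your self-contained argument via nested contours and the resolvent identity is precisely the standard textbook proof of that cited result, so the approach is right. There is, however, a sign slip in the key step. With $P = \frac{1}{2\pi i}\int_\Gamma (w-A)^{-1}\,dw$ (the paper's convention) and your definition
\begin{equation*}
R_1(z_0) = \frac{1}{2\pi i}\int_\Gamma \frac{(w-A)^{-1}}{w-z_0}\,dw,
\end{equation*}
expanding $z_0 - A = -(w-z_0) + (w-A)$ gives
\begin{equation*}
(z_0 - A)R_1(z_0) = -\frac{1}{2\pi i}\int_\Gamma (w-A)^{-1}\,dw + \frac{1}{2\pi i}\int_\Gamma \frac{dw}{w-z_0} = -P + 0 = -P,
\end{equation*}
since $z_0$ lies outside $\Gamma$. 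To get $+P$ you should take $R_1(z_0) = \frac{1}{2\pi i}\int_\Gamma \frac{(w-A)^{-1}}{z_0-w}\,dw$. Apart from this, the argument is complete: the analogous construction with $\I - P$ (which is the Riesz projection of the isolated part $\sigma(A)\setminus\sigma$, as one sees by deforming a contour enclosing all of $\sigma(A)$, on which the integral gives $\I$) handles $\sigma(A_2)$, and the block-diagonal identity $\sigma(A)=\sigma(A_1)\cup\sigma(A_2)$ upgrades the two inclusions to equalities because $\sigma$ and $\sigma(A)\setminus\sigma$ are disjoint.
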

From Lemma \ref{lem:fpriproj} it immediately follows that $\sigma_f(\td)$ is a subset of $\sigma_p(\td)$. Conversely, note from \eqref{eq:lauexp} that for $\lad_0 \in \sigma_p(\td) \backslash \sigma_{ess}(\td)$,
\begin{equation*}
    P_{\lad_0}(\td) = \frac{1}{2 \pi i}\int_{\Gamma} (\lad - \lad_0)^{-1}T_{-1} d \lad = T_{-1}
\end{equation*}
is a finite rank operator. By this fact, together with the definition of eigenvalues of finite type and $\sigma_f(\td) \subset \sigma_p(\td)$, we readily have 
\begin{equation} \label{eq:relaspe}
    \sigma_p(\td)\backslash\sigma_{ess}(\td) = \sigma_f(\td)\backslash\sigma_{ess}(\td)\,.
\end{equation}
In fact, we can obtain a sharper version of \eqref{eq:relaspe} by some further observations. We first note from Lemma \ref{lem:eqtrass} and Theorem \ref{prop:diseig} that $\{0,-\frac{1}{2}\} \not\subset \sigma_p(\td)$ \zz{and further that}
\begin{equation} \label{eq:relaspe_2}
    \sigma_p(\td)\backslash\sigma_{ess}(\td) = \sigma_p(\td)\backslash \{-1\}\subset\{\lad \in \C\,;\ \im \lad > 0\}\,.
\end{equation}
To consider the relation between $\sigma_f(\td)$ and $\sigma_{ess}(\td)$, we need a general result from 
\cite[Lemma 4.3.17]{davies2007linear}.
\begin{lemma}
Let $A$ be a bounded linear operator, and let $\lad_0$ be an isolated point in $\sigma(A)$. Then we have $\lad_0 \in \sigma_{ess}(A)$ if and only if
the Riesz projection $P_{\lad_0}(A)$ has an infinite-dimensional range. In particular, we have 
\begin{equation*}
    \sigma_{ess}(A) \bigcap \sigma_{f}(A) = \emptyset\,. 
\end{equation*}
\end{lemma}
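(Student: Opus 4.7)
The plan is to exploit the decomposition provided by Lemma \ref{lem:fpriproj}. Write $X = M \oplus N$ with $M = {\rm im}\, P_{\lad_0}(A)$ and $N = {\rm ker}\, P_{\lad_0}(A)$, both $A$-invariant, and with $\sigma(A|_M) = \{\lad_0\}$ and $\sigma(A|_N) = \sigma(A)\setminus\{\lad_0\}$. Since $\lad_0$ is isolated, $\lad_0 \notin \sigma(A|_N)$, so $\lad_0 - A|_N$ is invertible on $N$ and in particular Fredholm with index zero. Because the direct sum of a Fredholm operator with another operator is Fredholm on the product iff the other factor is Fredholm, it follows that $\lad_0 - A$ is Fredholm on $X$ if and only if $\lad_0 - A|_M$ is Fredholm on $M$. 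So the whole question reduces to understanding this single block.

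If $\dim M < \infty$, then $\lad_0 - A|_M$ is an operator on a finite-dimensional space and is automatically Fredholm, so $\lad_0 \notin \sigma_{ess}(A)$. This already proves one direction and, once established, immediately gives the ``in particular'' claim: if $\lad_0 \in \sigma_f(A)$, then by the definition of eigenvalues of finite type the projection $P_{\lad_0}(A)$ has finite rank, so $M$ is finite-dimensional and $\lad_0 \notin \sigma_{ess}(A)$.

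For the converse, suppose $\dim M = \infty$. The restriction $S := A|_M - \lad_0$ has spectrum $\{0\}$ on $M$, hence is quasi-nilpotent. I would then invoke the standard fact that on any infinite-dimensional Banach space the essential spectrum of a bounded operator is non-empty (the Calkin algebra is a non-trivial unital Banach algebra, so the spectrum of any element in it is a non-empty compact subset of $\C$). Since $\sigma_{ess}(S)\subset \sigma(S) = \{0\}$, this forces $\sigma_{ess}(S) = \{0\}$, so $S$ is not Fredholm on $M$, i.e.\ $\lad_0 - A|_M$ is not Fredholm, and therefore $\lad_0 \in \sigma_{ess}(A)$.

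The routine part is the splitting argument via the Riesz projection; the only genuinely delicate ingredient is the non-emptiness of the essential spectrum on an infinite-dimensional Banach space, which I will cite from standard references (e.g.\ \cite{gohberg1990classes}). An alternative, elementary way to reach the same conclusion in Step 3 is to argue directly that a quasi-nilpotent Fredholm operator on an infinite-dimensional space leads to a contradiction: being Fredholm means its image in the Calkin algebra is invertible, while being quasi-nilpotent means this image has spectral radius zero, which is impossible unless the Calkin algebra is trivial. I expect this step to be the main conceptual point of the proof; everything else is bookkeeping using Lemma \ref{lem:fpriproj}.
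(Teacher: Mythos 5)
Your proof is correct. The paper does not actually prove this lemma; it cites it as a textbook fact from Davies, \cite[Lemma~4.3.17]{davies2007linear}, so there is no in-house argument to compare against. Your proof is the standard one: decompose $X = {\rm im}\,P_{\lad_0}(A)\oplus\ker P_{\lad_0}(A)$ via Lemma~\ref{lem:fpriproj}, observe that $\lad_0 - A$ is Fredholm if and only if its restriction to the finite-spectrum block ${\rm im}\,P_{\lad_0}(A)$ is Fredholm (the other block is invertible, and a direct sum is Fredholm exactly when both summands are), and then dispose of the two cases by dimension. The crucial step in the converse direction — that a quasi-nilpotent operator on an infinite-dimensional Banach space cannot be Fredholm — is handled cleanly via the Calkin algebra: the image of $S:=A|_{{\rm im}\,P_{\lad_0}}-\lad_0$ there has spectral radius $\lim\|\pi(S)^n\|^{1/n}\le\lim\|S^n\|^{1/n}=0$, yet would be invertible if $S$ were Fredholm, which is impossible in a nontrivial unital Banach algebra. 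Both your primary argument (nonemptiness of $\sigma_{ess}(S)\subset\sigma(S)=\{0\}$) and your alternative at the end are valid formulations of the same Calkin-algebra fact, so there is some redundancy there, but nothing wrong. This is a complete and correct proof that the paper itself left to the reference.
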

This lemma, along with \eqref{eq:relaspe} and \eqref{eq:relaspe_2}, allows us to conclude that
\begin{equation*}
    \sigma_p(\td)\backslash \{-1\} = \sigma_f(\td)\,.
\end{equation*}

With all the above arguments, we actually have proved our second main  result of this subsection.
\begin{theorem} \label{thm:mainspec}
The spectrum $\sigma(\td)$ is a disjoint union of essential spectrum and eigenvalues of finite type, i.e.,
\begin{equation*}
    \sigma (\td) = \sigma_{ess} (\td)  \bigcup \sigma_f(\td)\,,
\end{equation*}
where $\sigma_{ess}(\td)$ and $\sigma_f(\td)$ are given by
\begin{equation*}
    \sigma_{ess}(\td) = \{-1,0,-\frac{1}{2}\}\,,\quad  \sigma_{f}(\td) =   \sigma_p(\td)\backslash \{-1\}\subset\{\lad \in \C\,;\ \im \lad > 0\}\,,
\end{equation*}
and $\sigma_{ess}(\td)$ \zz{gives all} the possible accumulation points of $\sigma_f(\td)$. Furthermore, $(\lad - \td)^{-1}$ is a meromorphic function on $\C\backslash\sigma_{ess}(\td)$ with a discrete set of poles given by $\sigma_f(\td)$.  
\end{theorem}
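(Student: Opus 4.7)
The plan is to assemble the machinery already developed in the excerpt: the operator matrix $\TT_D^k$ obtained via the Helmholtz decomposition, the compactness result of Lemma \ref{lem:speccomp}, Theorem \ref{prop:diseig}, and standard analytic Fredholm/Riesz projection theory. Since $\td$ and $\TT_D^k$ are similar via the isomorphism $\Xi$, they share all spectral properties, so it suffices to work with $\TT_D^k$.

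First I would invoke Lemma \ref{lem:speccomp} to write $\TT_D^k = \diag(-1,0,-\tfrac{1}{2}) + \rr_D^k$ with $\rr_D^k$ compact on $\xx$. The essential spectrum of the diagonal multiplication operator is visibly $\{-1,0,-\tfrac{1}{2}\}$, and Weyl's theorem (stability of the essential spectrum under compact perturbations) immediately yields $\sigma_{ess}(\td)=\{-1,0,-\tfrac{1}{2}\}$. Consequently $\lambda - \td$ is Fredholm of index zero on the connected open set $\C\setminus\sigma_{ess}(\td)$, so the analytic Fredholm theorem produces the meromorphic resolvent $(\lambda-\td)^{-1}$ on $\C\setminus\sigma_{ess}(\td)$ with a discrete set of poles, each pole being an eigenvalue with a finite-dimensional generalized eigenspace. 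Reading off the principal part of the Laurent expansion \eqref{eq:lauexp}, $P_{\lambda_0}(\td)=T_{-1}$ is finite rank, so every pole lies in $\sigma_f(\td)$, giving $\sigma_p(\td)\setminus\sigma_{ess}(\td)=\sigma_f(\td)\setminus\sigma_{ess}(\td)$.

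Next I would refine this identity. From Lemma \ref{lem:eqtrass}, $0\notin\sigma_p(\td)$, and from Theorem \ref{prop:diseig} together with Corollary \ref{cor:eigespe-1} every eigenvalue other than $-1$ lies in the open upper half-plane, so in particular $-\tfrac12\notin\sigma_p(\td)$. This upgrades the previous line to
\begin{equation*}
   \sigma_p(\td)\setminus\{-1\}\subset\{\lambda\in\C:\im\lambda>0\},\qquad \sigma_p(\td)\setminus\sigma_{ess}(\td)=\sigma_p(\td)\setminus\{-1\}.
\end{equation*}
Then I would apply the Davies lemma (the cited \cite[Lemma 4.3.17]{davies2007linear}): any point of $\sigma_{ess}$ that is isolated in $\sigma$ has an infinite-dimensional Riesz projection range, so $\sigma_f(\td)\cap\sigma_{ess}(\td)=\emptyset$. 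Combined with the two displays above, this forces $\sigma_f(\td)=\sigma_p(\td)\setminus\{-1\}$ and hence the disjoint union decomposition of $\sigma(\td)$.

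Finally, since $\sigma_f(\td)$ is the pole set of a meromorphic function on $\C\setminus\sigma_{ess}(\td)$, it is discrete in that open set; moreover the boundedness of $\td$ confines $\sigma_f(\td)$ to a bounded subset of $\C$, so any accumulation point must lie in $\sigma_{ess}(\td)=\{-1,0,-\tfrac12\}$, which is exactly the final claim. No step in this assembly presents a serious obstacle since the nontrivial analytic work (the Helmholtz-decomposition reduction to \eqref{eq:matrixtd}, the compactness of each block, and the upper-half-plane location of eigenvalues) has already been carried out; the mild subtlety is merely in distinguishing $\sigma_p(\td)$, $\sigma_f(\td)$, and $\sigma_{ess}(\td)$ carefully so that the Riesz projection argument can be invoked without overlap.
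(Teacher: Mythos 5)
Your proposal is correct and follows essentially the same route as the paper: Lemma \ref{lem:speccomp} plus Weyl stability to pin down $\sigma_{ess}(\td)$, the analytic Fredholm theorem on the connected set $\C\setminus\sigma_{ess}(\td)$ for the meromorphic resolvent, the finite-rank principal part of \eqref{eq:lauexp} to identify the poles with eigenvalues of finite type, Lemma \ref{lem:eqtrass} and Theorem \ref{prop:diseig} to exclude $0$ and $-\tfrac{1}{2}$ from $\sigma_p(\td)$, and the Davies lemma to disjoin $\sigma_f(\td)$ from $\sigma_{ess}(\td)$. The only thing the paper spells out that you leave implicit is the appeal to Lemma \ref{lem:fpriproj} for the inclusion $\sigma_f(\td)\subset\sigma_p(\td)$, which is needed for the reverse direction of $\sigma_p(\td)\setminus\sigma_{ess}(\td)=\sigma_f(\td)\setminus\sigma_{ess}(\td)$, but this is a one-line fix and not a genuine gap.
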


\begin{remark} \label{rem:nonradi_1}
This remark is to emphasize the special roles of eigenvalue $-1$ and its eigenspace, and to connect it with the nonradiating sources. We have observed in Corollary \ref{cor:eigespe-1} that $H_0(\ccurl 0,D)$ is a $\td$-invariant subspace with $\sigma(\td|_{H_0(\ccurl 0,D)}) = \{-1\}$, which can also be obtained by a direct calculation as in \eqref{eq:invanah}. In fact, we have 
\begin{equation*}
    \td[\vp] = \ccurl \kd[\ccurl \vp] - \ccurl \A_{\p D}^k[\n \t \vp]-\vp \chi_D\q \text{for}\ \vp \in H(\ccurl ,D)\,.
\end{equation*}
Hence, the space $H_0(\ccurl0,D)$ also corresponds to the nonradiating sources in the sense that $\td[\vp]$ for $\vp \in H_0(\ccurl0,D)$ vanishes in the far field since $\td[\vp] = - \vp \chi_D$. A more general version of this fact has actually been included in the proof of Theorem \ref{prop:diseig} implicitly. We have proved therein that if $u$ is the eigenfunction of $\td$ with eigenvalue $-1$, then $\td[u]$ has a vanishing far-field pattern. We refer the readers to \cite{bao2019stability} for the detailed characterization of nonradiating sources for Maxwell's equations in the homogeneous space.
\end{remark}

\subsection{Pole-pencil decomposition}  \label{subsec:ppdec}
To fully understand the structure of $(\lad - \td)^{-1}$, we may need to perform the full expansion of a vector field with respect to eigenfunctions and generalized eigenfunctions of $\td$ as the one given in \cite{ammari2015super} for the Helmholtz equation. Nevertheless, such a full expansion does not work here since we do not know whether the set of eigenfunctions and generalized eigenfunctions is complete in the space $L^2(D,\R^3)$. To circumvent this technical barrier, we develop 
a new pole-pencil decomposition (local expansion) in this subsection for the resolvent $(\lad - \td)^{-1}$ near
the reciprocal of the contrast $\tau$ instead, which relies on the concept of eigenvalues of finite type and Theorem \ref{thm:mainspec}. 

For our purpose, we define an $\ep$-neighborhood of $\tau^{-1}$ in $\sigma(\td)$:
\begin{equation} \label{eq:localneiandpole}
\sigma := B(\tau^{-1},\ep) \cap \sigma(\td)\,,
\end{equation}
where $\ep$ is a given small enough constant. By the fact from Theorem \ref{thm:mainspec} that $\sigma_f(\td)$ is discrete, we readily see that $\sigma$ must be a finite set of eigenvalues of finite type of $\td$, i.e.,
\begin{equation*}
    \sigma =  \cup_{i \in I}\{\lad_i\} = \{\lad_i\,;\ \lad_i \in B(\tau^{-1},\ep) \cap \sigma_f(\td) \} \,,
\end{equation*}
where $I \subset \NN$ is a finite index set. Without loss of generality, we assume that $\sigma$ is a nonempty set. In view of the facts that $\na H_0^1(D)$ is an invariant space of $\td$ and $\sigma(\td|_{\na H_0^1(D)}) = \{-1\}$ is disjoint from $\sigma$, it suffices to consider the resolvent of the restriction of $\td$ on $H(\ddiv0,D)$ to derive the pole-pencil decomposition of $(\lad -\td)^{-1}$. In the remainder of this subsection, we simply denote $\td|_{H(\ddiv0,D)}$ by $\w{T}_D^k$. To proceed, we first note from \eqref{eq:matrixtd} and Lemma \ref{lem:speccomp} that Theorem \ref{thm:mainspec} still holds with $\td$ replaced by $\w{T}_D^k$ except $$\sigma_{ess}(\w{T}_D^k) = \{0,-1/2\}\q \text{and}\q \sigma_{f}(\w{T}_D^k) = \sigma_p(\w{T}_D^k)\,.$$ 
It follows that both $\sigma$ and its complement $\zeta : = \sigma(\w{T}_D^k)\backslash \sigma$ are closed subsets of $\sigma(\w{T}_D^k)$, which allows us to choose a Cauchy contour $\Gamma$ in $\rho(\w{T}_D^k)$ around $\sigma$ separating $\sigma$ from $\zeta$, and define the Riesz projection corresponding to $\sigma$: 
\begin{equation} \label{def:rp}
    P_\sigma := \frac{1}{2\pi i}\int_{\Gamma}(\lad - \w{T}_D^k)^{-1}d \lad = \sum_{i \in I}P_{\lad_i}.
\end{equation}
The Riesz projection corresponding to $\zeta$ can be introduced similarly. By Lemma \ref{lem:fpriproj}, $H(\ddiv0,D)$ can be decomposed into two invariant subspaces of $\w{T}_D^k$ (and also $\td$):
\begin{equation} \label{eq:specinvadecomp}
    H(\ddiv0,D) = {\rm im} P_\sigma \oplus {\rm ker} P_\sigma,
\end{equation}
with ${\rm ker} P_\sigma = {\rm im} P_\zeta$, and it holds that 
\begin{equation*}
    \sigma(\td|_{{\rm im} P_\sigma}) = \sigma = \cup_{i \in I}\{\lad_i\},\quad   \sigma(\td|_{{\rm ker} P_\sigma}) = \sigma(\w{T}_D^k)\backslash \cup_{i \in I}\{\lad_i\}\,.
\end{equation*}
This decomposition \eqref{eq:specinvadecomp}, along with the Helmholtz decomposition \eqref{eq:helmdecomp}, gives us the following $\td$-invariant subspace decomposition of $L^2$-vector fields:
\begin{equation*}
    L^2(D,\R^3) = \nabla H_0^1(D,\R^3) \oplus_\perp ({\rm im} P_\sigma \oplus {\rm im} P_\zeta)\,.
\end{equation*}
On the associated product space: $\nabla H_0^1(D,\R^3) \times {\rm im} P_\sigma  \times {\rm im} P_\zeta$, the operator $\lad - \td$ with $\lad \in \C$ has a diagonal representation: $\diag(\lad + 1, \lad - T^k_\sigma , \lad - T_\zeta^k)$, where $T^k_\sigma$ and $T_\zeta^k$ are shorthand notations of $\td|_{{\rm im} P_\sigma}$ and $\td|_{{\rm im} P_\zeta}$ respectively. With the help of these notations, we arrive at the following representation of the solution to $(\lad - \td)[\vp] = f$ for $f \in L^2(D,\R^3)$ and $\lad \in B(\tau^{-1},\ep)\backslash \sigma$:
\begin{equation} \label{eq:prepolepen}
    \vp = \frac{1}{\lad + 1}\mathbb{P}_0 f + (\lad - T^k_\sigma)^{-1}P_\sigma f + (\lad - T_\zeta^k)^{-1}P_\zeta f\,.
\end{equation}
% where $\mathbb{P}_0$ is the orthogonal projection from $L^2(D,\R^3)$ to $\na H_0^1(D)$ as introduced above. 

To further understand the behavior of $(\lad - \td)^{-1}$ locally, we recall 
from the definitions of $\sigma$ and $P_\sigma$ that ${\rm im} P_\sigma$ is of finite-dimensional and $\td|_{{\rm im} P_\sigma}$ is an operator acting on a finite-dimensional vector space with 
eigenvalues $\{\lad_i\}_{i \in I}$.
By the Jordan theory to the finite-dimensional linear operator, there exists a basis such that the matrix representation of $\td|_{{\rm im} P_\sigma}$ has a Jordan canonical form, that is, the representation matrix is a block diagonal one consisting of elementary Jordan blocks:
$$
J = \left[\begin{array}{cccc}
\lad &1 &  &  \\
 &\lad & \ddots  & \\
& &\ddots &1\\
& &  &\lad \\
\end{array}\right].
$$
More precisely, suppose that $\lad_i$ has geometric multiplicity $N_i$, and then the associated Jordan matrix $J_{\lad_i}$ will have the form:
$J_{\lad_i} = \diag(J^1_{\lad_i},\cdots,J^{N_i}_{\lad_i})$, where $J^j_{\lad^i},\ 1\le j \le N_i$ are the elementary Jordan blocks. Suppose also that for each Jordan block $J^j_{\lad^i}$, there is a Jordan chain ${ \boldsymbol \vp^j_{\lad_i}} := (\vp^{j,0}_{\lad_i},\vp^{j,1}_{\lad_i},\cdots,\vp^{j,n_{ij}-1}_{\lad_i})$, $\mathbb{N} \ni n_{ij}\ge 1$, an ordered collection of linearly independent generalized eigenfunctions, such that $J^j_{\lad^i}$ is the representation matrix of $\td$ restricted on $E^j_{\lad_i}$:
\begin{equation*} 
    \td|_{E^j_{\lad_i}}\bvi = \bvi J^j_{\lad_i},
\end{equation*}
where $E^j_{\lad_i}$ is the invariant subspace of $\td$ spanned by the Jordan chain $\bvi$. Without loss of generality, we assume $\norm{\vp_{\sss\lad_i}^{\sss j,s}}_{L^2(D)} = 1$, for $i\in I$, $1\le j \le N_i$, $0 \le s \le n_{ij} - 1$ in the rest of the exposition.  With $E^j_{\lad_i}$, we can write the following invariant subspace decomposition of ${\rm im} P_\sigma$:
\begin{equation*}
    {\rm im} P_\sigma = \oplus_{i \in I} \oplus^{N_i}_{j = 1} E^j_{\lad_i}.
\end{equation*}
In our notation, the eigenspace corresponding to $\lad_i$ is spanned by $\{\vp^{j,0}_{\lad_i}\}_{j=1}^{N_i}$ with dimension $N_i$ while the generalized eigenspace is given by $\oplus^{N_i}_{j=1}E^j_{\lad_i}$ with dimension $\sum^{N_i}_{j=1} n_{ij}$ (the algebraic multiplicity of $\lad_i$).
For vector $\vp \in E^j_{\lad_i}$, denote by $(\vp)_{\sss\bvi} = ((\vp)_{\sss\bvi}(0),(\vp)_{\sss\bvi}(1),\cdots(\vp)_{\sss\bvi}(n_{ij}-1)) \in \R^{n_{ij}}$ the coefficients in the expansion of $\vp$ with respect to the basis  $ \{\vp_{\lad_i}^{j,s}\}_{s=0}^{n_{ij}-1}$, i.e., 
\begin{equation} \label{eq:expbasis}
    \vp = \bvi \dd (\vp)_{\sss \bvi}:= \sum^{n_{ij}-1}_{k=0} (\vp)_{\sss\bvi}(k) \vp^{j,k}_{\lad_i}\,.
\end{equation}

With the help of these notions and \eqref{eq:prepolepen}, we arrive at the pole pencil decomposition of $(\lad - \td)^{-1}$.
\begin{theorem}  \label{thm:ppe}
The resolvent $(\lad - \td)^{-1}$ on $ B(\tau^{-1},\ep)\backslash \sigma$ has the following pole pencil decomposition:
\begin{equation} \label{for:poledecom}
    (\lad - \td)^{-1}[\dd]= \frac{1}{\lad + 1}\mathbb{P}_0[\dd] + \sum_{i \in I}\sum^{N_i}_{j = 1} \bvi \dd (\lad - J^j_{\lad_i})^{-1}(P^j_{\lad_i} [\dd])_{\sss\bvi} + (\lad - T_\zeta^k)^{-1}P_\zeta [\dd].
\end{equation}
Here, $P^j_{\lad_i}:=P^j_i P_{\lad_i}$ is the composition of projections $P_i^j$ and $P_{\lad_i}$, where $P_i^j(i \in I, 1\le j \le N_i )$ are finite-dimensional projections from ${\rm im}P_{\lad_i}$ to $E_{\lad_i^j}$. 
\end{theorem}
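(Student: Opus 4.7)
The starting point is the representation \eqref{eq:prepolepen}, which already reduces the problem to three independent pieces by exploiting the $\td$-invariant decomposition $L^2(D,\R^3) = \nabla H_0^1(D) \oplus_\perp {\rm im} P_\sigma \oplus_\perp {\rm im} P_\zeta$. The first term in \eqref{for:poledecom} is immediate since $\td$ acts as $-1$ on $\nabla H_0^1(D)$ by \eqref{eq:invanah}, and the third term coincides literally with the third summand in \eqref{eq:prepolepen}. Thus the only substantive task is to re-express the middle term $(\lad - T_\sigma^k)^{-1}P_\sigma f$ as the explicit double sum over Jordan chains.

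The plan is to refine the decomposition of ${\rm im} P_\sigma$ in two further stages. First, because the spectrum of $T_\sigma^k$ is the disjoint union $\cup_{i\in I}\{\lad_i\}$, Lemma \ref{lem:fpriproj} applied inside ${\rm im} P_\sigma$ yields ${\rm im} P_\sigma = \oplus_{i \in I} {\rm im} P_{\lad_i}$, and the corresponding spectral projections are precisely $P_{\lad_i}$. Second, within each generalized eigenspace ${\rm im} P_{\lad_i}$ the Jordan theory for finite-dimensional operators gives the internal direct sum ${\rm im} P_{\lad_i} = \oplus_{j=1}^{N_i} E^j_{\lad_i}$, with $P_i^j$ the projection onto $E^j_{\lad_i}$ along the other chains. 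Composing these two sets of projections yields $P^j_{\lad_i} = P_i^j P_{\lad_i}$, and writing $P_\sigma = \sum_{i,j} P^j_{\lad_i}$ reduces the middle term to a finite sum of contributions supported on the individual Jordan chains.

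On each $E^j_{\lad_i}$ one has, by the very definition of the Jordan chain, that $\td$ has matrix representation $J^j_{\lad_i}$ with respect to the ordered basis $\bvi$; that is, identifying a vector $\vp \in E^j_{\lad_i}$ with its coordinate column $(\vp)_{\sss\bvi} \in \C^{n_{ij}}$ via \eqref{eq:expbasis}, the operator $\lad - \td$ corresponds to the matrix $\lad I - J^j_{\lad_i}$. Since $\lad \notin \sigma$, this matrix is invertible for every pair $(i,j)$, so the equation $(\lad - \td)\vp_{ij} = P^j_{\lad_i} f$ restricted to $E^j_{\lad_i}$ has the unique solution whose coordinate vector is $(\lad - J^j_{\lad_i})^{-1}(P^j_{\lad_i} f)_{\sss\bvi}$, and undoing the identification via \eqref{eq:expbasis} produces exactly the summand $\bvi \cdot (\lad - J^j_{\lad_i})^{-1}(P^j_{\lad_i} f)_{\sss\bvi}$. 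Substituting the finite sum of these contributions back into \eqref{eq:prepolepen} gives \eqref{for:poledecom}.

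The only delicate point is bookkeeping: one must verify that the abstract restriction $T^k_\sigma = \td|_{{\rm im} P_\sigma}$ truly commutes with each finite-dimensional projection $P^j_{\lad_i}$ (this follows from Lemma \ref{lem:fpriproj} applied first on $H(\ddiv 0, D)$ and then inside ${\rm im}P_{\lad_i}$), so that $(\lad - T_\sigma^k)^{-1}$ respects the splitting into Jordan blocks and the coordinate translation via \eqref{eq:expbasis} is legitimate. Once this invariance is in place, the remainder is a direct assembly of the three pieces and no further analytic work is required; the pole structure in $\lad$ is automatic from $(\lad - J^j_{\lad_i})^{-1}$, whose entries are rational functions with a pole of order at most $n_{ij}$ at $\lad = \lad_i$.
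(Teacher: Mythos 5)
Your argument is correct and follows exactly the route the paper leaves implicit between \eqref{eq:prepolepen} and Theorem \ref{thm:ppe}: refine ${\rm im}\,P_\sigma$ first by the Riesz projections $P_{\lad_i}$, then by the Jordan-chain splitting into $E^j_{\lad_i}$, and read off the resolvent on each chain as the inverse of the Jordan block in the basis $\bvi$. One small imprecision: the second refinement (into the $E^j_{\lad_i}$ for a fixed $i$) is \emph{not} an application of Lemma \ref{lem:fpriproj}, since those subspaces all share the same spectrum $\{\lad_i\}$ and so do not separate a spectral set; the commutation of $\td|_{{\rm im} P_{\lad_i}}$ with the projections $P_i^j$ is instead the elementary finite-dimensional fact that a projection along a decomposition into $\td$-invariant summands commutes with $\td$. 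With that substitution the proof is complete and coincides with the paper's intended argument.
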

By the above theorem, we clearly see that the behavior of $(\lad - \td)^{-1}$ is essentially determined by its principal part: $\sum_{i \in I}\sum^{N_i}_{j = 1} \bvi \dd (\lad - J^j_{\lad_i})^{-1}(P^j_{\lad_i} [\dd])_{\sss\bvi}$ in the sense that it contains all the singularity of $(\lad - \td)^{-1}$ on $B(\tau^{-1},\ep)$ while the remainder term $(\lad + 1)^{-1}\mathbb{P}_0 + (\lad - T_\zeta^k)^{-1}P_\zeta$ is an analytic operator function  on $B(\tau^{-1},\ep)$. In fact, if $\sigma$ has only one element $\lad_i$, the principal part here exactly matches the one in the Laurent series of $(\lad - \td)^{-1}$ \eqref{eq:lauexp} near the pole $\lad_i$:
\begin{equation} \label{eq:singmatch}
    \sum^{N_i}_{j = 1} \bvi \dd (\lad - J^j_{\lad_i})^{-1}(P^j_{\lad_i} [\dd])_{\sss\bvi} = \sum_{n = -q(\lad_i)}^{-1} (\lad-\lad_i)^n T_n\,.
\end{equation}
We also note that $(\lad - J^j_{\lad_i})^{-1}$ has the following explicit form:
$$
(\lad - J^j_{\lad_i})^{-1} = \left[\begin{array}{cccc} 
(\lad-\lad_i)^{-1} &(\lad-\lad_i)^{-2} & \cdots  & (\lad-\lad_i)^{-n_{ij}} \\
 &(\lad-\lad_i)^{-1} & \ddots  & \vdots\\
& &\ddots &(\lad-\lad_i)^{-2}\\
& &  &(\lad-\lad_i)^{-1} \\
\end{array}\right],
$$
which readily gives us that the order $q(\lad_i)$ of the pole $\lad_i$ is determined by
\begin{equation} \label{eq:mindeg}
    q(\lad_i) = \max_{1\le j \le N_i}n_{ij}\,.
\end{equation}
Hence, we may expect that there is a blow-up of $(\lad - \td)^{-1}$ near the pole $\lad_i$ with order of $1/|\lad-\lad_i|^{q(\lad_i)}$. In fact, we have the following local resolvent estimate (see Theorem \ref{thm:localesttd}) directly from \eqref{eq:lauexp} and the estimate for $\norm{(\lad-J^j_{\lad_i})^{-1}}$: 
\begin{equation} \label{eq:estjordan}
    \norm{(\lad-J^j_{\lad_i})^{-1}} \le C \frac{1}{|\lad-\lad_i|^{n_{ij}}}\,, 
\end{equation}
where $\lad$ is in a small neighborhood of $\lad_i$ and $C$ is a generic constant depending on $n_{ij}$ and the aforementioned neighborhood of $\lad_i$. Note that we do not indicate the matrix norm that is used due to the norm equivalence property on a finite-dimensional space.
\begin{theorem} \label{thm:localesttd}
Suppose that $B(\tau^{-1}, \ep)$ and $\sigma$ are given as in \eqref{eq:localneiandpole}.  
There exists a constant depending on $\ep$ and the pole set $\sigma$ such that the following estimate holds for $f \in L^2(D,\R^3)$ and $\lad \in B(\tau^{-1},\ep)\backslash \sigma$,
\begin{equation*}
     \norm{(\lad- \td)^{-1}f}_{L^2(D)} \le C \sum_{i \in I}\frac{1}{|\lad- \lad_i|^{q(\lad_i)}}\norm{f}_{L^2(D)}\,,
\end{equation*}
where $q(\lad_i)$ is given by \eqref{eq:mindeg}.
\end{theorem}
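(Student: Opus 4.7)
The plan is to apply the pole-pencil decomposition from Theorem \ref{thm:ppe} to $(\lad - \td)^{-1}f$ and bound each of its three summands separately on the punctured neighborhood $B(\tau^{-1},\ep)\setminus\sigma$. The first and third summands will turn out to be analytic on $B(\tau^{-1},\ep)$ by construction, so both contribute uniformly bounded operator-norm terms. All singular behavior in $\lad$ therefore comes from the middle Jordan sum, and I can absorb the two bounded contributions into the singular bound at the end.

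First I would shrink $\ep$ if necessary so that $B(\tau^{-1},\ep)$ avoids $\{-1\}\cup\zeta$, where $\zeta = \sigma(\w{T}_D^k)\setminus\sigma$. Then $\lad\mapsto(\lad+1)^{-1}\mathbb{P}_0$ is holomorphic on $B(\tau^{-1},\ep)$, and by the choice of the Cauchy contour $\Gamma$ in \eqref{def:rp} together with Lemma \ref{lem:fpriproj}, so is $\lad\mapsto(\lad - T_\zeta^k)^{-1}P_\zeta$. Hence there is a constant $C$, depending only on $\ep$ and $\sigma$, with
\begin{equation*}
\norm{(\lad+1)^{-1}\mathbb{P}_0 f}_{L^2(D)} + \norm{(\lad - T_\zeta^k)^{-1}P_\zeta f}_{L^2(D)} \le C\norm{f}_{L^2(D)}.
\end{equation*}

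For the middle sum I would use the Jordan-block estimate \eqref{eq:estjordan}. Since each $E_{\lad_i}^j$ is finite-dimensional and the generalized eigenfunctions $\vp_{\lad_i}^{j,s}$ are $L^2$-normalized, expanding $\bvi\cdot(\lad - J_{\lad_i}^j)^{-1}(P_{\lad_i}^j f)_{\sss\bvi}$ in the Jordan-chain basis and using norm equivalence on $\R^{n_{ij}}$ yields a bound of the form $C\,\norm{(\lad - J_{\lad_i}^j)^{-1}}\,\norm{P_{\lad_i}^j f}_{L^2(D)}$, with $P_{\lad_i}^j$ a bounded finite-rank projection. Combining \eqref{eq:estjordan} with the comparison
\begin{equation*}
\frac{1}{|\lad-\lad_i|^{n_{ij}}} = |\lad-\lad_i|^{q(\lad_i)-n_{ij}}\cdot\frac{1}{|\lad-\lad_i|^{q(\lad_i)}} \le (2\ep)^{q(\lad_i)-n_{ij}}\frac{1}{|\lad-\lad_i|^{q(\lad_i)}},
\end{equation*}
valid because $n_{ij}\le q(\lad_i)$ and $|\lad-\lad_i|\le 2\ep$ on $B(\tau^{-1},\ep)$, gives a bound of the form $C_i\,\norm{f}_{L^2(D)}/|\lad-\lad_i|^{q(\lad_i)}$ for the contribution of the $i$th pole.

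Summing over the finite index set $I$ and absorbing the two analytic pieces into $\sum_{i\in I}|\lad-\lad_i|^{-q(\lad_i)}$, which is legitimate since each summand is bounded below by $(2\ep)^{-q(\lad_i)}$, yields the claimed estimate. No genuinely analytic difficulty arises; the only real care is in the bookkeeping between the change-of-basis constants on each generalized eigenspace $E_{\lad_i}^j$ and the uniform control of the Jordan-inverse matrix norm on $B(\tau^{-1},\ep)$. It is precisely this bookkeeping that the pole-pencil form of Theorem \ref{thm:ppe} is tailored to handle, which is why the proof reduces to a clean combination of \eqref{eq:estjordan} with the boundedness of the finite-rank projections $P_{\lad_i}^j = P_i^j P_{\lad_i}$.
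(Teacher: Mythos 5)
Your proposal is correct and follows essentially the same route as the paper: apply the pole-pencil decomposition of Theorem \ref{thm:ppe}, bound the two analytic pieces $(\lad+1)^{-1}\mathbb{P}_0$ and $(\lad-T_\zeta^k)^{-1}P_\zeta$ uniformly on the (possibly slightly shrunk) punctured neighborhood, estimate each Jordan block via \eqref{eq:estjordan}, and then use $n_{ij}\le q(\lad_i)$ together with $|\lad-\lad_i|\le 2\ep$ to pass from exponents $n_{ij}$ to $q(\lad_i)$ and absorb the bounded pieces. The only part you make more explicit than the paper is the final absorption step, using the lower bound $|\lad-\lad_i|^{-q(\lad_i)}\ge(2\ep)^{-q(\lad_i)}$; this is a legitimate and clarifying addition, not a deviation in method.
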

This subsection ends with two remarks for a further discussion of the resolvent estimate of $\td$. 
\begin{remark}
In \cite{erxiong1994bounds}, the author gives the following bound for the smallest singular value of a 
$n \times n$ Jordan block $J$ with $\lad$ being its diagonal elements:
\begin{equation*}
    (\frac{n+1}{n})^n \frac{|\lad|^n}{n+1} \le \min_{1 \le j \le n}s_j(J) < \frac{|\lad|}{n} \q \text{for}\ 0 < |\lad| < \frac{n}{n+1}\,,
\end{equation*}
where ${s_j(A)}_{j=1}^n$ denote the singular values for a general $n\times n$ matrix $A$. The above estimate further gives us a sharper estimate for the induced $2$-norm of the resolvent of $J_{\lad_i}^j$ than \eqref{eq:estjordan}: 
 \begin{align*}
    \norm{(\lad - J^j_{\lad_i})^{-1}}_2 & = \max_{1\le j \le n_{ij}} s_j((\lad - J^j_{\lad_i})^{-1}) = \frac{1}{\min_{1\le j \le n_{ij}}s_j((\lad - J^j_{\lad_i}))} \le (\frac{n_{ij}}{n_{ij}+1})^{n_{ij}} \frac{n_{ij}+1}{|\lad-\lad_i|^{n_{ij}}}\,,
\end{align*}
when $0 < |\lad - \lad_j| \le n_{ij}/(n_{ij}+1)$. It allows us to derive new local resolvent estimate for $\td$:
\begin{equation*}
     \norm{(\lad- \td)^{-1}f}_{L^2(D)} \le  C\sum_{i \in I}\sum^{N_i}_{j = 1} \sqrt{n_{ij}} (\frac{n_{ij}}{n_{ij}+1})^{n_{ij}} \frac{n_{ij}+1}{|\lad-\lad_i|^{n_{ij}}} \norm{f}_{L^2(D)}\,,
\end{equation*}
for a generic constant $C$ and $\lad \in B(\tau^{-1}, \ep)$, which seems to be  a little bit shaper than the one in Theorem \ref{thm:localesttd} but actually does not provide us new information on the singularity of $(\lad-\td)^{-1}$ and its blow-up rate near the regular value $\tau^{-1}$.
\end{remark}

\begin{remark}
In general, it is very difficult to obtain a sharp global estimate for the resolvent $(\lad-\td)^{-1}$ of the non-selfadjoint and non-compact operator $\td$. \zz{Nevertheless, by} noting that $\td$ is a quasi-Hermitian operator, we can apply a general result to $\td$ to obtain its resolvent estimate. We put the detailed analysis and some relevant definitions in Appendix \ref{app:B}. 
% we apply a general resolvent bound to $\td$ by noting that $\td$ is a quasi-Hermitian operator, that is, it can be written as the sum  of a selfadjoint operator $T_D$ and a compact one $\td - T_D$. We refer the readers to Appendix \ref{app:B} for the precise and detailed arguments. \mb{appendix since it is independent}
% For such kind of operator, we have a general resolvent bound from \cite[Thm.7.7.1]{gil2003operator} under the condition that its imaginary Hermitian component $(\td - T_D^{k,*})/(2i)$ is a Hilbert-Schmidt operator, which can be verified directly in our case.  We refer the readers to \cite{gil2003operator} for the precise statement. 
\end{remark}

We have observed from Theorems \ref{prop:diseig} and \ref{thm:mainspec} that $\tau^{-1} - \td$ is invertible, and then Theorems \ref{thm:ppe} and \ref{thm:localesttd} permit us to write
\begin{equation} \label{eq:ppres}
    (\tau^{-1}-\td)^{-1} \sim \sum_{i \in I}\sum^{N_i}_{j = 1} \bvi \dd (\tau^{-1} - J^j_{\lad_i})^{-1}(P^j_{\lad_i} [\dd])_{\sss\bvi}\,,
\end{equation}
and to see that the behavior of $(\tau^{-1}-\td)^{-1}$ is indeed significantly influenced by the poles of resolvent of $\td$ near $\tau^{-1}$ and their associated eigenstructures, \zz{as it is suggested at the end of Section \ref{sec:iescp}.}

\if \commentflag = \ct
%%%%%%% 
this is for the proof the resolvent (no use) 
%%%%%%%%%%%%
\begin{proof}
Since $(\lad + 1)^{-1}\mathbb{P}_0 + (\lad - T_\zeta^k)^{-1}P_\zeta$ is an analytic function on $\bar{B}(\tau^{-1},\ep)$, we have find a constant $C_\ep$ depending on $\ep$ such that for $f \in L^2(D,\R^3)$,
\begin{equation*}
    \norm{(\lad + 1)^{-1}\mathbb{P}_0 f + (\lad - T_\zeta^k)^{-1}P_\zeta f}_{L^2(D)} \le C_\ep \norm{f}_{L^2(D)}\,,  
\end{equation*}
which, along with \eqref{for:poledecom}, imply that
\begin{equation} \label{inpr:local_1}
    \norm{(\lad- \td)^{-1}f}_{L^2(D)} \le C_\ep \norm{f}_{L^2(D)} + \sum_{i \in I}\sum^{N_i}_{j = 1} \norm{\bvi \dd (\lad - J^j_{\lad_i})^{-1}(P_{\lad_i} f)_{\sss\bvi}}_{L^2(D)}\,.
\end{equation}
By Lemma \ref{lem:localjor}, we  have 
\begin{align}
    \norm{(\lad - J^j_{\lad_i})^{-1}}_2 & = \max_{1\le j \le n_{ij}} s_j((\lad - J^j_{\lad_i})^{-1}) = \frac{1}{\min_{1\le j \le n_{ij}}s_j((\lad - J^j_{\lad_i}))} \notag\\ 
    & \le (\frac{n_{ij}}{n_{ij}+1})^{n_{ij}} \frac{n_{ij}+1}{|\lad-\lad_i|^{n_{ij}}}\,. \label{inpr:local_2}
\end{align}
Therefore, for each $i \in I$, the following estimate holds, 
\begin{align}
     &\sum^{N_i}_{j = 1} \norm{\bvi \dd (\lad - J^j_{\lad_i})^{-1}(P_{\lad_i} f)_{\sss\bvi}}_{L^2(D)} \notag \\
     \le & \sum^{N_i}_{j = 1} \sqrt{n_{ij}} \norm{(\lad - J^j_{\lad_i})^{-1}}_2\norm{(P_{\lad_i} f)_{\sss\bvi}}_{2} \notag \\
     \le & C\sum^{N_i}_{j = 1} \sqrt{n_{ij}} (\frac{n_{ij}}{n_{ij}+1})^{n_{ij}} \frac{n_{ij}+1}{|\lad-\lad_i|^{n_{ij}}} \norm{f}_{L^2(D)}\,,\label{inpr:local_3}
\end{align}
where we use the norm equivalence property on a finite- dimensional space and estimate \eqref{inpr:local_2}. By \eqref{inpr:local_1} and \eqref{inpr:local_3}, it follows that
\begin{align*}
    \norm{(\lad- \td)^{-1}f}_{L^2(D)} \le C_\ep \norm{f}_{L^2(D)} + C\sum_{i \in I}\sum^{N_i}_{j = 1} \sqrt{n_{ij}} (\frac{n_{ij}}{n_{ij}+1})^{n_{ij}} \frac{n_{ij}+1}{|\lad-\lad_i|^{n_{ij}}} \norm{f}_{L^2(D)}\,.
\end{align*}
\end{proof}
\fi

\if \commentflag = \ct 
%%%%%%%
% this is wrong part for the resolvent estimate
%%%%%%%
\textbf{Resolvent estimate for $\td$.} 
Our final goal of this section is to derive the resolvent estimate for $\td$ undee the restriction that $|\lad|\ll 1$. By Theorem \ref{thm:mainspec} and norm equivalence \eqref{eq:normequ}, we have
\begin{equation*}
    \norm{(\lad - T_D^k)^{-1}}_{B(L^2)} \approx \norm{(\lad - \TT_D^k)^{-1}}_{B(\xx)}.
\end{equation*}
Recalling \eqref{eq:similar}, we can further derive
\begin{align}
\norm{(\lad - \TT_D^k)^{-1}[\FF]}_{B(\xx)} & = \frac{1}{1 + \lad} \norm{f_1}_{H_0^1} + \norm{(\lad -  \w{\TT}^k_D)^{-1} \w{\FF}}_{\w{X}_0 \times H_0^{-\frac{1}{2}}(\p D)} 
\notag \\ & \le \norm{f_1}_{H_0^1} + C_\P \norm{\P (\lad - \w{\TT}_D^k)^{-1}\P^{-1}\P \w{\FF}}_{\w{X}_0 \times H_0^{-\frac{1}{2}}(\p D)} \notag \\
& \le \norm{f_1}_{H_0^1} + C_\P \norm{(\lad - \T_D^k)^{-1}\P \w{\FF}}_{\w{X}_0 \times H_0^{-\frac{1}{2}}(\p D)} \notag \\
& \le C_\P\norm{(\lambda - k^2 \T \kd \T^{-1})^{-1}}_{B(\w{X}_0)} \norm{\FF}_{\xx}, \label{esti:prioriess}
\end{align}
where $C_\P$ is a generic constant depending on $\P$, and $\w{\FF}:=(f_2,f_3)\in \w{X}_0 \times H_0^{-\frac{1}{2}}(\p D)$ is the last two components associated with $\FF \in \xx$.  The last inequality in \eqref{esti:prioriess} holds since we are considering the high contrast regime, which allow us to assume $\norm{(\lambda - k^2 \T \kd \T^{-1})^{-1}}_{B(\w{X}_0)}$ is much larger than $\norm{(\lad+\frac{1}{2}-\md)^{-1}}_{B(H_0^{-\frac{1}{2}}(\p D))}$. Therefore, we only need to estimate the resolvent $(\lad - \T k^2 \kd \T^{-1})^{-1}$ on $\w{X}_0$ to conclude our final result, Theorem \ref{thm:resolventoftd}. To this purpose, we recall the definition of Hilbert-Schmidt operator and a corresponding resolvent estimate. Given a separable Hilbert space $H$, a linear operator $K \in B(H)$ is Hilbert-Schmidt if for any orthonormal basis $\{e_n\}^\infty_{n=1}$ of $H$, we have 
\begin{equation*}
\sum_{n = 1}^\infty \norm{Ke_n}^2_{H} < \infty .
\end{equation*}
Define Hilbert-Schmidt norm by $\norm{K}^2_{HS} = \sum^\infty_{n = 1} \norm{Ke_n}^2_{H} < \infty$, which can be shown to be independent of the choice of orthonormal basis. In particular, if $H = L^2(D)$, then for any Hilbert-Schmidt operator $K$, we can construct a Kernel $K(x,y)\in L^2(D \times D)$ such that 
\begin{equation*}
    K[f](x) = \int_D K(x,y)f(y)dy, \ \text{for} \ f\in L^2(D).
\end{equation*}
We then have an equivalent definition of Hilbert-Schmidt norm:
\begin{equation*}
    \norm{K}^2_{HS} = \int_D \int_D |K(x,y)|^2 dx dy.
\end{equation*}
Moreover, for the resolvent $(\lad-K)^{-1}$, the following norm estimate holds. 
\begin{lemma} \label{lem:generalesti}
For Hilbert-Schmidt operator $K$, we have following resolvent estimate,
\begin{equation*}
    \norm{(\lad - K)^{-1}}_{H} \le \frac{1}{d(\lad,\sigma(K))}exp(\frac{1}{2} \frac{\norm{K}^2_{HS}}{d(\lad,\sigma(K))^2}+\frac{1}{2}),
\end{equation*}
where $d(\lad,\sigma(K))$ is defined by 
\begin{equation*}
    d(\lad,\sigma(K)):= \inf_{z \in \sigma(K)}|\lad - z|
\end{equation*}
\end{lemma}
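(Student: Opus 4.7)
The strategy is to combine Schur's triangular representation of the compact operator $K$ with a sharp norm estimate for powers of its quasi-nilpotent upper-triangular part, and then sum the resulting Neumann-type series.

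Since $K$ is Hilbert--Schmidt and hence compact, the Schur--Ringrose triangular theorem provides an orthonormal basis $\{e_n\}$ of $H$ in which $K$ decomposes as $K = D + V$, where $D$ is the diagonal (normal) part carrying the nonzero eigenvalues $\{\mu_n\}$ of $K$ (counted with algebraic multiplicity) and $V$ is strictly upper triangular, hence quasi-nilpotent. Both $D$ and $V$ remain Hilbert--Schmidt and satisfy the Pythagorean identity $\norm{K}_{HS}^2 = \norm{D}_{HS}^2 + \norm{V}_{HS}^2$; in particular $\norm{V}_{HS} \leq \norm{K}_{HS}$ and $\sigma(K) = \sigma(D) = \{\mu_n\} \cup \{0\}$.

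For $\lad \notin \sigma(K)$, the diagonal operator $\lad - D$ is boundedly invertible with $\norm{(\lad - D)^{-1}} = \sup_n 1/|\lad - \mu_n| = 1/d(\lad, \sigma(K))$. Writing $\lad - K = (\lad - D) - V$ and exploiting the quasi-nilpotency of $V$, one obtains the convergent Neumann expansion
\begin{equation*}
(\lad - K)^{-1} = \sum_{k=0}^{\infty} \bigl((\lad - D)^{-1} V\bigr)^k (\lad - D)^{-1}.
\end{equation*}
Setting $M := (\lad - D)^{-1} V$, which is again strictly upper triangular and Hilbert--Schmidt with $\norm{M}_{HS} \leq \norm{K}_{HS}/d(\lad, \sigma(K))$, the classical Ringrose inequality for strictly triangular Hilbert--Schmidt operators yields the key estimate $\norm{M^k} \leq \norm{M}_{HS}^k/\sqrt{k!}$. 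This is proved by induction on $k$, using that the $(i,j)$-matrix element of $M^k$ is supported on strictly increasing chains $i < i_1 < \cdots < i_{k-1} < j$ of length $k$, combined with Cauchy--Schwarz on the resulting multi-index sum.

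Substituting into the Neumann series gives
\begin{equation*}
\norm{(\lad - K)^{-1}} \leq \frac{1}{d(\lad,\sigma(K))} \sum_{k=0}^{\infty} \frac{1}{\sqrt{k!}} \left(\frac{\norm{K}_{HS}}{d(\lad,\sigma(K))}\right)^k,
\end{equation*}
and an elementary inequality of the form $\sum_{k \geq 0} x^k/\sqrt{k!} \leq \exp\bigl((x^2 + 1)/2\bigr)$, obtained by a Cauchy--Schwarz argument with suitable weights, delivers the claimed exponential bound. The main obstacle is the Ringrose estimate on $\norm{M^k}$: it is what converts the naive bound $\norm{M^k} \leq \norm{M}^k$, which would only yield a geometric series and no exponential control, into the super-exponential factor $1/\sqrt{k!}$ responsible for the Gaussian-type exponential in the resolvent bound.
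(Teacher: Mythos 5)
Your proof sets up the triangular decomposition $K = D + V$ correctly, the factorization $(\lad - K) = (\lad - D)(I - M)$ with $M = (\lad - D)^{-1}V$ quasi-nilpotent and Hilbert--Schmidt, and the Ringrose estimate $\norm{M^k} \le \norm{M}_{HS}^k/\sqrt{k!}$ is a standard, correct fact. The gap is in the final step. The ``elementary inequality''
\begin{equation*}
\sum_{k\ge 0} \frac{x^k}{\sqrt{k!}} \;\le\; \exp\Bigl(\tfrac{x^2+1}{2}\Bigr)
\end{equation*}
is false. Already at $x=1$ the left side is $\sum_k 1/\sqrt{k!} \approx 3.47$ while the right side is $e \approx 2.72$. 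In fact no Cauchy--Schwarz weighting can rescue it: by a Laplace/saddle-point estimate the terms $x^k/\sqrt{k!}$ peak at $k \approx x^2$ with peak value $e^{x^2/2}$ and effective width $\sim x$, so $\sum_k x^k/\sqrt{k!} \sim c\,x\,e^{x^2/2}$ as $x \to \infty$. This grows strictly faster than the target $e^{1/2}e^{x^2/2}$, so the ratio is unbounded and the inequality fails for all large $x$. Summing the termwise Neumann bounds therefore can at best give a bound of the form $Cx\, e^{x^2/2}/d$, not the $e^{(x^2+1)/2}/d$ claimed in the lemma.

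The stated constant is not an artifact to be negotiated away; it comes from a genuinely different mechanism. The correct route (this is how the cited reference \cite{gil2003operator} handles it, and why the constant $e^{1/2}$ appears) is to avoid the term-by-term sum entirely: after writing $(\lad - K)^{-1} = (I - M)^{-1}(\lad - D)^{-1}$, one observes that the quasi-nilpotent Hilbert--Schmidt operator $M$ has $\det_2(I - M) = 1$ (all its eigenvalues vanish), and then applies Carleman's regularized determinant inequality
\begin{equation*}
\norm{(I - M)^{-1}\det_2(I - M)} \;\le\; \exp\Bigl(\tfrac{1}{2}\norm{M}_{HS}^2 + \tfrac{1}{2}\Bigr),
\end{equation*}
which for quasi-nilpotent $M$ gives $\norm{(I-M)^{-1}} \le \exp\bigl(\tfrac{1}{2}\norm{M}_{HS}^2 + \tfrac{1}{2}\bigr)$ directly, with the exact constant required. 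Combining this with $\norm{(\lad-D)^{-1}} = 1/d(\lad,\sigma(K))$ and $\norm{M}_{HS} \le \norm{V}_{HS}/d \le \norm{K}_{HS}/d$ yields the lemma. So your decomposition and the Ringrose bound are the right scaffolding, but the closing step needs Carleman's determinant inequality rather than summing $\norm{M^k}$; the latter only proves a weaker bound with an extra factor that grows linearly in $\norm{K}_{HS}/d$.
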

We are now prepared to give the result on the estimate of $\norm{(\lambda - k^2 \T \kd \T^{-1})^{-1}}_{B(\w{X}_0)}$.
\begin{lemma}
$\T \kd \T^{-1}$ is a Hilbert-Schmidt operator on $\w{X}_0$ with the following resolvent estimate:
\begin{equation*}
    \norm{(\lambda - \T \kd \T^{-1})^{-1}}_{B(\w{X}_0)} \le \frac{1}{d(\lad,\sigma(\T \kd \T^{-1}))}exp( \frac{diam(D)|D|}{8\pi d(\lad,\sigma(\T \kd \T^{-1}))^2}+\frac{1}{2}),
\end{equation*}
where $diam(D)$ is the diameter of $D$, given by $\sup_{x,y \in D}|x-y|$, and $|D|$ is the volume of $D$.
\end{lemma}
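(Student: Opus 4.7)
\medskip

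\noindent\textbf{Proof plan.} The strategy is to reduce the estimate to an application of the abstract Hilbert--Schmidt resolvent bound stated just above (Lemma \ref{lem:generalesti}). This requires two ingredients: (i) verifying that $\T \kd \T^{-1}$ really is Hilbert--Schmidt on $\w{X}_0$, and (ii) producing a quantitative upper bound on its Hilbert--Schmidt norm in terms of the geometric quantities $\mathrm{diam}(D)$ and $|D|$.

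\medskip

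\noindent\emph{Step 1: reduction by similarity.} I first transfer the Hilbert--Schmidt question from $\T\kd\T^{-1}$ on $\w{X}_0$ to $\kd$ on (a suitable subspace of) $L^2(D,\R^3)$. Since $\T$ is used throughout Section \ref{sec:savio} as an isometric isomorphism implementing the decomposition of $L^2(D,\R^3)$ compatible with the Helmholtz splitting (cf.\ \eqref{eq:helmdecomp}), Hilbert--Schmidtness and the HS norm are both preserved under conjugation by $\T$, so it suffices to control $\kd$ restricted to the relevant component. This reduces the problem to a kernel computation for $\kd$, whose Schwartz kernel is $g(x,y,k)\I$.

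\medskip

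\noindent\emph{Step 2: HS norm via the kernel.} Since $k\in\R$, I use $|g(x,y,k)|^2 = (4\pi|x-y|)^{-2}$ pointwise. The square of the Hilbert--Schmidt norm then takes the form
\begin{equation*}
    \norm{\kd}_{HS}^2 \;=\; \int_D\!\int_D \frac{1}{16\pi^2|x-y|^2}\,dy\,dx,
\end{equation*}
up to a harmless dimensional constant absorbed by the reduction in Step 1. The key estimate is the standard spherical-coordinate bound: for each fixed $x\in D$,
\begin{equation*}
   \int_D \frac{dy}{|x-y|^2}\;\le\;\int_{B(x,\mathrm{diam}(D))} \frac{dy}{|x-y|^2}\;=\;4\pi\,\mathrm{diam}(D).
\end{equation*}
Integrating in $x$ over $D$ yields $\norm{\kd}_{HS}^2 \le \mathrm{diam}(D)|D|/(4\pi)$, which in particular proves that $\T\kd\T^{-1}$ is Hilbert--Schmidt on $\w{X}_0$.

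\medskip

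\noindent\emph{Step 3: conclusion via Lemma \ref{lem:generalesti}.} Plugging $\norm{K}_{HS}^2 \le \mathrm{diam}(D)|D|/(4\pi)$ with $K = \T\kd\T^{-1}$ into Lemma \ref{lem:generalesti} immediately gives
\begin{equation*}
\norm{(\lad - \T\kd\T^{-1})^{-1}}_{B(\w{X}_0)} \;\le\; \frac{1}{d(\lad,\sigma(\T\kd\T^{-1}))}\,\exp\!\left(\frac{\mathrm{diam}(D)\,|D|}{8\pi\, d(\lad,\sigma(\T\kd\T^{-1}))^2} + \frac{1}{2}\right),
\end{equation*}
which is precisely the claimed estimate.

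\medskip

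\noindent\emph{Main obstacle.} The delicate point is Step 1: one must ensure that passing from $\kd$ (a matrix-kernel operator on $L^2(D,\R^3)$) to its reduction $\T\kd\T^{-1}$ on $\w{X}_0$ does not inflate the Hilbert--Schmidt norm beyond the scalar-kernel estimate of Step 2. The cleanest route is to exploit the fact that $\T$ is an isometry onto $\w{X}_0$, so conjugation preserves the HS norm exactly, and that the relevant component of the Helmholtz decomposition absorbs the matrix trace factor from the identity part of the kernel $g(x,y,k)\I$. Once this identification is made carefully, Steps 2 and 3 are routine kernel estimates and a direct application of the quoted lemma.
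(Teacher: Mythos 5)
Your Steps~2 and~3 reproduce the paper's computation: the kernel bound $\norm{\kd}^2_{HS}\le \mathrm{diam}(D)|D|/(4\pi)$ followed by the abstract Hilbert--Schmidt resolvent bound quoted just before the lemma. The gap is precisely the ``main obstacle'' you flag in Step~1 and then leave unresolved, and your sketch of how one would resolve it does not match the argument that actually works. You assert that $\T$ is an isometric isomorphism, so that conjugation preserves the Hilbert--Schmidt norm exactly. But the operator $\T$ here (the magnetostatic solution operator of Appendix~\ref{app:A}) is not an isometry from $L^2(D,\R^3)$ onto $\w{X}_0$; the map that is isometric is $\ccurl:\w{X}_0\to L^2(D,\R^3)$ (by the very choice of quotient norm on $\w{X}_0$), and its range is a \emph{proper} closed subspace of $L^2(D,\R^3)$. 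Consequently $\T\kd\T^{-1}$ is not unitarily equivalent to $\kd$, and one cannot conclude HS-norm equality by an abstract conjugation argument.

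The paper instead proves the \emph{inequality} $\norm{\T\kd\T^{-1}}_{HS}\le\norm{\kd}_{HS}$ from three concrete structural facts: (i) if $\{e_n\}$ is an orthonormal basis of $\w{X}_0$, then $\{\ccurl e_n\}$ is orthonormal in $L^2(D,\R^3)$ and extends to a full orthonormal basis there, whence $\sum_n\norm{\kd\,\ccurl e_n}^2_{L^2}\le\norm{\kd}^2_{HS}$; (ii) the factorization $\kd\T^{-1}e_n=\ccurl\kd\,\ccurl e_n$, which rests on $\T^{-1}$ acting as $\ccurl\ccurl$ on $\w{X}_0$ together with the commutation of $\kd$ with $\ccurl$ on the relevant fields; and (iii) the contraction estimate $\norm{\ccurl\T\ccurl w}_{L^2}\le\norm{w}_{L^2}$, which is what actually absorbs the conjugation. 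Items (ii) and (iii) are the missing mechanism in your write-up: without them there is no quantitative link between the Hilbert--Schmidt norm of $\T\kd\T^{-1}$ on $\w{X}_0$ and the scalar-kernel bound for $\kd$ on $L^2(D,\R^3)$, and your assertion of \emph{exact} HS-norm preservation is both unproved and stronger than what in fact holds.
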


\begin{proof}
We first note $\kd$ is a Hilbert-Schmidt operator with norm:
\begin{equation} \label{auxeq:hskdnorm}
    \norm{\kd}^2_{HS} = \frac{1}{16 \pi^2} \int_D \int_D \frac{1}{|x-y|^2}dxdy \le \frac{1}{4 \pi}diam(D) |D|.
\end{equation}
Consider Hilbert space $\w{X}_0$ and choose a orthonormal basis $\{e_n\}^\infty_{n=1}$.  By the orthogonal relation: $$(\ccurl e_i,\ccurl e_j)_{L^2(D)} = \d_{ij},$$ it is easy to observe that the orthonormal set $\{\ccurl e_n\}^\infty_{n=1}$ with respect to the $L^2$ -inner product can be extended to an orthonormal basis in $L^2(D,\R^3)$, which, combined with \eqref{auxeq:hskdnorm}, leads us to
\begin{equation} \label{auxeq:hsnorm}
    \sum^{\infty}_{n=1}\norm{\kd \ccurl e_n}^2_{L^2(D)} \le \norm{\kd}^2_{HS}.
\end{equation}
Note that 
\begin{align*}
    \norm{\ccurl\T \kd \T^{-1} e_n}_{L^2(D)} = \norm{\ccurl\T \ccurl \kd \ccurl e_n}_{L^2(D)} \le \norm{\kd \ccurl e_n}_{L^2(D)},
\end{align*}
which directly gives us
\begin{equation*}
    \sum^\infty_{n=1}  \norm{\ccurl\T \kd \T^{-1} e_n}_{L^2(D)} \le \norm{\kd}^2_{HS}.
\end{equation*}
By the definition of Hilbert-Schmidt operator and \eqref{auxeq:hskdnorm}, we have $\T \kd \T^{-1}$ is Hilbert-Schmidt with
\begin{equation*}
    \norm{\T \kd \T^{-1}}_{HS} \le \frac{1}{4 \pi}diam(D) |D|.
\end{equation*}
Then, Lemma \ref{lem:generalesti} helps us to conclude. 
\end{proof}
As a corollary, we obtain the resolvent estimate for $\td$.
\begin{theorem} \label{thm:resolventoftd}
For $\td$ and $\lad \in \rho(\td)$, it holds that 
\begin{equation*}
    \norm{(\lad-\td)^{-1}} \le \frac{C_\P}{d(\lad,\sigma(\T \kd \T^{-1}))}exp( \frac{diam(D)|D|}{8\pi d(\lad,\sigma(\T \kd \T^{-1}))^2}+\frac{1}{2}).
\end{equation*}
\end{theorem}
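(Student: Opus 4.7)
The plan is to transfer the resolvent estimate from $\td$ on $L^2(D,\mathbb{R}^3)$ to the more tractable operator $k^2\,\mathcal{T}\,\kd\,\mathcal{T}^{-1}$ on the smaller Hilbert space $\w{X}_0$, where the Hilbert--Schmidt machinery of the preceding lemma applies directly.

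First, I would invoke the norm equivalence induced by the isomorphism $\Xi:L^2(D,\mathbb{R}^3)\to\xx$ from Subsection~\ref{subsec:specstr}, so that $\norm{(\lad-\td)^{-1}}_{B(L^2(D))}$ is bounded, up to a multiplicative constant, by $\norm{(\lad-\TT_D^k)^{-1}}_{B(\xx)}$. Because $\nabla H_0^1(D)$ is $\td$-invariant and contributes only the eigenvalue $-1$, the block-triangular form of $\TT_D^k$ recorded in \eqref{eq:matrixtd} immediately splits the inverse into a scalar factor $(1+\lad)^{-1}$ on the first component and a two-by-two block $\w{\TT}_D^k$ acting on $\hzz\times\hbb$; the former is trivially bounded near $\lad=\tau^{-1}$.

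Next, I would conjugate $\w{\TT}_D^k$ by the similarity transform $\P$ (constructed so as to decouple the volume and surface parts, in the same spirit as in the preceding lemma) so that the diagonal entries become $k^2\,\mathcal{T}\,\kd\,\mathcal{T}^{-1}$ on $\w{X}_0$ and $-\tfrac{1}{2}-\md$ on $\hbb$. In the high-contrast regime $\lad=\tau^{-1}\ll 1$, the surface resolvent $(\lad+\tfrac{1}{2}-\md)^{-1}$ is uniformly bounded and can be absorbed into the constant $C_\P$, while the resolvent of the volume block dominates. Substituting the Hilbert--Schmidt bound $\norm{\mathcal{T}\,\kd\,\mathcal{T}^{-1}}_{HS}\le\tfrac{1}{4\pi}\,\mathrm{diam}(D)\,|D|$ into the Carleman-type resolvent inequality for Hilbert--Schmidt operators (exactly the bound used in the preceding lemma) then produces the exponential estimate in the statement.

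The main obstacle I foresee is the uniform tracking of $C_\P$ together with the justification that the surface block is indeed dominated. The former requires a careful bookkeeping of how $\P$ maps $\w{X}_0\times\hbb$ isomorphically onto $\hzz\times\hbb$ with quantitatively controlled norms of $\P$ and $\P^{-1}$; the latter is only valid when $\lad$ lies close enough to zero compared with the essential spectrum $\{-1/2,0\}$ of $\td$, so for $\lad$ outside the high-contrast regime an additional contribution involving $(\lad+\tfrac{1}{2}-\md)^{-1}$ should be kept explicitly on the right-hand side.
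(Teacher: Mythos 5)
Your proposal reconstructs the paper's argument essentially step for step: the norm equivalence under $\Xi$, the block-triangular reduction of $\TT_D^k$ isolating the scalar factor $(1+\lad)^{-1}$ on $\nabla H_0^1(D)$ and the $2\times 2$ block $\w{\TT}_D^k$ on $\hzz\times\hbb$, the similarity $\P$ that conjugates the volume block to $k^2\,\T\kd\T^{-1}$ on $\w{X}_0$ and the surface block to a Neumann--Poincar\'e-type operator $-\tfrac12-\md$ on $\hbb$, and finally the Hilbert--Schmidt/Carleman resolvent bound applied to $\T\kd\T^{-1}$ using $\norm{\kd}_{HS}$. You even land on the same final inequality.

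The gap you flag is genuine, and it is not resolved in the paper's own derivation either. The reduction to $C_\P\,\norm{(\lambda - k^2\T\kd\T^{-1})^{-1}}_{B(\w{X}_0)}$ alone is justified in the paper only by an informal remark that in the high-contrast regime the volume resolvent dominates the surface resolvent $(\lad+\tfrac12-\md)^{-1}$. That assumption cannot hold uniformly over all of $\rho(\td)$ -- near $\lad=-\tfrac12$ the surface part is the singular one, not the volume part. Since the theorem as stated claims the estimate for every $\lad\in\rho(\td)$, either the hypothesis must be restricted to a neighbourhood of $0$ where $d(\lad,\{-\tfrac12\})$ is bounded below, or the surface resolvent must remain explicitly on the right-hand side, exactly as you suggest. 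One further bookkeeping point: to obtain the factor $\tfrac{1}{8\pi}$ in the exponent one must feed the \emph{squared} Hilbert--Schmidt norm $\norm{\T\kd\T^{-1}}_{HS}^2\le\tfrac{1}{4\pi}\,\mathrm{diam}(D)\,|D|$ into the Carleman bound $\exp\!\bigl(\tfrac12\norm{\cdot}_{HS}^2/d^2\bigr)$; the unsquared bound you quote would not produce the stated exponent. Finally, note that the statement's distance is to $\sigma(\T\kd\T^{-1})$ while the relevant volume block is $k^2\T\kd\T^{-1}$; the $k^2$ rescaling should be tracked through to the distance in the denominator.
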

\fi

\subsection{Spherical region} \label{subsec:sphere}
\zz{In view of the formula \eqref{eq:ppres}, both eigenvalues and eigenfunctions can play a crucial role in 
the local behavior of $(\lad - \td)^{-1}$ near the very small regular value $\tau^{-1}$, which motivates us to quantitatively investigate the asymptotic behaviors of eigenvalues and eigenfunctions of the operator $\td$ as $\lad \to 0$ to further explore the mechanism lying behind the super-resolution. In this subsection, we consider the spectral properties of $\td$ on the unit ball $D = B(0,1)$ in $\R^3$, where the Mie scattering theory is applicable.}

 We have seen in Lemma \ref{lem:eqtrass} that solving the eigenvalue equation $(\lad - \td)[\vp] = 0$ is equivalent to finding $\lad$ and the associated nontrivial radiating solution to the transmission problem:
\begin{equation} \label{eq:eqtrassrep}
    \curl \curl E - k^2 E = \frac{k^2}{\lad} E \chi_D.
\end{equation}
In this subsection, we assume $\lad \neq -1$ so that the wave number  $k_\lad = k\sqrt{1+\lad^{-1}}$ inside the domain will never vanish, see Remark \ref{rem:eig-1ball}, also Remark \ref{rem:nonradi_1} for a discussion of the case of $\lad = -1$. By the Mie theory, any solution $E$ of the time-harmonic Maxwell equations $\curl \curl E - k^2 E = 0$ in the far field can be represented in the following series form:
\begin{equation} \label{eq:repoute}
     E(x) = \sum^\infty_{n = 1} \sum^n_{m = -n}
    \gnm \ete(k,x) + \enm \etm(k,x)\,,
\end{equation}
where the complex coefficients $\gnm$ and $\enm$  are to be determined and $\ete$ and $\etm$ are vector wave functions defined in the Appendix \ref{app:C_1}. Similarly, any solution $E$ to the Maxwell equations $\curl \curl E - k_\lad^2 E = 0$ near $0$ has the following representation:
\begin{equation} \label{eq:repinte}
   E(x) = \sum^\infty_{n = 1} \sum^n_{m = -n}  \anm \wete(k_\lad, x) + \bnm \wetm(k_\lad, x)\,,
\end{equation}
with undetermined coefficients $\anm,\bnm \in \C$, see \eqref{eq:enete} and \eqref{eq:enetm} for the definitions of $\wete$ and $\wetm$. To establish the equations for eigenvalues $\lad$, we match the Cauchy data $(\h{x}\t E, \h{x} \t \curl E)$ of  \eqref{eq:repoute} and \eqref{eq:repinte} on the boundary $\p B(0,1)$. By the trace formulas of multipole fields \eqref{eq:trramp} and \eqref{eq:trenmp}, and recalling that $\{\unm\}$ and $\{\vnm\}$ is an orthonormal basis of $L^2_T(S^2)$, matching Cauchy data reduces the original eigenvalue problem to solving infinite linear systems:
\begin{equation*}
    [\h{x} \t E(x)] = 0 \eq 
   \left\{ \begin{array}{l}
   \gnm h_n^{(1)}(k ) = \anm j_n(k_\lad) \\
  \enm \hh_n(k ) = \bnm \frac{k}{k_\lad}    \jj_n(k_\lad )         
    \end{array} \right. n = 1,2,\cdots,\ m=-n,\cdots,n\,, 
\end{equation*}
and 
\begin{equation*}
       [\h{x} \t \curl E (x)] = 0 \eq 
   \left\{ \begin{array}{l}
   \gnm \hh_n(k) = \anm \jj_n(k_\lad) \\
  \enm kh^{(1)}_n(k) = \bnm k_\lad j_n(k_\lad ) 
    \end{array} \right. n = 1,2,\cdots,\ m=-n,\cdots,n\,,
\end{equation*}
which can be reformulated into the following independent equations with the undetermined coefficients as unknowns:
\begin{equation}\label{eq:coeigenmode1}
\mm 
j_n(k_\lad ) & -h_n^{(1)}(k ) \\ 
\jj_n(k_\lad) & -\hh_n(k) 
\nn 
 \mm \anm \\ \gnm \nn  = 0\,,\  n = 1,2,\cdots,\ m=-n,\cdots,n\,,
\end{equation}
% \gnm h_n^{(1)}(k ) = \anm j_n(k_\lad ) \\
% \gnm \hh_n(k) = \anm \jj_n(k_\lad)
and
\begin{equation}\label{eq:coeigenmode2}
\mm \frac{k}{k_\lad}\jj_n(k_\lad ) & -\hh_n(k ) \\ k_\lad j_n(k_\lad )  & -kh^{(1)}_n(k)  \nn \mm \bnm \\ \enm \nn = 0\,, \ n = 1,2,\cdots,\ m=-n,\cdots,n\,.
\end{equation}

\ifnum \commentflag = \ct
\begin{align*}
    & \frac{\curl E(x)}{i k } = \sum^\infty_{n = 1} \sum^n_{m = -n}  \anm \frac{\curl \wete(x)}{i k }  + \bnm \wete(x)\,,\forall \ x\in B(0,R) \\
    & \frac{\curl E(x)}{i k} = \sum^\infty_{n = 1} \sum^n_{m = -n}
    \gnm \frac{\curl \ete(x)}{i k} + \enm \ete(x)\,, \forall \ x \in \R^3\backslash \bar{B}(0,R)
\end{align*}
\fi
\ifnum \commentflag = \ct
We match the coefficients, with respect to the vector spherical harmonics, of tangential trace $\h{x} \t E(x)$ on $\p  B(0,R)$. 
\begin{align*}
   & \h{x} \t \ete(x) =  \sqrt{n(n+1)} h_n^{(1)}(k |x|)\unm(\h{x}) \\
   & \h{x} \t \etm(x) = - \frac{\sqrt{n(n+1)}}{ik |x|} \vnm(\h{x})\hh_n(w|x|) 
\end{align*}
\begin{align*}
    & \h{x} \t \wete (x) = j_n(k\sqrt{1+1/\lad} |x|) \sqrt{n(n+1)} \unm(\h{x})  \\
    & \hat{x} \t \wetm (x) = -\frac{\sqrt{n(n+1)}}{ik (1 + 1/\lad)|x|} \jj_n(k\sqrt{1+1/\lad}|x|)\vnm(\h{x})
\end{align*}
We match the coefficients, with respect to the vector spherical harmonics, of tangential trace $\h{x} \t H(x)$ on $\p  B(0,R)$. 
\begin{align*}
   & \h{x} \t \ete(x) =  \sqrt{n(n+1)} h_n^{(1)}(k |x|)\unm(\h{x}) \\
   & \h{x} \t \frac{\curl \ete}{i k} 
   =  \frac{\sqrt{n(n+1)}}{ik |x|} \vnm(\h{x})\hh_n(w|x|) 
\end{align*}
\begin{align*}
    & \h{x} \t \wete (x) = j_n(k\sqrt{1+1/\lad} |x|) \sqrt{n(n+1)} \unm(\h{x})  \\
    & \hat{x} \t \frac{\curl \wete}{ik} = \frac{\sqrt{n(n+1)}}{ik |x|} \jj_n(k\sqrt{1+1/\lad}|x|)\vnm(\h{x})
\end{align*}
We hence have
\begin{align*}
    \eta_{n,m}h_n^{(1)}(k R) = \beta_{n,m}j_n(k \sqrt{1+1/\lad}R) \\
    \gnm \hh_n(k R) = \anm \jj_n(k \sqrt{1+1/\lad}R)
\end{align*}
\fi 

We readily observe that the coefficient matrices in above linear systems do not depend on the index $m$, and the equation \eqref{eq:eqtrassrep} has nontrivial solutions for $\lad \in \sigma_p(\td)\backslash\{0\}$ if and only if \eqref{eq:coeigenmode1} or \eqref{eq:coeigenmode2} has nonzero solutions for some index $n \in \NN^+$, or equivalently, the determinants of the associated coefficient matrices are zero:
\begin{equation} \label{eq:coeff1}
 h_n^{(1)}(k) \jj_n(k_\lad ) -  j_n(k_\lad ) \hh_n(k ) = 0\,,
\end{equation}
or 
\begin{equation} \label{eq:coeff2}
\frac{k^2}{k_\lad^2} h_n^{(1)}(k) \jj_n(k_\lad) - j_n(k_\lad ) \hh_n(k) = 0\,.
\end{equation}
% \begin{align*}
%   & \vp^{\lad,1}_{n,m}(x)  = j_n(k_\lad |x|)\vnm(\h{x}), \\
%   & \vp^{\lad,2}_{n,m}(x) =  \frac{1}{|x|}\jj_n(k_\lad|x|)\unm(\h{x}) + \frac{\sqrt{n(n+1)}}{|x|}j_n(k_\lad|x|)\ynm(\h{x})\h{x}.
% \end{align*}
% However, we can see both \eqref{for:eqforthm1} and \eqref{for:eqforthm2} having non-trivial solution gives us the same equation:
% \begin{equation} \label{eq:coeff}
%     k h_n^{(1)}(k) \jj_n(k_\lad) = k_\lad j_n(k_\lad) \hh_n(k),
% \end{equation}
% and $\vp^{\lad,2}_{n,m}(x) = \frac{1}{\sqrt{n(n+1)}}\ccurl \vp^{\lad,1}_{n,m}(x)$. 
To proceed, let us focus on the first case, i.e., equations \eqref{eq:coeigenmode1} and \eqref{eq:coeff1}. We note from the fact that all the zeros of $j_n(z)(n \in \NN^+)$, expect the possible point $z = 0$, are simple \cite{watson1995treatise} that $j_n(k_\lad)$ and $\jj_n(k_\lad)$ cannot vanish simultaneously. Neither can $h_n^{(1)}(k)$ and $\hh_n(k)$ by a similar observation. Then all the nontrivial solutions of \eqref{eq:coeigenmode1} have the form: $(\anm,\gnm) = c_{n,m}(\alpha_n,\gamma_n)$ with $\alpha_n,\gamma_n \neq 0$ and $c_{n,m}\in \C\backslash \{0\}$. Therefore, for $\lad$ such that \eqref{eq:coeff1} holds for some index $n$, there is an associated subspace spanned by the eigenfunctions $\{\wete\}_{m=-n}^{m=n}$. If the same $\lad$ happens to satisfy \eqref{eq:coeigenmode1} for index $n' \neq n$ or \eqref{eq:coeigenmode2} for index $n''$, we can find another (sub)eigenspace spanned by $\{\w{E}^{TE}_{n',m}\}_{m=-n'}^{m=n'}$ or $\{\w{E}^{TM}_{n'',m}\}_{m=-n''}^{m=n''}$, which is orthogonal to the aforementioned one. Moreover, the geometric multiplicity of $\lad$ is the sum of the dimensions of these subspaces, which must be finite, since all the eigenvalues of $\td$ except $-1$ are  eigenvalues of finite type, see Theorem \ref{thm:mainspec}. The same arguments can be applied to the system \eqref{eq:coeigenmode2}, as well as the equation \eqref{eq:coeff2}.
We summarize the above facts in the following theorem. 
\begin{theorem}
Denote by $\sigma^1_n$ and $\sigma^2_n$ the sets of $\lad$ such that \eqref{eq:coeff1} and \eqref{eq:coeff2} holds respectively, then we have the set of eigenvalues  of finite type of $\td$ for a spherical region $B(0,1)$ is given by 
\begin{equation*}
    \sigma_f(\td) = \sigma_p(\td) \backslash\{-1\} = \cup_{n=1}^\infty (\sigma^1_n \cup \sigma^2_n)\,.
\end{equation*}
And for each $\lad \in \sigma_f(\td)$, the finite-dimensional eigenspace is spanned by 
\begin{equation*}
    \cup_{i = 1}^2\cup_{n \in \Lambda_i}\cup_{m=-n}^n \w{E}^i_{n,m}(k_\lad,x),
\end{equation*}
where $\Lambda_i$, $i=1,2$, is a finite subset of $\NN^+$ such that $\lad \in \sigma_n^i$ for $n \in \Lambda_i$. Here, $\w{E}^i_{n,m}(k_\lad,x)$ , $i = 1,2$, denote the eigenfunctions $\wete(k_\lad,x)$ and $\wetm(k_\lad,x)$, respectively. 
\end{theorem}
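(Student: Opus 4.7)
The plan is to apply Lemma \ref{lem:eqtrass} to reduce $(\lad - \td)[\vp]=0$ (for $\lad\neq -1$) to finding a nontrivial radiating solution $E\in H_{loc}(\ccurl,\R^3)$ of the transmission problem \eqref{eq:eqtrassrep}, and then exploit the spherical symmetry via the Mie expansions \eqref{eq:repinte} inside $B(0,1)$ and \eqref{eq:repoute} in the exterior. Since both series are complete for the corresponding spaces of Maxwell solutions, the eigenvalue problem becomes completely equivalent to enforcing $[\h{x}\t E]=0$ and $[\h{x}\t \curl E]=0$ on $\p B(0,1)$.

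Using the trace formulas \eqref{eq:trramp} and \eqref{eq:trenmp} for the multipole fields together with the $L^2_T(S^2)$-orthonormality of $\{\unm,\vnm\}$, the two continuity conditions decouple across $(n,m)$ and across the TE/TM families. This directly yields the linear systems \eqref{eq:coeigenmode1} for $(\anm,\gnm)$ and \eqref{eq:coeigenmode2} for $(\bnm,\enm)$, whose coefficient matrices do not depend on $m$. A nonzero solution of the transmission problem exists precisely when at least one of these homogeneous $2\times 2$ systems admits a nontrivial solution for some $n\ge 1$, which is equivalent to the vanishing of its determinant, i.e., to \eqref{eq:coeff1} or \eqref{eq:coeff2}. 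Combined with the identity $\sigma_f(\td)=\sigma_p(\td)\setminus\{-1\}$ supplied by Theorem \ref{thm:mainspec}, this proves $\sigma_f(\td)=\bigcup_{n\ge 1}(\sigma^1_n\cup\sigma^2_n)$.

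For the eigenspace description, I would first observe that $j_n(\dd)$ and $\jj_n(\dd)$ cannot vanish simultaneously, since all nonzero zeros of $j_n$, $n\ge 1$, are simple, and likewise for $h_n^{(1)}$ and $\hh_n$; consequently, whenever \eqref{eq:coeff1} holds the null space of \eqref{eq:coeigenmode1} is one-dimensional in $(\alpha_n,\gamma_n)$. Letting $m$ range over $\{-n,\dots,n\}$ then produces, for each $n\in\Lambda_1$, a $(2n+1)$-dimensional block spanned by $\{\wete(k_\lad,\dd)\}_{m=-n}^n$, and the analogous statement for \eqref{eq:coeigenmode2} yields the blocks $\{\wetm(k_\lad,\dd)\}_{m=-n}^n$ for $n\in\Lambda_2$. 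These blocks are mutually linearly independent because they project onto distinct pairs of vector spherical harmonics on $\p B(0,1)$, so assembling them gives the claimed basis, and the finiteness of $\Lambda_1,\Lambda_2$ is forced by Theorem \ref{thm:mainspec} since every $\lad\in\sigma_f(\td)$ has finite algebraic multiplicity. The main subtle point I foresee is justifying the termwise identification of coefficients on the sphere: one needs the interior Mie series \eqref{eq:repinte} to converge in a suitable trace sense to produce the matching, and the representation to be unique so that no nontrivial solution of \eqref{eq:eqtrassrep} is missed by this construction.
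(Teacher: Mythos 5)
Your proof follows essentially the same route as the paper: reduce via Lemma \ref{lem:eqtrass} to the transmission problem \eqref{eq:eqtrassrep}, expand inside and outside $B(0,1)$ using the Mie series \eqref{eq:repinte} and \eqref{eq:repoute}, match Cauchy data through \eqref{eq:trramp}--\eqref{eq:trenmp} and $L^2_T(S^2)$-orthonormality to obtain the decoupled $2\times 2$ systems \eqref{eq:coeigenmode1}--\eqref{eq:coeigenmode2}, invoke the simplicity of the zeros of $j_n$ (and the non-vanishing of $h_n^{(1)},\hh_n$) to show the null spaces are one-dimensional with both components nonzero, and finally use Theorem \ref{thm:mainspec} both for $\sigma_f(\td)=\sigma_p(\td)\setminus\{-1\}$ and for the finiteness of $\Lambda_1,\Lambda_2$. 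The concern you raise at the end about termwise identification is handled implicitly by the completeness of the vector spherical harmonics basis in $L^2_T(S^2)$, which the paper also takes for granted, so the argument is sound and matches the paper's proof.
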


\begin{remark} \label{rem:eig-1ball}
As we have seen in Corollary \ref{cor:eigespe-1} and Remark \ref{rem:nonradi_1}, the eigenspace of eigenvalue $\lad = 1$ is given by $\na H_0^1(D)$, which are the nonradiating sources. For the case of the domain $B(0,1)$, it is spanned by the gradient of eigenfunctions $u_n$ of the Dirichlet Laplacian, that is,
\begin{equation*}
    \left\{
    \begin{array}{ll}
  \Delta u_{n} = -k_n^2 u_{n}  & \text{in} \  B(0,1)\,, \\
  u_n = 0  & \text{on} \ \p B(0,1)\,. \end{array}\right.
\end{equation*}
 The explicit formulas of the Dirichlet eigenvalues $k_n$ and eigenfunctions $u_n$ are available in \cite{folland1995introduction}. It is also worth mentioning that in the above argument, we have actually proved that all of these eigenfunctions: $\wete$ and $\wetm$ are the radiating sources, since both solution spaces of \eqref{eq:coeigenmode1} and \eqref{eq:coeigenmode2} are one-dimensional and  spanned by some vector $p \in \C^2$ with non-vanishing components $p_1,p_2$, i.e., $p_1,p_2 \neq 0$.
%%%%%%%%%%%%%
% the explicit formula for eigenvalue and eigenfunctions
%%%%%%%%%%%%%
% Eigenfunction is given by 
% \begin{align*}
%     & 2^{1/2}|J_{n+\frac{3}{2}}(k_n^l)|^{-1}r^{-\frac{1}{2}}J_{n+\frac{1}{2}}(k_n^l r)\ynm(\h{x})\\
% = &  \frac{2}{\sqrt{\pi}} |J_{n+\frac{3}{2}}(k_n^l)|^{-1} j_n(k_n^l |x|)\ynm(\h{x}) 
% \end{align*}
% Here, $k_n^l(l=1,2,3\cdots)$ be positive zeros of $J_{n+\frac{1}{2}}$. And
% \begin{equation*}
%     \frac{2}{\sqrt{\pi}} |J_{n+\frac{3}{2}}(k_n^l)|^{-1} j_n(k_n^l |x|)
% \end{equation*}
% are an orthonormal basis for $L^2((0,1),r^2 dr)$. To check this, we need following formulas: 
% \begin{equation*}
%     \int_0^1 x J_{\alpha}(a_{\alpha,m}x)J_{\alpha}(a_{\alpha,n}x)dx =  \frac{\d_{m,n}}{2}[J_{\alpha+1}(u_{\alpha,m})]^2. 
% \end{equation*}
% Define $C_n^l = \frac{2}{\sqrt{\pi}} |J_{n+\frac{3}{2}}(k_n^l)|^{-1}$, and we have
% \begin{align*}
%     \na j_n(k_n^l |x|)\ynm(\h{x}) = j_n(k_n^l|x|) \na_S \ynm(\h{x}) + j'_n(k_n^l|x|)\ynm(\h{x})\h{x}.
% \end{align*}
\end{remark}

\subsubsection{Asymptotic behavior of eigenvalues}
This subsection is devoted to the understanding of the distribution of eigenvalues in $\sigma_n^i$ for $i = 1,2$, namely, the eigenvalues of $\td$. For this purpose, it suffices to investigate the zeros of $f_n^i(z)$ for $i=1,2$ on $\C\backslash\{0\}$, where $f_n^i(z)$ are introduced by the right-hand side of \eqref{eq:coeff1} and \eqref{eq:coeff2} by setting $z = k_\lad$, i.e.,
\begin{align}
& f_n^1(z) =  h_n^{(1)}(k) \jj_n(z) -  j_n(z) \hh_n(k), \label{def:fn1}\\
& f_n^2(z) =  \frac{k^2}{z^2} h_n^{(1)}(k) \jj_n(z) - j_n(z) \hh_n(k). \label{def:fn2}
\end{align}
We readily see from the analyticity of $z^{-n}j_{n}(z)$ on $\C$ that $f_n^i$ for $i = 1,2$ and $n \in \NN^+$ are analytic on the whole complex plane $\C$, except that $f_1^2(z)$ is a meromorphic function on $\C$ with $0$ being its only simple pole. By the symmetry property of $j_n(z)$: $j_n(-z) = (-1)^nj_n(z)$ \cite{olver2010nist}, we have
\begin{equation*}
    \jj_n(-z) = j_n(-z)+ (-z)j_n'(-z) = (-1)^nj_n(z) + (-1)^n zj_n'(z) = (-1)^n \jj_n(z)\,,
\end{equation*}
which directly gives us the following lemma.
\begin{lemma} \label{lem:symf_12}
For $f_n^i(z)$, $n\in \NN^+,i=1,2$ defined by \eqref{def:fn1} and \eqref{def:fn2}, the following symmetry properties hold,
\begin{equation*}
     f_n^1(-z) = (-1)^n f_n^1(z)\,,\q f_n^2(-z) = (-1)^n f_n^2(z)\,.
\end{equation*}
\end{lemma}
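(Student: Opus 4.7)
The plan is to substitute $-z$ directly into the definitions \eqref{def:fn1} and \eqref{def:fn2} and invoke the classical parity identity $j_n(-z) = (-1)^n j_n(z)$ for spherical Bessel functions. Note that the paragraph immediately preceding the lemma already carries out the key auxiliary step, showing that this parity propagates to $\jj_n$ via
\[
\jj_n(-z) = j_n(-z) + (-z)j_n'(-z) = (-1)^n\bigl(j_n(z) + z j_n'(z)\bigr) = (-1)^n \jj_n(z),
\]
so both $j_n(z)$ and $\jj_n(z)$ transform by the same sign $(-1)^n$ under $z \mapsto -z$, while $h_n^{(1)}(k)$ and $\hh_n(k)$ are $z$-independent and unaffected.

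For $f_n^1$, the verification is one line: substituting gives
\[
f_n^1(-z) = h_n^{(1)}(k)\jj_n(-z) - j_n(-z)\hh_n(k) = (-1)^n\bigl[h_n^{(1)}(k)\jj_n(z) - j_n(z)\hh_n(k)\bigr] = (-1)^n f_n^1(z).
\]
For $f_n^2$ the only additional ingredient is the rational prefactor $k^2/z^2$: since $(-z)^2 = z^2$, this factor is invariant under the sign flip, and the same two-line computation yields the claim. Because $f_n^2$ is meromorphic with a simple pole only at $z = 0$ (as noted just above the lemma), the identity is understood pointwise on $\C\setminus\{0\}$, which is enough for the subsequent analysis.

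There is essentially no obstacle here — the lemma is a direct bookkeeping consequence of the parity of $j_n$ that the authors have already foregrounded. I would therefore write out only the two displayed substitutions, reference the preceding computation for $\jj_n$, and remark that the $k^2/z^2$ prefactor is even in $z$ to justify the second identity without repeating the algebra.
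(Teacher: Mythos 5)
Your proposal is correct and follows exactly the paper's own route: the authors record $j_n(-z)=(-1)^nj_n(z)$, derive $\jj_n(-z)=(-1)^n\jj_n(z)$ from it, and then state that the lemma follows directly, which is precisely the substitution argument you write out (including the observation that the prefactor $k^2/z^2$ in $f_n^2$ is even). No discrepancy to report.
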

As a consequence of Lemma \ref{lem:symf_12}, the zeros of $f_n^i$ are symmetric with respect to the origin. To obtain an intuition about the behavior of those zeros, we numerically compute the zeros of $f_n^i$ for $i =1,2$ and different values of $n$ in the right half plane $\{z\in \C\,;\  -\frac{\pi}{2} < \arg(z)\le \frac{\pi}{2}\}$, by Muller's method \cite{ammari2018mathematical}. As we can observe in Figure \ref{fig:zeros}, the zeros of $f_n^i(z)$ are complex and lie in the lower half-plane. This fact has been theoretically justified by Theorem \ref{prop:diseig}. Also the overall magnitudes of their imaginary parts rapidly decrease as the value of $n$ increases. Moreover, it is remarkable to note that for fixed $i$ and $n$, there is a sequence of zeros of $f_n^i$ tending exponentially fast to the real axis. It motivates us to investigate the asymptotic behavior of zeros of $f_n^i(z)$ as $|z| \to \infty$.

\begin{figure}[!htbp]
\includegraphics[clip,width=0.35\textwidth]{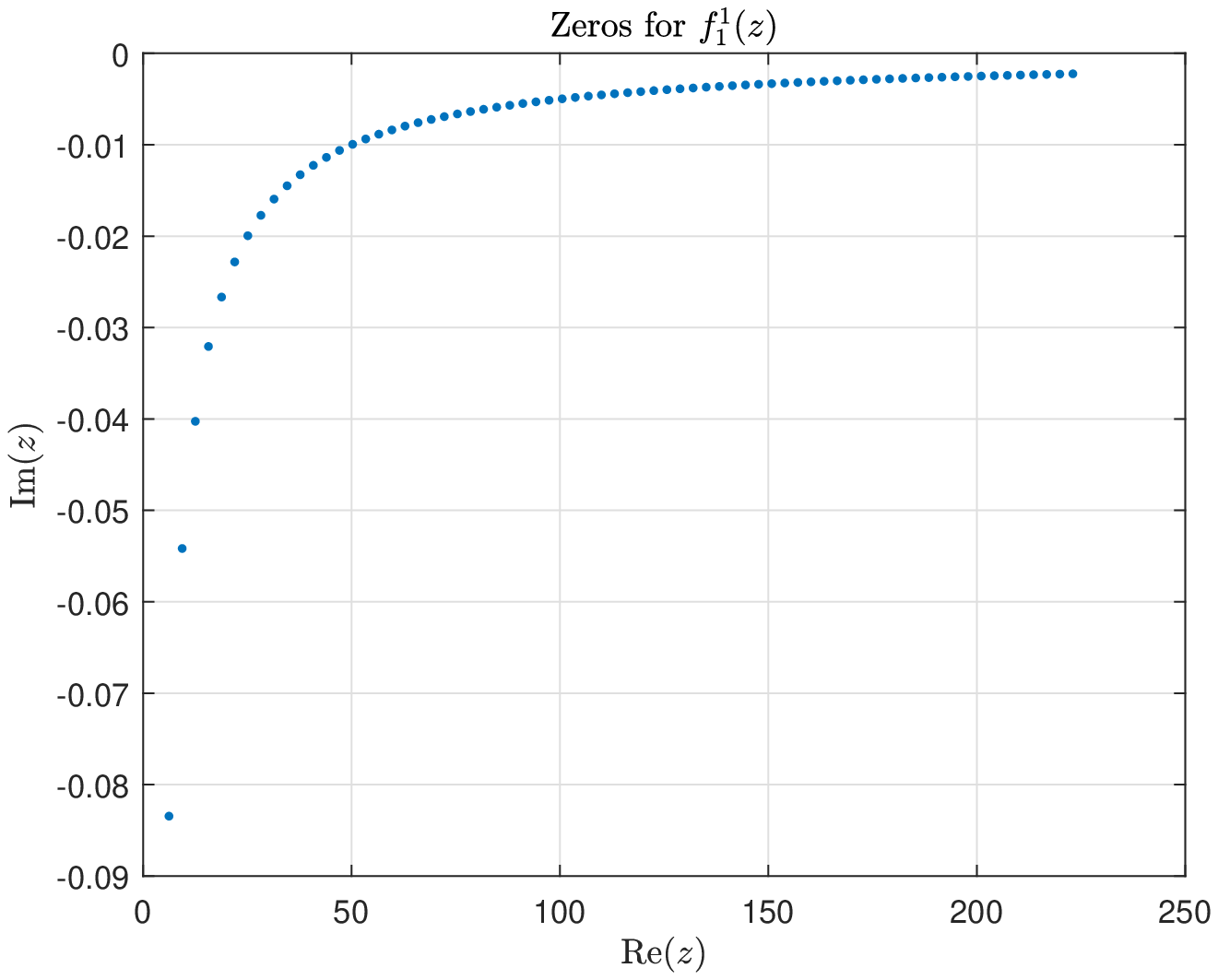} \hskip -0.6cm
\includegraphics[clip,width=0.35\textwidth]{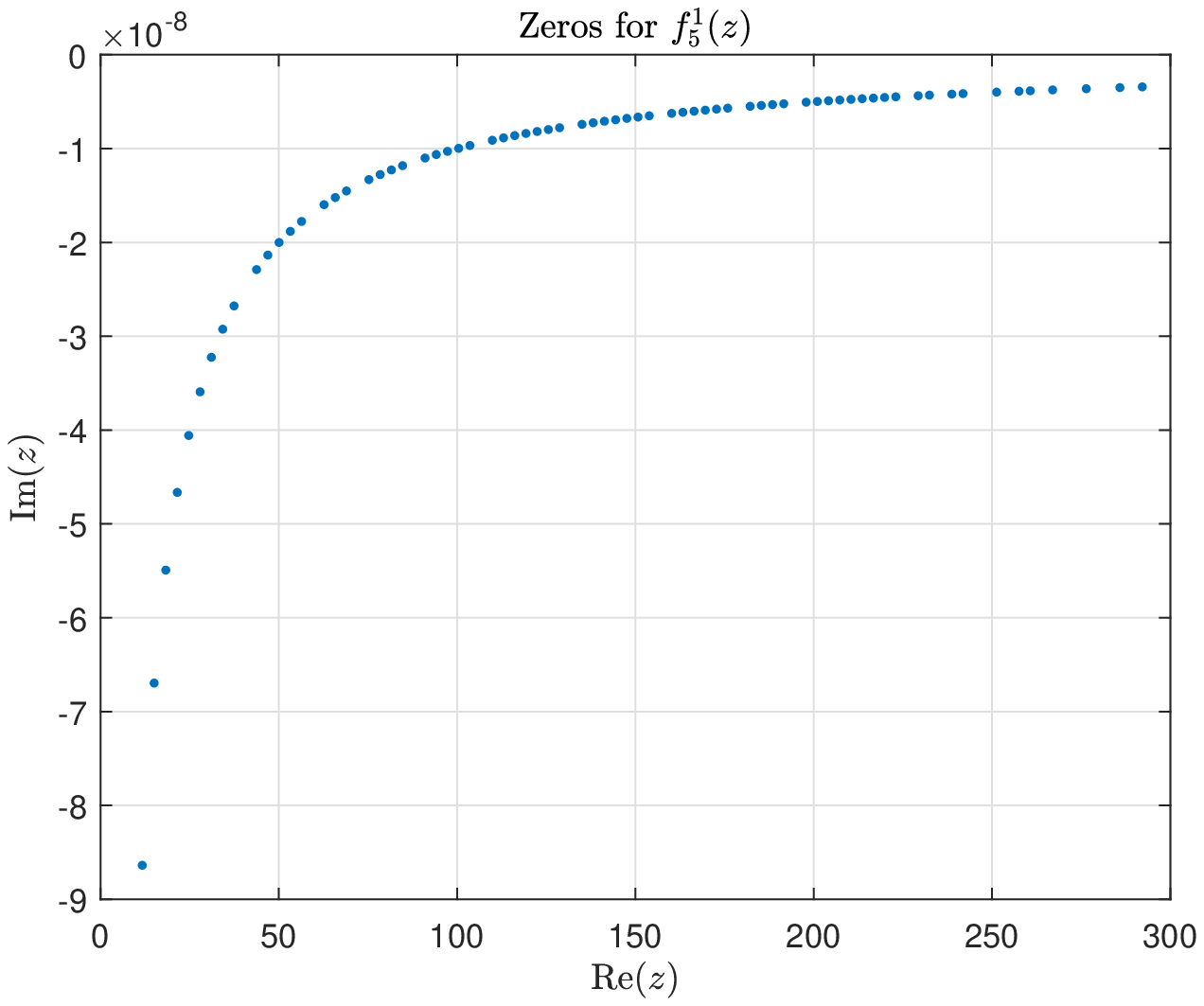} \hskip -0.6cm
\includegraphics[clip,width=0.35\textwidth]{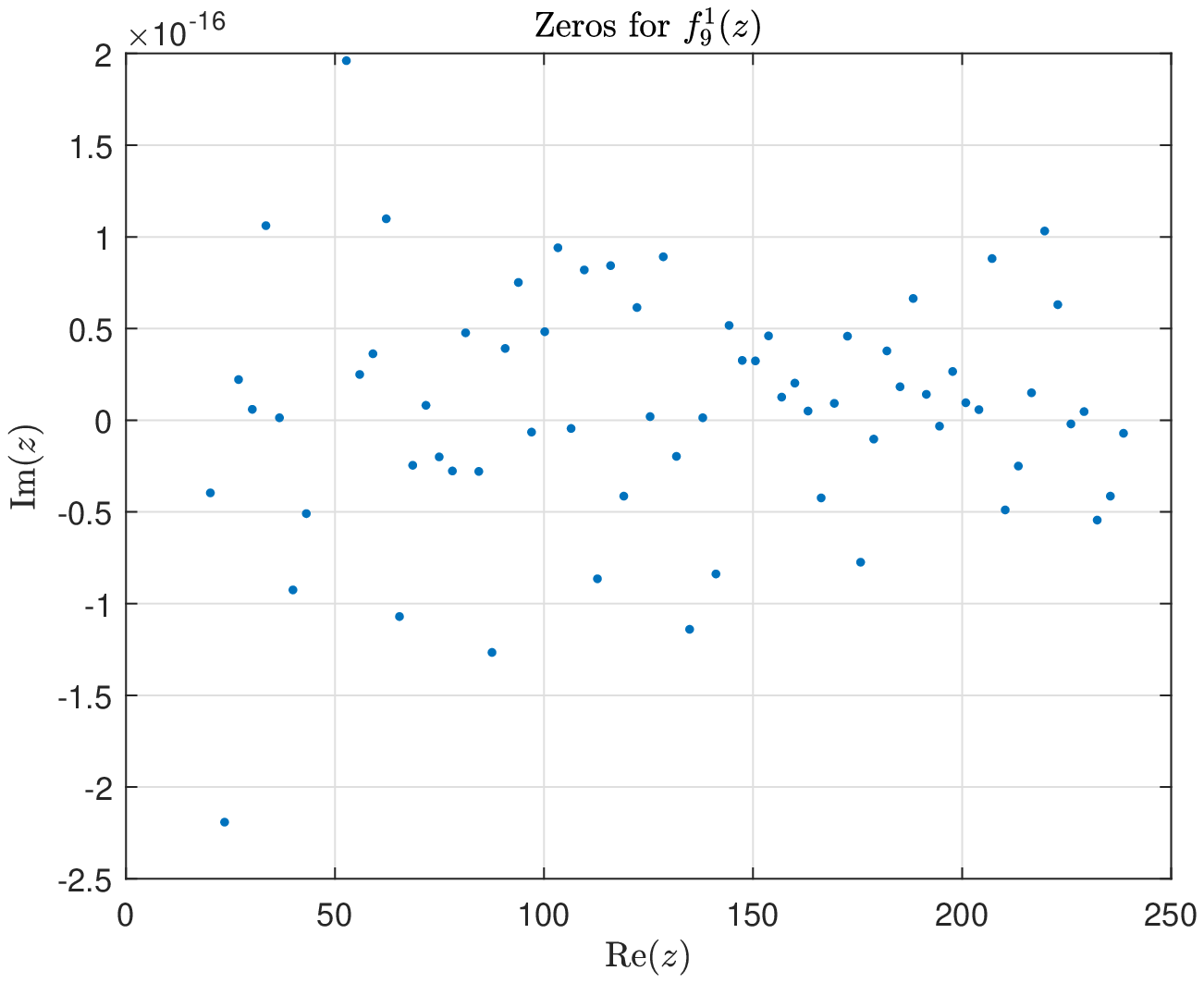} \\
\includegraphics[clip,width=0.35\textwidth]{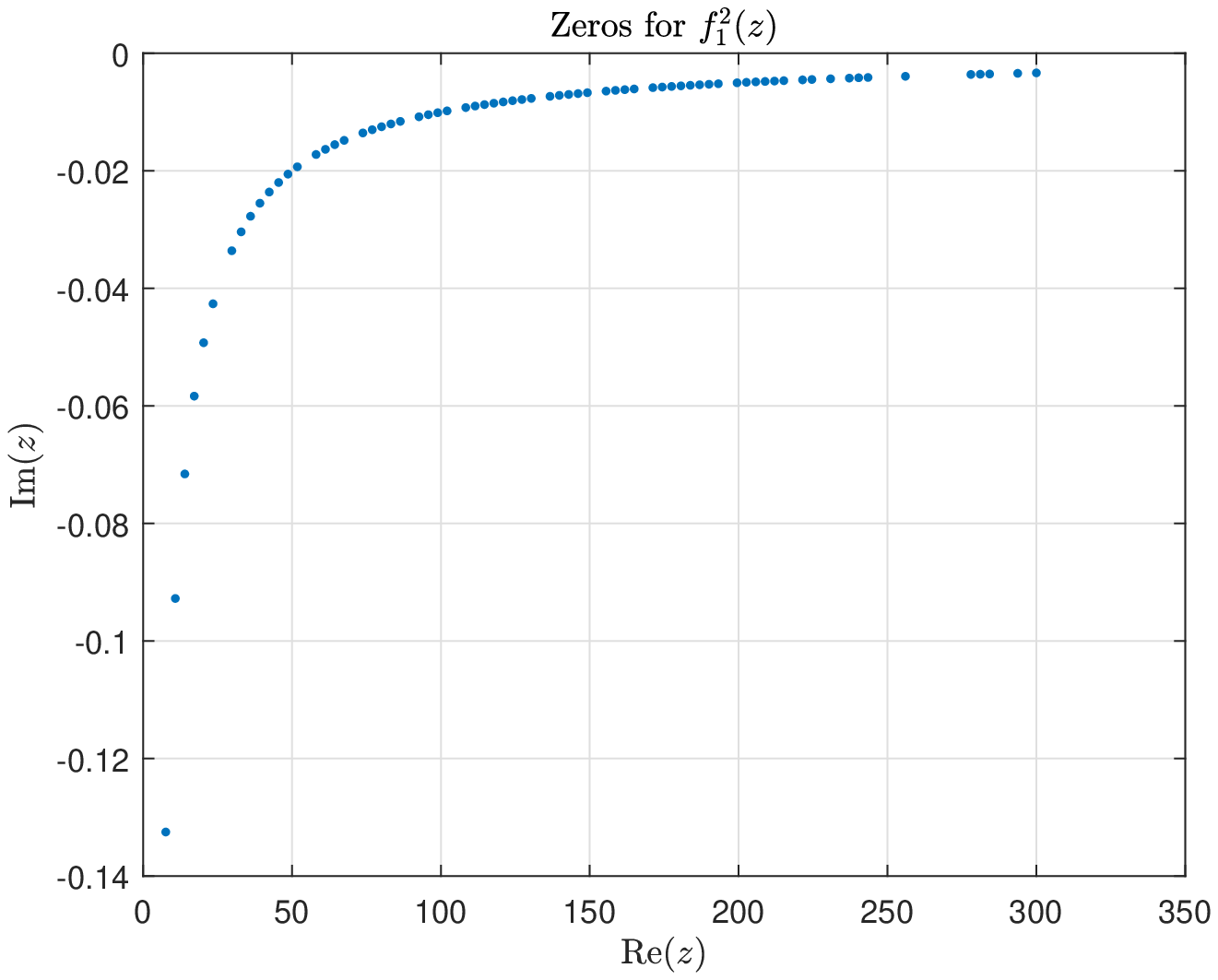} \hskip -0.6cm
\includegraphics[clip,width=0.35\textwidth]{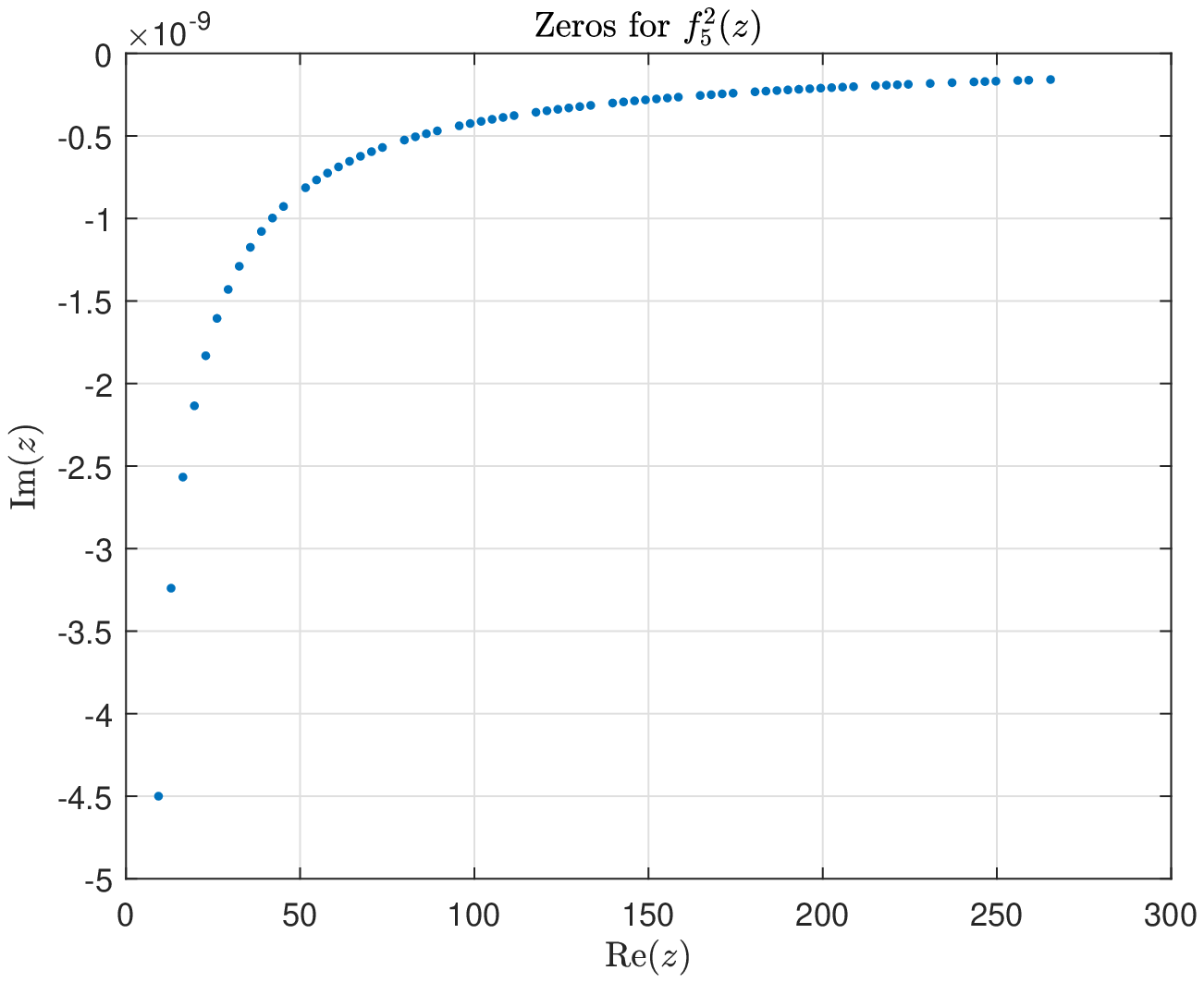} \hskip -0.6cm
\includegraphics[clip,width=0.35\textwidth]{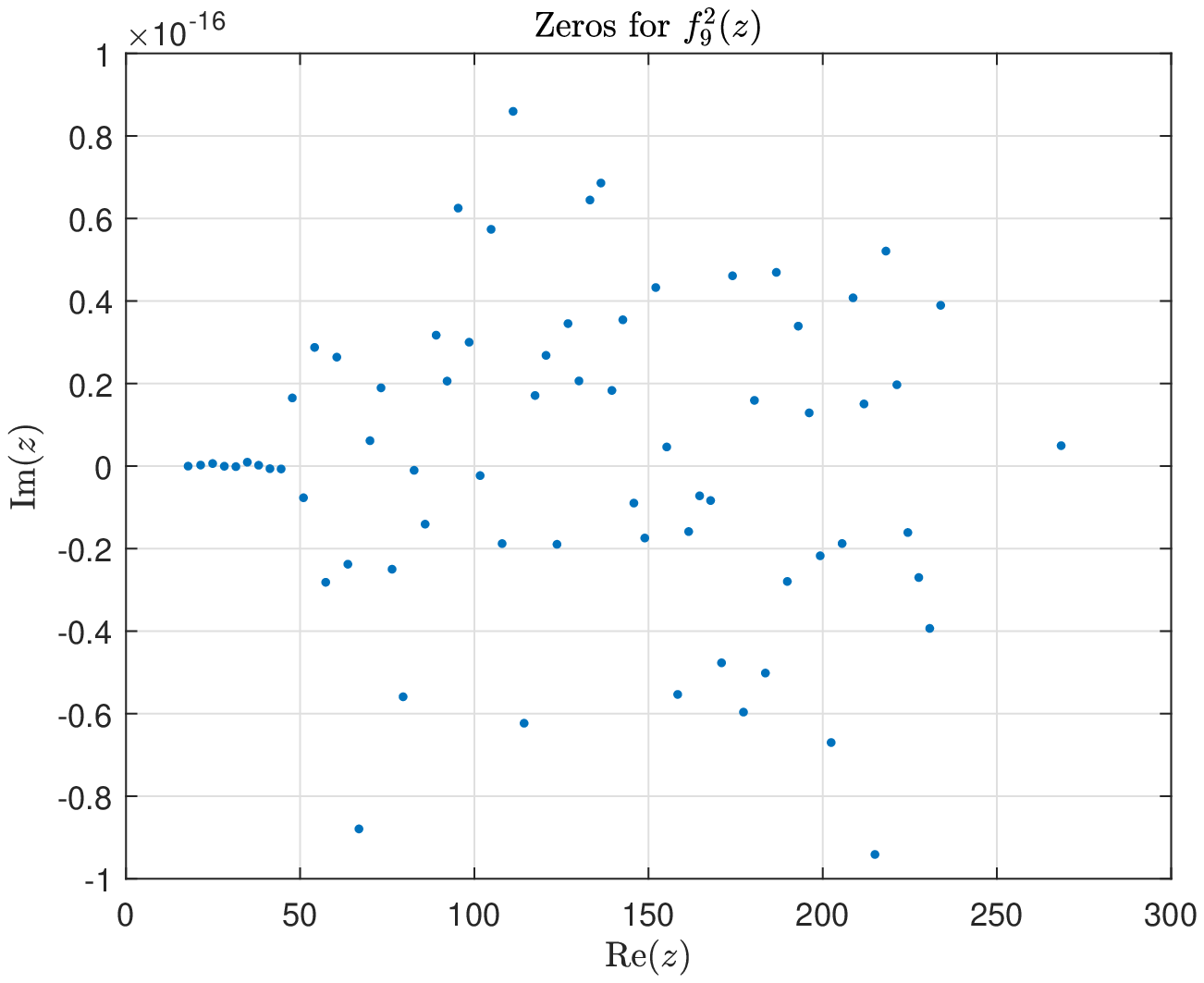}
\caption{\label{fig:zeros} 70 zeros of $f_n^i(z)$ for $i=1$ (the first row), $i = 2$ (the second row), and $n=1,5,9$ (from left to right) in the right half plane: $\{z\in \C\,;\  -\frac{\pi}{2} < \arg(z)\le \frac{\pi}{2}\}$.}
\end{figure}

For this, we first consider $f_n^1(z)$ and see the following asymptotics
from \eqref{eq:aymjn} and \eqref{eq:aymjjn} that for $|\arg(z)| < \pi$,  
\begin{align} \label{eq:asyf1}
    f_n^1(z) &= h_n^{(1)}(k) \cos\left(z - \frac{n\pi}{2}\right)- \frac{1}{z} \hh_n(k)\cos\left(z - \frac{n \pi}{2} - \frac{\pi}{2}\right) + e^{|\im z|} O\left(\frac{1}{|z|}\right)\notag \\
  & = h_n^{(1)}(k) \cos\left(z- \frac{n\pi}{2}\right) +  e^{|\im z|} O\left(\frac{1}{|z|}\right) \q \text{as}\   |z|\to \infty \,,
\end{align}
\zz{where we have also utilized the fact that both $h_n^{(1)}(z)$ and $\hh_n(z)$ do not have real zeros.
} % without loss of generality, we assume that both $h_n^{(1)}(k)$ and $\hh_n(k)$ do not vanish, see Remark \ref{rem:othcase} for a discussion about other cases.  
In view of \eqref{eq:obserrem}, we can find generic positive constants $C_1$, $C_2$ and $C_3$ depending on $n$ such that
\begin{equation*}
    |f_n^1(z)| \ge |h_n^{1}(k)|\frac{e^{|\im z|}-1}{2} - e^{|\im z|} C_1 \frac{1}{|\im z|} \ge C_2\,,
\end{equation*}
when $|\im z| \ge C_3$. Combining the above estimate with the symmetry of the zeros,
it readily follows that the zeros of $f_n^1(z)$ must lie in the strip:
\begin{equation*}
 \{z\in \C\,;\ |\im z|\le C_3\}\,.
\end{equation*}
In this region, the remainder term  $e^{|\im z|}O(|z|^{-1})$ in \eqref{eq:asyf1} converges to zero as $|z| \to \infty$. Since all the zeros of the entire function $\cos(z-\frac{n \pi }{2})$ are real and simple, given by 
\begin{equation} \label{eq:zerocos}
    \w{z}_{n,l} = \frac{(1+2l+n)\pi}{2}\,,\q l \in \NN\,,
\end{equation}
we foresee that there are zeros of $f_n^1(z)$ lying near $\w{z}_{n,l}$ when $|z|$ is large enough, which is indeed the case, by a direct application of Rouch\'{e}'s theorem and inverse function theorem. To see this, we define the entire function $g_n(z) = h_n^{(1)}(k)\cos(z-\frac{n\pi}{2})$ on the complex plane $\C$, which has the minimal period $2\pi$ in the sense that if $\alpha \in \C$ satisfies $g_n(z + \alpha) = g_n(z)$ for all $z$, then $\alpha = 2 \pi m$ for some integer $m$. Noting that $g'_n(\w{z}_{n,l}) \neq 0$ for $l \in \NN$, by the inverse function theorem, we can find an open neighborhood $V_n$ of $\w{z}_{n,0}$ and an open neighborhood $W_n$ of
the origin such that $g(V_n + l\pi) = (-1)^l W_n$ and $g$ is an analytic isomorphism from the neighborhood $V_n + l \pi$  of $\w{z}_{n,l}$ to the neighborhood $(-1)^lW_n$ of $0$ for each $l \in \Z$, where we also use the periodicity and symmetry of $g_n(z)$ : $g_n(z + l\pi) = (-1)^l g_n(z)$, $l \in \Z$. 
We denote by $r_n$ the radius of the largest ball contained in $V_n$ with center at the $\w{z}_{n,0}$,  and define 
\begin{equation*}
    M_n := \inf_{z \in \p B(\w{z}_{n,l},r_n)}|g_n(z)|\,,
\end{equation*}
which is independent of the value of $l$. When $l$ is large enough, we can guarantee 
\begin{equation*}
    \sup_{z \in \p B(\w{z}_{n,l},r_n)}|f^1_n(z)-g_n(z)| <  M_n\,,
\end{equation*}
by using the asymptotic expansion \eqref{eq:asyf1}. Then, the Rouch\'{e}'s theorem helps us to conclude that in the region $B(\w{z}_{n,l},r_n)\subset V_n + l \pi$, $f_n^1(z)$ has a simple zero denoted by $z_{n,l}$. It then directly follows that  $g_n(z_{n,l}) \in (-1)^l W_n$ and 
\begin{equation} \label{eq:appznl_1}
  0 = g_n(\w{z}_{n,l}) = f_n^1(z_{n,l}) = g_n(z_{n,l}) + O\left(\frac{1}{|z_{n,l}|}\right)\,.
\end{equation}
Hence we have, by using \eqref{eq:appznl_1} and
the local invertibility of $g'_n$,
\begin{equation*}
\frac{|z_{n,l}-\w{z}_{n,l}|}{|g_n(z_{n,l})-g_n(\w{z}_{n,l})|} = \frac{|z_{n,l}-\w{z}_{n,l}|}{|g_n(z_{n,l}))|} \le \sup_{\xi \in (-1)^lW_n }|(g_n^{-1})'(\xi)| = \sup_{z\in V_n} |g_n'(z)|^{-1} < + \infty\,,
\end{equation*}
which immediately implies 
\begin{equation} \label{eq:localre}
    |z_{n,l}-\w{z}_{n,l}| \le C_n \frac{1}{|z_{n,l}|} \le C_n |l|^{-1}, 
\end{equation}
for large enough $l$, where $C_n$ denotes a generic constant depending on $n$ and may have different \zz{values in the following}. Further, considering the fact that a non-constant analytic function on the closure of a bounded domain can only have finite zeros, we arrive at the following result.
\begin{lemma}
The zeros of $f_n^1(z)$ are symmetric with respect to the origin and contained in the strip: $\{z\in \C\,;\ |\im z|\le C\}$ for some constant $C$. Let $\{z^1_{n,l}\}_{l\in \NN}$ denote the zeros with $-\frac{\pi}{2} < \arg(z)\le \frac{\pi}{2}$. Then $\{z^1_{n,l}\}$ has the following estimate:
\begin{equation} \label{eq:est_zeros1}
    |z^1_{n,l}-\w{z}_{n,l}^1| \le C_n l^{-1} \q \text{for all}\ l \in \NN^+\,,
\end{equation}
where $\{\w{z}_{n,l}^1\}$ is given by \eqref{eq:zerocos}.
\end{lemma}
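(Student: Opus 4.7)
The proof breaks into four pieces, three of which are essentially sketched in the discussion preceding the lemma; my plan is to organize them carefully and address the one subtle indexing issue.

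First, I would record the symmetry: since Lemma \ref{lem:symf_12} gives $f_n^1(-z)=(-1)^n f_n^1(z)$, the zero set is invariant under $z \mapsto -z$, so restricting to the closed right half-plane $\{-\tfrac{\pi}{2}<\arg z\le \tfrac{\pi}{2}\}$ loses no information. Next, for the strip containment, I would exploit the asymptotic expansion \eqref{eq:asyf1}:
\begin{equation*}
f_n^1(z) = h_n^{(1)}(k)\cos\!\bigl(z-\tfrac{n\pi}{2}\bigr) + e^{|\im z|} O(|z|^{-1}),
\end{equation*}
together with the elementary bound $|\cos(z-\tfrac{n\pi}{2})|\ge (e^{|\im z|}-1)/2$ and the fact that $h_n^{(1)}(k)\neq 0$. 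A direct comparison shows that for $|\im z|\ge C_3$ the leading term dominates the remainder, so $|f_n^1(z)|$ stays bounded away from zero. Thus every zero sits in the horizontal strip $\{|\im z|\le C_3\}$.

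The heart of the proof is the asymptotic location. Set $g_n(z):=h_n^{(1)}(k)\cos(z-\tfrac{n\pi}{2})$, an entire $2\pi$-periodic function whose only (simple) zeros are the $\widetilde{z}_{n,l}$ from \eqref{eq:zerocos}. By the inverse function theorem, there exist neighborhoods $V_n\ni \widetilde{z}_{n,0}$ and $W_n\ni 0$ so that $g_n$ is an analytic isomorphism from each translate $V_n+l\pi$ onto $(-1)^l W_n$. Let $r_n$ be the inradius of $V_n$ at its center and set $M_n:=\inf_{z\in\partial B(\widetilde{z}_{n,0},r_n)}|g_n(z)|>0$, which by periodicity equals the corresponding infimum on every $\partial B(\widetilde{z}_{n,l},r_n)$. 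The asymptotic \eqref{eq:asyf1} forces $\sup_{z\in\partial B(\widetilde{z}_{n,l},r_n)}|f_n^1(z)-g_n(z)|<M_n$ once $l$ exceeds some threshold $L_n$. Rouché's theorem then yields a unique simple zero $z_{n,l}^1\in B(\widetilde{z}_{n,l},r_n)$ of $f_n^1$ for every $l\ge L_n$. The rate \eqref{eq:localre} follows by writing
\begin{equation*}
|z_{n,l}^1-\widetilde{z}_{n,l}| \;=\; |g_n^{-1}(g_n(z_{n,l}^1))-g_n^{-1}(0)| \;\le\; \bigl(\sup_{V_n}|g_n'|^{-1}\bigr)\,|g_n(z_{n,l}^1)|,
\end{equation*}
and then using $g_n(z_{n,l}^1)=f_n^1(z_{n,l}^1)-(f_n^1(z_{n,l}^1)-g_n(z_{n,l}^1))=O(|z_{n,l}^1|^{-1})=O(l^{-1})$.

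Finally, I would tie up the indexing. The analysis above produces a sequence $\{z_{n,l}^1\}_{l\ge L_n}$ matching $\{\widetilde{z}_{n,l}\}_{l\ge L_n}$ with the desired rate. Inside the compact region $\{|\im z|\le C_3,\;|\re z|\le L_n\pi\}\cap\{\re z\ge 0\}$ the analytic function $f_n^1$ has only finitely many zeros (it is not identically zero, since $g_n$ has nonzero limits at infinity along the real axis), so we may index these finitely many remaining zeros as $z_{n,0}^1,\ldots,z_{n,L_n-1}^1$ in any order. By enlarging $C_n$ to absorb the maxima $l\,|z_{n,l}^1-\widetilde{z}_{n,l}|$ over the finite set $l<L_n$, the bound $|z_{n,l}^1-\widetilde{z}_{n,l}|\le C_n l^{-1}$ holds for every $l\in\mathbb{N}^+$, completing the proof. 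The only genuinely delicate point is ensuring that the Rouché disks around distinct $\widetilde{z}_{n,l}$ are disjoint (which holds because $r_n<\pi/2$ by construction) and that no zeros of $f_n^1$ in the right half-plane escape this enumeration, handled by the finite-zero argument in the compact window.
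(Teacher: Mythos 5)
Your proof is correct and follows essentially the same route as the paper: symmetry via Lemma~\ref{lem:symf_12}, strip containment via \eqref{eq:asyf1} and \eqref{eq:obserrem}, Rouch\'e's theorem on the periodic disks $B(\w{z}_{n,l},r_n)$, the rate $O(l^{-1})$ via local invertibility of $g_n$, and absorption of the finitely many small-$l$ zeros into $C_n$. One small correction to a parenthetical: $g_n(z)=h_n^{(1)}(k)\cos\bigl(z-\tfrac{n\pi}{2}\bigr)$ does \emph{not} have a nonzero limit at real infinity (it oscillates through its zeros); the nonvanishing of $f_n^1$ is instead immediate from the strip estimate $|f_n^1(z)|\ge C_2$ for $|\im z|\ge C_3$.
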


Recall that what we are truly interested in is $\lad^1_{n,l} :=  k^2/((z^1_{n,l})^2-k^2)\in \sigma_n^1$. We translate the above lemma  with respect to $z^1_{n,l}$ to $\lad^1_{n,l}$ and obtain
\begin{equation*}
\left|\lad^1_{n,l} - \frac{4k^2}{(1+2l+n)^2\pi^2 - 4k^2}\right|\le C_n |l|^{-4}\,,
\end{equation*}
by applying the mean-value theorem to the one-dimensional function $h(t) = 
k^2/((\w{z}^1_{n,l}+t(z^1_{n,l}-\w{z}^1_{n,l}))^2-k^2)$ on $[0,1]$.  This estimate can be further simplified as follows: 
\begin{equation*}
\left|\lad^1_{n,l} - \frac{4k^2}{(1+2l+n)^2\pi^2}\right|\le C_n |l|^{-4}\q \text{as} \ l\to +\infty \,.
\end{equation*}

For our second case, by a very similar argument applied to $zf_n^2(z)$, which has the same zeros away from the origin as $f_n^2(z)$ and satisfies the following asymptotic form:
\begin{align}  \label{eq:asyf2}
  z f_n^2(z) &= \frac{k^2}{z} h_n^{(1)}(k) \cos\left(z - \frac{n\pi}{2}\right)-  \hh_n(k)\cos\left(z - \frac{n \pi}{2} - \frac{\pi}{2}\right) + e^{|\im z|} O\left(\frac{1}{|z|}\right)\notag \\
  & =-  \hh_n(k)\cos\left(z - \frac{n \pi}{2} - \frac{\pi}{2}\right) + e^{|\im z|} O\left(\frac{1}{|z|}\right)\q \text{as} \ |z|\to \infty \,,
\end{align}
we can obtain that the zeros $\{z^2_{n,l}\}$ of $f_n^2(z)$ in the right half plane satisfy the estimate:
\begin{equation} \label{eq:est_zeros2}
     \left|z^1_{n,2}-\w{z}_{n,l}^2\right| \le C_n l^{-1} \q \text{with}\ \w{z}_{n,l}^2 := \frac{(2l + n)\pi}{2}\q \text{for all}\ l \in \NN^+\,, 
\end{equation}
and the associated $\{\lad^2_{n,l}\} \subset \sigma_n^2$ have the asymptotics:
\begin{equation*}
    \left|\lad_{n,l}-\frac{4k^2}{(2l+n)^2\pi^2}\right|\le C_n |l|^{-4}\q \text{as} \ l\to +\infty \,.
\end{equation*}

We now give the main result of this subsection.
\begin{theorem} \label{thm:asyeigenvalue}
Let $\{\lad_{n,l}^i\}_{l \in \NN}$ be the eigenvalues in $\sigma_n^i$ for $i = 1,2$ and $n \in \NN^+$. Then, when $l \to +\infty$, the following asymptotic estimates hold,
\begin{equation} \label{eq:asyeigenvalue}
    \left|\lad^1_{n,l} - \frac{4k^2}{(1+2l+n)^2\pi^2}\right| = O(l^{-4})\,,\q  \left|\lad^2_{n,l}-\frac{4k^2}{(2l+n)^2\pi^2}\right| = O(l^{-4})\,.
\end{equation}
\end{theorem}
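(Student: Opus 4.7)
The plan is to reduce the question about $\lad_{n,l}^i$ to a question about the zeros of the entire function $f_n^1(z)$ and the meromorphic function $zf_n^2(z)$ (cf.\,\eqref{def:fn1}--\eqref{def:fn2}) under the substitution $z = k_\lad$, and then to transfer sharp asymptotic estimates on these zeros to the eigenvalues via the map $\Phi(z) := k^2/(z^2-k^2)$. The starting point is the classical large-argument expansion of $j_n$ and $\jj_n$, from which, after cancelling the $O(1/|z|)$ trigonometric correction, one obtains
\begin{equation*}
   f_n^1(z) = h_n^{(1)}(k)\cos\!\Bigl(z-\tfrac{n\pi}{2}\Bigr) + e^{|\im z|}O\!\left(\tfrac{1}{|z|}\right),\q z f_n^2(z) = -\hh_n(k)\cos\!\Bigl(z-\tfrac{n\pi}{2}-\tfrac{\pi}{2}\Bigr) + e^{|\im z|}O\!\left(\tfrac{1}{|z|}\right)
\end{equation*}
as $|z|\to\infty$ in $|\arg z|<\pi$. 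The leading cosine factors $g_n(z)$ have simple real zeros given explicitly by \eqref{eq:zerocos} (and the analogue for $f_n^2$).

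Next I would confine the zeros of $f_n^i$ to a horizontal strip $\{|\im z|\le C_n\}$. Outside such a strip, the inequality $|g_n(z)|\gtrsim |h_n^{(1)}(k)|\,e^{|\im z|}$ (or the $\hh_n(k)$ analogue for $zf_n^2$) dominates the remainder $e^{|\im z|}O(|z|^{-1})$, so $f_n^i$ cannot vanish. Together with the symmetry $f_n^i(-z)=(-1)^n f_n^i(z)$ from Lemma \ref{lem:symf_12}, this reduces the problem to the right half-plane inside the strip. In that region the remainder genuinely decays like $O(|z|^{-1})$, so Rouch\'e's theorem applied on fixed-radius disks $B(\w z_{n,l}^i,r_n)$, for all sufficiently large $l$, produces exactly one simple zero $z_{n,l}^i$ of $f_n^i$ per disk. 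A local inversion based on $g_n'(\w z_{n,l}^i)\neq 0$ and the $\pi$-periodicity of $g_n$ upgrades this to the quantitative bound
\begin{equation*}
    |z_{n,l}^i - \w z_{n,l}^i| \;\le\; C_n \,l^{-1}\qquad \text{for all large } l.
\end{equation*}

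Finally I would push this estimate through the map $\Phi(z) = k^2/(z^2-k^2)$. Since $\w z_{n,l}^i = O(l)$, the mean-value theorem along the segment joining $z_{n,l}^i$ and $\w z_{n,l}^i$ gives $|\Phi'|=O(l^{-3})$, so the $O(l^{-1})$ error in $z$ becomes an $O(l^{-4})$ error in $\lad$. Replacing $\Phi(\w z_{n,l}^i) = k^2/((\w z_{n,l}^i)^2 - k^2)$ by the purely leading expression $4k^2/((1+2l+n)^2\pi^2)$, respectively $4k^2/((2l+n)^2\pi^2)$, only introduces a further $O(l^{-4})$ error, yielding \eqref{eq:asyeigenvalue}.

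The main obstacle is the strip-confinement step: the remainder $e^{|\im z|}O(|z|^{-1})$ grows at the same exponential rate as the leading cosine, so one cannot a priori rule out zeros escaping to infinity in the imaginary direction. The argument ultimately rests on the fact that $h_n^{(1)}(k)$ and $\hh_n(k)$ have no real zeros, so the cosine coefficient stays uniformly bounded below and the leading term swamps the error once $|\im z|$ exceeds a suitable $n$-dependent constant. Once this is in place, the Rouch\'e step and the transfer to $\lad$-coordinates are fairly standard.
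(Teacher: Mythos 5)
Your proposal is correct and follows essentially the same route as the paper: asymptotic expansion of $f_n^1$ and $zf_n^2$, strip confinement via the lower bound $|\cos z|\ge (e^{|\im z|}-1)/2$ combined with the non-vanishing of $h_n^{(1)}(k)$ and $\hh_n(k)$, Rouch\'e on fixed disks around $\w z_{n,l}^i$, a local-inversion bound yielding $|z_{n,l}^i-\w z_{n,l}^i|=O(l^{-1})$, and transfer to $\lad$ via the mean-value theorem applied to $\Phi(z)=k^2/(z^2-k^2)$ with $|\Phi'|=O(l^{-3})$. You also correctly identified the delicate point (exponential growth of the remainder matching that of the cosine) and its resolution, so this matches the paper's argument.
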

\zz{We refer the readers to Theorem \ref{thm:eigfree_region} for an interesting related result.}

% \begin{remark}\label{rem:othcase}
% If one of $h_n^{(1)}(k)$ and $\hh_n(k)$ is zero, the above arguments and analysis still hold, except for some slight modification to the results in Theorem \ref{thm:asyeigenvalue}. In a precise way, if $h_n^{(1)}(k) = 0$, then both $\{\lad^1_{n,l}\}$ and  $\{\lad^2_{n,l}\}$ satisfy the second asymptotics in \eqref{eq:asyeigenvalue}. If $\hh_n(k) = 0$, then both of them will satisfy the first asymptotic expansion in \eqref{eq:asyeigenvalue}. 
% \end{remark}

\subsubsection{Asymptotic behavior and localization of eigenfunctions}
Theorem \ref{thm:asyeigenvalue} has clearly described asymptotic behaviors of the eigenvalues in $\sigma_n^i$, $i = 1,2$. We see from \eqref{eq:asyeigenvalue} and \eqref{eq:ppres} that when $l$ is large enough,  $\lad_{n,l}^1$ and $\lad_{n,l}^2$ will most likely be contained in the $\ep$-neighborhood of $\tau^{-1}$ so that the high-frequency resonant modes $\wete(k_\lad,x)$, $\wetm(k_\lad,x)$ for the same value of $n$ and $m$ will be excited simultaneously.  Via the integral operator $\td$, these resonant modes carrying the subwavelength information of the embedded sources can propagate into the far field. In this subsection, instead of considering the vector fields $\td[\wete(k_\lad,\dd)](x)$ and $\td[\wetm(k_\lad,\dd)](x)$, we consider their tangential component measurements for ease of exposition, which can be explicitly represented by
\begin{equation*} 
    \h{x} \t \td[\wete(k_\lad,\dd)](x)  = ik^3 \sqrt{n(n+1)}h^{(1)}_n(k|x|)\unm(\h{x}) \int_0^1 j_n(kr) j_n(k_\lad r)r^2 dr\,,
\end{equation*}
and 
\begin{equation*}
    \h{x} \t \td[\wetm(k_\lad,\dd)](x) = - \frac{k\sqrt{n(n+1)}}{k_\lad |x|}\hh_n(k|x|)\vnm(\h{x})\int_0^1 \jj_n(kr) \jj(k_\lad r) + n(n+1) j_n(kr)j_n(k_\lad r) dr\,,
\end{equation*}
for $|x| > 1$, by \eqref{eq:tancomm_1} and \eqref{eq:tancomm_2} in Appendix \ref{app:C_3}. These formulas motivate us to define the following two propagating functions, respectively, responsible for the propagation of vector spherical harmonics $\unm$ and $\vnm$: 
\begin{equation} \label{def:prog_1}
   \vp_n^{\lad, 1}(kt) := \left\{ \begin{array}{ll}
   \sqrt{n(n+1)} \lad j_n(k_\lad t)      &   0 < t\le 1\\
   ik^3 \sqrt{n(n+1)}h^{(1)}_n(k t)\int_0^1 j_n(kr) j_n(k_\lad r)r^2 dr      & t >1
    \end{array} \right.\,,
\end{equation}
and 
\begin{equation} \label{def:prog_2}
     \vp_n^{\lad, 2}(kt) := \left\{ \begin{array}{ll}
   \frac{i\lad \sqrt{n(n+1)}}{k_\lad t}\jj_n(k_\lad t)    &   0 < t\le 1\\
 - \frac{k \sqrt{n(n+1)}}{k_\lad t} \hh_n(k t)\int_0^1 \jj_n(kr) \jj(k_\lad r) + n(n+1) j_n(kr)j_n(k_\lad r) dr     & t >1
    \end{array} \right.\,.
\end{equation}
Here, to define $\vp_n^{\lad,i}$ inside the domain for $i = 1,2$,  we have used the fact that $\wete$ and $\wetm$ are eigenfunctions of $\td$ with eigenvalue $\lad$. From the definitions \eqref{def:prog_1} and \eqref{def:prog_2}, we readily see that when $t> 1$, $\vp_n^{\lad,1}$(resp., $\vp_n^{\lad,2}$) is proportional to $h_n^{(1)}(kt)$ (resp., $\hh_n(kt)$), and thereby has the same asymptotic behavior as $h_n^{(1)}(kt)$ (resp., $\hh_n(kt)$) as $t \to + \infty$. To understand the roles played by $\vp_n^{\lad,i}$ for different orders $n$ in the far-field measurement,  we give the result about their asymptotics for large order $n$.  The detailed calculations and estimates are included in Appendix \ref{app:C_3}. 
\begin{proposition} \label{prop:asylarge}
The following asymptotic estimates uniformly hold for $t$ in a compact subset of $(1, +\infty)$,
\begin{equation} \label{eq:aympfvp}
    \vp_n^{\lad , 1}(t) = O\left( \left(\frac{e}{2 t}\right)^{n+1} \frac{k_\lad^{n-1}}{(n+1)^n}\right)\,,\q \vp_n^{\lad , 2}(t) = O\left( \left(\frac{e k}{2 t}\right)^{n-1} \frac{k_\lad^{n-2}}{(n-1)^{n-3}}\right) \ \text{as} \ n \to \infty\,,
\end{equation}
where we remind that the big-$O$ terms are bounded by constants independent of $n$ but depending on other parameters: the wave 
number $k$, the eigenvalue $\lad$, and the compact set for variable $t$.
\end{proposition}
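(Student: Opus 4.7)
The plan is to derive both asymptotics by inserting the large-order expansions of the spherical Bessel and Hankel functions into the defining integrals in \eqref{def:prog_1} and \eqref{def:prog_2}, and then tidying the resulting expressions with Stirling's formula for the double factorials. Since we are in the branch $t>1$, the $t$-dependence of $\vp_n^{\lad,i}$ enters only through $h_n^{(1)}(kt)$ and $\hh_n(kt)$, whose asymptotics are uniform for $t$ ranging over any compact subset of $(1,+\infty)$, so uniformity is automatic.

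First I would collect the four large-$n$ asymptotics
\begin{align*}
j_n(z) &\sim \frac{z^n}{(2n+1)!!}\,, & \jj_n(z) &\sim \frac{(n+1)\,z^n}{(2n+1)!!}\,,\\
h_n^{(1)}(z) &\sim -i\,\frac{(2n-1)!!}{z^{n+1}}\,, & \hh_n(z) &\sim i\,n\,\frac{(2n-1)!!}{z^{n+1}}\,,
\end{align*}
valid as $n\to\infty$ for fixed $z\neq 0$. The expansions for $j_n$ and $h_n^{(1)}$ come from the leading terms of their standard power series, while those for $\jj_n$ and $\hh_n$ follow by applying the identities $\jj_n(z)=zj_{n-1}(z)-nj_n(z)$ and $\hh_n(z)=zh_{n-1}^{(1)}(z)-nh_n^{(1)}(z)$ (obtained from the recurrences $(z^{n+1}j_n)'=z^{n+1}j_{n-1}$ and the analogous relation for $h_n^{(1)}$) and observing which of the two terms dominates in each case.

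Substituting these asymptotics into the integrals $I_n^1=\int_0^1 j_n(kr)j_n(k_\lad r)\,r^2\,dr$ and $I_n^2=\int_0^1\bigl[\jj_n(kr)\jj_n(k_\lad r)+n(n+1)j_n(kr)j_n(k_\lad r)\bigr]dr$, the integrands behave respectively like $(kk_\lad)^n r^{2n+2}/((2n+1)!!)^2$ and like $\bigl[(n+1)^2+n(n+1)\bigr](kk_\lad)^n r^{2n}/((2n+1)!!)^2$, so the integrations produce
$$
I_n^1 \sim \frac{k^n k_\lad^n}{(2n+3)\bigl((2n+1)!!\bigr)^2},\qquad
I_n^2 \sim \frac{(n+1)\,k^n k_\lad^n}{\bigl((2n+1)!!\bigr)^2},
$$
after using $(n+1)^2+n(n+1)=(n+1)(2n+1)$ to cancel one factor of $2n+1$. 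Multiplying by the Hankel prefactors $h_n^{(1)}(kt)$ and $\hh_n(kt)$, simplifying the ratio of double factorials via $(2n+1)!!=(2n+1)(2n-1)!!$, and invoking Stirling in the form $(2n-1)!!=(2n)!/(2^n n!)\sim \sqrt{2}\,(2n/e)^n$ reduces each $\vp_n^{\lad,i}$ to an expression of the advertised exponential form; subpolynomial factors and the bounded discrepancies between $n$, $n+1$ and $n-1$ raised to $n$-th powers (since $((n+1)/n)^n=O(1)$, etc.) are absorbed into the $O$-constant, yielding exactly the bounds in \eqref{eq:aympfvp}.

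The main obstacle is the bookkeeping rather than any deep analytic step: one has to track the interaction of the prefactors $\sqrt{n(n+1)}$ and $k/(k_\lad t)$ with the cancellations in $(2n\pm 1)!!$, and one must verify that the subleading remainders in the four Bessel/Hankel asymptotics remain subdominant after integration against $r^{2n}$-type weights. Both tasks are routine, since every error term is smaller than the corresponding leading term by a factor $O(1/n)$ uniformly on $[0,1]$, and the integrations $\int_0^1 r^{2n+a}\,dr$ preserve these relative orders. With these controls in place, combining the four asymptotics, the two integral estimates, and Stirling completes the proof.
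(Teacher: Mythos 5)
Your proof is correct and reaches the same estimate, but you take a genuinely different route at the step where the integrals $I_n^1$ and $I_n^2$ are estimated. The paper does not integrate the Bessel asymptotics directly; it cites the Lommel integral identities (\ref{eq:lommel_1})--(\ref{eq:lommel_2}) to rewrite each integral in closed form as a combination of $j_n,j_{n\pm1}$ evaluated at $k$ and $k_\lad$, and then applies the pre-Stirlingized large-order bounds (\ref{eq:asyjh}) to these boundary values, bounding the result by the triangle inequality. That triangle-inequality step silently discards a near-cancellation (the two terms of the Lommel formula share the same leading order in $n$), so the paper's intermediate bound is looser than the true asymptotic by a factor of order $n^{-2}$; this is harmless because the proposition only asserts an $O$-bound. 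Your direct substitution of the leading-order series $j_n(z)\sim z^n/(2n+1)!!$, $\jj_n(z)\sim (n+1)z^n/(2n+1)!!$ into the integrands and termwise integration of $r^{2n}$-type weights avoids that cancellation entirely and in fact produces the sharper asymptotic $I_n^1\sim k^n k_\lad^n/\bigl[(2n+3)((2n+1)!!)^2\bigr]$; the two approaches of course agree once constants and polynomial prefactors in $n$ are absorbed. Two small bookkeeping remarks: your ``routine'' justification of interchanging the asymptotic expansion with the integration does require noting that the relative error of the series $j_n(z)=\frac{z^n}{(2n+1)!!}(1+O(z^2/n))$ is uniform on the bounded set $|z|\le\max(k,|k_\lad|)$ so that the integrated error keeps the same relative order; and your final bound actually has $1/(n+1)^{n+2}$ rather than the stated $1/(n+1)^n$, which is fine since the extra $n^{-2}$ is bounded and can be dropped, but it is an $O$-type domination rather than ``exactly the bounds'' as you phrase it.
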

In view of the exponential decay of propagating functions $\vp_n^{\lad,i}$ in \eqref{eq:aympfvp} when $n$ tends to infinity, we have theoretically justified the previously mentioned fact in the introduction that the evanescent part of the radiating EM wave with the fine-detail information of the objects, i.e., the remainder term of the infinite sum in \eqref{eq:repoute} from large enough $n$, is almost negligible in the measured far-field data. 
It is the low-frequency component:  
\begin{equation*}
    E_{low}(x) = \sum^N_{n = 1} \sum^n_{m = -n}
    \gnm \ete(k,x) + \enm \etm(k,x)\,,\q \gnm, \enm \in \C\,, \ |x| \gg 1
\end{equation*}
that dominates the far-field behavior of the radiating wave $E$, where $N$ is a given small positive integer . We plot both real and imaginary parts of $\vp_n^{\lad , i}$ in Figures \ref{fig:eigenf5} and  \ref{fig:eigenf9} for different values of $n$ and $k=1$, from which we can clearly observe that the higher the resonant mode oscillates, the smaller the amplitude is. 

\begin{figure}[!htbp]
    \centering
    \subfigure[$\vp_5^{\lad,1}(t)$. Left: $t\in(0,1)$; right: t$\in(1,6)$.]{\label{fig:eigenf51}
    \begin{minipage}{0.48\textwidth}
       \centering
       \includegraphics[clip,width=0.48\textwidth]{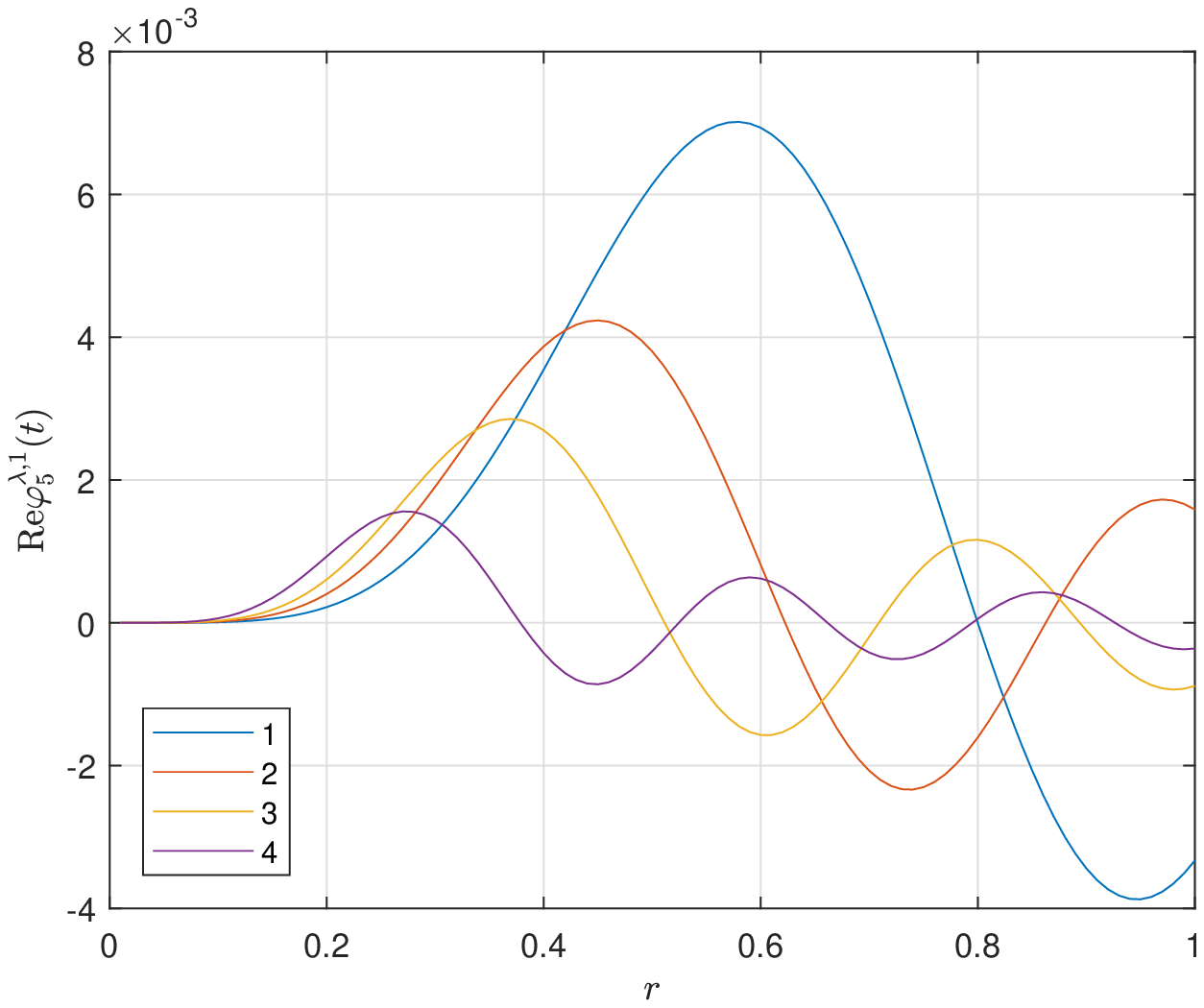} \hskip -0.1cm
\includegraphics[clip,width=0.48\textwidth]{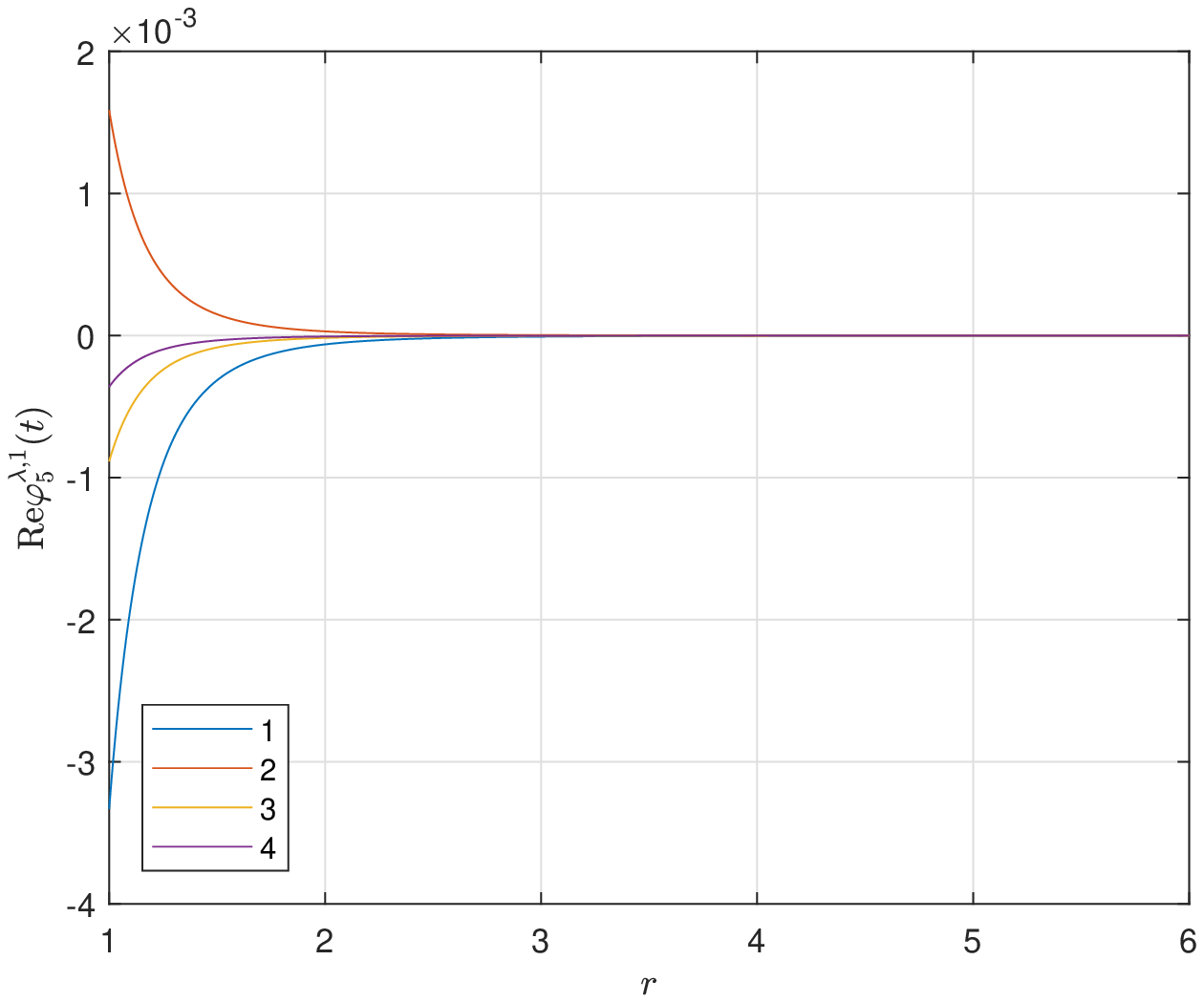} \\
\includegraphics[clip,width=0.48\textwidth]{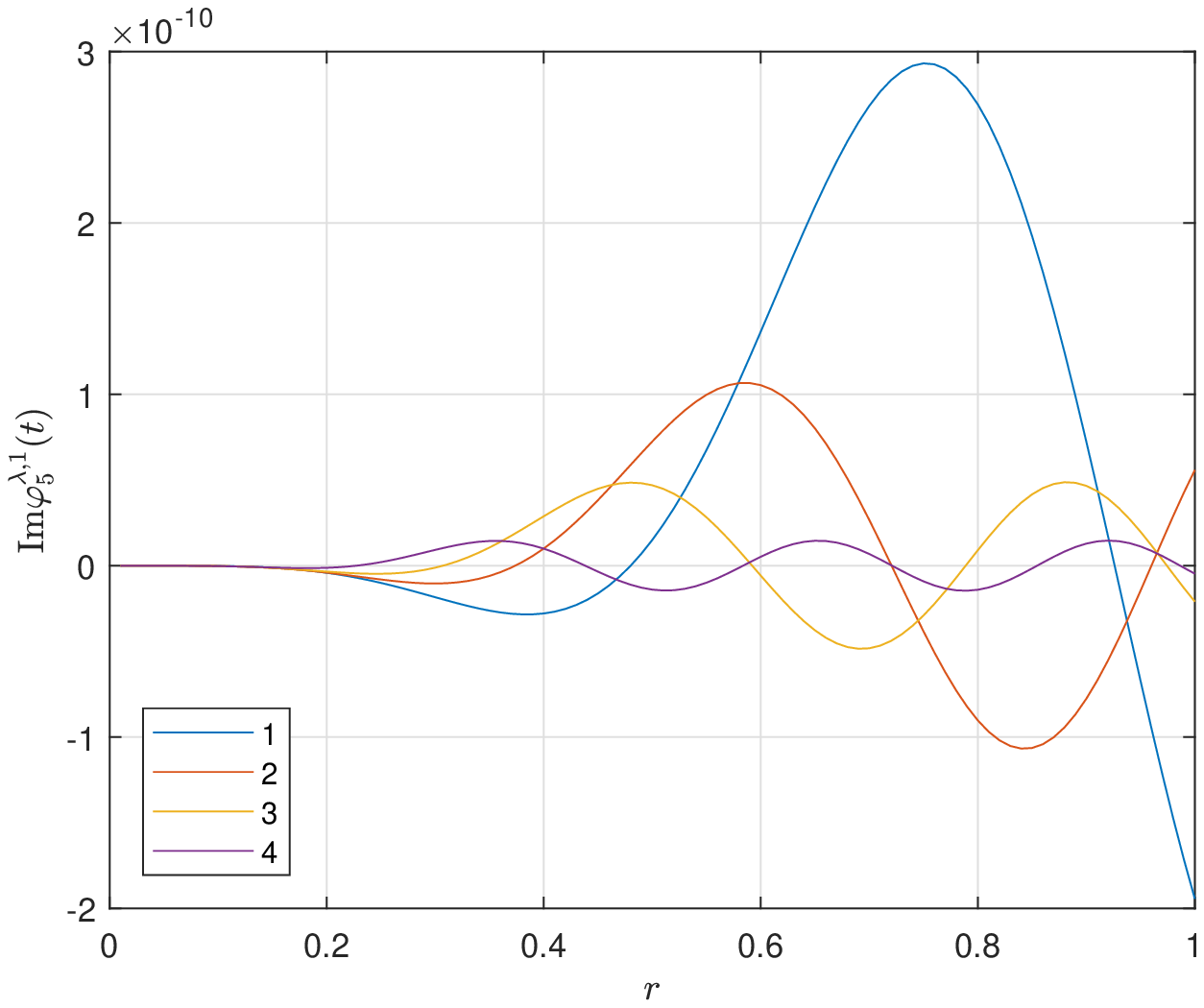} \hskip -0.1cm
\includegraphics[clip,width=0.48\textwidth]{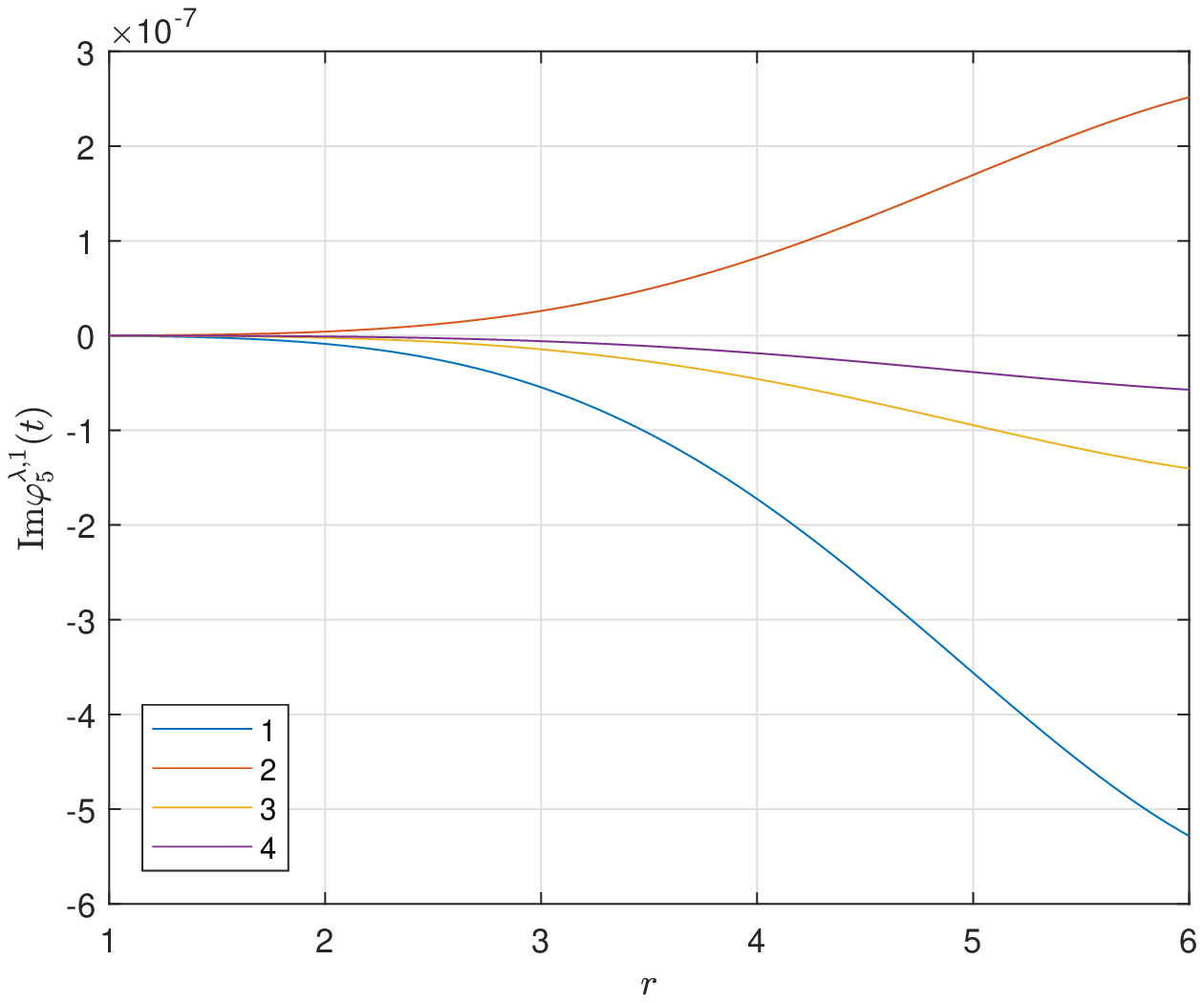} 
    \end{minipage}}% 
    \hskip -0.2cm
  \subfigure[$\vp_5^{\lad,2}(t)$. Left: $t\in(0,1)$; right: t$\in(1,6)$.]{ \label{fig:eigenf52}
    \begin{minipage}{0.48\textwidth}
       \centering
     \includegraphics[clip,width=0.48\textwidth]{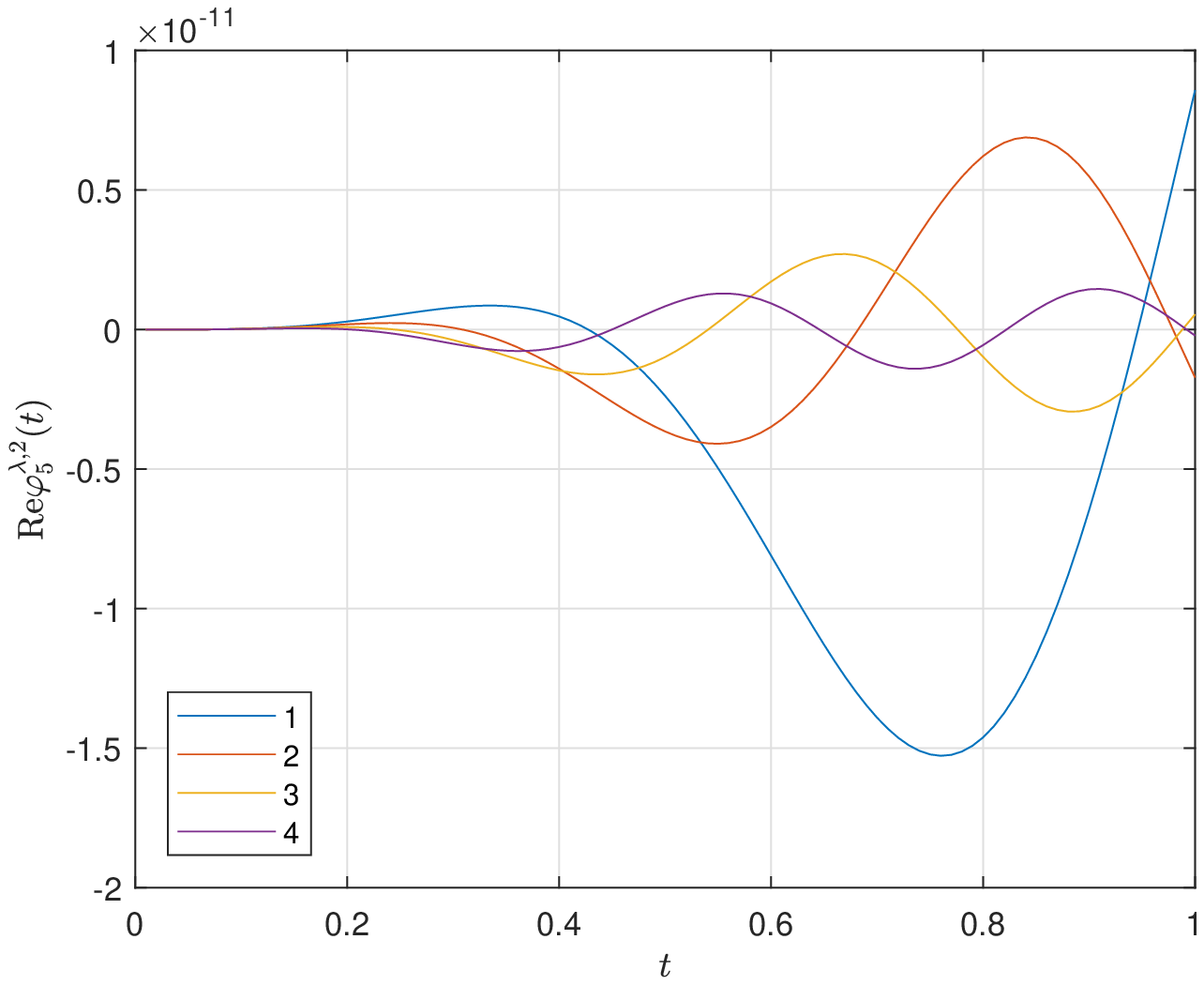} \hskip -0.1cm
\includegraphics[clip,width=0.48\textwidth]{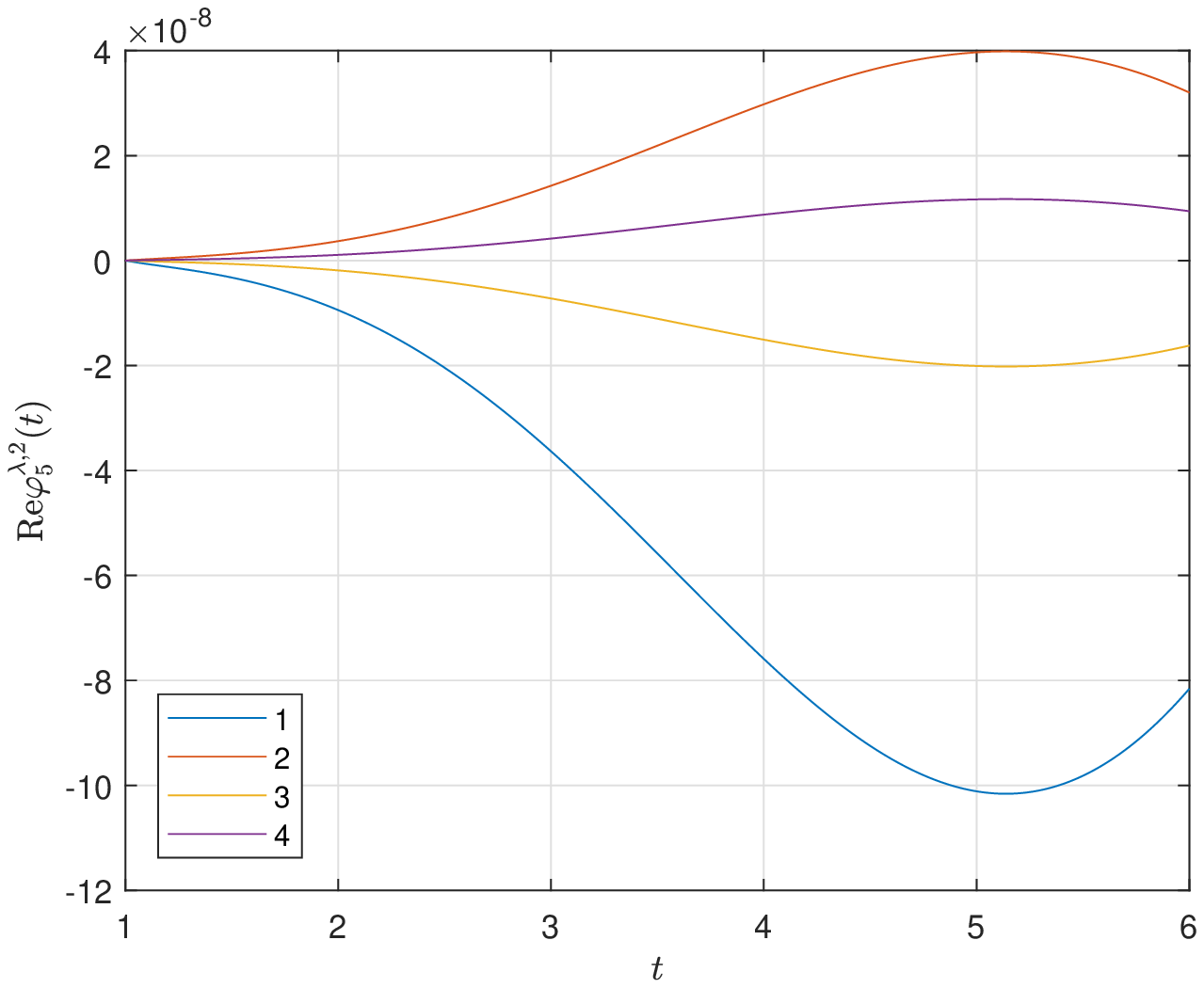}  \\
\includegraphics[clip,width=0.48\textwidth]{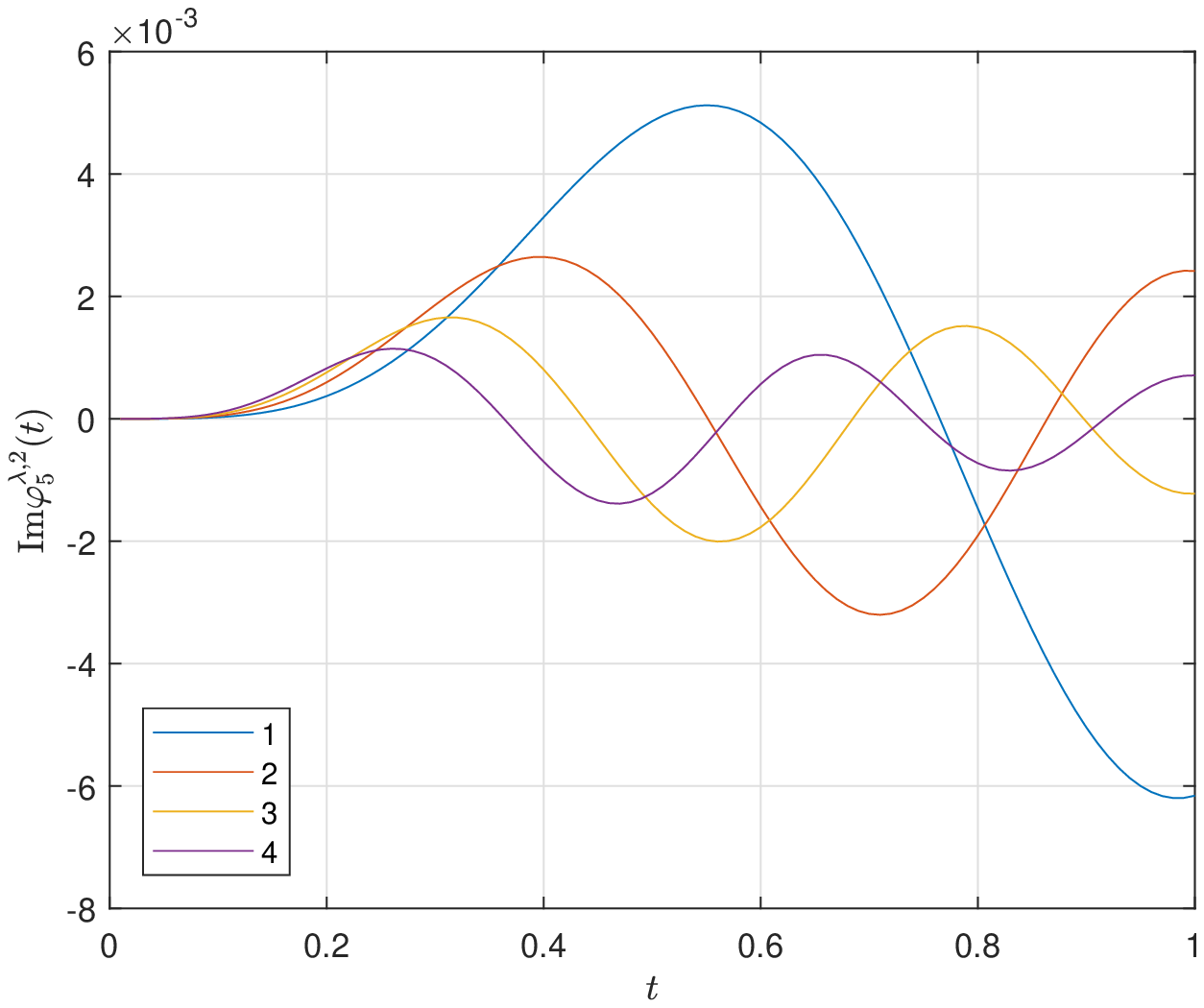} \hskip -0.1cm
\includegraphics[clip,width=0.48\textwidth]{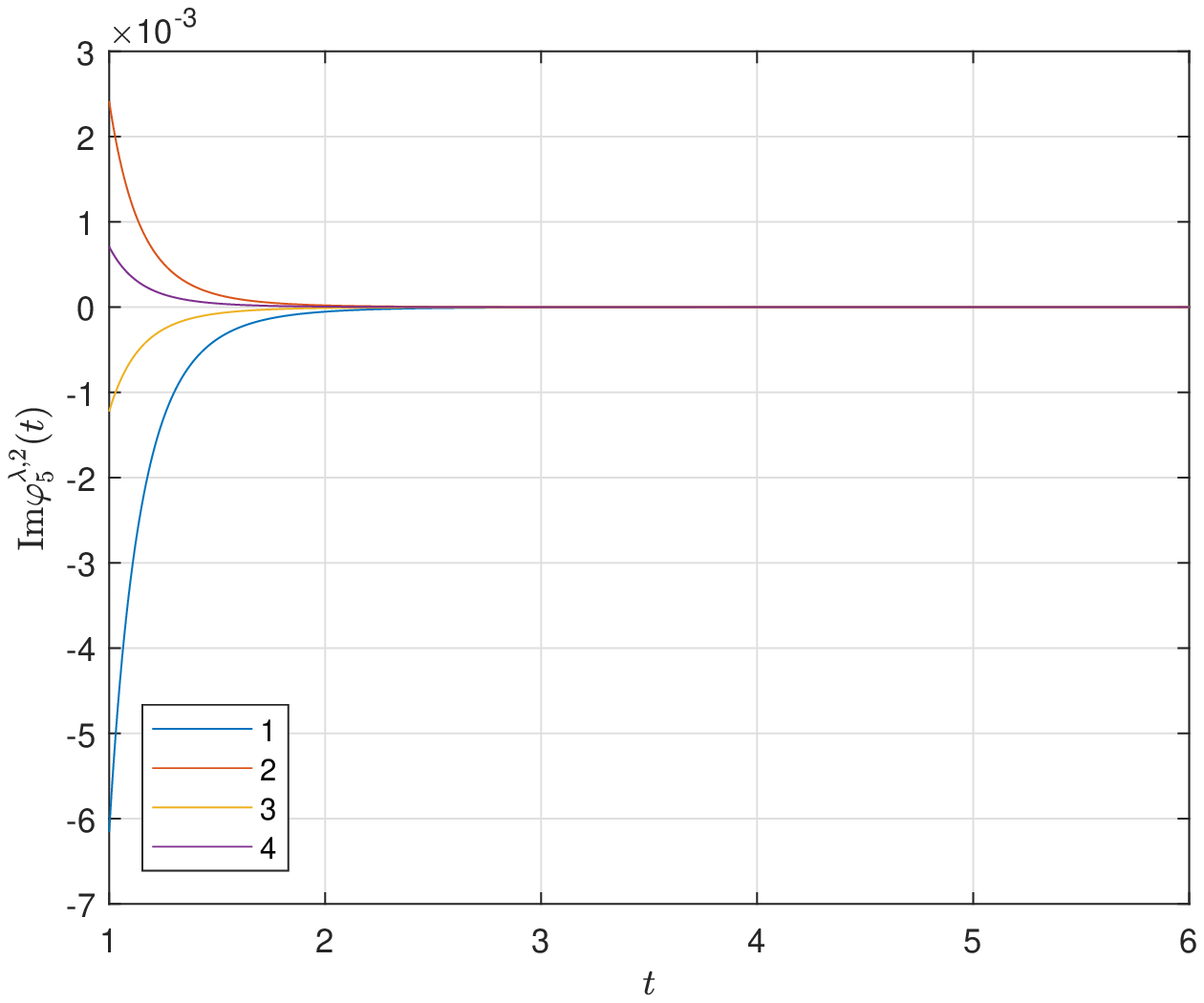}
    \end{minipage}}
    \caption{ Propagating function $\vp_5^{\lad,i}$ for the first four $\lad$ from $\sigma_5^i$, $i=1,2$.  First row: real part of $\vp_5^{\lad,i}$; second row: imaginary part of $\vp_5^{\lad,i}$, for $i=1,2$. }
    \label{fig:eigenf5}
\end{figure}

We also note from Figures \ref{fig:eigenf51} and \ref{fig:eigenf91} that the imaginary parts of $\vp_n^{\lad,1}$ for different $n$ have very small amplitudes inside and outside the domain, while for the case $\vp_n^{\lad,2}$, it is the real part. However, it is not a surprising fact, if we recall from Theorem \ref{thm:asyeigenvalue} that the eigenvalues $\lad$ of $\td$ is near the real axis and there is an additional factor $i$ in the definition of $\vp_n^{\lad,2}$, compared to $\vp_n^{\lad,1}$(cf.\,\eqref{eq:enete} and \eqref{eq:enetm}).

\begin{figure}[!htbp]
    \centering
    \subfigure[$\vp_9^{\lad,1}(t)$. Left: $t\in(0,1)$; right: t$\in(1,6)$.]{\label{fig:eigenf91}
    \begin{minipage}{0.48\textwidth} 
       \centering
       \includegraphics[clip,width=0.48\textwidth]{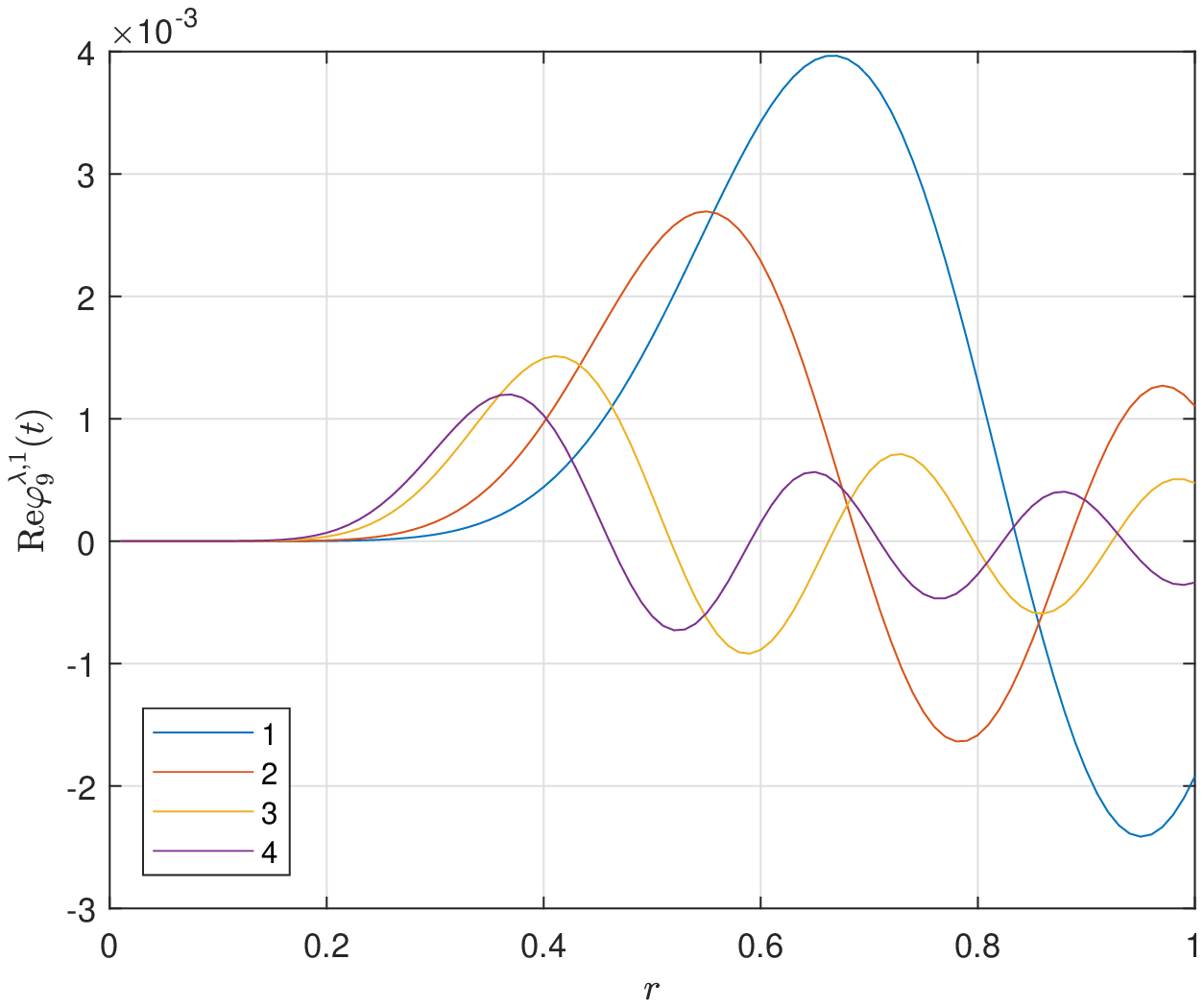} \hskip -0.1cm
\includegraphics[clip,width=0.48\textwidth]{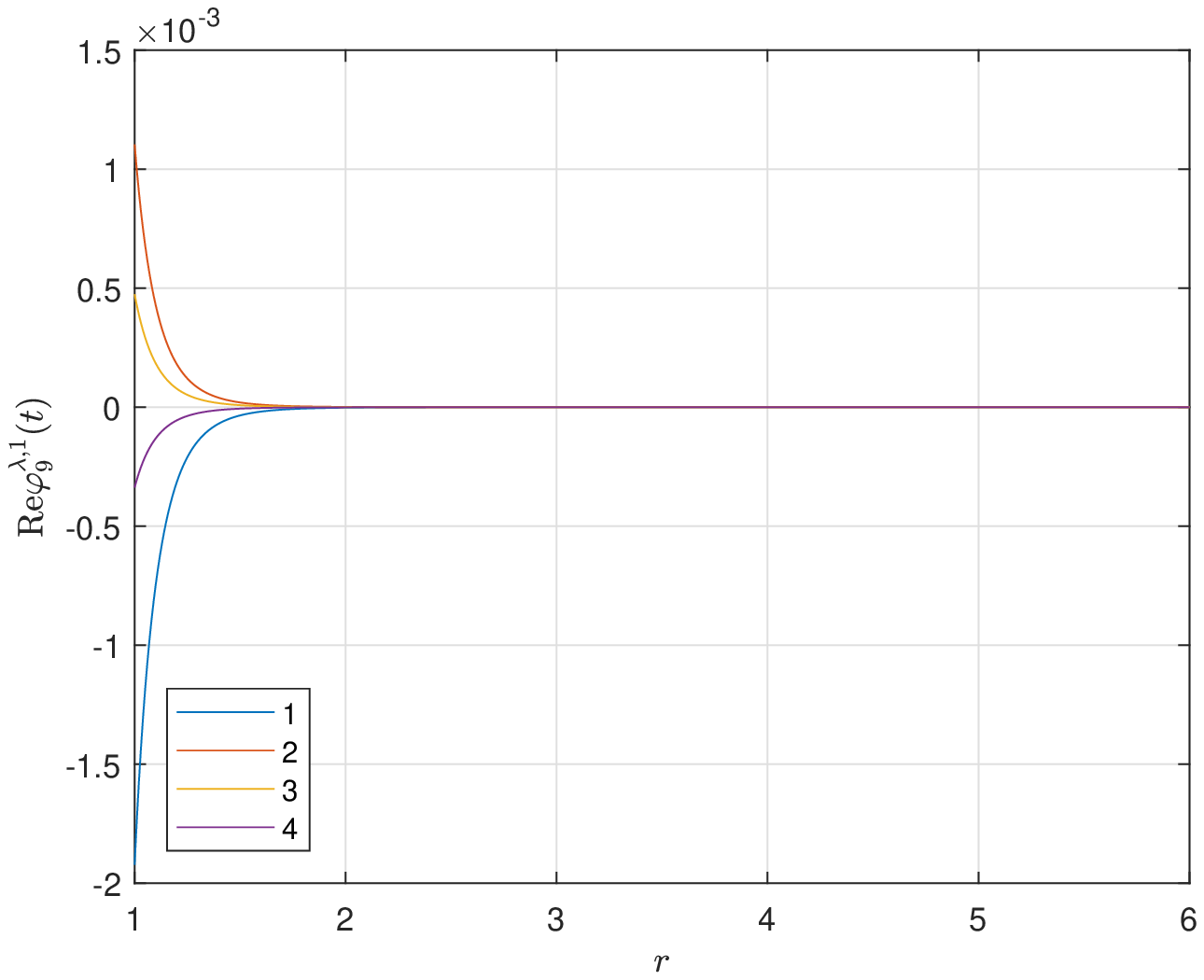} \\
\includegraphics[clip,width=0.48\textwidth]{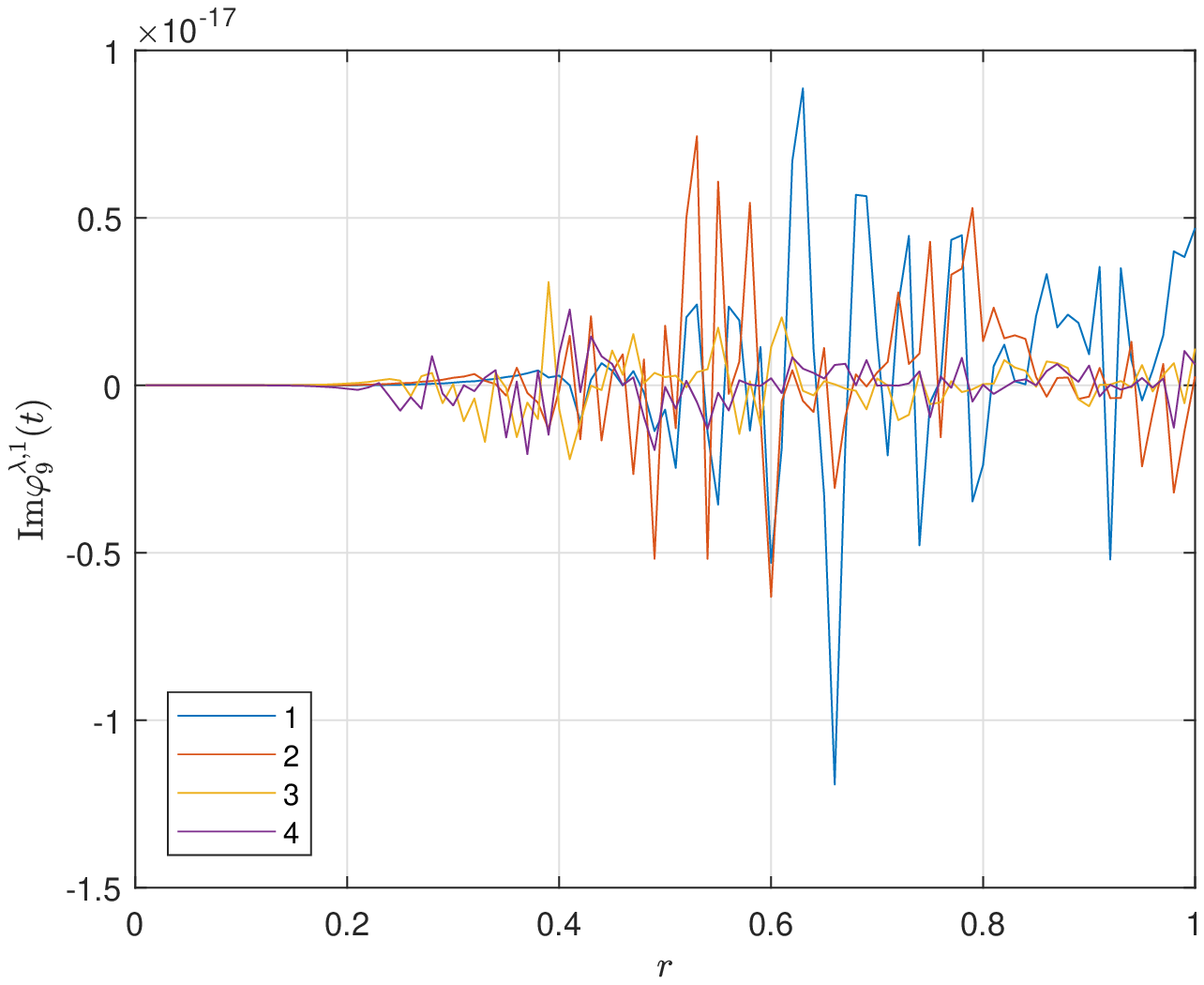} \hskip -0.1cm
\includegraphics[clip,width=0.48\textwidth]{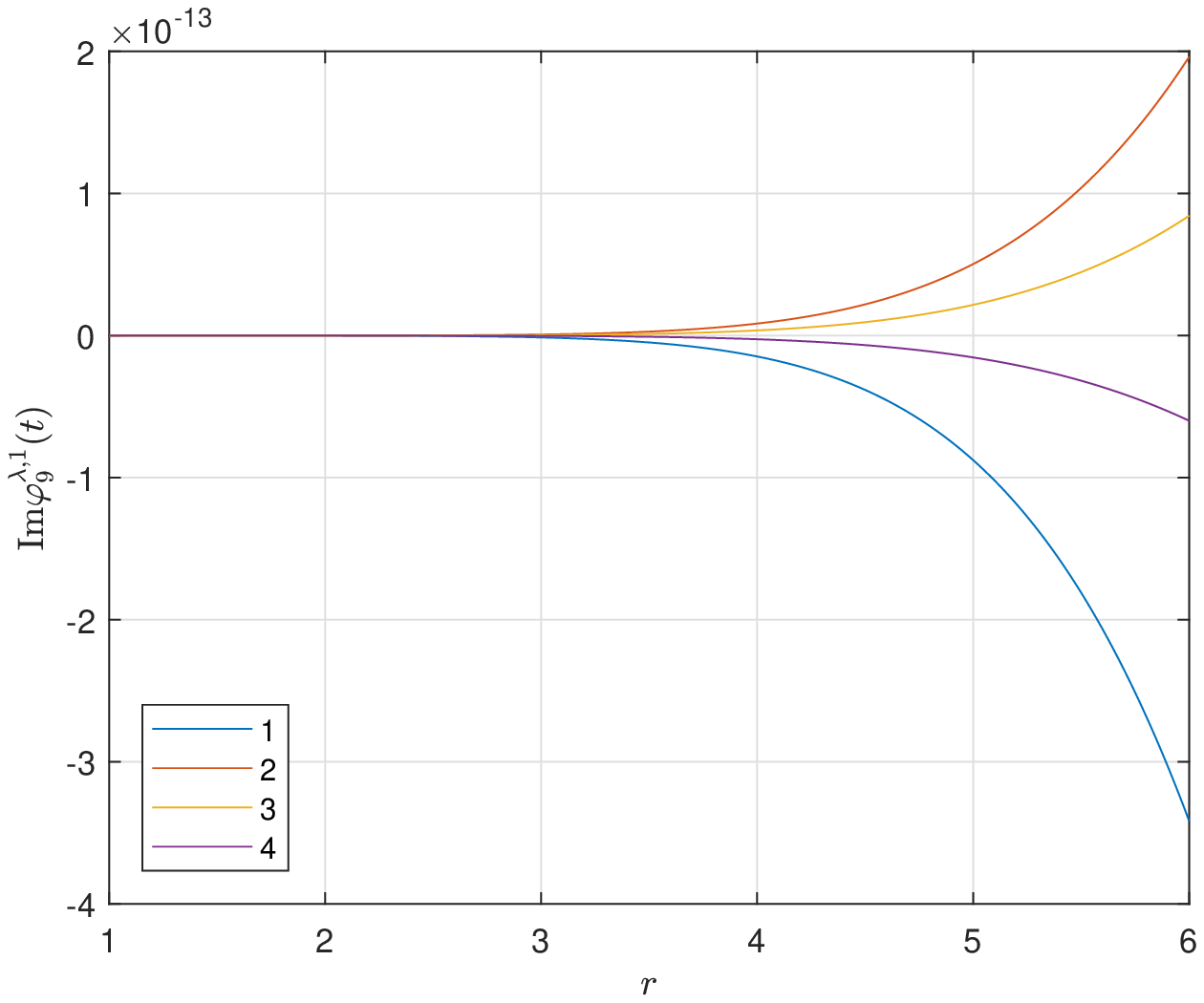} 
    \end{minipage}}% 
    \hskip -0.2cm
  \subfigure[$\vp_9^{\lad,2}(t)$. Left: $t\in(0,1)$; right: t$\in(1,6)$.]{ \label{fig:eigenf92}
    \begin{minipage}{0.48\textwidth}
       \centering
     \includegraphics[clip,width=0.48\textwidth]{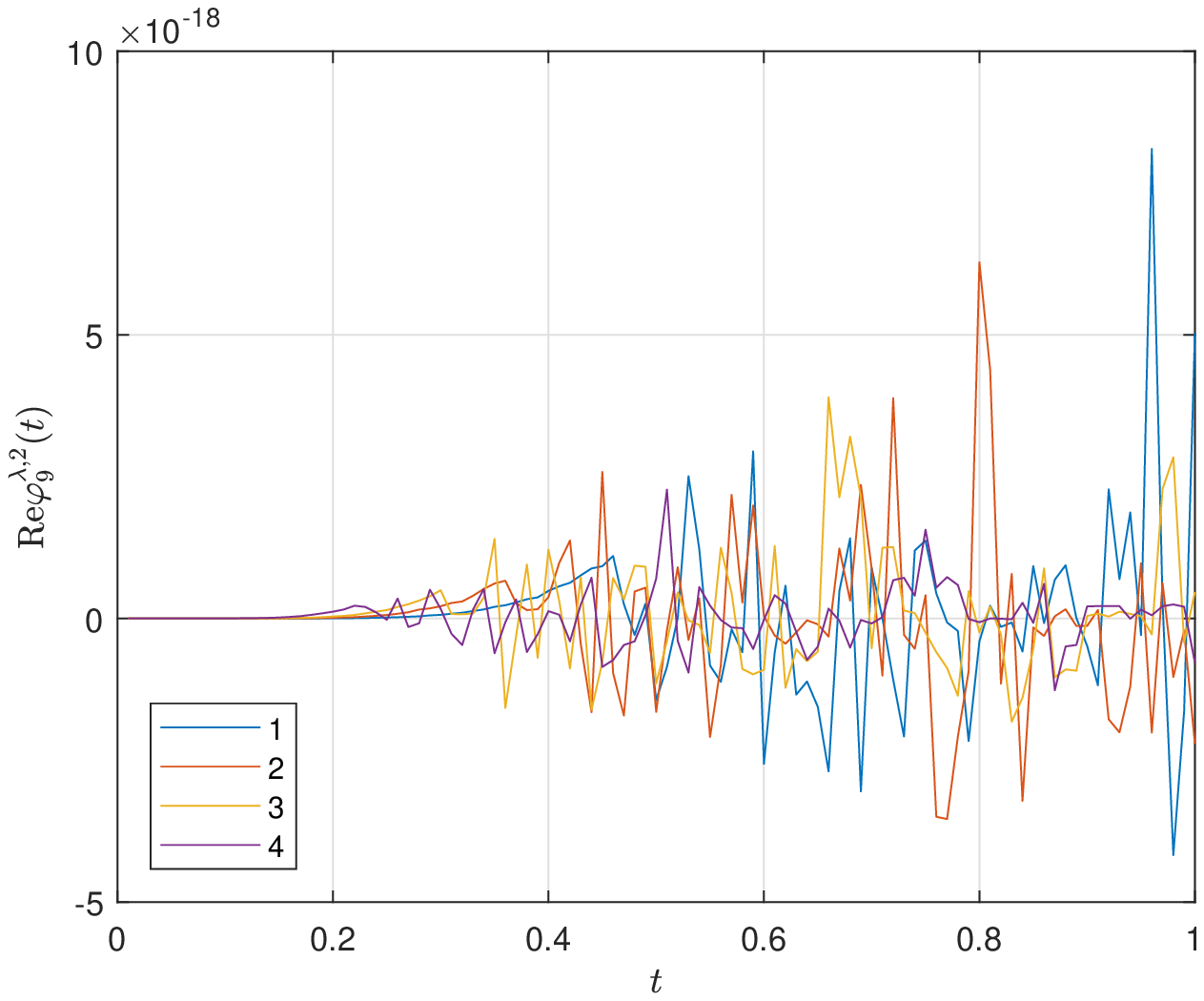} \hskip -0.1cm
\includegraphics[clip,width=0.48\textwidth]{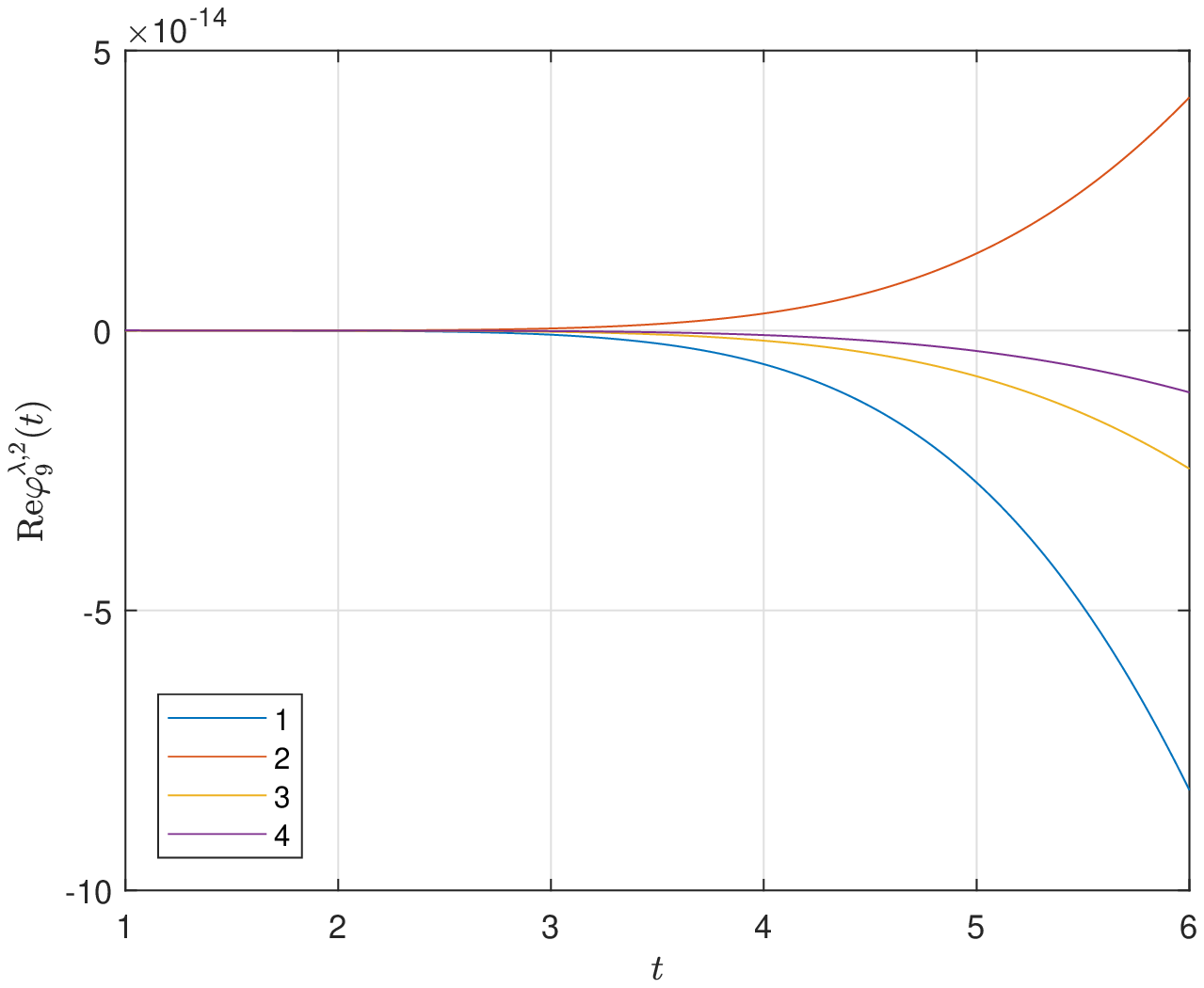}  \\
\includegraphics[clip,width=0.48\textwidth]{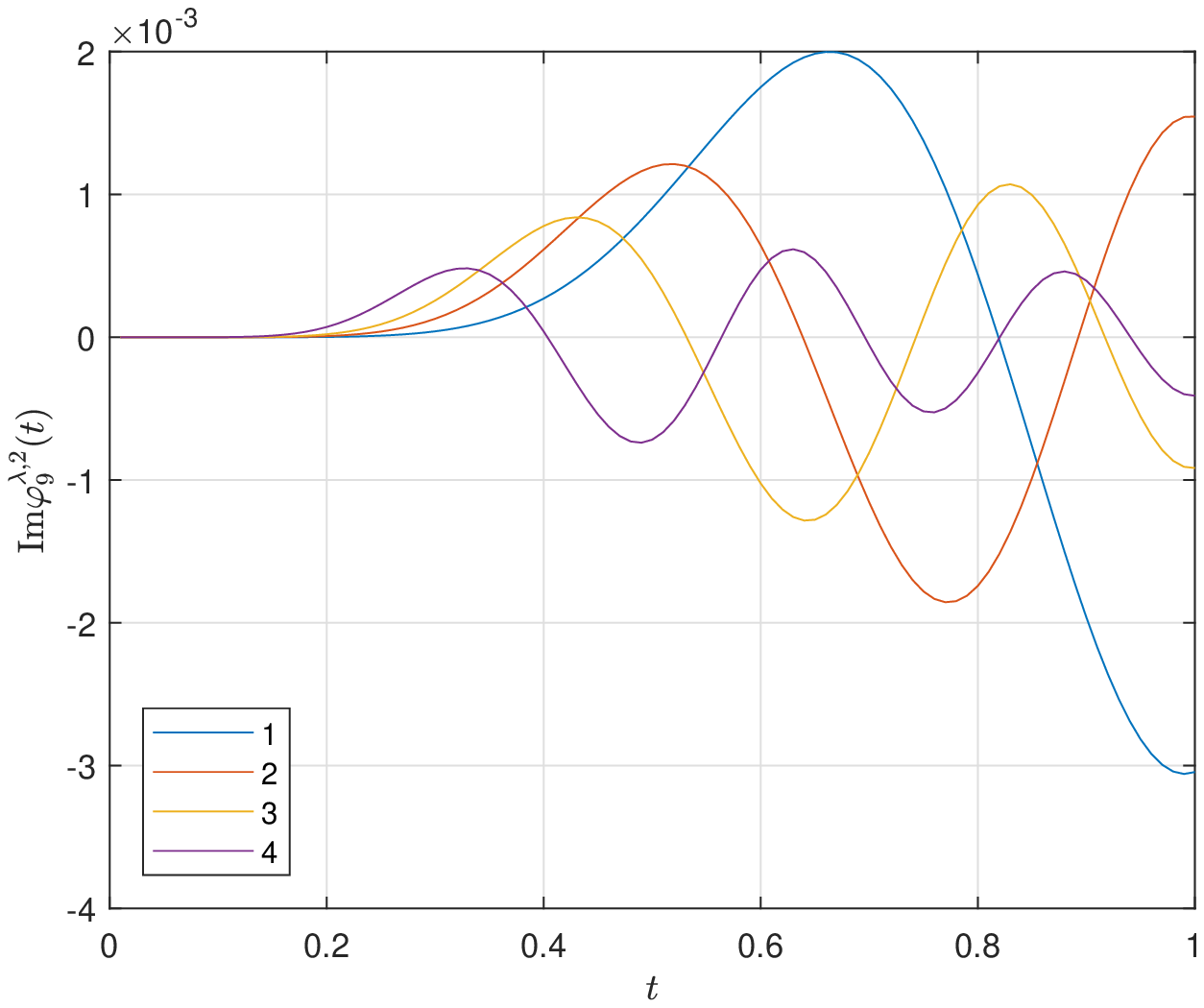} \hskip -0.1cm
\includegraphics[clip,width=0.48\textwidth]{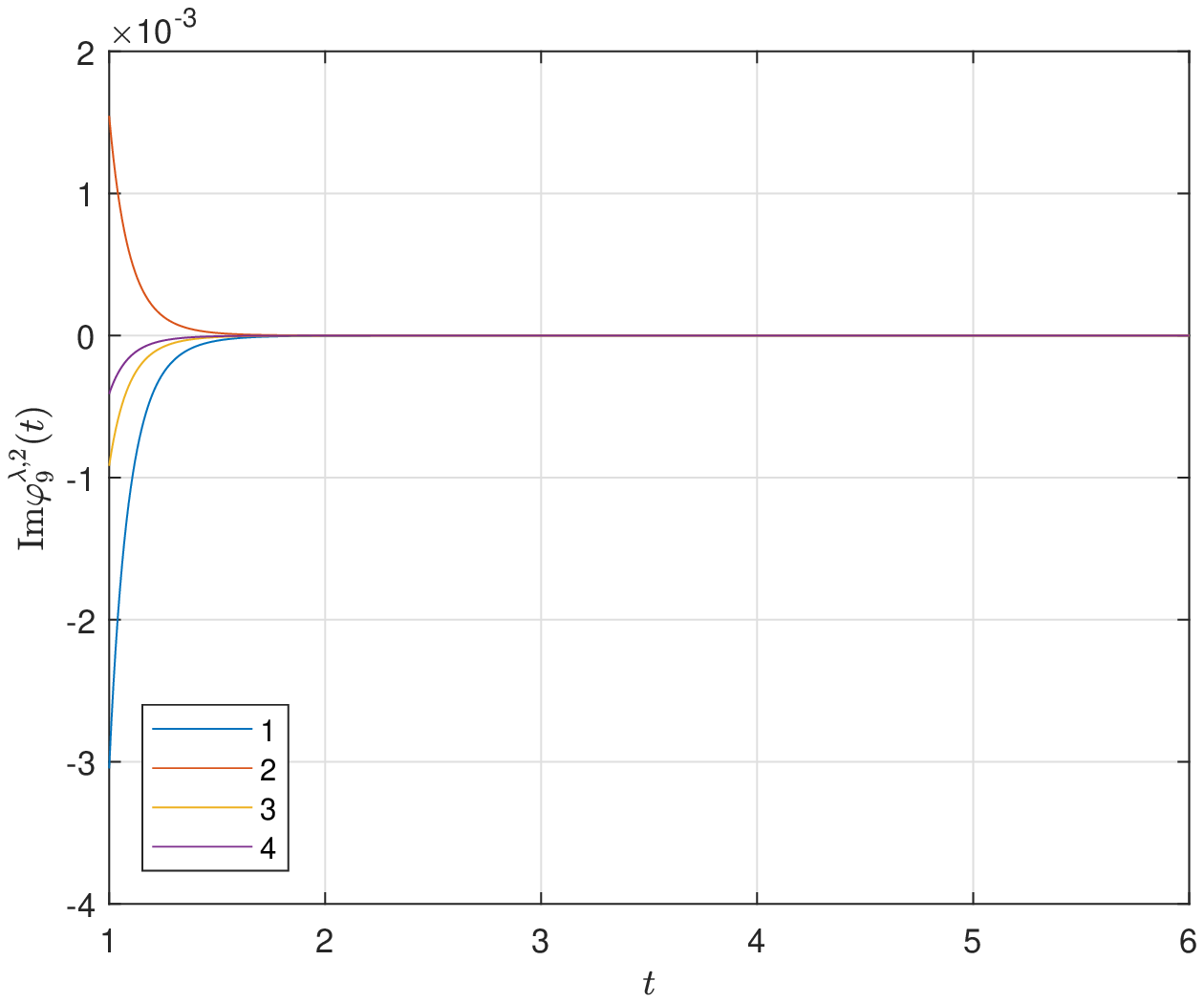}
    \end{minipage}}
    \caption{ Propagating function $\vp_9^{\lad,i}$ for the first four $\lad$ from $\sigma_9^i$, $i=1,2$.  First row: real part of $\vp_9^{\lad,i}$; second row: imaginary part of $\vp_9^{\lad,i}$, for $i=1,2$. }
    \label{fig:eigenf9}
\end{figure}

We next consider the behaviors of the propagating functions $ \vp_n^{ \lad^{\scriptscriptstyle i}_{\scriptscriptstyle n,l},i}(kt)$ for $i=1,2$ inside the domain. \zz{To simplify the notations, we
% use the relation $\lad^i_{n,l} :=  k^2/((z^i_{n,l})^2-k^2)$ to
re-denote them as follows:}
\begin{equation} \label{def:progfun_1}
\vp_{n,l}^1(kt) :=\vp_n^{ \lad^{\scriptscriptstyle 1}_{\scriptscriptstyle n,l},1}(kt) = \sqrt{n(n+1)} \lad_{n,l}^1 j_n(z^1_{n,l} t) \q \text{for}\  t \in [0,1]\,, 
\end{equation}
and 
\begin{equation} \label{def:progfun_2}
    \q \vp^2_{n,l}(kt) := \vp_n^{ \lad^{\scriptscriptstyle 2}_{\scriptscriptstyle n,l},2}(kt) = \frac{i\lad_{n,l}^2 \sqrt{n(n+1)}}{z^2_{n,l} t}\jj_n(z^2_{n,l} t)\q \text{for}\  t \in [0,1]\,.
\end{equation}
By estimates \eqref{eq:est_zeros1} and \eqref{eq:est_zeros2}, the zeros $z_{n,l}^i$, $i = 1,2$ have very small imaginary parts when $l$ is large enough (for the case $n = 5$, $\im z_{n,l}^i \sim 10^{-8}$ by numerical simulation, see Figure \ref{fig:zeros}). 
This indicates that $\vp^1_{n,l}$ is almost a real function while $\vp^2_{5,l}$ is almost purely imaginary (for the case $n = 5$, $\im \vp^1_{n,l} \sim 10^{-10}$ and $\re \vp^2_{n,l} \sim 10^{-11}$  by numerical simulation, see Figure \ref{fig:eigenf5}). We plot in Figure \ref{fig:local} the normalized real parts of propagating function $\vp_{n,l}^1(k|x|)$:
\begin{equation*}
    \widetilde{\re \vp_{n,l}^1}(k|x|) = \frac{\re \vp_{n,l}^1(k|x|)}{\max_{0\le |x|\le 1}\re \vp_{n,l}^1(k|x|)}\,,
\end{equation*}
and the normalized imaginary parts of propagating function $\vp_{n,l}^2(k|x|)$:
\begin{equation*}
     \widetilde{\im \vp_{n,l}^2}(k|x|) = \frac{\im \vp_{n,l}^2(k|x|)}{\max_{0\le |x|\le 1}\re \vp_{n,l}^2(k|x|)}
\end{equation*}
on a two-dimensional cross-sectional plane of the ball $B(0,1)$ passing through the origin, for $k=1$, $n =5$, and different values of $l$.   And we readily see from Figure \ref{fig:local} that for a fixed $n$, when $l$ tends to infinity, both  $\widetilde{\re \vp_{\sss 5,l}^{\sss 1}}(|x|)$ and $\widetilde{\im \vp_{\sss 5,l}^{\sss 2}}(|x|)$ present a remarkable localization pattern in the sense that they are highly oscillating, essentially distributed in a small neighborhood of the origin and rapidly attenuated towards the boundary.

\begin{figure}[!htbp]
    \centering
    \subfigure[$l = 1,5,20,50$ ($\re z^1_{5,l} = 
11.6952, 24.7230, 75.2638, 216.7232$) from left to right.]{
      \centering
      \includegraphics[clip,width=0.25\textwidth]{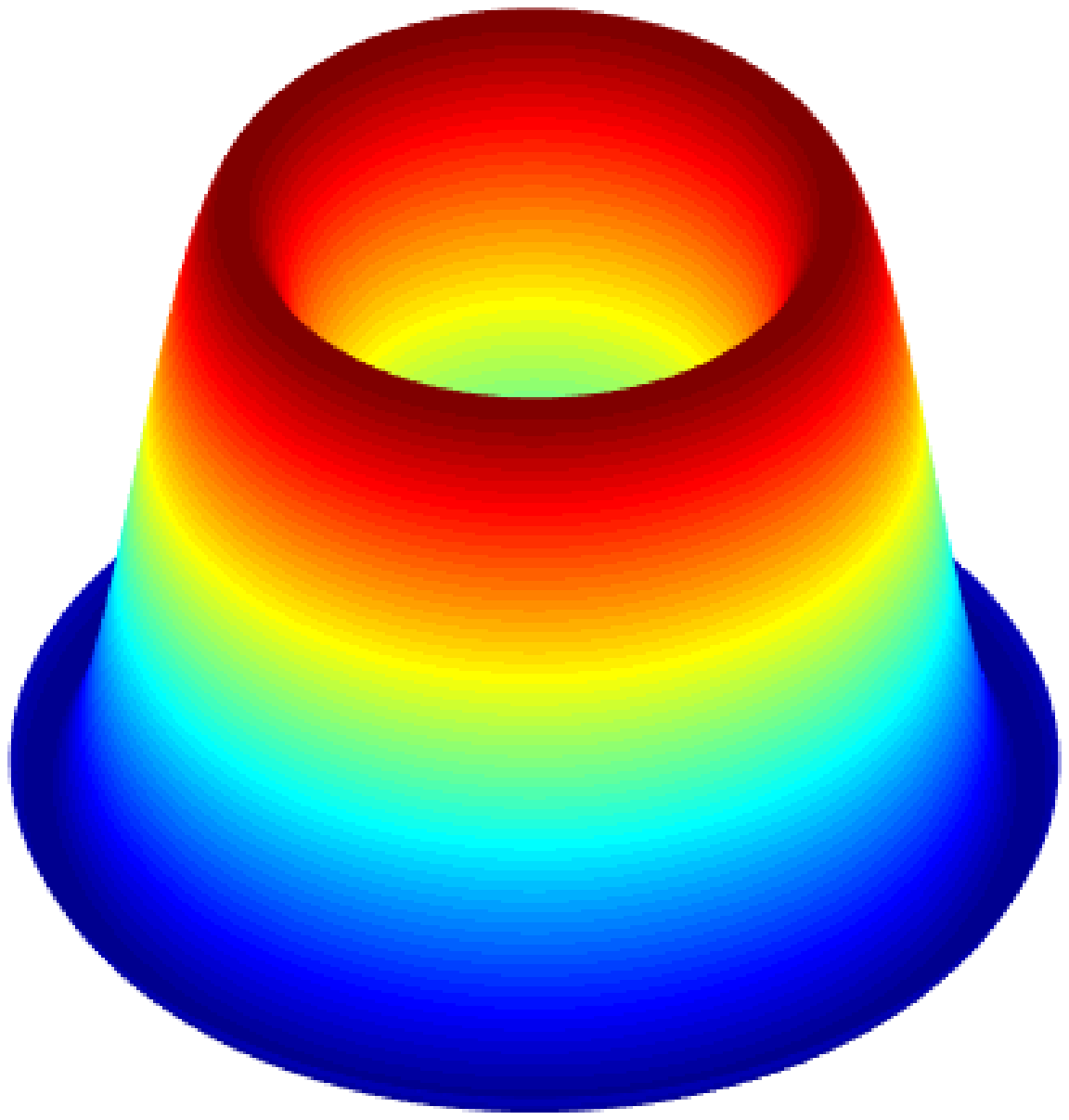}
       \hskip -0.6 cm
\includegraphics[clip,width = 0.25\textwidth]{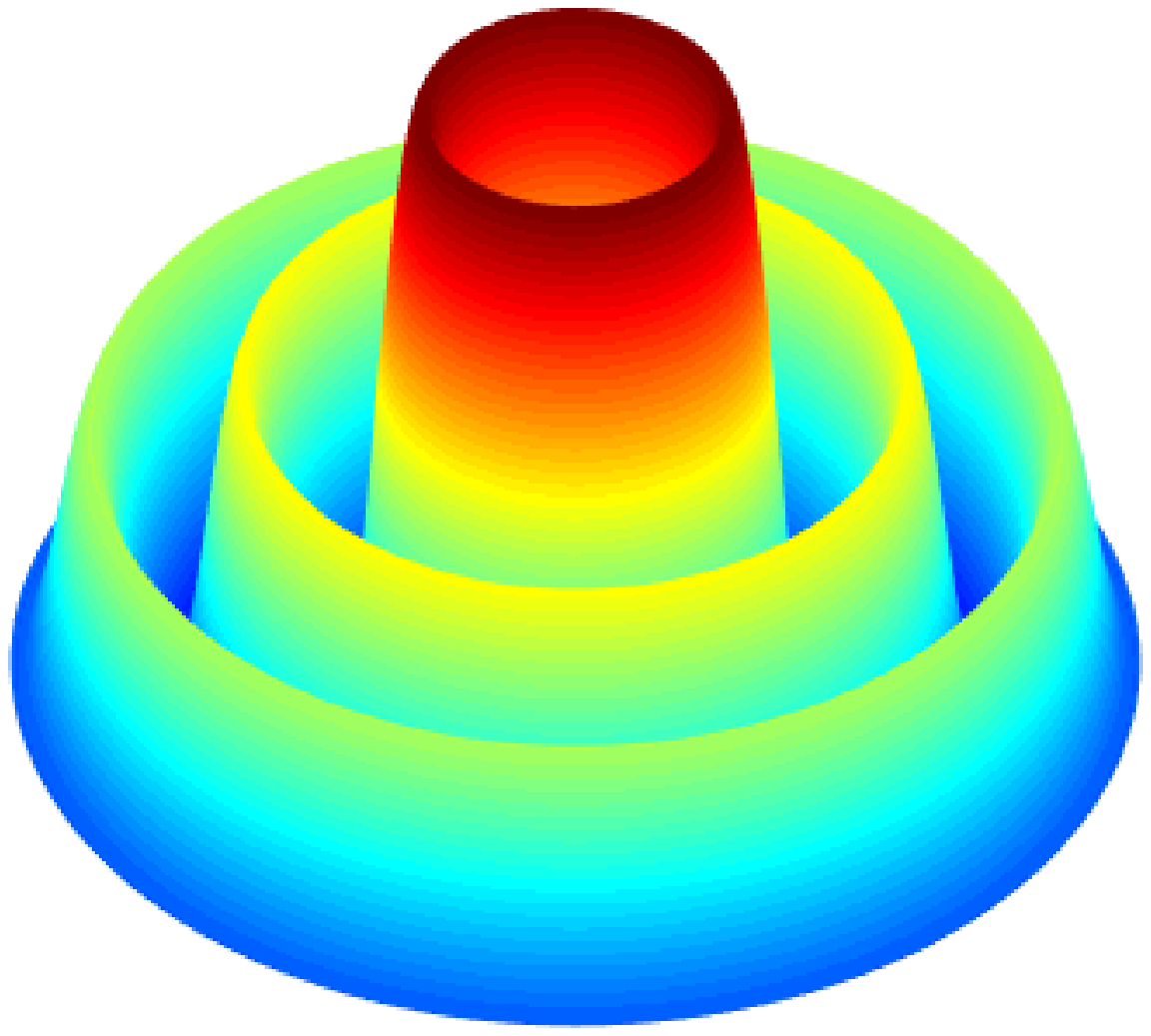}  \hskip -0.6 cm 
\includegraphics[clip,width=0.25\textwidth]{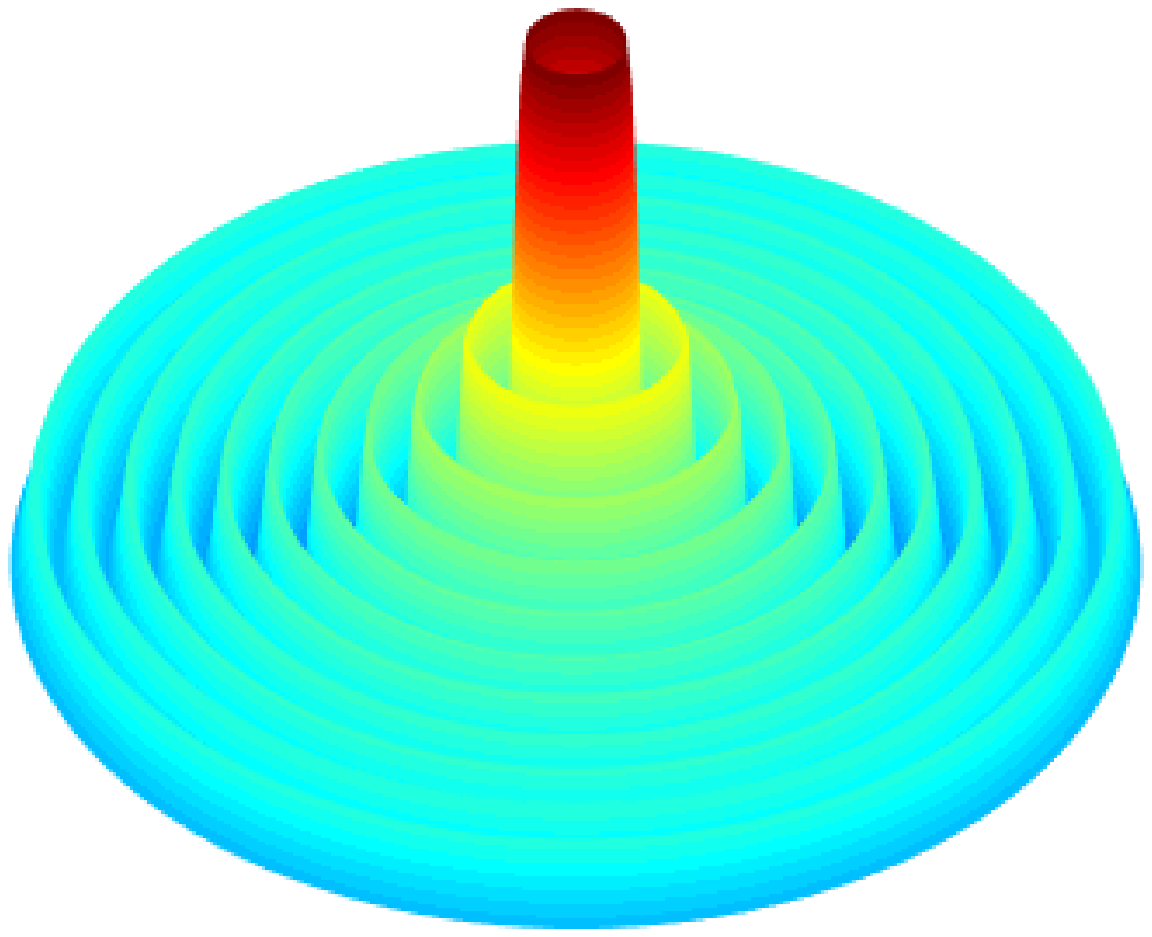}   \hskip -0.6 cm
\includegraphics[clip,width=0.25\textwidth]{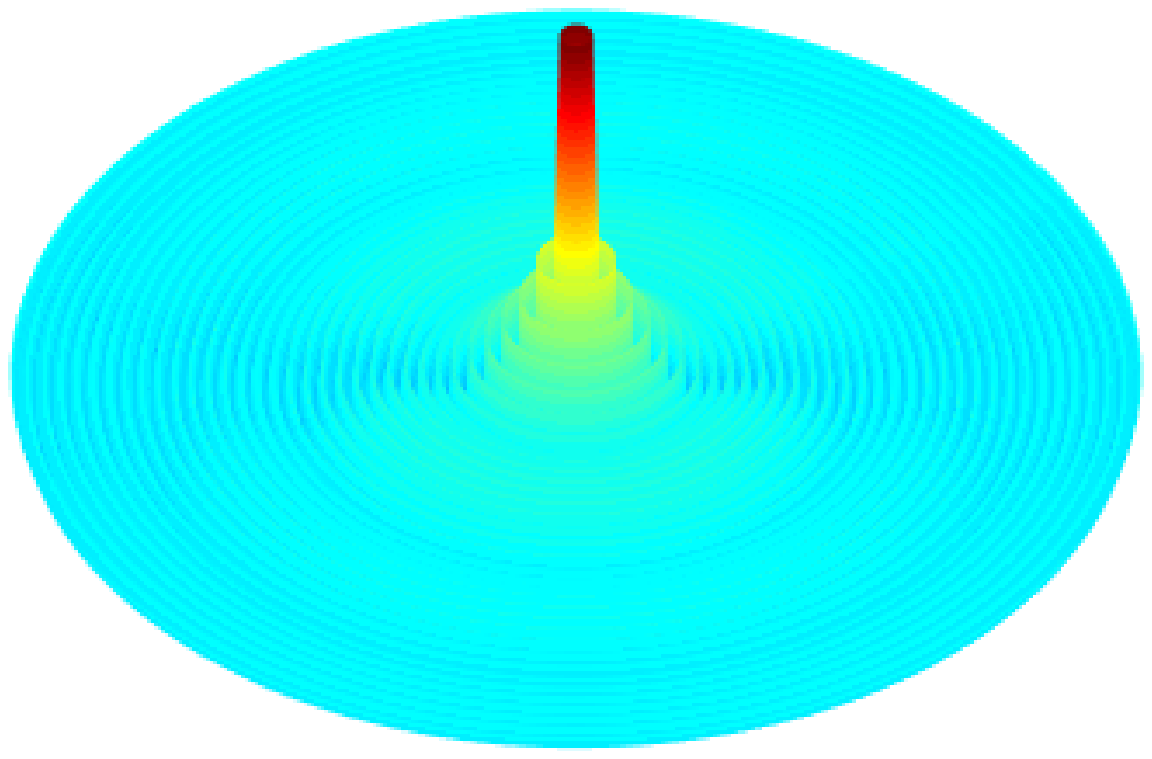}
}
 \\
     \subfigure[$l = 1,5,20,50$ ($ \re z^2_{5,l}=
9.3339, 22.8956, 70.4700, 164.8413$) from left to right.]{
      \centering
      \includegraphics[clip,width=0.25\textwidth]{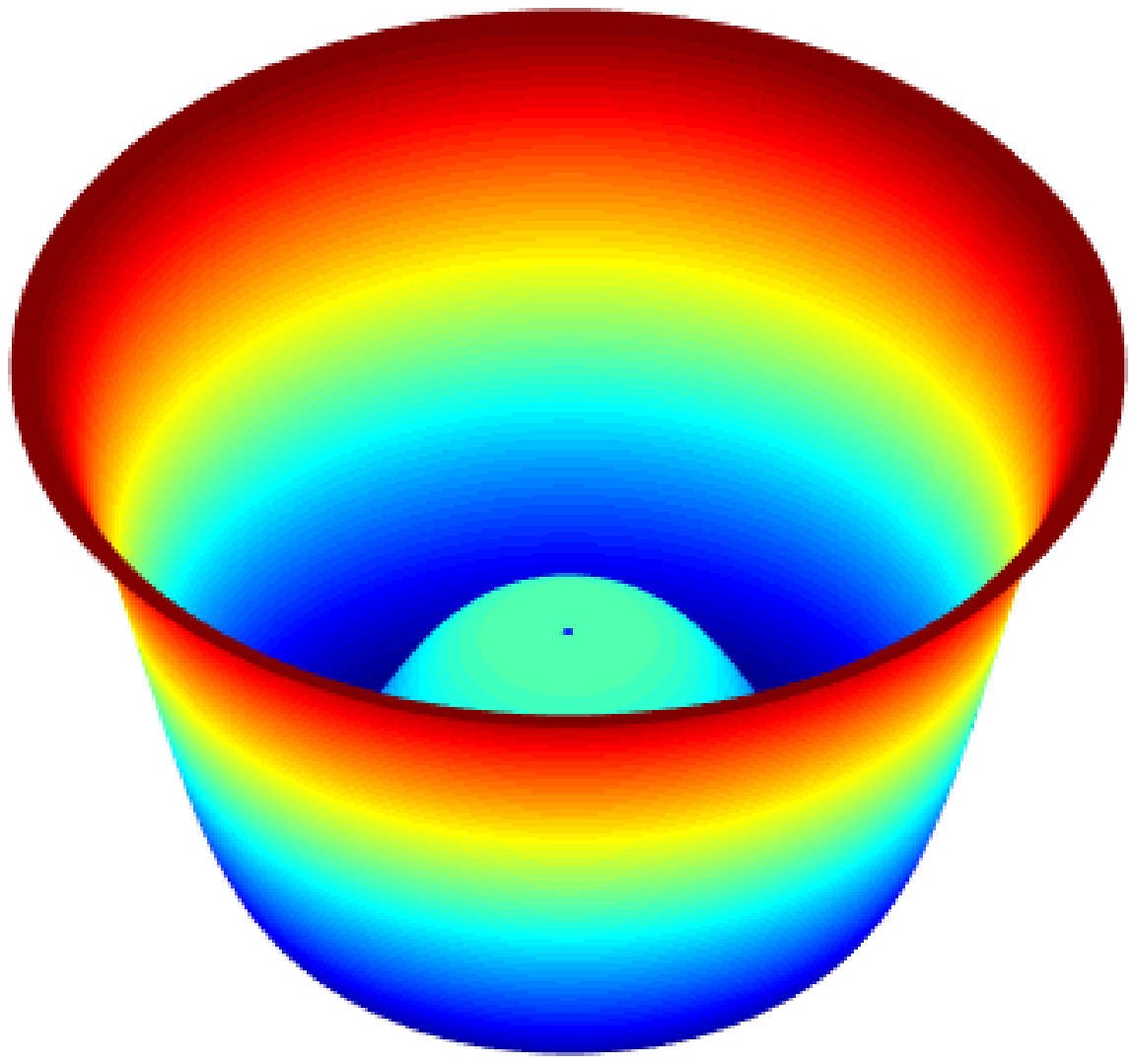}
       \hskip -0.6 cm

     \includegraphics[clip,width = 0.25\textwidth]{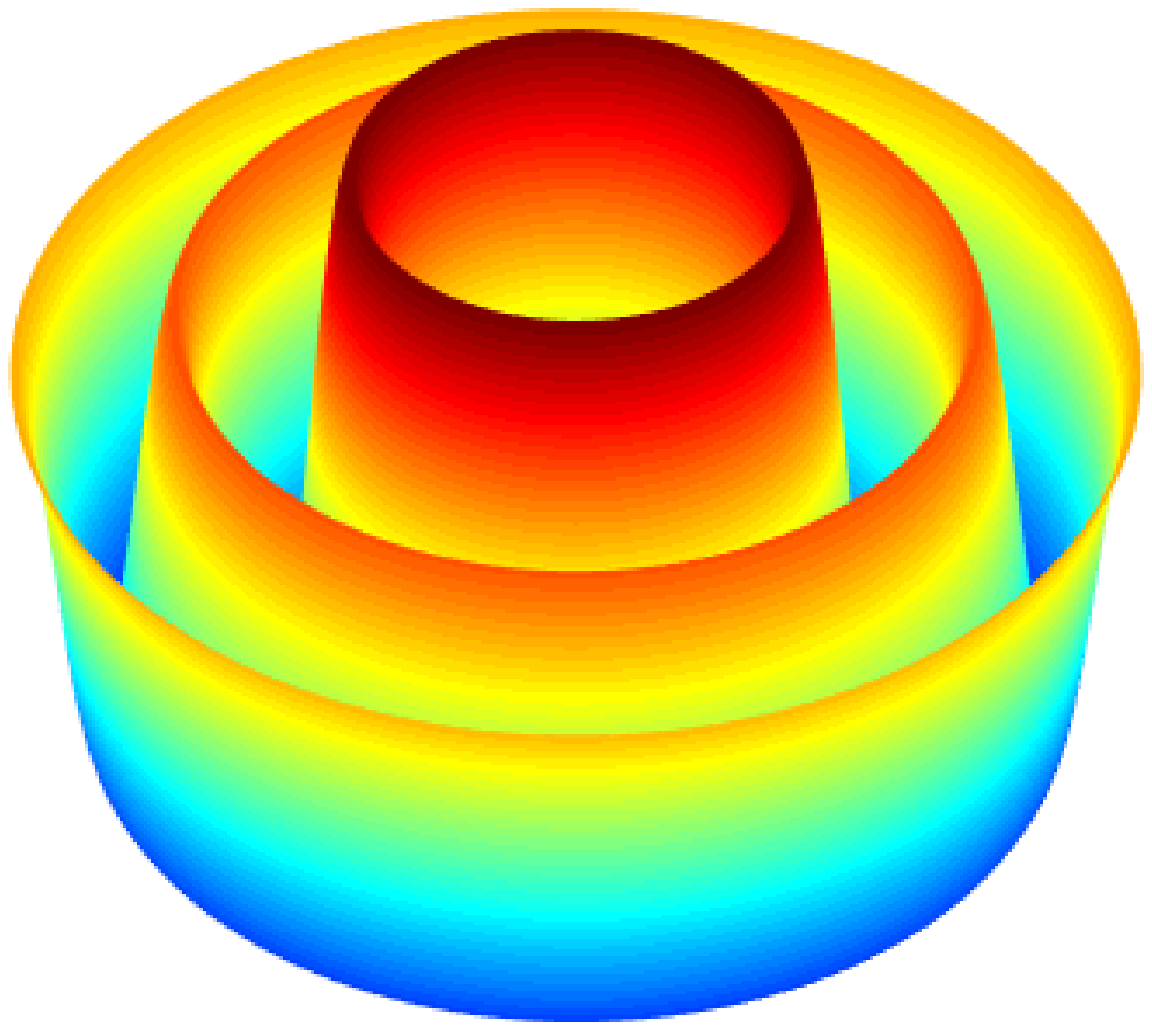} 
 \hskip -0.6 cm
 
\includegraphics[clip,width=0.25\textwidth]{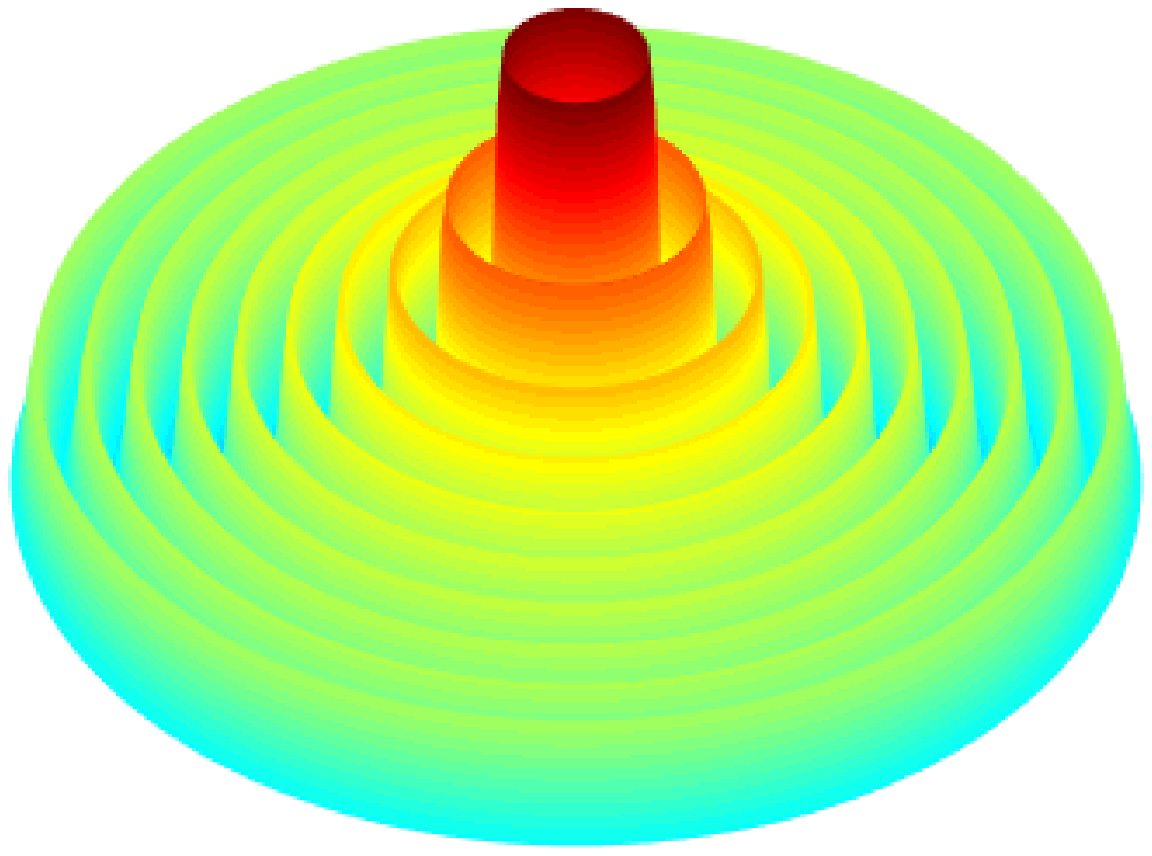} 
 \hskip -0.6 cm
\includegraphics[clip,width=0.25\textwidth]{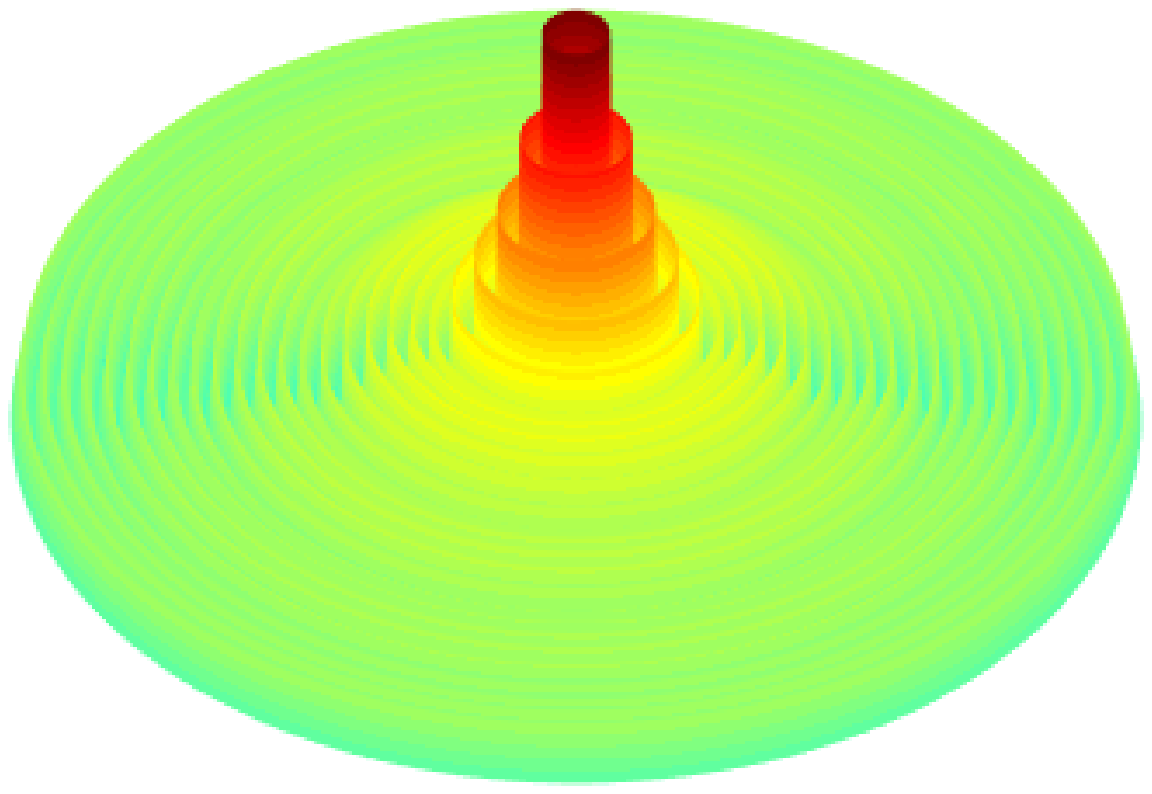}} 
\caption{(a) Normalized real part of  $\vp^1_{5,l}(|x|)$; (b) normalized  imaginary part of $\vp^2_{5,l}(|x|)$ for different values of $l$ on the cross-sectional plane: $|x| \le 1$ with $x_3 = 0$. } \label{fig:local}
\end{figure}

We now give a qualitative mathematical result to illustrate this localization phenomenon.
\begin{theorem}\label{thm:loceig}
Let $\{\vp^i_{n,l}\}$, $i = 1,2$ be the sequences of propagating functions defined by \eqref{def:progfun_1} and \eqref{def:progfun_2}. Then the following asymptotics hold,
\begin{equation} \label{lim;loceig}
    \frac{\max_{t\in [a,1]}|\vp_{n,l}^1(kt)|}{\max_{t\in [0,1]}|\vp_{n,l}^1(kt)|} = O(l^{-1})\,,\q  \frac{\max_{t\in [a,1]}|\vp_{n,l}^2(kt)|}{\max_{t\in [0,1]}|\vp_{n,l}^2(kt)|} = O(l^{-1}) \q \text{as} \ l \to \infty\,,
\end{equation}
\zz{where $a$ is a positive real number from $(0,1)$.}
\end{theorem}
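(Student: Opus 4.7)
\textbf{Proof plan for Theorem \ref{thm:loceig}.} The strategy is to reduce the claim to classical large-argument asymptotics of spherical Bessel functions via the substitution $s = z^i_{n,l} t$, which maps $t \in [0,1]$ to $s \in [0, z^i_{n,l}]$ and $t \in [a,1]$ to $s \in [a z^i_{n,l}, z^i_{n,l}]$. By the estimates \eqref{eq:est_zeros1} and \eqref{eq:est_zeros2}, we have $z^i_{n,l} = \w{z}^i_{n,l} + O(l^{-1})$, so $|z^i_{n,l}| \sim l\pi$ and $|\im z^i_{n,l}| = O(l^{-1})$ as $l \to \infty$. In particular, the asymptotic expansions \eqref{eq:aymjn} and \eqref{eq:aymjjn} (written out explicitly in \eqref{eq:asyf1} and \eqref{eq:asyf2}) apply uniformly for $s = z^i_{n,l} t$ with $t \in [a,1]$, yielding
\begin{equation*}
  |j_n(s)| \le C/|s|, \qquad |\jj_n(s)| \le C,
\end{equation*}
with a constant $C$ independent of $l$ (since $e^{|\im s|}$ remains bounded).

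For $\vp^1_{n,l}(kt) = \sqrt{n(n+1)}\lad^1_{n,l}\, j_n(z^1_{n,l} t)$, the common prefactor cancels in the ratio, so it suffices to control $\max_{t\in[a,1]}|j_n(z^1_{n,l} t)|$ against $\max_{t\in[0,1]}|j_n(z^1_{n,l} t)|$. For the numerator, the above bound gives $|j_n(z^1_{n,l} t)| \le C/(a|z^1_{n,l}|) = O(l^{-1})$ on $[a,1]$. For the denominator, fix any $s_n > 0$ at which $j_n(s_n) \neq 0$; such a point exists because $j_n$ is a nontrivial entire function. For $l$ sufficiently large we have $s_n/|z^1_{n,l}| \in (0,1)$, so there exists $t_n \in (0,1)$ with $|j_n(z^1_{n,l} t_n)| \ge |j_n(s_n)|/2 =: c_n > 0$. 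Hence the ratio is bounded by $(C/(al))/c_n = O(l^{-1})$.

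For $\vp^2_{n,l}(kt) = \tfrac{i\lad^2_{n,l}\sqrt{n(n+1)}}{z^2_{n,l} t}\,\jj_n(z^2_{n,l} t)$, the prefactor $i\lad^2_{n,l}\sqrt{n(n+1)}/z^2_{n,l}$ again cancels in the ratio, and we are left comparing the maxima of $|\jj_n(z^2_{n,l} t)|/t$. Via the substitution $s = z^2_{n,l} t$, this quantity equals $z^2_{n,l}\,|\jj_n(s)/s|$. On $[a,1]$, the asymptotic bound $|\jj_n(s)| \le C$ together with $1/t \le 1/a$ gives numerator $\le C/a = O(1)$. For the denominator, pick any $s^*_n > 0$ such that $\jj_n(s^*_n)/s^*_n \neq 0$ (such a point exists since $\jj_n$ is a nontrivial analytic function not identically vanishing at the origin in the relevant sense, and vanishes only at isolated points); then for $l$ large, $s^*_n \in (0, |z^2_{n,l}|)$, giving denominator $\ge c'_n\, z^2_{n,l} \gtrsim l$ for some $c'_n > 0$. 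Thus the ratio is $O(l^{-1})$.

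The main technical point is ensuring that the denominators do not degenerate, i.e., providing uniform lower bounds independent of $l$. This amounts to picking a fixed interior maximum of $j_n$ (respectively of $\jj_n(s)/s$) and observing that for large $l$ the dilated domain $[0, z^i_{n,l}]$ eventually contains any such fixed point. The numerator decay is then automatic from the $O(1/|s|)$ decay of $j_n$ (respectively the boundedness of $\jj_n$ divided by $s \gtrsim l$) on the shifted interval $[a z^i_{n,l}, z^i_{n,l}]$, and combining the two estimates yields both statements in \eqref{lim;loceig}.
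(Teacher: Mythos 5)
Your proof is correct and follows the same high-level strategy as the paper's: reduce the ratio to a ratio of $|j_n|$ maxima (respectively $|\jj_n(\cdot)/\cdot|$), bound the numerator using the large-argument decay $j_n(s) = O(1/|s|)$ on the strip where $\im s$ stays bounded, and establish a lower bound on the denominator that is uniform in $l$.

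The one place you diverge is in how you obtain the denominator lower bound. The paper does this via two auxiliary lemmas: Lemma~\ref{lem:localforeig} first replaces the complex zero $z^1_{n,l}$ by the real approximation $\w{z}^1_{n,l}$ (at a cost $O(l^{-1})$), and Lemma~\ref{lem:localforeig0} then shows $\max_{t\in[0,1]}|j_n(\w{z}^1_{n,l}t)|$ is eventually equal to the fixed global maximum of $|j_n|$ on $[0,\infty)$. You instead fix a single point $s_n$ where $j_n(s_n)\neq 0$, observe that $s_n$ eventually lies inside the dilated interval, and appeal to continuity to get $|j_n(z^1_{n,l}t_n)|\ge c_n>0$ for a suitable $t_n\in(0,1)$. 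This is arguably more elementary and avoids introducing the two lemmas, but it is slightly less explicit on one point: the argument $z^1_{n,l}t_n$ is \emph{complex}, not equal to $s_n$. You should spell out that with $t_n = s_n/|z^1_{n,l}|$ one has $z^1_{n,l}t_n = s_n\cdot z^1_{n,l}/|z^1_{n,l}|\to s_n$ because $\re z^1_{n,l}\to\infty$ and $\im z^1_{n,l} = O(l^{-1})$, so continuity of $j_n$ then gives the bound for all large $l$. Once that step is made explicit (and the analogous one for $\jj_n(s)/s$ in the second case), your argument is complete and yields the same $O(l^{-1})$ rate, in both cases driven by $|z^i_{n,l}|\sim l\pi$.
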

    \begin{proof} 
    The proof is direct and simple based on two lemmas in Appendix \ref{app:C}. We only give the argument for the first estimate in \eqref{lim;loceig}. The analysis for the second one can be conducted by the same idea.  In fact, by Lemma \ref{lem:localforeig} and the asymptotic expansion \eqref{eq:aymjn}, we have
    \begin{align*}
        \frac{\max_{t\in [a,1]}|\vp_{n,l}^1(kt)|}{\max_{t\in [0,1]}|\vp_{n,l}^1(kt)|} =  \frac{\max_{t\in [a,1]}|j_n(z^1_{n,l} t)|}{\max_{t\in [0,1]}|j_n(z^1_{n,l} t)|} \le \frac{C_1|z^1_{n,l}|^{-1}}{\max_{t\in [0,1]}|j_n(\w{z}^1_{n,l}t)|-C_2|l|^{-1}}\,,
    \end{align*}
    where $C_1$ and $C_2$ are some generic constants depending on $n$. Note that letting $l$ tends to infinity, both $\{\w{z}_{n,l}^1\}$ and $\{z_{n,l}^1\}$ vanish with the rate $l^{-1}$. Then the result directly follows from Lemma \ref{lem:localforeig0}.
    \end{proof}
\begin{remark}
It is also possible to obtain more subtle estimates for the localization speed under various $L^p$-norm ($p \ge 1$) in a similar manner as in \cite{nguyen2013localization}, where the authors considered the high-frequency localization of Laplacian eigenfunctions under various boundary conditions and norms. However, the detailed discussions are beyond the scope of this work. We intend to investigate this interesting topic in our future work.
\end{remark}

\section{Applications to super-resolutions in high contrast media}  \label{sec:super-resolu} 
We have established the main mathematical results in this work concerning the spectral properties of $\td$ and the behavior of the resolvent $(\lad - \td)^{-1}$ in the high contrast regime, as well as the asymptotic estimates for the eigenvalues and eigenfunctions for a spherical domain. \zz{In this section,} we shall derive the resonance expansions for the Green's tensor $G$ and its imaginary part $\im G$,  by Theorems \ref{thm:resoinhogr_2} and \ref{thm:ppe}, and use it to explain the expected super-resolution phenomenon when imaging the source $f$ embedded in the high contrast medium. We shall also provide the numerical experiments for the case of a spherical region to show the existence of the possible subwavelength peaks of the imaginary part of the Green's tensor. 
\subsection{Resonance expansion of Green's tensor}
To write the resonance expansion for the Green's tensor $G$, we  directly substitute the pole-pencil decomposition in \eqref{for:poledecom} into the representation of $G$ in \eqref{eq:resoinhogr_2} with a polarization $p \in S^2$ and then obtain
\begin{align}  \label{eq:resexpG}
    G(z,z',k)p = & \frac{1}{k_\tau^2} \nabla_z \ddiv_z (\w{g}(z,z',k)p) + \frac{1}{\tau + 1} \mathbb{P}_0\w{G}(z,z',k)p \notag \\
    & + \frac{1}{\tau}\sum_{i \in I}\sum^{N_i}_{j = 1} \bvi (z) \dd (\tau^{-1} - J^j_{\lad_i})^{-1}(P^j_{\lad_i}\w{G}(\dd,z',k)p)_{\sss\bvi} + (1- \tau T_\zeta^k)^{-1}[P_\zeta \w{G}(\dd,z',k)p](z)\,,
\end{align}
for $z \in D$ and $z'\in D'$; see Theorem \ref{thm:resoinhogr_2} for the definitions of $\w{g}$ and $\w{G}$ here. To derive the resonance expansion of $\im G$, we first recall the explicit form of $\mathbb{P}_0$: $-\na \S \ddiv$ and formula \eqref{def:gp}, and then have
\begin{align}
  \im \mathbb{P}_0\w{G}(z,z',k)p = \mathbb{P}_0\im \w{G}(z,z',k)p & = -\na_z \S \ddiv_z\im G_0(z,z_0,k)p + \na_z \S \ddiv_z \frac{1}{k^2} \nabla_z \ddiv_z \im \w{g}(z,z_0,k)p \notag\\
%   & =  -\na_z \S \ddiv_z G_0(z,z_0,k)p + \frac{1}{k^2} \na_z \S \Delta_z \ddiv_z (\w{g}(z,z_0,k)p) \\
  & = - \frac{1}{k^2} \na_z \ddiv_z (\im \w{g}(z,z_0,k)p)\,, \label{eq:aux:resoim}
\end{align}
by noting that $\ddiv_z \im G_0(z,z_0,k)p = 0$ and $\S$ is the inverse of $-\Delta$ in the
variational sense (cf.\,\eqref{mod:dsp}). In view of \eqref{eq:aux:resoim}, taking imaginary part of both sides of \eqref{eq:resexpG} gives us the following resonance expansion of $\im G$:
\begin{align}  \label{eq:resoexphc}
    \im G(z,z',k)p = &\im \frac{1}{\tau}\sum_{i \in I}\sum^{N_i}_{j = 1} \bvi (z) \dd (\tau^{-1} - J^j_{\lad_i})^{-1}(P^j_{\lad_i}\w{G}(\dd,z',k)p)_{\sss\bvi} \notag\\ &+ \im (1- \tau T_\zeta^k)^{-1}[P_\zeta \w{G}(\dd,z',k)p](z)\,,\q z\in D,\ z' \in D'\,,
\end{align}
which has a more concise expression than \eqref{eq:resexpG}. Note that the counterpart of the expansion \eqref{eq:resoexphc} for the imaginary part of the free space Green's tensor $\im G_0$ can be derived from \eqref{def:gp} and \eqref{eq:aux:resoim}:
\begin{align} 
    \im G_0(z,z',k)p & = \frac{1}{k^2}\na \ddiv (\im \w{g}(z,z',k)p) + \im (\mathbb{P}_0 + P_\sigma + P_\xi) \w{G}(z,z',k)p \notag\\
    & = \im \sum_{i \in I}\sum^{N_i}_{j = 1} \bvi (z) \dd (P^j_{\lad_i}\w{G}(\dd,z',k)p)_{\sss\bvi} + \im  P_\xi \w{G}(z,z',k)p\,,\q z\in D,\ z' \in D'\,, \label{eq:expimg}
\end{align}
where we have used the fact that $\mathbb{P}_0 + P_\sigma + P_\xi$ is the identity operator, and the definition of $P_\sigma$ in \eqref{def:rp} and the expression \eqref{eq:expbasis}. The first term in 
the above expansion may be viewed as the high-frequency part of $\im G_0$ that can encode the subwavelength information of the sources due to the super-oscillatory nature of the generalized eigenfunctions in the Jordan chains $\bvi$, see Figures \ref{fig:eigenf9} and \ref{fig:local}. Comparing it with \eqref{eq:resoexphc}, we can find that this high-frequency part is amplified by the resolvents of Jordan matrices: $(1-\tau J^j_{\lad_i})^{-1}$ when $\tau^{-1}$ is approaching the eigenvalues $\lad_i, \ i \in I$. Therefore, the imaginary part of $G$ may display a sharper peak than the one of $G_0$ for some specified high contrast parameters, and thus help us more accurately resolve subwavelength details. 

\subsection{Numerical illustrations}  
In this subsection, we numerically study the imaginary part of the Green's tensor $G(x,y,k)$ \zz{corresponding to the spherical medium $B(0,1)$ with the 
high contrast $\tau$, as a complement of the analysis and the illustration for the super-resolution provided in the previous subsection.}
For the sake of simplicity, we let $y = 0$ \zz{and write $G(x,k)$ (resp., $G_0(x,k)$) for $G(x,0,k)$ (resp., $G_0(x,0,k)$).} By the addition formula in \eqref{eq:additiongreen}  for $G_0$ and noting that $\wete(k,0) = 0$ for $n \ge 1$ and $\wetm(k,0) = 0$ for $n \ge 2$, we have 
\begin{equation*}
    G_0(x,k) = \frac{ik}{2}\sum_{m = -1}^1 E_m(k,x) \otimes \overline{\w{E}_m}(k,0)\,, \q x \in \R^3 \backslash \{0\}\,.
\end{equation*}
Here and throughout this subsection, we simply denote $E^{TM}_{1,m}$
 (resp., $\w{E}^{TM}_{n,m}$) by  $E_m(k,x)$ (resp., $\w{E}_m$), for $m =-1,0,1$. As in Section \ref{subsec:sphere}, via the vector wave functions, we assume that the Green's tensor $G$ with a real polarization vector $p \in \R^3$ has the following ansatz: 
\begin{equation} \label{eq:ansaimg_1}
    G(x,k)p = \left\{ \begin{array}{ll}
    G_0(x,k_\tau)p +  \sum^1_{m = -1} a_m \w{E}_m(k_\tau,x)  &  |x| \le 1\,,  \\
     \sum^1_{m = -1} b_m E_m(k, x)     &  |x| \ge 1\,,
    \end{array}\right.
\end{equation}
where $a_m$ and $b_m$ for $m = -1,0,1$ are complex constants to be determined
and linearly depending on $p$. To proceed, we note that, from \eqref{eq:trramp} and \eqref{eq:trenmp}, it follows that 
\begin{equation} \label{eq:trag0_1}
\begin{cases}
\h{x} \t G_0(x,k_\tau) p =  -\frac{1}{\sqrt{2}|x|}  \hh_1(k_\tau|x|) \sum_{m = -1}^1 V_1^m(\h{x}) \overline{\w{E}_m}(k_\tau,0)^t\dd p\,, \q x \in \R^3 \backslash \{0\}\,,   \\
  \h{x} \t \curl G_0(x,k_\tau)p = -\frac{k_\tau^2}{\sqrt{2}} h_1^{(1)}(k_\tau|x|)\sum_{m = -1}^1 U_1^m(\h{x}) \overline{\w{E}_m}(k_\tau,0)^t \dd p\,, \q x \in \R^3 \backslash \{0\}\,.
\end{cases}
\end{equation}
To avoid calculating the three coefficients $a_m (m=-1,0,1)$, we choose a special real polarization vector $p$:
\begin{equation} 
    p = \frac{\w{p}}{\norm{\scriptstyle \w{E}_0(k_\tau,0)}^2}\in \R^3\,,\q   \w{p} = \w{E}_0(k_\tau,0)/i\,,
\end{equation}
\zz{according to} two easily verified observations that $\w{E}_m(k,0)$, $m = -1,0,1$ are  orthogonal vectors with the same $l^2$-norms (cf.\,\eqref{eq:tancomm_2}), and $\w{E}_0(k_\tau,0)$ has purely imaginary components since $Y_1^0(\h{x})$ is a real vector function on $S^2$. With this specially chosen $p$, we can simplify \eqref{eq:trag0_1} as follows:
\begin{equation}\label{eq:trag0_2}
\begin{cases} 
  \h{x} \t G_0(x,k_\tau) p =  \frac{i}{\sqrt{2}|x|}  \hh_1(k_\tau|x|)  V_1^0(\h{x}) \,, \q x \in \R^3 \backslash \{0\}\,,  \\
   \h{x} \t \curl G_0(x,k_\tau)p = \frac{i k_\tau^2}{\sqrt{2}}h_1^{(1)}(k_\tau|x|)  U_1^0(\h{x}) \,, \q x \in \R^3 \backslash \{0\}\,.
\end{cases}
\end{equation}
 Matching the Cauchy data of the field in \eqref{eq:ansaimg_1} inside and outside the domain on the boundary $\p B(0,1)$, we obtain, by using \eqref{eq:trramp} and \eqref{eq:trenmp}, that $a_{-1} = a_1 = 0$ and $b_{-1} = b_1 = 0$, and the following equation for $(a_0,b_0)$:
\begin{equation*} 
% \begin{cases}
%      \frac{i}{\sqrt{2}} \hh_1(k_\tau) - a_0 \frac{\sqrt{2}}{ik_\tau}\jj_1(k_\tau) = - b_0\frac{\sqrt{2}}{i k}\hh_1(k) \\
%      \frac{ik_\tau^2}{\sqrt{2}}h_1^{(1)}(k_\tau) + ik_\tau a_0 \sqrt{2} j_1(k_\tau) = i k b_0 \sqrt{2}h_1^{(1)}(k)
% \end{cases} 
% \eq 
\mm \frac{1}{ik_\tau}\jj_1(k_\tau) & - \frac{1}{i k}\hh_1(k) \\ -ik_\tau j_1(k_\tau) & i k h_1^{(1)}(k)
\nn \mm a_0\\b_0 \nn = \mm \frac{i}{2} \hh_1(k_\tau)   \\   \frac{i k_\tau^2}{2}  h_1^{(1)}(k_\tau) \nn.
\end{equation*} 
Then the solution $a_0$ to the above equation readily follows (we only need $a_0$ to investigate the behavior of $G$ inside the domain):
\begin{align*}
    a_0 
    % & = \frac{ -\frac{k}{2}\hh_1(k_\tau)h_1^{(1)}(k) + \frac{k_\tau^2}{2k}\hh_1(k)h_1^{(1)}(k_\tau)  }{\frac{k}{k_\tau}\jj_1(k_\tau)h_1^{(1)}(k) - \frac{k_\tau}{k}j_1(k_\tau)\hh_1(k)}\,. \mm  \frac{i}{2} \hh_1(k_\tau) & - \frac{1}{i k}\hh_1(k) \\   \frac{i k_\tau^2}{2}  h_1^{(1)}(k_\tau) & i k h_1^{(1)}(k)  \nn \\
     = \frac{ -\frac{k^2}{2k_\tau}\hh_1(k_\tau)h_1^{(1)}(k) + \frac{k_\tau}{2}\hh_1(k)h_1^{(1)}(k_\tau)  }{\frac{k^2}{k^2_\tau}\jj_1(k_\tau)h_1^{(1)}(k) -j_1(k_\tau)\hh_1(k)}\,.
\end{align*}
We regard $a_0$ as a function of the real variable $k_\tau$ and plot its absolute value in Figure \ref{fig:abscoeff} for $k = 1$, from which we clearly see that it blows up when $k_\tau$ hits the real parts of the discrete zeros $z^2_{1,l}$ of $f^2_n(z)$.
\begin{figure}[!htbp]
    \centering
      \includegraphics[clip,width=0.5\textwidth]{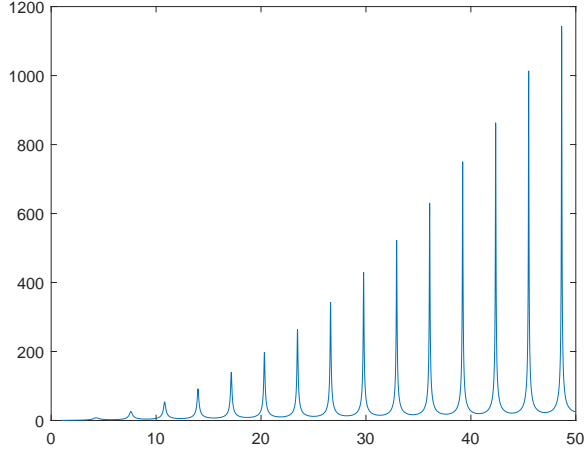}
      \caption{$|a_0(k_\tau)|$ as a function of $k_\tau$, $k_\tau \in [1,50]$.}
    \label{fig:abscoeff}
\end{figure}

Since the spherical harmonics has nothing to do with the contrast $\tau$, in the following, we shall pay attention to the imaginary part of the radial part:
\begin{equation*}
    \phi(k_\tau,t) = \frac{i}{\sqrt{2}t}\hh_1(k_\tau t) - a_0 \frac{\sqrt{2}}{ik_\tau t} \jj_n(k_\tau t), \quad t \in[-1,1]\,,
\end{equation*}
of the tangential component $\h{x} \t G(x,k)p$ of $G(x,k)p$:
\begin{equation*}
    \h{x} \t G(x,k)p = \frac{i}{\sqrt{2}|x|}\hh_1(k_\tau |x|) V_1^0(\h{x}) - a_0 \frac{\sqrt{2}}{ik_\tau |x|} \jj_n(k_\tau |x|) V_1^0(\h{x})\,.
\end{equation*}
We remark that $\phi(k_\tau,t)$ is a one-dimensional function but keeping all the main features of $\im G(x,k)p$ we are interested in; and the radial part of the normal component $\h{x} \dd G(x,k)p$ has a very similar behavior as $\phi(k_\tau,t)$. 
\begin{figure}[!htbp]
    \centering 
    \subfigure[$k_\tau =1, \re(z^2_{1,l})$ for $l = 2,3,4,5$, i.e., $k_\tau$ =1, 7.5944, 10.8119,   13.9949, 17.1626.]{\label{fig:imaggre_1}
       \includegraphics[clip,width=0.5\textwidth]{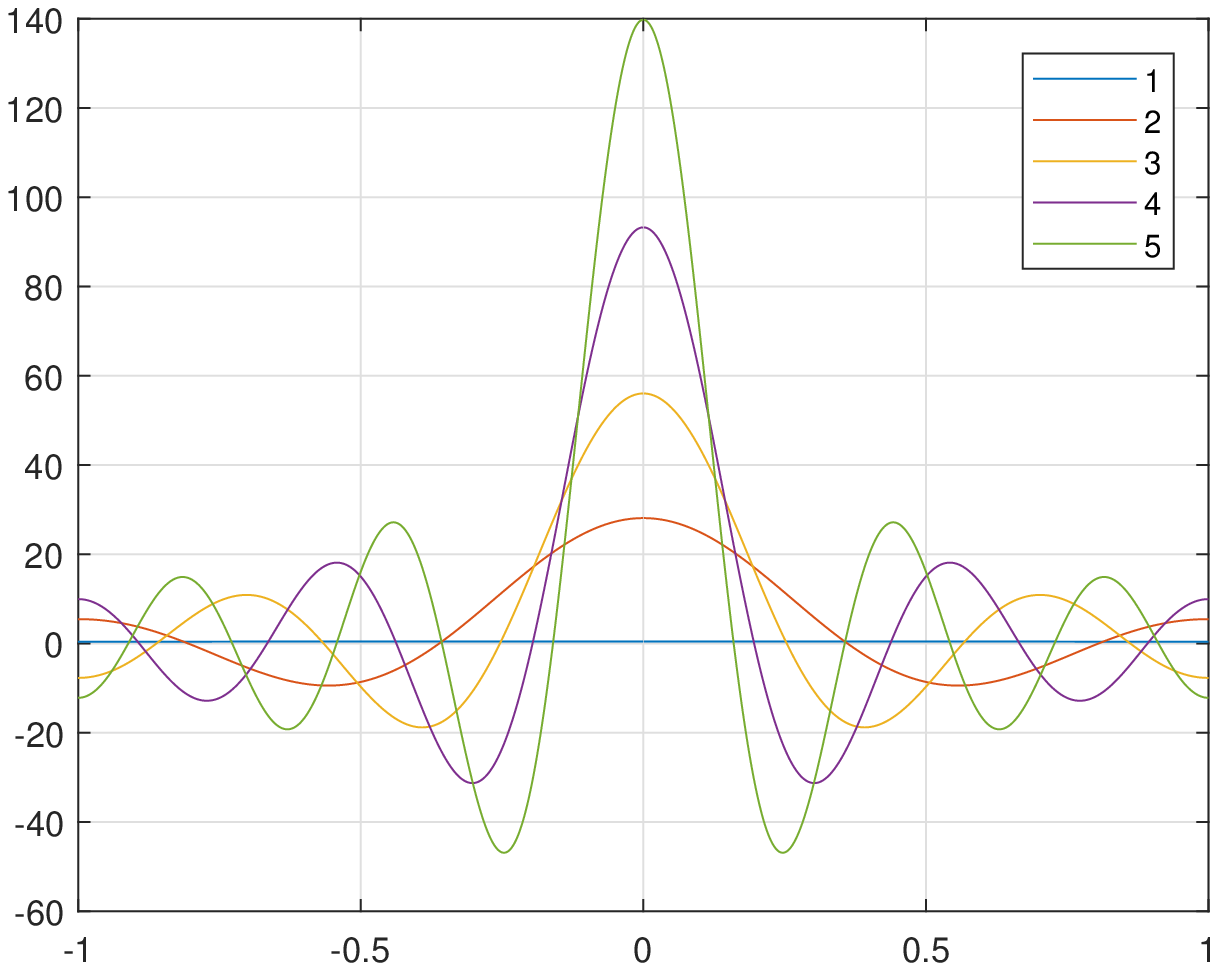}}  \hskip -0.6 cm
        \subfigure[$k_\tau$ =1, 15, 25.]{\label{fig:imaggre_2}
       \includegraphics[clip,width=0.5\textwidth]{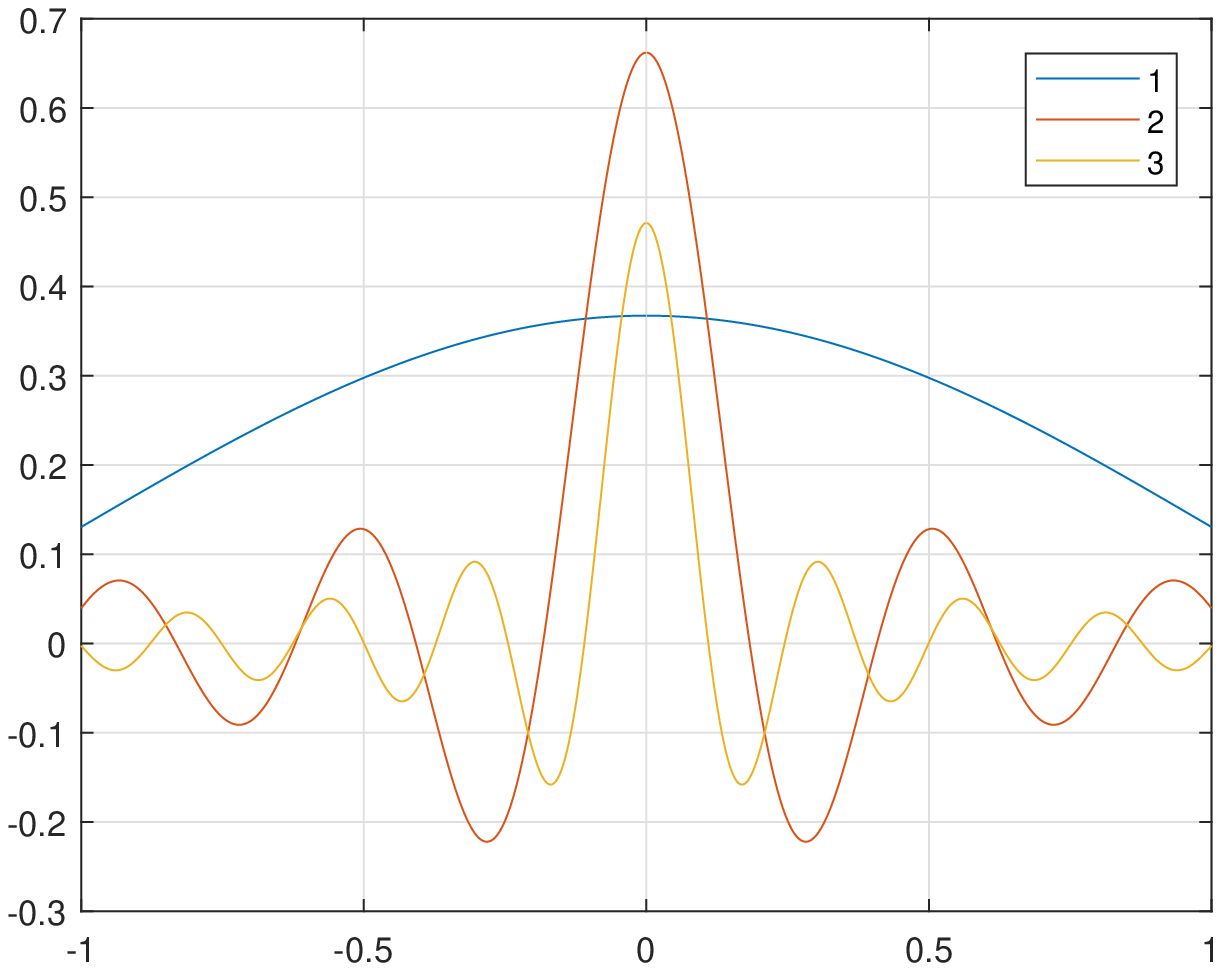}} 
      \caption{Imaginary part of  $\phi(k_\tau,t)$ for various $k_\tau$.}
    \label{fig:imaggre}
\end{figure}

From Figure \ref{fig:imaggre_1}, where we present $\im \phi(k_\tau,t)$ for different values of $k_\tau$,  we see that when $k_\tau$ increases and hits the real parts of $z_{1,l}^2$, the imaginary part of Green's tensor become highly oscillating and exhibit a subwavelength peak, and hence the super-resolution can be achieved \zz{with the increasing likelihood. When $\tau$ tends to infinity, we can even expect the infinite resolvability of the imaging system, by Theorems \ref{thm:asyeigenvalue} and \ref{thm:loceig}.} However, we would like to stress that the super-resolution phenomenon can only be expected for discrete values of $\tau$. For those $\tau$ taking high values but not near the resonant values, the magnitude of $\im G(x,k)p$ will not be significantly enhanced and have almost the same order of $\im G_0(x,k)p$, although it is more oscillatory than the one in the homogeneous space; see Figure \ref{fig:imaggre_2}.

\section{Concluding remarks}
In this work, we have considered the time-reversal reconstruction of EM sources embedded in an inhomogeneous background, and tied its anisotropic resolution to the resolvent of a certain type of integral operators $\td$ via a newly derived
Lippmann-Schwinger representation that reveals the close relation between the medium (shape and refractive indices) 
and its associated EM Green's tensor. We have then investigated the spectral structure of $\td$ for a bounded smooth domain with a very general geometry and found that all the poles of its resolvent in $\C\backslash \sigma_{ess}(\td)$ are eigenvalues of finite type and lie in the upper-half plane \zz{with $\sigma_{ess}(\td)$ being all its possible accumulation points}. With these new findings, we have derived the pole-decomposition for the resolvent of $\td$ and \zz{obtained the local resonance expansion for the Green's tensor associated with the high contrast medium}. More quantitative results about the asymptotic behaviors of eigenvalues and eigenfunctions have been also provided for the case of a spherical domain. \zz{As a by-product of our spectral analysis, we have given a characterization and discussion about the EM nonradiating sources, see Remarks \ref{rem:nonradi_1} and \ref{rem:eig-1ball}. Some further interesting spectral results about the operator $\td$ based on the fact that $\td$ is a quasi-Hermitian operator have been included in Appendix \ref{app:B}.}
\zz{In Section \ref{sec:super-resolu},} we have applied our new theoretical results to explain the expected super-resolution in the inverse electromagnetic source problem at some discrete characteristic values. \zz{It turns out that both eigenvalues and eigenfunctions are responsible for the super-resolution phenomenon in the sense that the eigenfunctions are super-oscillatory and can encode the subwavelength information of the sources; while the eigenvalues serve as an amplifier when they nearly hit the reciprocal of the contrast so that these subwavelength information can be measurable in the far field. We finally remark that our analysis and results can be naturally extended to the Lipschitz domain by noting the facts that the Helmholtz decomposition in Appendix \ref{app:A} still holds \cite{amrouche1998vector} and that for a selfadjoint operator on a Hilbert space, the essential spectrum is a compact
subset of the real line \cite{gohberg1990classes}.}

\titleformat{\section}{\bfseries}{\appendixname~\thesection .}{0.5em}{}
\titleformat{\subsection}{\normalfont\itshape}{\thesubsection.}{0.5em}{}
\appendices

\section{Helmholtz decomposition of \texorpdfstring{$L^2$-vector fields}{}}\label{app:A}
In this section we give a complete review of the Helmholtz decomposition of $L^2$-vector fields in a unified manner due to its great significance to our main analysis in the work. For a vector field $u$, the Helmholtz decomposition provides us a procedure to separate its divergence, curl, and the normal trace information. In the following, we show how to extract these information from a field $u$ by solving some sub-variational problems. Let us first give a more precise description about the geometry of the domain $D$. We denote by $\Gamma_j\,,0\le j \le J$, the connected component of $\p D$, in which $\Gamma_0$ is the boundary of the unbounded connected component of $\R^3 \backslash \bar{D}$. And the genus $L$ of $\p D$ may be nontrivial, i.e., $L \ge 0$ (for $L \ge 1$, we can construct interior cuts: $\Sigma_i\,,1\le i \le L$ contained in $D$ such that $D\backslash\cup^L_{i=1}\Sigma_i$ is simple connected; see \cite[Section 3.7]{monk2003finite}).
 A typical example of $D$ with $L = 1$ and $J=1$ is a torus with a ball hole.

Denote by $\S: H^{-1}(D) \to H_0^1(D)$ the solution operator of the Dirichlet source problem, namely, for $l \in H^{-1}(D)$, $\S l \in H_0^1(D)$ solves the variational problem:
\begin{equation} \label{mod:dsp}
   \text{Find\ }\psi\in H_0^1(D)\ \text{such that}\  \l l, \varphi \r_{H_0^1(D)} = (\nabla \psi, \nabla \varphi)_{L^2(D)}\,, \quad \forall\  \varphi \in H^1_0(D)\,.
\end{equation}
We remark that $\S$ is an isomorphism between $H^{-1}(D)$ and $H_0^1(D)$. Note that $\ddiv :  L^2(D,\R^3) \to H^{-1}(D)$ is the adjoint operator of $-\nabla: H_0^1(D) \to  L^2(D,\R^3)$. For $u \in L^2(D,\R^3)$, we consider \eqref{mod:dsp} with 
\begin{equation*}
    \l l, \varphi \r_{H_0^1(D)}: = (u,\nabla \varphi)_{L^2(D)}, \quad \forall\ \varphi \in H^1_0(D)\,.
\end{equation*}
Then there exists a unique solution $\psi_1 := - \S \ddiv u \in H^1_0(D)$ satisfying \eqref{mod:dsp}, from which it follows that  $u - \na \psi_1$ is divergence-free in the distribution sense, and the normal trace $\gamma_n$ is well-defined. 

To obtain the $\ccurl$ part of $u$, we need to solve a magnetostatics problem. To do so, we introduce the Hilbert space $X_N := H_0(\ccurl,D) \bigcap H(\ddiv, D)$ 
with the graph norm $\norm{\dd}_{X_N}:=\norm{\dd}_{L^2(D)} + \norm{\ddiv \dd}_{L^2(D)} + \norm{\ccurl \dd}_{L^2(D)}$, and its subspace $X_N^0 := H_0(\ccurl,D) \bigcap H(\ddiv0, D)$. By the well-known de Rham diagram (cf.\,\cite[Section 3.7]{monk2003finite}), we see that the kernel space of the $\ccurl$ operator in $H_0(\ccurl,D)$, i.e., $H_0(\ccurl 0,D)$, has the following orthogonal decomposition:
\begin{equation} \label{eq:kerofcurl}
    H_0(\ccurl 0,D) = \na H^1_0(D) \oplus_\perp K_N(D),
\end{equation}
where $K_N(D)$ is the normal cohomology space
with the dimension $J$, given by 
\begin{equation*}
    K_N(D)=\{u \in H_0(\ccurl,D)\,;\ \curl u = 0,\ \ddiv u = 0\ \text{in} \ D\}.   
\end{equation*}
Moreover, we have the following characterization of $K_N(D)$ from \cite[Theorem 3.42]{monk2003finite}.
\begin{lemma} \label{lem:charkn}
$K_N(D)$ is spanned by $\na p_j$, $1\le j \le J$, where $p_j \in H^1(D)$ satisfies
\begin{equation*}
    \Delta p_j = 0\ \text{in}\ D,\q  \text{and} \ \  p_j = \d_{j,s} \q \text{on} \ \Gamma_s, \ 0 \le s \le J.
\end{equation*}
In addition $\l\frac{\p p_j}{\p \n},1\r_{H^{1/2}(\Gamma_s)} = \d_{j,s}$, $1\le j \le J$, and $\l\frac{\p p_j}{\p \n}, 1\r_{H^{1/2}(\Gamma_0)} = -1$. 
\end{lemma}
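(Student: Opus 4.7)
The plan is to first verify that each $\nabla p_j$ ($1 \le j \le J$) sits inside $K_N(D)$, then establish linear independence of these $J$ elements, and finally invoke the standard dimension formula $\dim K_N(D)=J$ to conclude that $\{\nabla p_j\}_{j=1}^{J}$ is a basis; the boundary-pairing identities will fall out of Green's formula applied to the $p_j$'s against carefully chosen test functions.

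For existence, I would solve the Dirichlet problem for $p_j$ via Lax--Milgram on $H_0^1(D)$ plus a lift of the piecewise-constant boundary datum $\delta_{j,s}$, yielding $p_j \in H^1(D)$ harmonic with $\mathrm{tr}(p_j)|_{\Gamma_s}=\delta_{j,s}$. Then $\nabla p_j \in L^2(D,\mathbb{R}^3)$ has $\mathrm{curl}\,\nabla p_j=0$ pointwise and $\ddiv\nabla p_j=\Delta p_j = 0$. Crucially, since $p_j$ is constant on each connected component $\Gamma_s$ of $\partial D$, its surface gradient vanishes, so the tangential trace of $\nabla p_j$ vanishes on all of $\partial D$; hence $\nabla p_j \in H_0(\ccurl,D)\cap H(\ddiv 0,D) = K_N(D)$. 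Linear independence follows because $\sum_{j=1}^J c_j \nabla p_j = 0$ in $L^2(D)$ implies $q:=\sum_j c_j p_j$ is constant in $D$, and reading off the traces on $\Gamma_0$ (where $q=0$) and on $\Gamma_s$ (where $q=c_s$) forces $c_s=0$ for each $s$.

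For spanning, I would appeal to the classical dimension formula $\dim K_N(D)=J$ (see, e.g., \cite[Thm.~3.41]{monk2003finite}), which depends only on the number of connected components of $\partial D$ and uses a Hodge-type decomposition. Combined with the linear independence above, this forces $K_N(D)=\mathrm{span}\{\nabla p_j\}_{j=1}^J$. For the boundary-pairing identities, the plan is to apply Green's formula
\[
\int_D \nabla p_j \cdot \nabla \phi\, dx \;=\; \Big\langle \tfrac{\p p_j}{\p \n},\, \phi \Big\rangle_{\p D}
\qquad \forall\, \phi \in H^1(D),
\]
which is licit since $\Delta p_j=0$ and $\p p_j/\p\n \in H^{-1/2}(\p D)$. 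Taking $\phi\equiv 1$ yields $\sum_{s=0}^{J}\langle \p p_j/\p\n,1\rangle_{\Gamma_s}=0$, which together with the decoupled boundary decomposition gives the stated relation on $\Gamma_0$ once the values on $\Gamma_s$ for $s\ge 1$ are known; the latter I would extract by choosing $\phi=p_s$, so that $\phi|_{\Gamma_r}=\delta_{r,s}$ and the right-hand side collapses to $\langle \p p_j/\p\n,1\rangle_{\Gamma_s}$, while the left-hand side becomes the Dirichlet pairing $\int_D \nabla p_j \cdot \nabla p_s\, dx$, whose symmetry and characterization via the harmonic extension yield the claimed Kronecker-type values.

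The main obstacle is the spanning direction: even when $D$ has nontrivial first homology (genus $L \ge 1$), one must check that the vanishing tangential trace condition on $\partial D$ kills the periods of $u \in K_N(D)$ around cycles threading the handles $\Sigma_i$, so that $\dim K_N(D)$ remains $J$ rather than $J+L$. I would prefer to cite the Monk reference here; a self-contained argument would require interpreting $u$ as a closed $1$-form on $D$, trivializing it on the cut domain $D\setminus\cup_i \Sigma_i$, and verifying that the jumps across each $\Sigma_i$ vanish by pairing $u$ against the tangential cohomology representatives — this is the subtle geometric ingredient and is precisely the content of the cited dimension theorem.
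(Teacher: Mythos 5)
Your outline for the basis statement is sound and matches the route the paper takes by citing Monk (the paper does not reprove this lemma): $\nabla p_j$ lies in $K_N(D)$ because $p_j$ is harmonic (so $\ddiv \nabla p_j = 0$, $\ccurl\nabla p_j = 0$) and is constant on each boundary component (so the tangential trace vanishes); linear independence follows from reading off boundary traces of $q = \sum c_j p_j$; spanning follows from $\dim K_N(D) = J$ cited from Monk.

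However, the last paragraph contains a genuine gap, and the flux identity it is trying to establish is in fact false for the $p_j$ as defined. You correctly reduce $\langle \partial_\nu p_j, 1\rangle_{\Gamma_s}$ via Green's formula to the Dirichlet pairing $\int_D \nabla p_j \cdot \nabla p_s\,dx$, but then assert that symmetry and harmonic extension force this Gram matrix to be $\delta_{j,s}$. That does not follow: the matrix $\bigl(\int_D \nabla p_j\cdot\nabla p_s\,dx\bigr)_{j,s}$ is the capacitance (Dirichlet-to-Neumann) matrix associated to the boundary components, which is symmetric positive definite but not the identity. A concrete counterexample: let $D = \{1 < |x| < 2\}$, so $J=1$, $\Gamma_0 = \{|x|=2\}$, $\Gamma_1 = \{|x|=1\}$, and $p_1(r) = 2/r - 1$. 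Then $\partial_\nu p_1 \equiv 2$ on $\Gamma_1$, so $\langle \partial_\nu p_1, 1\rangle_{\Gamma_1} = 8\pi$, not $1$. The discrepancy traces back to a mismatch with Monk's actual formulation (Thm.\ 3.42): there the $p_j$ are required to be \emph{some} constants on each $\Gamma_s$ (with $p_j|_{\Gamma_0}=0$), and those constants are \emph{determined} by imposing the flux normalization $\langle \partial_\nu p_j,1\rangle_{\Gamma_s}=\delta_{j,s}$ — a mixed boundary-value problem, not the Dirichlet problem with prescribed traces $\delta_{j,s}$. One cannot prescribe both the constant Dirichlet data $\delta_{j,s}$ and the unit flux simultaneously; with your Dirichlet normalization, the correct conclusion is only that the flux matrix is symmetric positive definite, and a change of basis by its inverse is needed to recover the Kronecker form.

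Your remark about the genus $L$ subtlety — that the vanishing tangential trace kills the periods around the handle cycles, so $\dim K_N(D)$ is $J$ rather than $J+L$ — is exactly the right thing to flag, and citing Monk for it is appropriate in this context.
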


% \end{equation*}
By Friedrich's inequality (cf.\,\cite[Corollary.3.19]{amrouche1998vector}), on the space $X_N$, the seminorm
\begin{equation*}
|\dd|_{X_N}:=\norm{\ccurl \dd}_{L^2(D)} + \norm{\ddiv \dd}_{L^2(D)} + \sum^J_{j=1}\left|\l \gamma_n \dd,1\r_{H^{1/2}(\Gamma_j})\right|    
\end{equation*}
is equivalent to the graph norm $\norm{\dd}_{X_N}$. We now define the following quotient space:
\begin{equation*}
    \w{X}_N := X_N/K_N(D)
\end{equation*}
with the standard quotient norm $\norm{[u]}_{\w{X}_N} := \inf_{v \in K_N(D)}|u+v|_{X_N}$, where $[u] \in \w{X}_N$ denotes the equivalent class of $u$. It is easy to see that the quotient norm has an explicit form:
\begin{equation} \label{eq:equinorm}
    \norm{[u]}_{\w{X}_N} = \norm{\ccurl [u]}_{L^2(D)} + \norm{\ddiv [u]}_{L^2(D)}\,,
\end{equation}
where $\ccurl [u]$ and $\ddiv [u]$ are well-defined. Indeed, we can choose $$v = - \sum^J_{j=1} \l\gamma_n u ,1 \r_{{H^{1/2}(\Gamma_j})} \na p_j \in K_N(D)$$
such that for the representation element $u + v$ of $[u]$, the term $\sum^J_{j=1}|\l \gamma_n \dd,1\r_{{H^{1/2}(\Gamma_j})}|$ vanishes, which directly leads us to \eqref{eq:equinorm}. Moreover, on the subspace $\w{X}^0_N:= X_N^0/K_N(D)$ the quotient norm reduces to $\norm{\ccurl \dd}_{L^2(D)}$. We are now ready to consider the following magnetostatic field problem:   
for $f \in L^2(D,\R^3)$, find $\psi \in \w{X}_N$ such that
\begin{equation} \label{sys:cc}
  \left\{ \begin{array}{lc}
  \ccurl \ccurl \psi = \ccurl f &  \text{in} \ D\,, \\
   \ddiv \psi = 0      &  \text{in} \ D\,, \\
   \n \t \psi = 0 & \text{on} \ \p D\,, 
    \end{array}  \right.
\end{equation}
which shall be seen to have a unique solution. Its variational formulation is given by the next lemma.
\begin{lemma}
The system \eqref{sys:cc} is equivalent to the following variational problem: find $\psi \in \w{X}_N$ such that it holds, for all  $\phi \in \w{X}_N$,  that
\begin{equation} \label{sys:var:cc}
    (f, \ccurl \phi)_{L^2(D)} = (\ccurl \psi, \ccurl \phi)_{L^2(D)} + (\ddiv \psi, \ddiv \phi)_{L^2(D)}\,.
\end{equation}
\end{lemma}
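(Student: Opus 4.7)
The plan is to establish the equivalence directly from the weak/distributional interpretations of the pointwise equations in \eqref{sys:cc}, using density and carefully chosen test functions. The boundary condition $\n \t \psi = 0$ and the $H(\ddiv, D)$-regularity of $\psi$ are already built into the space $\w{X}_N$, so the real content to transfer between the two formulations is the interior equation $\ccurl \ccurl \psi = \ccurl f$ together with the divergence constraint $\ddiv \psi = 0$. One preliminary observation makes the quotient by $K_N(D)$ harmless: every $v \in K_N(D)$ satisfies $\ccurl v = 0$ and $\ddiv v = 0$, so $v$ annihilates both bilinear forms appearing in \eqref{sys:var:cc}, and the identity therefore descends to $\w{X}_N$ independently of the choice of representative.

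For the direction \eqref{sys:cc} $\Rightarrow$ \eqref{sys:var:cc}, since $f \in L^2(D, \R^3)$ the equation $\ccurl \ccurl \psi = \ccurl f$ must be read in the distribution sense, which amounts to $(\ccurl \psi, \ccurl \varphi)_{L^2(D)} = (f, \ccurl \varphi)_{L^2(D)}$ for every $\varphi \in C_c^\infty(D, \R^3)$. I would extend this identity to all $\phi \in X_N$ by density of $C_c^\infty(D, \R^3)$ in $H_0(\ccurl, D)$ in the $H(\ccurl)$-norm, and then add the trivially vanishing term $(\ddiv \psi, \ddiv \phi)_{L^2(D)} = 0$ (using $\ddiv \psi = 0$) to both sides to produce \eqref{sys:var:cc}.

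For the converse \eqref{sys:var:cc} $\Rightarrow$ \eqref{sys:cc}, the condition $\psi \in \w{X}_N$ already encodes $\n \t \psi = 0$. To extract $\ddiv \psi = 0$, I would test with $\phi = \na q$ for $q \in H_0^1(D) \cap H^2(D)$: such $\na q$ lies in $X_N$, because $\ccurl \na q = 0$ and $\n \t \na q$ coincides with the tangential gradient of $q|_{\p D} = 0$. Substituting into \eqref{sys:var:cc} yields $(\ddiv \psi, \Delta q)_{L^2(D)} = 0$ for all such $q$, and by surjectivity of the Dirichlet Laplacian $\Delta : H_0^1(D) \cap H^2(D) \to L^2(D)$ on the smooth domain $D$ this forces $\ddiv \psi = 0$. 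Finally, restricting test functions to $\varphi \in C_c^\infty(D, \R^3) \subset X_N$ and invoking the just-proved $\ddiv \psi = 0$ collapses \eqref{sys:var:cc} to $(\ccurl \psi, \ccurl \varphi)_{L^2(D)} = (f, \ccurl \varphi)_{L^2(D)}$, which is precisely $\ccurl \ccurl \psi = \ccurl f$ in the distributional sense. The only real subtlety is that $f$ is merely $L^2$, so both formulations must be interpreted weakly throughout; once the matching distributional readings are fixed, the equivalence reduces to the choice of divergence-producing test functions $\na q$ together with the density argument.
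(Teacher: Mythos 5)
Your proof is correct and follows essentially the same structure as the paper's: read the curl-curl equation distributionally to get the forward implication, then in the converse direction extract $\ddiv\psi = 0$ by testing with gradient fields whose divergence exhausts $L^2(D)$, and finally recover the interior equation from $C_c^\infty$ test fields. The one place where you diverge slightly is the choice of gradient test functions: you take $q \in H_0^1(D)\cap H^2(D)$ and invoke elliptic $H^2$-regularity of the Dirichlet Laplacian on the smooth domain, whereas the paper works with the weaker space $\{\vp \in H_0^1(D)\,;\ \Delta\vp \in L^2(D)\}$, observing (via \eqref{eq:kerofcurl}) that every such $\na\vp$ lies in $H_0(\ccurl 0,D)\cap H(\ddiv,D)$ and that $\Delta\colon H_0^1(D)\to H^{-1}(D)$ already hits all of $L^2(D)$ without any boundary regularity. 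Your variant is perfectly valid under the paper's smoothness hypothesis, but the paper's is marginally more robust (it would survive a Lipschitz domain). You also handle correctly the two small technicalities that the paper elides: that the identity descends to the quotient $\w{X}_N$ because elements of $K_N(D)$ annihilate both bilinear forms, and that the density of $C_c^\infty(D,\R^3)$ in $H_0(\ccurl,D)$ in the $H(\ccurl)$-norm suffices for the forward direction since both sides of the identity depend on $\phi$ only through $\ccurl\phi$.
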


\begin{proof}
If $\psi$ is a solution of \eqref{sys:cc}, by the first equation in \eqref{sys:cc}, then it holds for all $\phi \in H_0(\ccurl,D)$ that
\begin{equation*}
    (f,\ccurl \phi)_{L^2(D)} = (\ccurl \psi, \ccurl \phi)_{L^2(D)}.
\end{equation*}
Therefore, by combining it with the fact that $\ddiv \psi = 0$,  we can directly see that  \eqref{sys:var:cc} holds. Conversely, if \eqref{sys:var:cc} holds, it suffices to prove that $\ddiv \psi = 0$ to conclude the lemma. Recalling \eqref{eq:kerofcurl}, we have 
   \begin{equation} \label{eq:seteqi}
       H_0(\ccurl0,D) \bigcap H(\ddiv,D) = \{\nabla \vp\,;\ \vp \in H_0^1(D) \ \text{with}\ \Delta \vp \in L^2(D)\} \oplus_{\perp} K_N(D).
   \end{equation}
   Denoting the space defined in \eqref{eq:seteqi} by $X$, we then obtain $L^2(D) = \ddiv (X/K_N(D))$ since for all $v\in L^2(D)$, we can find $\varphi \in H_0^1(D)$ such that $\Delta \varphi = v$ in the variational sense. By choosing
 $\phi \in X/K_N(D)$ in \eqref{sys:var:cc}, we readily see $\ddiv \psi = 0$, and hence the proof is complete.
% taking $\phi \in \nabla H_0^1(D,\Delta \in L^2(D)) = H_0(\ccurl0,D) \bigcap H(\ddiv,D)$, then we have $\ddiv \psi = 0$.Finally, taking $\phi \in \D(D,\R^3)$, we can conclude $\ccurl \ccurl \psi = \ccurl f$. 
\end{proof}

To show the existence and uniqueness of a solution, we introduce the isomorphism  $\T:\w{X}_N' \to \w{X}_N$ such that for $l \in \w{X}_N'$,  $Tl$ satisfies
\begin{equation*}
   \l l, \phi \r_{\w{X}_N} = (\ccurl \T l, \ccurl \phi)_{L^2(D)} + (\ddiv \T l, \ddiv \phi)_{L^2(D)}\,, \quad \forall\  \varphi \in \w{X}_N\,,
\end{equation*} 
by \eqref{eq:equinorm} and Riesz representation theorem.
% In fact, by Riesz representation theorem, we can see from \eqref{eq:equinorm} that there exist an isomorphism  $\T:\w{X}' \to \w{X}$ such that for $l \in \w{X}'$, 
% \begin{equation*}
%   \l l, \phi \r_{\w{X}} = (\ccurl \T l, \ccurl \phi)_{L^2(D)} + (\ddiv \T l, \ddiv \phi)_{L^2(D)}.
% \end{equation*} 
We note that $\ccurl$ can be regarded as
 a continuous mapping from $L^2(D,\R^3)$ to $\w{X}_N'$, by setting 
 \begin{equation} \label{def:discurll2}
     \l\ccurl u, \phi\r_{\w{X}_N}: = (u, \ccurl \phi)_{L^2(D)},
 \end{equation}
  which is well-defined since $\ccurl \phi$ is independent of the choice of the representative element of $[\phi]$. Then for $u \in L^2(D,\R^3)$, there is a unique $\psi_2 := \T \ccurl u \in \w{X}_N^0$ solving \eqref{sys:var:cc} or \eqref{sys:cc} with $f = u$. By the above constructions, we can see that the remaining $v$ of $u \in L^2(D,\R^3)$:
\begin{equation} \label{eq:remainvec}
  v :=  u - \nabla \psi_1 - \ccurl \psi_2 = u + \nabla \S \ddiv u - \ccurl \T\ccurl u \in L^2(D,\R^3)\,,
\end{equation}
is an irrational and divergence-free vector field, i.e., $\ddiv v = \ccurl v = 0$.  

The last step regarding the normal trace is relatively simple by noting the fact that the restriction of normal trace mapping $\w{\gamma}_n: = \gamma_n|_W$ on $W$ is 
an isomorphism from $W$ to $H_0^{-1/2}(\p D)$. To be precise, for $\phi \in H_0^{-1/2}(\p D)$, $\wg \phi$ is the gradient, which is unique, of a solution to the following Neumann problem:
\begin{equation*}
   \left\{ \begin{array}{ll}
        \Delta p = 0 &  \ \text{in}\ D\,,\\
        \frac{\p p}{\p \n} = \phi & \ \text{on}\ \p D \,.
    \end{array} \right.
\end{equation*}
By setting $\phi = \gamma_nv$, where $v$ is introduced in \eqref{eq:remainvec}, we can find an element $\wg\gamma_n v$ from $W$ to characterize the normal trace information of $v$ (and also $u$).

However, after we remove the divergence, curl and normal trace component of $u$, the remaining part: 
\begin{equation*}
    u - \nabla \psi_1 - \ccurl \psi_2 - \wg \gamma_n v 
\end{equation*}
is still nontrivial if the genus $L \ge 1$, and it is located in the so-called tangential cohomology space $K_T(D)$, defined  by
\begin{equation*}
K_T(D)=\{u \in H_0(\ddiv,D)\,;\ \curl u = 0,\ \ddiv u = 0\ \text{in} \ D\}\,,
\end{equation*}
which has dimension $L$. We remark that there exists a similar characterization as in Lemma \ref{lem:charkn} for $K_T(D)$. We now summarize
the above constructions in the following result, where the $L^2$-orthogonal relation can be verified directly. 
\begin{theorem} \label{lem:decoofl2}
$L^2(D,\R^3)$ has the following $L^2$-orthogonal decomposition:
\begin{align*}
    L^2(D,\R^3) 
    % & = \nabla H^1_0(D) \oplus H_0(\ddiv 0, D) \oplus W \\
    = \nabla H^1_0(D) \oplus_\perp \ccurl \w{X}_N^0\oplus_\perp W \oplus_\perp K_T(D)\,,
\end{align*}
where $\nabla H^1_0(D)$, $\ccurl \w{X}_N^0$, and $W$ are uniquely determined by $\ddiv u$, $\ccurl u$, and $\gamma_n(u + \na \S \ddiv u)$, respectively. Here, the operator $\S$ is given by \eqref{mod:dsp}.
\end{theorem}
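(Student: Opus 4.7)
My plan is to verify that the four building blocks already constructed in the paragraphs preceding the theorem fit together as claimed, by proving pairwise $L^2$-orthogonality of the four subspaces, then showing the constructive decomposition is exhaustive, and finally reading off the uniqueness/characterization statements from the isomorphism properties of $\S$, $\T$, and $\wg$. Concretely, given $u\in L^2(D,\R^3)$, I would set $\psi_1=-\S\ddiv u\in H_0^1(D)$, $\psi_2=\T\ccurl u\in \w X_N^0$, let $v=u-\na\psi_1-\ccurl\psi_2$, and define the residual $r=v-\wg\gamma_n v$. I will then show that $r\in K_T(D)$ and that the four summands are pairwise orthogonal, which yields both existence and (by orthogonality) uniqueness of the decomposition.

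For orthogonality I would systematically integrate by parts. Let $\phi\in H_0^1(D)$, $\psi\in X_N^0$, $w=\na h\in W$ (with $h$ harmonic), and $\kappa\in K_T(D)$. Using $\phi|_{\p D}=0$ and $\ddiv\ccurl=0$, $(\na\phi,\ccurl\psi)_{L^2}=0$; using $\Delta h=0$ and $\phi|_{\p D}=0$, $(\na\phi,\na h)_{L^2}=0$; using $\ddiv\kappa=0$ and $\gamma_n\kappa=0$ (since $\kappa\in H_0(\ddiv,D)$), $(\na\phi,\kappa)_{L^2}=0$. For the $\ccurl\w X_N^0$ column, $\n\t\psi=0$ on $\p D$ gives $(\ccurl\psi,\na h)_{L^2}=(\psi,\curl\na h)_{L^2}=0$ and $(\ccurl\psi,\kappa)_{L^2}=(\psi,\curl\kappa)_{L^2}=0$ since $\curl\kappa=0$. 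Finally $(\na h,\kappa)_{L^2}=-\l h,\gamma_n\kappa\r+(\text{div term})=0$, using $\ddiv\kappa=0$ and $\gamma_n\kappa=0$. Each of these requires only a careful bookkeeping of the duality pairing on $\p D$, but none is subtle.

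Next I would verify that the residual $r=v-\wg\gamma_n v$ actually lies in $K_T(D)$. By construction $\psi_1$ solves \eqref{mod:dsp} with right-hand side $(u,\na\cdot)_{L^2}$, so $\ddiv(u-\na\psi_1)=0$ in $H^{-1}(D)$; and $\psi_2$ satisfies \eqref{sys:cc} with $f=u$ via the definition \eqref{def:discurll2}, so $\ccurl(u-\ccurl\psi_2)=0$ when tested against $H_0(\ccurl,D)$ elements. Combined with $\ccurl\na\psi_1=0$ and $\ddiv\ccurl\psi_2=0$, this gives $\ddiv v=\ccurl v=0$. Since $\wg\gamma_n v$ is the gradient of a harmonic function with Neumann data $\gamma_n v$, subtracting it preserves $\ddiv r=\ccurl r=0$ while enforcing $\gamma_n r=\gamma_n v-\gamma_n\wg\gamma_n v=0$. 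Thus $r\in H_0(\ddiv,D)$ with $\ccurl r=\ddiv r=0$, i.e., $r\in K_T(D)$.

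The characterization claims then follow: $\ddiv u$ determines $\psi_1=-\S\ddiv u$ uniquely because $\S:H^{-1}(D)\to H_0^1(D)$ is an isomorphism; $\ccurl u$ determines $\psi_2=\T\ccurl u$ uniquely in $\w X_N^0$ because $\T$ is an isomorphism from $\w X_N'$ onto $\w X_N$, and hence $\ccurl\psi_2$ is well-defined; and $\wg\gamma_n v$ is determined by $\gamma_n v=\gamma_n(u+\na\S\ddiv u)$ because $\wg:H_0^{-1/2}(\p D)\to W$ is an isomorphism (here one uses that $\gamma_n\ccurl\psi_2=0$ so $\gamma_n v=\gamma_n(u-\na\psi_1)$, which follows from $\psi_2\in H_0(\ccurl,D)$ by integration by parts against arbitrary $H^1(D)$ test functions). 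The step that demands the most care is the last subtraction: one must check that $\gamma_n v$ indeed lies in $H_0^{-1/2}(\p D)$ rather than merely in $H^{-1/2}(\p D)$ — this uses $\ddiv v=0$ together with the integral compatibility $\l\gamma_n v,1\r_{H^{1/2}(\Gamma_j)}=0$, which holds because we already killed the $K_N(D)$ contribution when choosing $\psi_2$ in the quotient $\w X_N^0$ and because $\na\psi_1$ has matching harmonic-extension flux on each $\Gamma_j$ by Lemma \ref{lem:charkn}. Once this is verified the decomposition is complete, and uniqueness is automatic from the pairwise orthogonality.
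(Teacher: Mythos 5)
Your proof is correct and follows exactly the paper's route: the theorem is stated there as a summary of the construction in the preceding paragraphs (the operators $\S$, $\T$, $\wg$, the residual $v$, and its projection onto $K_T(D)$), with the pairwise orthogonality left to be "verified directly," which is precisely what you supply via integration by parts. One small imprecision: the per-component compatibility $\l\gamma_n v,1\r_{H^{1/2}(\Gamma_j)}=0$ that you invoke at the end is neither true in general (take $u=\na p_1\in K_N(D)$, for which $\psi_1=0$, $\psi_2=0$, $v=u$ and the flux through $\Gamma_1$ equals $1$) nor needed — solvability of the Neumann problem defining $\wg^{-1}$ only requires the total flux $\l\gamma_n v,1\r_{\p D}=0$, which already follows from $\ddiv v=0$ as you note.
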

% Now, we consider the remaining of $u \in L^2(D,\R^3) $: $$\phi : = u - \nabla \psi_1 - \ccurl \psi_2 = u + \nabla \S \ddiv u - \ccurl \T\ccurl u \in L^2(D,\R^3) $$
% \mr{Here, we can clearly see the dual structure}
% It is easy to check that $\ddiv \phi = \ccurl \phi = 0$ in $\D'(D)$. 
% \mb{By poincar\'{e} lemma, we can find the $\phi$ is the gradient of a harmonic function.}
% By above arguments, we have
% \begin{lemma}
% $L^2(D,\R^3)$ has the following decomposition, 
% \begin{align*}
%     L^2(D,\R^3) &= \nabla H^1_0(D) \oplus H_0(\ddiv 0, D) \oplus W\\
%     & = \nabla H^1_0(D) \oplus \ccurl X_0\oplus W
% \end{align*}
% where $W$ is the space of gradients of harmonic $H^1$ functions.
% \end{lemma}

\if \commentflag = \ct
For a vector field $u$, the Helmholtz decomposition provides us a procedure to extract its divergence information, curl information and the normal trace part. We next show how to construct the $\nabla H^1_0(D)$ and $W$ components by utilizing $\ddiv u$ and $\gamma_n u$ for a $L^2$-field $u$ for our subsequent use. The structure of space $\hzz$ is a little bit more complicated than the aforementioned two parts, which can be uniquely determined by the $\ccurl u$ on a quotient space module over a finite-dimensional cohomology subspace. We do not intend to give a comprehensive illustration of this point here since it won't be used directly, although it is very important. Denote by $\S: H^{-1}(D) \to H_0^1(D)$, which is also an isomorphism, the solution operator of Dirichlet source problem, namely, for $l \in H^{-1}(D)$, $\S l \in H_0^1(D)$ solve the following variational problem:
\begin{equation} \label{mod:dsp}
   \text{find\ }\psi\in H_0^1(D)\ \text{such that}\  \l l, \varphi \r_{H_0^1} = (\nabla \psi, \nabla \varphi)_{L^2(D)}, \quad \forall \varphi \in H^1_0(D)\,.
\end{equation}
Note that $\ddiv :  L^2(D,\R^3) \to H^{-1}(D)$ is the adjoint operator of $-\nabla: H_0^1(D) \to  L^2(D,\R^3)$. For $u \in L^2(D,\R^3)$, we consider \eqref{mod:dsp} with 
\begin{equation*}
    \l l, \varphi \r_{H_0^1}: = (u,\nabla \varphi)_{L^2(D)}, \quad \forall \varphi \in H^1_0(D).
\end{equation*}
Then there exists a unique solution $\psi_1 := - \S \ddiv u \in H^1_0(D)$ satisfying \eqref{mod:dsp}, from which it follows that  $u - \na \psi_1$ is divergence-free in the distribution sense, and the normal trace $\gamma_n$ is well-defined. To find the $W$ component, we first note that the restriction of normal trace operator $\gamma_n$ on $W$, denoted by $\w{\gamma}_n$, is an isomorphism from $W$ to $\hbb$. More precisely, for $\phi \in \hbb$, $\wg \phi$ is the gradient of a solution to the following Neumann problem:
\begin{equation} \label{eq:neumannpw}
   \left\{ \begin{array}{ll}
        \Delta p = 0 &  \ \text{in}\ D\\
        \frac{\p p}{\p \n} = \phi & \ \text{on}\ \p D\,.
    \end{array} \right.
\end{equation}
By setting $\phi = \gamma_n(u-\na \psi_1)$ in \eqref{eq:neumannpw}, we then have $\pw u = \na p$, which implies $u - \na \psi_1 - \na p \in \hzz$. 
\fi

\section{\texorpdfstring{$\td$}{} as a quasi-Hermitian operator} \label{app:B}
\subsection{A global resolvent estimate}  \label{app:B_1}
In this subsection, we provide a resolvent estimate for $(\lad-\td)^{-1}$ on $\rho(\td)$ by applying a general spectral result from \cite{gil2003operator}. To do this, We first introduce some notions. We consider the bounded linear operator $A$ acting on a separable Hilbert space $H$. The imaginary Hermitian component $A_I$ and the real Hermitian componet $A_R$ are defined as follows:
\begin{equation*}
    A_I = \frac{A-A^*}{2 i}\,, \q A_R = \frac{A + A^*}{2}\,,
\end{equation*}
where $A^*$ is the adjoint operator of $A$ in the Hilbert sense. Moreover, we say that an operator $A$ is quasi-Hermitian operator if it is a sum of a selfadjoint operator and a compact one. For such kind of operators, we have a general resolvent bound under the condition (cf.\, \cite[Thm.7.7.1]{gil2003operator}):
\begin{equation}\label{con:resol}
    A_I \ \text{is a Hilbert-Schmidt operator.}
\end{equation}
\begin{theorem} \label{est:resol}
Under condition \eqref{con:resol}, the following bound for the norm of $(\lad-A)^{-1}$ holds,
\begin{equation}
    \norm{(\lambda - A)^{-1}} \le \frac{\sqrt{2}}{{\rm dist}(\lambda,\sigma(A))}\exp\left(\frac{g_I^2(A)}{{\rm dist}^2(\lambda,\sigma(A))}\right)\,,
\end{equation}
where the quantity $g_I(A)$ is given by
\begin{equation} \label{eqapp:gi}
    g_I(A) = \sqrt{2}\Big[\norm{A_I}^2_{HS} - \sum_{k = 0}^\infty(\im \lambda_k(A))^2\Big]^{\frac{1}{2}},
\end{equation}
where $\lambda_k(A)$ are the eigenvalues of $A$ counting multiplicity and $\norm{\dd}_{HS}$ denotes the Hilbert-Schmidt norm.
\end{theorem}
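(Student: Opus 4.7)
The plan is to reduce the estimate to a Schur-type triangular decomposition of $A$, in which the resolvent splits into a ``normal'' factor controlled by the distance to $\sigma(A)$ and a ``Volterra'' (quasi-nilpotent) factor controlled by the Hilbert-Schmidt datum $g_I(A)$.

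First I would invoke the generalized Schur lemma for quasi-Hermitian operators due to Brodski\v{\i} and Gohberg-Krein: since $A_I$ is Hilbert-Schmidt, $A$ admits, relative to a maximal chain of invariant orthoprojections, a decomposition $A = N + V$, where $N$ is a normal operator with $\sigma(N) = \sigma(A)$ (its discrete part enumerating the eigenvalues $\{\lambda_k(A)\}$ with algebraic multiplicity), and $V$ is a Hilbert-Schmidt Volterra operator, i.e.\ ``strictly upper-triangular'' along the chain and hence quasi-nilpotent. Using the fact that strictly upper- and strictly lower-triangular operators are mutually orthogonal in the Hilbert-Schmidt inner product, a direct computation yields
\begin{equation*}
\norm{A_I}_{HS}^2 = \sum_{k} (\im \lambda_k(A))^2 + \tfrac{1}{2}\norm{V}_{HS}^2\,,
\end{equation*}
so that $\norm{V}_{HS} = g_I(A)$ precisely; this is what pins down the peculiar form of \eqref{eqapp:gi}.

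Next I would exploit the multiplicative factorization
\begin{equation*}
(\lambda - A)^{-1} = (\lambda - N)^{-1}\bigl(I - V(\lambda - N)^{-1}\bigr)^{-1}
\end{equation*}
to reduce the estimate to two independent pieces. Normality of $N$ and the spectral theorem immediately give $\norm{(\lambda - N)^{-1}} = 1/{\rm dist}(\lambda, \sigma(A))$, so it remains only to bound $\norm{(I - K)^{-1}}$, where $K := V(\lambda - N)^{-1}$ is itself a quasi-nilpotent Hilbert-Schmidt operator with $\norm{K}_{HS} \le g_I(A)/{\rm dist}(\lambda,\sigma(A))$.

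The main obstacle is this last bound on the Neumann series of a Hilbert-Schmidt Volterra operator. The workhorse would be Ringrose's power inequality $\norm{K^n}_{HS} \le \norm{K}_{HS}^n/\sqrt{n!}$ for quasi-nilpotent Hilbert-Schmidt operators, which I would establish first for strictly triangular matrices by induction on the matrix size (exploiting that the diagonal of $K^n$ vanishes and $V$ stays within its invariant chain), and then upgrade to the infinite-dimensional setting via the chain structure provided by the Brodski\v{\i}-Gohberg-Krein theory. Summing the Neumann series with Cauchy-Schwarz then yields a Carleman-type bound of the form $\norm{(I-K)^{-1}} \le \sqrt{2}\exp(\norm{K}_{HS}^2)$, and substitution into the factorization produces exactly the stated inequality. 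The principal technical delicacy throughout is that $A$ is not assumed compact, so the classical finite-rank Schur triangulation is insufficient and the full triangular-integral chain machinery is unavoidable.
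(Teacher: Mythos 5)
The paper states Theorem \ref{est:resol} as an import from Gil's monograph \cite[Thm.\ 7.7.1]{gil2003operator} and does not reproduce a proof; your task is therefore to compare with the cited source's argument rather than with anything in this paper. Your sketch is a correct reconstruction of exactly that argument: the Brodski\v{\i}--Gohberg--Krein triangular representation $A=D+V$ with $D$ normal, $\sigma(D)=\sigma(A)$, and $V$ a Hilbert--Schmidt Volterra operator; the orthogonality identity $\norm{V}_{HS}^2 = 2\big(\norm{A_I}_{HS}^2-\sum_k(\im\lambda_k(A))^2\big)=g_I(A)^2$; the resolvent factorization $(\lambda-A)^{-1}=(\lambda-D)^{-1}\big(I-V(\lambda-D)^{-1}\big)^{-1}$ with $\norm{(\lambda-D)^{-1}}=1/\mathrm{dist}(\lambda,\sigma(A))$ from normality; the Carleman-type power bound $\norm{K^n}\le \norm{K}_{HS}^n/\sqrt{n!}$ for the Volterra factor $K=V(\lambda-D)^{-1}$; and the weighted Cauchy--Schwarz summation of the Neumann series producing the $\sqrt{2}\exp(\cdot)$ factor.
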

For our purpose, we write $\td$ as the sum of $T_D$ and $N_D^k := \td - T_D$, where $T_D$ is known to be a selfadjoint operator. We consider the kernel $K_N$ of the integral operator $N_D^k$:
\begin{equation*}
  K_N(x,y):= (k^2 + \nabla_x \ddiv_x)(g(x,y,k) - g(x,y,0))\,.
\end{equation*}
It is easy to see that when $x$ approaches $y$, the kernel has following singularity:
\begin{equation*}
    K_N(x,y) = O\left(\frac{1}{|x-y|}\right)\,.
\end{equation*}
It directly follows that $N_D^k$ and its imaginary Hermitian component $N_{D,I}^k$ 
are Hilbert-Schmidt operators. We further note the relation:
\begin{equation*}
    T^k_{D,I} = \frac{\td - T_D^{k,*}}{2 i} = \frac{N_D^k - N_D^{k,*}}{2 i} = N_{D,I}^k\,,
\end{equation*}
which helps us to conclude that $\td$ is a quasi-Hermitian operator satisfying condition \eqref{con:resol}, and thus Theorem \eqref{est:resol} can be applied. 

\subsection{Decay property and bound of the imaginary parts of eigenvalues} \label{app:B_2}
Formula \eqref{eqapp:gi} has suggested us that $\{\im \lad_k(A)\}$ is a bounded sequence and tends to zero when $k \to \infty$. Its detailed proof can be found in \cite[pp.106-107]{gil2003operator}. Here we provide a sketch  of the main argument for the sake of completeness. For a quasi-Hermitian operator $A$ satisfying condition \eqref{con:resol}, we have the following triangular representation:
\begin{equation*}
    A = D + V\,,
\end{equation*}
such that $\sigma(D) = \sigma(A)$, where $D$ is a normal operator and $V$ is a compact operator with $\sigma(V) = \{0\}$ and 
\begin{equation*}
    \norm{A_I}_{HS}^2 = \norm{D_I}_{HS}^2 + \norm{V_I}_{HS}^2< + \infty\,. 
\end{equation*}
Then, by using $\sigma(A) = \sigma(D)$ and the fact that $D$ is a normal operator, we can obtain
\begin{equation*}
    \norm{D_I}^2_{HS} = \sum_{k = 0}^\infty(\im \lambda_k(A))^2 < + \infty\,.
\end{equation*}
We end this appendix with the corresponding result for $\td$.
\begin{theorem} \label{thm:eigfree_region}
For the integral operator $\td$ defined in \eqref{def:td}, its spectrum $\sigma(\td)$ is contained in a strip in the complex plane:
\begin{equation*}
    \sigma(\td) \subset \{z\in \C\,;\q |\im z|\le C\}\q \text{for some}\ C\,,
\end{equation*}
and the imaginary parts of the eigenvalues in the spectrum consists of a $2$-power summable sequence, i.e., 
\begin{equation*}
    \sum_{i = 0}^\infty\left|\im \lambda_i(\td)\right|^2 < + \infty\,, \q \lad_i \in \sigma_f(\td)\,. 
\end{equation*}
\end{theorem}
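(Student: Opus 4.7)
The plan is to apply the abstract spectral framework for quasi-Hermitian operators laid out in Appendix \ref{app:B}. First, I would recall from Appendix \ref{app:B_1} that $\td$ decomposes as $\td = T_D + N_D^k$, where $T_D$ is selfadjoint and $N_D^k$ has the kernel
\begin{equation*}
K_N(x,y) = (k^2 + \nabla_x \ddiv_x)(g(x,y,k) - g(x,y,0)) = O\bigl(|x-y|^{-1}\bigr)\,,
\end{equation*}
so that $N_D^k$, and hence the imaginary Hermitian component $T_{D,I}^k = N_{D,I}^k$, is Hilbert-Schmidt on $L^2(D,\R^3)$. Thus condition \eqref{con:resol} is in force for $\td$.

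Second, I would invoke the triangular representation summarized at the start of Appendix \ref{app:B_2}: there exists a decomposition $\td = D + V$ with $D$ normal, $V$ compact and quasi-nilpotent (i.e.\ $\sigma(V) = \{0\}$), spectra agreeing as $\sigma(D) = \sigma(\td)$, and Hilbert-Schmidt components adding orthogonally as $\norm{T_{D,I}^k}_{HS}^2 = \norm{D_I}_{HS}^2 + \norm{V_I}_{HS}^2$. For the first assertion (the strip), I would use the numerical range of $D$: since $D$ is normal, $\sigma(D)$ lies in the closure of $\{\langle Dx,x\rangle : \norm{x}=1\}$, and for any unit vector $x$ one has $|\im \langle Dx,x\rangle| = |\langle D_I x, x\rangle| \le \norm{D_I} \le \norm{T_{D,I}^k}_{HS}$. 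Combined with $\sigma(\td) = \sigma(D)$, this puts $\sigma(\td)$ inside the horizontal strip $\{z \in \C : |\im z| \le C\}$ with $C = \norm{T_{D,I}^k}_{HS}$.

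For the second assertion (square summability), the normality of $D$ yields that the selfadjoint Hilbert-Schmidt operator $D_I$ has point spectrum exactly $\{\im \lambda_k(D)\}_k$, counted with multiplicity; accordingly,
\begin{equation*}
\sum_{k}|\im \lambda_k(\td)|^2 \;=\; \sum_{k}|\im \lambda_k(D)|^2 \;=\; \norm{D_I}_{HS}^2 \;\le\; \norm{T_{D,I}^k}_{HS}^2 \;<\; \infty\,,
\end{equation*}
where the first equality uses $\sigma(\td) = \sigma(D)$ together with the fact that, by Theorem \ref{thm:mainspec}, the essential spectrum $\sigma_{ess}(\td) = \{-1,0,-\tfrac{1}{2}\}$ is real and only the eigenvalues in $\sigma_f(\td)$ can produce non-vanishing imaginary parts. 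The main (minor) obstacle is purely a bookkeeping one: ensuring that the abstract identification $\sigma(D) = \sigma(\td)$ respects the counting-with-multiplicity conventions used in \eqref{eqapp:gi}, which is precisely what the Gil' triangular-representation theorem \cite{gil2003operator} guarantees. No new analytic input beyond the Hilbert-Schmidt property of $N_D^k$ is required.
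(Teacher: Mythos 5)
Your proposal is correct and follows essentially the same route as the paper: the quasi-Hermitian decomposition $\td = T_D + N_D^k$ and the Hilbert–Schmidt property of the imaginary Hermitian component from Appendix \ref{app:B_1}, followed by Gil's triangular representation $\td = D + V$ with $D$ normal and $\sigma(D)=\sigma(\td)$, which gives $\norm{D_I}_{HS}^2 = \sum_k|\im\lambda_k(\td)|^2 \le \norm{T_{D,I}^k}_{HS}^2 < \infty$. The only embellishment is your explicit numerical-range step for the strip containment, which the paper leaves implicit in its sketch.
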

%%%%%%
% more detials
%%%%%%%
% . We can check that $VD^*$ is also compact with $\sigma(VD^*) = \{0\}$. Then, by fundamental properties of trace-class operator, we have
% \begin{equation*}
%     {\rm Tr}(VD^*) =  {\rm Tr}(V^*D) = 0\,.
% \end{equation*}

\section{Some definitions, calculations and auxiliary  results for Section \ref{subsec:sphere}}
\label{app:C}
\subsection{Vector wave functions} \label{app:C_1}
Let $\ynm(\h{x}), n =0, 1,2,\cdots,\ m=-n,\cdots,n,$ be the spherical harmonics on $S^2$. The  vector spherical harmonics, which form a complete orthonormal system of $L_T^2(S^2)$ \cite[Theorem 6.25]{colton2012inverse}, are introduced as follows:
\begin{equation*}
    \unm = \frac{1}{\sqrt{n(n+1)}} \sgd \ynm\,,\q \vnm = \h{x} \t \unm\,, \ n = 1,2,\cdots,\ m=-n,\cdots,n\,.
\end{equation*}
Define the radiating electric multipole fields in $\R^3\backslash\{0\}$ for $n = 1,2,\cdots$ and $m=-m,\cdots,n$ \cite{monk2003finite}: 
\begin{align}
   \ete(k,x) & = \curl \{x h_n^{(1)}(k |x|)\ynm(\h{x})\} \notag\\ & =  - \sqrt{n(n+1)} h_n^{(1)}(k|x|)\vnm(\h{x}), \label{eq:raete} \\
%   & = - h_n^{(1)}(k|x|)\h{x} \t \sgd \ynm(\h{x}) \\
   \etm(k,x)  & = - \frac{1}{i k } \curl \ete(k,x)  \notag\\ & = -\frac{\sqrt{n(n+1)}}{ik|x|}\hh_n(k|x|)\unm(\h{x}) - \frac{n(n+1)}{ik|x|}h_n^{(1)}(k|x|)\ynm(\h{x})\h{x}\,, \label{eq:raetm}
\end{align}
where $h_n^{(1)}(t)$ is the spherical Hankel function of the first kind and order $n$ and $\hh_n(t) := h_n^{(1)}(t) + t(h_n^{(1)})'(t)$. The entire electric multipole fields $\w{E}^{TE}_{n,m}(k,x)$ and  $\w{E}^{TM}_{n,m}(k,x)$ can be similarly introduced \cite{monk2003finite}:
\begin{align}
   \wete(k,x) & = \curl \{x j_n(k |x|)\ynm(\h{x})\} \notag\\ & =  - \sqrt{n(n+1)} j_n(k|x|)\vnm(\h{x}), \label{eq:enete} \\
%   & = - h_n^{(1)}(k|x|)\h{x} \t \sgd \ynm(\h{x}) \\
   \wetm(k,x)  & = - \frac{1}{i k } \curl \wete(k,x)\notag\\ &  = -\frac{\sqrt{n(n+1)}}{ik|x|}\jj_n(k|x|)\unm(\h{x}) - \frac{n(n+1)}{ik|x|}j_n(k|x|)\ynm(\h{x})\h{x}\,, \label{eq:enetm}
\end{align}
where $j_n(t)$ is the spherical Bessel function of the first kind and order $n$ and $\jj_n$ is given by $\jj_n(t) := j_n(t) + tj'_n(t)$.  Then, a direct 
calculation gives us the tangential traces of the multipole fields:
\begin{equation} \label{eq:trramp}
    \begin{cases}
    \h{x} \t \ete(k,x) =  \sqrt{n(n+1)} h_n^{(1)}(k |x|)\unm(\h{x}) \\
    \h{x} \t \etm(k,x) = - \frac{\sqrt{n(n+1)}}{i k |x|} \hh_n(k|x|)\vnm(\h{x})
    \end{cases},
\end{equation}
and
\begin{equation}\label{eq:trenmp}
\begin{cases}
    \h{x} \t \wete(k_\lad,x) =  \sqrt{n(n+1)} j_n(k|x|)\unm(\h{x})\\
    \h{x} \t \wetm(k_\lad,x) = - \frac{\sqrt{n(n+1)}}{i k |x|}\jj_n(k|x|)\vnm(\h{x})
\end{cases}.
\end{equation}

We end this section with the addition formula of the Green's tensor $G_0(x,y,k)$ \cite[Theorem 6.29]{colton2012inverse}:
\begin{align} \label{eq:additiongreen}
    G_0(x,y,k) = & \sum^\infty_{n = 1}\frac{i k}{n(n+1)} \sum^n_{m=-n} \etm(x)\otimes\overline{\wetm}(y) \notag \\
    & + \sum^\infty_{n=1} \frac{i k}{n(n+1)} \sum^n_{m=-n}\ete(x) \otimes\overline{\wete}(y) 
    % \notag \\
    % & + \frac{i}{k}\sum^\infty_{n=0} \sum_{m=-n}^n \na v_{n,m}(x)\otimes\overline{\na \w{v}_{n,m}}(y)
     \q \text{for}\ |x|>|y|\,.
\end{align}

\subsection{Asymptotic expansions for spherical Bessel functions} \label{app:C_2}
We collect some standard results about asymptotic expansions for $j_n(z),\ n\ge 0$. For the complex variable $z$ with $|\arg(z)|< \pi$, the following asymptotics holds \cite[p.199]{watson1995treatise},
\begin{align} \label{eq:aymjn}
    j_n(z) = \frac{1}{z} \cos\left(z - \frac{n \pi}{2} - \frac{\pi}{2}\right) + e^{|\im z|}O\left(\frac{1}{|z|^2}\right)\q \text{as} \ |z|\to \infty\,.
\end{align}
Combining \eqref{eq:aymjn} with the following recurrence relations of Bessel functions \cite{olver2010nist,watson1995treatise}:
\begin{equation*}
    n j_{n-1}(z) - (n+1)j_{n+1}(z) = (2n+1)j_n'(z),
\end{equation*}
we see the asymptotic form of $j_n'(z)$:
\begin{equation} \label{eq:aymjnd}
    j_n'(z) = \frac{1}{z} \cos\left(z-\frac{n\pi}{2}\right)+ e^{|\im z|}O\left(\frac{1}{|z|^2}\right)\q \text{as} \ |z|\to \infty\,.
\end{equation}
By definition of $\jj_n(z)$, \eqref{eq:aymjn} and \eqref{eq:aymjnd}, it holds that
\begin{align} \label{eq:aymjjn}
    \jj_n(z) & = \frac{1}{z} \cos\left(z - \frac{n \pi}{2} - \frac{\pi}{2}\right) +  \cos\left(z-\frac{n\pi}{2}\right) + e^{|\im z|}O\left(\frac{1}{|z|}\right)\notag\\
    & = \cos\left(z-\frac{n\pi}{2}\right) + e^{|\im z|}O\left(\frac{1}{|z|}\right)\q \text{as} \ |z|\to \infty\,,
\end{align}
where we have also used the  observation:
% \begin{equation} \label{eq:obserrem}
%     |\frac{1}{z} \cos(z)| = \frac{1}{|z|} |\frac{e^{iz} + e^{-iz}}{2}| = e^{|\im z|} O(\frac{1}{|z|}) \q \text{as} \ |z|\to \infty\,.
% \end{equation}
\begin{equation} \label{eq:obserrem}
 \frac{e^{|\im z|} - 1}{2} \le |\cos(z)| = \left|\frac{e^{i\re z - \im z} + e^{-i\re z + \im z}}{2}\right| \le \frac{1 + e^{|\im z|}}{2}\,.
\end{equation}

% \setcounter{equation}{0}
% \setcounter{theorem}{0}
% \renewcommand\theequation{B.\arabic{equation}}
% \renewcommand\thetheorem{B.\arabic{theorem}}
%  \section{Addition formula for plane wave and imaginary part of Green's tensor}  

\subsection{Auxiliary results for propagating functions} \label{app:C_3}
In this section, we first calculate the tangential traces of $\h{x} \t \td[\wete(k_\lad,\dd)](x)$ and $\h{x} \t \td[\wetm(k_\lad,\dd)](x)$ on the sphere $\p B(0,|x|)$ with radius $|x| > 1$, where $D = B(0,1)$. By the addition formula for the Green's tensor \eqref{eq:additiongreen} and the definition of $\td$, we have, by using the orthogonality of $\{\unm\}$ and $\{\vnm\}$, 
\begin{align} \label{eq:tancomm_1}
   \h{x} \t \td[\wete(k_\lad,\dd)](x) & = \frac{ik^3}{n(n+1)}\h{x}\t \ete(k,x) \int_{B(0,1)}\overline{\wete}(k,x)^t \dd \wete(k_\lad,x)dx \notag \\
    & = ik^3 \h{x} \t \ete(k,x) \int_0^1 j_n(kr) j_n(k_\lad r)r^2 dr \notag \\
    & = ik^3 \sqrt{n(n+1)}h^{(1)}_n(k|x|)\unm(\h{x}) \int_0^1 j_n(kr) j_n(k_\lad r)r^2 dr\,,
\end{align}
and   
\begin{align} \label{eq:tancomm_2}
   \h{x} \t \td[\wetm(k_\lad,\dd)](x)  & =  \frac{ik^3}{n(n+1)}\h{x} \t \etm(k,x) \int_{B(0,1)}\overline{\wetm}(k,x)^t \dd \wetm(k_\lad,x)dx \notag \\
  & = \frac{i k^3}{k k_\lad} \h{x} \t \etm(k,x) \int_0^1 \jj_n(kr) \jj_n(k_\lad r) + n(n+1) j_n(kr)j_n(k_\lad r) dr \notag\\
  & = - \frac{k\sqrt{n(n+1)}}{k_\lad |x|}\hh_n(k|x|)\vnm(\h{x})\int_0^1 \jj_n(kr) \jj(k_\lad r) + n(n+1) j_n(kr)j_n(k_\lad r) dr\,.
\end{align}
The integrals involved in \eqref{eq:tancomm_1} and \eqref{eq:tancomm_2} can
be explicitly calculated by the Lommel's integrals \cite{watson1995treatise} for $n \ge 1$:
\begin{equation} \label{eq:lommel_1}
    \int_0^1 j_n(kr)j_n(k_\lad r)r^2 dr = \frac{1}{k^2-k_\lad^2}\left[k_\lad j_n(k)j_{n-1}(k_\lad)-kj_{n-1}(k)j_n(k_\lad)\right]\,,
\end{equation}
and
 \begin{align} \label{eq:lommel_2}
& \int^1_0 n(n+1) j_n(kr)j_n(k_\lad r) + \jj_n(kr)\jj_n(k_\lad r) dr \notag \\ = & \frac{k k_\lad}{2n+1}\left((n+1)\int_0^1 j_{n-1}(k r)j_{n-1}(k_\lad r)r^2dr + n\int_0^1 j_{n+1}(k r)j_{n+1}(k_\lad r)r^2dr\right)\,.
\end{align}

We next provide the calculations and estimates for Proposition \ref{prop:asylarge}. We recall the following asymptotic forms of  $j_n$ and
$h_n^{(1)}$ for large $n$ that uniformly hold for $z$ in a compact subset of $\C$ away from the origin:
\begin{equation} \label{eq:asyjh}
    j_n(z) = O\left(\left(\frac{e|z|}{2(n+1)}\right)^{n+1}\right)\,,\q h_n^{(1)}(z) = O\left(\left(\frac{2n}{e|z|}\right)^{n}\right) \quad \text{as}\ n \to \infty\,, 
\end{equation}
as a result of series expansions of $j_n$ and $h_n^{(1)}$ and Stirling's formula (cf.\,\cite[p.30]{colton2012inverse}). For the propagating function $\vp_n^{\lad,1}(kt)$, by \eqref{eq:tancomm_1} and \eqref{eq:lommel_1}, a direct application of \eqref{eq:asyjh} gives us, for $t$ from a compact subset of $(1,+\infty)$,
\begin{align*}
    \vp_n^{\lad,1}(kt) & = O\left(n  \left(\frac{2n}{e k t}\right)^{n} \frac{1}{|k_\lad|^2}\left[|k_\lad| \left(\frac{e k }{2(n+1)}\right)^{n+1}\left(\frac{e |k_\lad|}{2 n}\right)^{n} +\left(\frac{e k}{2n}\right)^{n}\left(\frac{e|k_\lad|}{2(n+1)}\right)^{n+1}\right]\right) \\
    & = O\left(n  \frac{1}{t^n} \frac{1}{|k_\lad|^2}\left[|k_\lad| \left(\frac{1}{2(n+1)}\right)^{n+1}\left(e |k_\lad|\right)^n +\left(\frac{e|k_\lad|}{2(n+1)}\right)^{n+1}\right]\right)  \\
    & = O\left(n  \frac{1}{t^n} \frac{1}{|k_\lad|^2} \left(\frac{e|k_\lad|}{2(n+1)}\right)^{n+1}\right) = O\left(\left(\frac{e}{2 t}\right)^{n+1} \frac{|k_\lad|^{n-1}}{(n+1)^n}\right)\,.
\end{align*}
A very similar but more complicated calculation yields the second  estimate in \eqref{eq:aympfvp}. We omit the details here.

The following two lemmas were used for Theorem \ref{thm:loceig}.
\begin{lemma} \label{lem:localforeig0}
Suppose that $f(x)$ is a continuous function on $[0,+\infty)$ with $f(x)\to 0$ as $x \to + \infty$. We have 
\begin{equation*}
    \max_{x \in [0,a]}|f(x)| = \max_{x \in [0,+\infty)}|f(x)|\,
\end{equation*}
\zz{for any $a \in \R$ larger than some fixed $a_0 >0$}. Moreover, let $\{a_n\}$ be a sequence such that $a_n \to + \infty$ when $n \to + \infty$, 
then $\{f(a_n x)\}$ are localized near the origin in the sense that 
\begin{equation*}
    \lim_{n \to +\infty}\frac{\max_{x\in[a,1]}|f(a_n x)|}{\max_{x\in[0,1]}|f(a_n x)|} = 0\,.
\end{equation*}
\end{lemma}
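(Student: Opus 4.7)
The plan is to exploit the decay of $f$ at infinity and a change of variables to reduce both assertions to elementary observations about where the supremum of $|f|$ is attained. Let $M := \sup_{x \in [0,+\infty)}|f(x)|$. If $M = 0$ there is nothing to prove (both sides of the first identity are zero and the quotient in the second display is vacuous), so assume $M > 0$.

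For the first assertion, the plan is to first argue that $M$ is attained at some finite point. Since $f(x) \to 0$ as $x \to +\infty$, pick $R > 0$ such that $|f(x)| < M/2$ for all $x \ge R$. On the compact interval $[0,R]$ the continuous function $|f|$ attains its maximum at some $x_0$, and this maximum must equal $M$ (otherwise the global supremum would be strictly less than $M$). Setting $a_0 := x_0$ then gives, for every $a \ge a_0$, the chain of inequalities $M = |f(x_0)| \le \max_{[0,a]}|f| \le M$, proving the first equality.

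For the localization statement, the plan is to substitute $y = a_n x$ in both the numerator and denominator, yielding
\begin{equation*}
\frac{\max_{x\in[a,1]}|f(a_n x)|}{\max_{x\in[0,1]}|f(a_n x)|} \;=\; \frac{\max_{y\in[a\, a_n,\, a_n]}|f(y)|}{\max_{y\in[0,\, a_n]}|f(y)|}.
\end{equation*}
Since $a_n \to +\infty$, the first assertion applied with $a = a_n$ for $n$ large enough gives that the denominator equals $M > 0$. For the numerator, note that $a a_n \to +\infty$ as well (because $a > 0$ is fixed), so $\max_{y\in[a a_n, a_n]}|f(y)| \le \sup_{y \ge a a_n}|f(y)|$, and this last quantity converges to $0$ by the decay hypothesis $f(y) \to 0$. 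Dividing by the constant $M$ then yields the claimed limit.

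The argument is elementary, and I do not foresee a genuine obstacle; the only subtle point is the trivial case $M = 0$ (which must be excluded so that the quotient is well defined for large $n$) and making sure that ``for large enough $n$'' in the denominator step is justified uniformly, which follows cleanly because $a_n \to \infty$ guarantees $a_n \ge a_0$ eventually.
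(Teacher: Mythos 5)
Your argument is correct, and the paper actually leaves this lemma unproved (it is stated as an elementary auxiliary result in Appendix~\ref{app:C}), so there is nothing in the text to compare against; your change-of-variables reduction $y = a_n x$ plus the fact that $\sup_{y\ge a a_n}|f(y)|\to 0$ is exactly the natural way to establish it. One tiny nitpick: you set $a_0:=x_0$, but the lemma requires $a_0>0$ and the maximizer $x_0$ could be $0$; this costs nothing, since in that case the first identity holds for every $a>0$, so one may simply take $a_0:=\max(x_0,1)$.
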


\begin{lemma} \label{lem:localforeig}
For $j_n(z)$ and $\jj_n(z)/z$, the following estimates uniformly hold for $t \in [0,1]$, 
\begin{equation}\label{eq:est:j_n1}
     \left|j_n(z^1_{n,l}t) - j_n(\w{z}^1_{n,l}t)\right| = O(l^{-1})\,, \quad 
     \left|\frac{\jj_n(z^2_{n,l}t)} {z^2_{n,l}t} - \frac{\jj_n(\w{z}^2_{n,l}t)}{\w{z}^2_{n,l}t}\right|  =  O(l^{-1})\,,
\end{equation}
%and 
%\begin{equation} \label{eq:est:j_n2}
%    |\frac{\jj_n(z^2_{n,l}t)} {z^2_{n,l}t} - \frac{\jj_n(\w{z}^2_{n,l}t)}{\w{z}^2_{n,l}t}|  =  O(l^{-1})\,,
%\end{equation}
when $l$ tends to infinity. Here, $\{z^i_{n,l}\}$ and $\{\w{z}_{n,l}^i\}$, $i=1,2$, are the same as the ones in \eqref{eq:est_zeros1} and \eqref{eq:est_zeros2}.
\end{lemma}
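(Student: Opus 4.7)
The plan is to reduce both estimates in \eqref{eq:est:j_n1} to a one-line mean value theorem application for analytic functions, using that the zero-differences $|z^i_{n,l} - \w{z}^i_{n,l}|$ are already $O(l^{-1})$ by \eqref{eq:est_zeros1}--\eqref{eq:est_zeros2}. First I would observe that for every $n \ge 1$ both $j_n(z)$ and $\psi_n(z) := \jj_n(z)/z$ extend to entire functions on $\C$; for $\psi_n$ this uses that $\jj_n$ vanishes to order $n \ge 1$ at the origin, as can be read off from the standard series $j_n(z) = z^n/(2n+1)!! + O(z^{n+2})$.

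The key analytic input I would then isolate is a uniform bound for the derivatives $j_n'$ and $\psi_n'$ on a horizontal strip $\Sigma_n := \{z \in \C : |\im z| \le C_n\}$ of fixed width. By the asymptotic expansions \eqref{eq:aymjn}--\eqref{eq:aymjjn}, together with the trivial bound $|\cos(z - \alpha)| \le (1+e^{|\im z|})/2$ from \eqref{eq:obserrem}, the functions $j_n$ and $\jj_n$ are uniformly bounded on $\Sigma_n$; differentiating (or, equivalently, using Cauchy's formula on small discs contained inside a slightly larger strip) then produces a finite constant $M_n := \sup_{\Sigma_n}(|j_n'| + |\psi_n'|) < \infty$. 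This is the place where holding the strip width fixed in $l$ is essential, and it is the only subtle point in the argument, since the trigonometric main terms in the Bessel asymptotics grow like $e^{|\im z|}$ off the real axis.

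Finally, since the $\w{z}^i_{n,l}$ are real and $|\im z^i_{n,l}| \le C_n l^{-1}$, the straight segment from $\w{z}^i_{n,l} t$ to $z^i_{n,l} t$ sits inside $\Sigma_n$ for every $t \in [0,1]$ and every sufficiently large $l$. Applying the mean value theorem along this segment, I would obtain
\begin{equation*}
|j_n(z^1_{n,l}t) - j_n(\w{z}^1_{n,l}t)| \le t\,M_n\,|z^1_{n,l} - \w{z}^1_{n,l}| \le M_n C_n\, l^{-1},
\end{equation*}
uniformly in $t \in [0,1]$, which is the first claim in \eqref{eq:est:j_n1}. The second estimate follows by the identical argument applied to the entire function $\psi_n$ on the corresponding segment between $\w{z}^2_{n,l}t$ and $z^2_{n,l}t$, completing the proof.
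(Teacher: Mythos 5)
Your proposal is correct and follows essentially the same route as the paper: bound the derivative on a horizontal strip containing $z^i_{n,l}t$ and $\w{z}^i_{n,l}t$ for $t\in[0,1]$, then integrate along the segment joining them and invoke the $O(l^{-1})$ estimates \eqref{eq:est_zeros1}--\eqref{eq:est_zeros2}. The only cosmetic difference is that you propose to get the derivative bound indirectly via Cauchy's formula from a sup-bound on $j_n$ and $\jj_n/z$, whereas the paper reads the bound directly off the asymptotics \eqref{eq:aymjnd} and off the computed expansion of $(\jj_n(z)/z)'$; both are valid and equally elementary.
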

\begin{proof}
For the first estimate, we first observe from \eqref{eq:aymjnd} and \eqref{eq:obserrem} that $|j_n'(z)|$ is bounded by a constant $M$ on the strip:
\begin{equation} \label{eq:aux:strip}
    \{z\in \C\,;\ |\im z| \le C,\ -\frac{\pi}{2}< \arg(z)\le \frac{\pi}{2}\}\,,
\end{equation}
where the constant $C \in \R$  is chosen such that $\{z^1_{n,l}\}_{l \in \NN}$ lie in \eqref{eq:aux:strip}. Then we have, by using the analyticity of $j_n(z)$ and the contour integral,
\begin{equation*}
    |j_n(z_{n,l}t) - j_n(\w{z}_{n,l}t)| = \left|\int_{\gamma} j_n'(\xi) d\xi \right| \le M |z_{n,l} - \w{z}_{n,l}||t|\,, \q t \in [0,1]\,,
\end{equation*}
where $\gamma$ is the segment connecting $z^1_{n,l}t$ with $\w{z}^1_{n,l}t$. Combining the above estimate with \eqref{eq:est_zeros1}, we can conclude that the first estimate in \eqref{eq:est:j_n1} holds. For the second estimate, it suffices to note that the derivative of $\jj_n(z)/z$ is entire and satisfies the following asymptotic form:
\begin{equation*}
    \left(\frac{\jj_n(z)}{z}\right)' = \frac{j_n'(z)z-j_n(z)}{z^2} + j_n''(z) =  \frac{1}{z} \cos\left(z - \frac{n \pi}{2} - \frac{\pi}{2}\right) + e^{|\im z|}O\left(\frac{1}{|z|^2}\right)\q \text{as} \ |z|\to \infty\,,
\end{equation*}
and can also bounded on a strip of the form \eqref{eq:aux:strip} with  a different constant $C$ such that it contains the zeros $\{z_{n,l}^2\}$ of $f_n^2(z)$. Then, in view of \eqref{eq:est_zeros2}, the same argument as the previous one allows us to complete the proof. 
\end{proof}

%%%%%%%%%%%%%
% jacobi-anger formula for planewave and imggree
%%%%%%%%%%%%%
\if \commentflag = \ct
By \eqref{eq:farexgre}, we have
\begin{align} \label{eq:farexccg}
    \curl \curl (g(x,y,k)\I) = k^2 \frac{e^{ik|x|}}{4 \pi |x|}\{e^{-ik\h{x}\dd y}(\I - \h{x}\otimes\h{x})+O(\frac{1}{|x|})\},
\end{align}
while \eqref{eq:additiongreen} gives us
\begin{align}
     \curl \curl (g(x,y,k)\I) = & \sum^\infty_{n = 1}\frac{i k^3}{n(n+1)} \sum^n_{m=-n} \etm(x)\otimes\overline{\wetm}(y) \notag\\
    & + \sum^\infty_{n=1} \frac{i k^3}{n(n+1)} \sum^n_{m=-n}\ete(x)\otimes \overline{\wete}(y).\label{eq:addccg}
\end{align}
Recall the asymptotics of spherical hankel function of the first kind and its derivative(cf.\,\cite[p.31]{colton2012inverse}): as $t \to \infty$,
\bb
h_n^{(1)}(t) \sim \frac{e^{it}}{t}e^{-i\frac{n+1}{2}\pi}  \\
(h_n^{1})'(t) \sim \frac{e^{it}}{t}e^{-i\frac{n}{2}\pi}.
\ee
Then a direct calculation gives us the far field behavior of radiating multiple electric fields \eqref{eq:raete}-\eqref{eq:raetm}:
\bb \label{eq:farexprme}
\ete(k,x) \sim \frac{e^{ik|x|}}{k|x|} e^{-i\frac{n+1}{2}\pi} (-\sqrt{n(n+1)}\vnm(\h{x})) \\
\etm(k,x) \sim  \frac{e^{ik|x|}}{k|x|} e^{-i\frac{n+1}{2}\pi} (-\sqrt{n(n+1)}\unm(\h{x}).
\ee 
% which further gives us 
% \begin{align}
%     &\frac{ik}{n(n+1)} \ete(x)\otimes \overline{\wete}(y)\notag \\
% \sim & \frac{e^{ik|x|}}{4\pi|x|} \frac{i 4 \pi}{n(n+1)}   e^{-i\frac{n+1}{2}\pi} (-\sqrt{n(n+1)}\vnm(\h{x}))\otimes \overline{\wete}(y) \q \text{as}, \ |x|\to \infty, \label{eq:farexprme1}
% \end{align}
% and 
% \begin{align}
%     &\frac{i k}{n(n+1)} \etm(x)\otimes\overline{\wetm}(y)\notag\\
%     \sim & \frac{i k}{n(n+1)}  \frac{e^{ik|x|}}{k|x|} e^{-i\frac{n+1}{2}\pi} (-\sqrt{n(n+1)}\unm(\h{x}))\otimes \overline{\wetm}(y) \notag \\
%      \sim & \frac{e^{ik|x|}}{4 \pi|x|}\frac{i 4 \pi}{n(n+1)}  e^{-i\frac{n+1}{2}\pi} (-\sqrt{n(n+1)}\unm(\h{x}))\otimes \overline{\wetm}(y), \q \text{as} \ |x|\to \infty,  \label{eq:farexprme2}
% \end{align}
 Substituting \eqref{eq:farexprme} into \eqref{eq:addccg} and comparing it with \eqref{eq:farexccg}, we can write  Jacobi-anger expansion of $e^{-ik\h{x}\dd y}(\I - \h{x}\otimes \h{x})$:
 \begin{align} \label{eq:jaexp}
    e^{-ik\h{x}\dd y}(\I - \h{x}\otimes\h{x})  = - \sum_{n=1}^\infty \frac{ 4 \pi(-i)^n}{\sqrt{n(n+1)}} \sum_{m=-n}^n   \vnm(\h{x})\otimes \overline{\wete}(y)+ \unm(\h{x})\otimes \overline{\wetm}(y)\,,\ \text{for}\ y \in D.
\end{align}
% \begin{align*} 
%     e^{-ik\h{x}\dd y}(\I - \h{x}\otimes\h{x})  = &-\sum_{n=1}^\infty \sum_{m=-n}^n \frac{ 4 \pi(-i)^n}{\sqrt{n(n+1)}}   \vnm(\h{x})\otimes \overline{\wete}(y)\\
%     & - \sum_{n=1}^\infty \sum_{m=-n}^n  \frac{ 4 \pi(-i)^n}{\sqrt{n(n+1)}}    \unm(\h{x})\otimes \overline{\wetm}(y),\ y \in D.
% \end{align*}
By noting that $\sum^n_{m = -n} \ynm(\h{x})\overline{\ynm}(\h{y})$ is real, we can find the addition formula for the imaginary part of $ G_0(x,y,k)$ via \eqref{eq:additiongreen} directly:\fn{add more details}
\begin{align} \label{eq:expimgreen}
  \im G_0(x,y,k) = \sum^\infty_{n = 1} \frac{k}{n(n+1)} \sum^n_{m = -n} \wete(x) \otimes \overline{\wete}(y)
      + \wetm(x)\otimes \overline{\wetm}(y).
\end{align}
for all $x,y$ with $|x|\neq |y|$.
\fi

%%%%%%%%%%%%
% this is for target imaging 
%%%%%%%%%%%%
\if \commentflag = \ct
\newpage
we can readily write the corresponding series expansion (Fourier expansion) of $\Psi_\infty(\h{x},z)$:
\begin{equation*}
    \Psi_\infty(\h{x},z) = \sum_{n =1}^\infty \sum_{m=-n}^n \aanm(z)\unm(\h{x}) + \bbnm(z)\vnm(\h{x})\,,
\end{equation*}
where the Fourier coefficients $\{\aanm\}$ and $\{\bbnm\}$ are given by 
\begin{equation} \label{def:coeffiint}
    \aanm(z) = \int_{S^2} \overline{\vnm}^t(\h{x}) \dd \Psi_\infty(\h{x},z)d\sigma(\h{x})\,,\q  
    \bbnm(z) = \int_{S^2} \overline{\unm}^t(\h{x}) \dd \Psi_\infty(\h{x},z)d\sigma(\h{x})\,.
\end{equation}
Then the series expansion of $\h{I}_2(z)$ follows from the Parseval's theorem,
\begin{equation*}
    \h{I}_2(z) = k \sum_{n =1}^\infty \sum^n_{m = - n} |\aanm(z)|^2 + |\bbnm(z)|^2\,.
\end{equation*}
It is worth noting that the expansion allows us to separate the sampling position $z$ and the observation direction $\h{x}$ of $\Psi_\infty(\h{x},z)$. We hence expect that the investigation of $\{\aanm\}$ and $\{\bbnm\}$ can provide us more precise and detailed information on the resolvability of $\h{I}_2$. 

To reveal the relations between these coefficients and the high contrast parameter $\tau$, similar to the previous subsection,  we first write the 
the following Lippmann-Schwinger equation for $\Psi$:
\begin{equation}  \label{eq:lippsch}
\Psi = \tau \td[\Psi + \im G_0p],
\end{equation}
which can be further written as,
\begin{equation} \label{eq:lippsch_2}
    \Psi + \im G_0p  = (1 - \tau \td)^{-1}[\im G_0p],
\end{equation}
by the same derivation as the one for \eqref{eq:reprevp}. Then, by asymptotic expansion of Green's tensor $G_0$: 
\begin{equation} \label{eq:farexgre}
G_0(x,y,k) = (\I-\h{x}\otimes \h{x})\frac{e^{ik|x|}}{4 \pi |x|}e^{-ik \h{x}\dd y}
+ O(\frac{1}{|x|^2})\,, \ \text{for} \ y \in D, \ \text{as}\ |x| \to \infty\,,
\end{equation}
along with \eqref{eq:lippsch}, we have
\begin{equation} \label{eq:exppsi}
    \Psi(x,z) = \tau k^2 (\I-\h{x}\otimes \h{x})\frac{e^{ik|x|}}{4 \pi |x|} \int_D e^{-ik \h{x}\dd y} (\Psi(y,z) + \im G_0(y,z,k))dy\,.
\end{equation}
The integral representation of the far-field pattern $\Psi_\infty(\h{x},z)$ hence follows from its definition \eqref{def:ffp} and \eqref{eq:lippsch_2},
\begin{align} \label{eq:farfieldpsi}
    \Psi_\infty(\h{x},z) & = \frac{k^2}{4 \pi} (\I - \h{x}\otimes\h{x}) \int_{D} e^{-ik\h{x}\dd y}(\tau^{-1} - \td)^{-1}[\im G_0(\dd,z,k)p](y)dy\,.
\end{align} 
% Moreover, a direct calculation gives us 
% \begin{equation} \label{eq:exp:green}
%     (k^2 \I + \na \ddiv)_{ij} \frac{e^{ik|x|}}{4 \pi |x|} = k^2 (\d_{ij} - \h{x}_i \h{x}_j)\frac{e^{ik|x|}}{4 \pi |x|} + O(\frac{1}{|x|^2})\,, \ \text{as}\ |x| \to \infty.
% \end{equation}
% we now substitute \eqref{eq:exp:green} into \eqref{eq:exppsi} and 
Furthermore, the Fourier coefficients $\aanm(z) $ and $\bbnm(z)$ can be computed by \eqref{def:coeffiint} and  \eqref{eq:farfieldpsi}, as well as the Jacobi-Anger expansion \eqref{eq:jaexp} of $e^{-ik\h{x}\dd y}(\I - \h{x}\otimes \h{x})$(see Appendix \ref{app:A}. We summarize the above discussion in the following result.
\begin{theorem} \label{thm:seriesexpI2}
The resolution  $\h{I}_2$ of the imaging functional $I_2$ has the following series expansion: 
\begin{equation}
    \h{I}_2(z) = k \sum_{n =1}^\infty \sum^n_{m = - n} |\aanm(z)|^2 + |\bbnm(z)|^2\,,
\end{equation}
where $\{\aanm(z)\}$ and $\{\bbnm(z)\}$ are the Fourier coefficients of $\Psi_\infty(\h{x},z)$ with respect to the vector spherical harmonics, given by
\begin{equation}
    \begin{cases}
    \aanm(z) = k^2 \frac{-(-i)^n}{\sqrt{n(n+1)}}\int_D\overline{\wete}^t(y)\dd(\tau^{-1} - \td)^{-1}[\im G_0(\dd,z,k)p](y)dy\,, \\
    \bbnm(z) = k^2 \frac{-(-i)^n }{\sqrt{n(n+1)}}\int_D\overline{\wetm}^t(y)\dd(\tau^{-1} - \td)^{-1}[\im G_0(\dd,z,k)p](y)dy\,.
    \end{cases}
\end{equation}
\end{theorem}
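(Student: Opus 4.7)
The plan is to derive the series expansion for $\h{I}_2(z)$ in three stages: (i) apply Parseval's identity to the vector spherical harmonic expansion of the far-field pattern $\Psi_\infty(\h{x},z)$; (ii) obtain an explicit integral representation for $\Psi_\infty(\h{x},z)$ by combining the Lippmann-Schwinger equation for $\Psi$ with the far-field asymptotic of $G_0$; (iii) collapse the resulting integrals by means of the Jacobi-Anger expansion of $e^{-ik\h{x}\dd y}(\I-\h{x}\otimes\h{x})$ against the vector spherical harmonics.

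First I would invoke the fact, recalled in Appendix \ref{app:C_1}, that $\{\unm,\vnm\}_{n\ge 1,|m|\le n}$ is a complete orthonormal system in $L^2_T(S^2)$. Since $\Psi_\infty(\h{x},z)$ is tangential on $S^2$, it admits the Fourier representation $\Psi_\infty(\h{x},z)=\sum_{n,m}\aanm(z)\unm(\h{x})+\bbnm(z)\vnm(\h{x})$ with coefficients as in \eqref{def:coeffiint}. Because $\h{I}_2(z)$ is (up to the factor $k$) the squared $L^2_T$-norm of $\Psi_\infty(\dd,z)$, Parseval's identity immediately yields the scalar identity $\h{I}_2(z)=k\sum_{n,m}(|\aanm(z)|^2+|\bbnm(z)|^2)$.

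Next, to obtain the explicit form of the Fourier coefficients, I would mimic the singularity-splitting argument developed for Theorem \ref{thm:resoinhogr_1}, applied now to $\Psi$, which solves a Maxwell-type equation with source proportional to $\im G_0\,p$. This produces the Lippmann-Schwinger identity \eqref{eq:lippsch_2}, namely $\Psi+\im G_0\,p = (1-\tau\td)^{-1}[\im G_0\,p]$, equivalently $\Psi = \tau(\tau^{-1}-\td)^{-1}[\im G_0\,p]$. Substituting the far-field expansion \eqref{eq:farexgre} of $G_0$ into this representation and extracting the outgoing amplitude yields the closed-form integral \eqref{eq:farfieldpsi} for $\Psi_\infty(\h{x},z)$ in which the projector $\I-\h{x}\otimes\h{x}$ appears explicitly.

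The main technical step is the actual computation of $\aanm(z)$ and $\bbnm(z)$ from \eqref{def:coeffiint} and \eqref{eq:farfieldpsi}. Here I would plug in the Jacobi-Anger type expansion of $e^{-ik\h{x}\dd y}(\I-\h{x}\otimes\h{x})$ derived in Appendix \ref{app:C_1}, valid for $y\in D$, which writes the kernel as a double series in $\vnm(\h{x})\otimes\overline{\wete}(y)+\unm(\h{x})\otimes\overline{\wetm}(y)$ with prefactor $-4\pi(-i)^n/\sqrt{n(n+1)}$. Orthonormality of $\{\unm,\vnm\}$ on $S^2$ kills all but the diagonal term when pairing against $\overline{\vnm}$ or $\overline{\unm}$, leaving exactly the one-term expressions claimed in the theorem, namely the $y$-integrals against $\overline{\wete}(y)$ or $\overline{\wetm}(y)$ of $(\tau^{-1}-\td)^{-1}[\im G_0(\dd,z,k)p]$. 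The main obstacle will be justifying the termwise interchange of summation and integration; this requires uniform absolute convergence of the Jacobi-Anger series on $S^2\times D$, which in turn follows from the large-$n$ asymptotics \eqref{eq:asyjh} for $j_n(kr)$ and $h_n^{(1)}(kr)$, together with the fact that $D$ is strictly contained in a ball on which those estimates apply. A secondary bookkeeping issue is tracking the phase $(-i)^n$ and the normalization $\sqrt{n(n+1)}$, both of which come directly from the multipole normalizations in \eqref{eq:raete}--\eqref{eq:raetm}.
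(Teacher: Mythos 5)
Your proposal follows essentially the same route as the paper's derivation: Parseval's identity applied to the vector spherical harmonic expansion of $\Psi_\infty(\h{x},z)$, the Lippmann--Schwinger identity $\Psi+\im G_0p=(1-\tau \td)^{-1}[\im G_0p]$ combined with the far-field asymptotics of $G_0$ to obtain the integral representation of $\Psi_\infty$ with the projector $\I-\h{x}\otimes\h{x}$, and the Jacobi--Anger expansion of $e^{-ik\h{x}\dd y}(\I-\h{x}\otimes\h{x})$ together with the orthonormality of $\{\unm,\vnm\}$ in $L^2_T(S^2)$ to extract the coefficients $\aanm$ and $\bbnm$. The only blemish is the parenthetical rewriting ``equivalently $\Psi=\tau(\tau^{-1}-\td)^{-1}[\im G_0p]$'', which should be $\Psi=\td(\tau^{-1}-\td)^{-1}[\im G_0p]$ (equivalently $\Psi+\im G_0p=\tau^{-1}(\tau^{-1}-\td)^{-1}[\im G_0p]$); this slip does not propagate, since your subsequent computation correctly uses the far-field formula in which $\tau(1-\tau\td)^{-1}=(\tau^{-1}-\td)^{-1}$.
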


the Funk-Hecke theorem, that is,
% \begin{equation*}
%     \int_{|x| = R}e^{ikx\dd d}d\sigma(x) = \frac{4 \pi R sin(kR|d|)}{k|d|}
% \end{equation*}
\begin{equation*}
    \int_{S^2}e^{i k  \h{x}\dd (\xi-y)}d\sigma(\h{x})  = \frac{4 \pi  sin(k  |\xi-y|)}{k|\xi-y|}= \frac{16 \pi^2}{k} \im g(\xi,y,k),
\end{equation*}
and an observation:
\begin{equation*}
     k^2(\I - \h{x}\otimes\h{x})e^{ik \h{x}\dd(\xi -y)} = (k^2\I + \na_y \ddiv_y)e^{ik \h{x}\dd(\xi -y)},
\end{equation*}
we can further derive 
\begin{align*}
    \h{I}(z) = &
  k \frac{\tau^2}{(4 \pi)^2}k^4 \int_{S^2}\int_{D\t D}(1 - \tau T_D^{-k})^{-1}[\im G_0p](\xi)^t \dd (\I - \h{x}\otimes\h{x})(1 - \tau \td)^{-1}[\im G_0p](y) e^{ik \h{x}\dd(\xi-y)}dyd\xi d \sigma(\h{x}) \\
    = &k^4 \tau^2 \int_{D\t D}(1 - \tau T_D^{-k})^{-1}[\im G_0p](\xi)^t \dd \im G_0(\xi,y,k)(1 - \tau \td)^{-1}[\im G_0p](y) dyd\xi
\end{align*}
\fi

% \mr{stop here, follows need more check} For $\im G_0(x,y)$, we can also derive an addition formula. By addition formula of $g(x,y,k)$, we have
% \begin{equation*}
%     \im g(x,y,k) = k \sum^\infty_{n = 0} j_n(k|x|)j_n(k|y|) \sum^n_{m = -n} \ynm(\h{x})\overline{\ynm}(\h{y}) = k\sum^\infty_{n=0} \sum^n_{m = -n}\w{v}_{n,m}(x)\overline{\w{v}_{n,m}}(y),
% \end{equation*}
% which is held for all $x,y$ with $|x| \neq |y|$. We then can derive the vector version of addition formula:
% \begin{align*}
%     \im g(x,y,k) p  = &(\ccurl \ccurl - \na \ddiv) \frac{\im g(x,y)p}{k^2} \\
%     = &\sum^\infty_{n = 1} \sum^n_{m = -n}  \frac{1}{k} \na \w{v}_{n,m}(x) \overline{\na \w{v}_{n,m}}(y) \dd p + \frac{k}{n(n+1)} \wete(x) \otimes \overline{\wete}(y) \\
%      & + \frac{k}{n(n+1)} \wetm(x)\otimes \overline{\wetm}(y),
% \end{align*}
% which again holds 
% A direct computation gives us 
% \begin{align*}
%     (1 + \frac{1}{k^2}\na \ddiv) \im g(x,y,k)p = \sum^\infty_{n = 1} \sum^n_{m = -n}\frac{k}{n(n+1)} \wete(x) \otimes \overline{\wete}(y)
%       + \frac{k}{n(n+1)} \wetm(x)\otimes \overline{\wetm}(y).
% \end{align*}

% \begin{equation*}
%     \overline{\vnm} = (-1)^m V_n^{-m},\q \overline{\unm} = (-1)^m U_n^{-m} 
% \end{equation*}

% \begin{align*}
%       & \sum^n_{m = -n} \frac{i k}{n(n+1)}\ete(y)\otimes\overline{\wete}(x) \\
% = & -ikh_n^{(1)}(k|y|) j_n(k|x|) \sum^n_{m = -n} (-1)^m \vnm(\h{y})\otimes V_n^{-m}(\h{x}) \\
% = & -ikh_n^{(1)}(k|y|) j_n(k|x|) \sum^n_{m = -n} \overline{\vnm}(\h{y})\otimes \vnm(\h{x})
% \end{align*}

% \input{main.bbl}
%\bibliographystyle{plain}
%\bibliography{super-resolution}

\end{document}